\newdimen\plusheight
\def\+{\;\lower\plusheight\hbox{$+$}\;}
\newdimen\minusheight
\def\-{\;\lower\minusheight\hbox{$-$}\;}
\newdimen\cdotsheight
\def\cds{\lower\cdotsheight\hbox{$\cdots$}}
\def\leqalignno#1{\displ@y \tabskip\z@ plus\@ne fil
  \halign to\displaywidth{\hfil$\@lign\displaystyle{##}$\tabskip\z@skip
    &$\@lign\displaystyle{{}##}$\hfil\tabskip\z@ plus\@ne fil
    &\kern-\displaywidth\rlap{$\@lign\hbox{\rm##}$}\tabskip\displaywidth\crcr
    #1\crcr}}
\newcommand{\eb}{\begin{equation}}
\newcommand{\ee}{\end{equation}}
\newcommand{\df}{\dfrac}
 \renewcommand{\a}{\alpha}
\renewcommand{\b}{\beta}
\renewcommand{\o}{{\omega}}
\newcommand{\x}{\xi}
\renewcommand{\(}{\left\(}
\renewcommand{\)}{\right\)}
\renewcommand{\[}{\left\[}
\renewcommand{\]}{\right\]}
\renewcommand{\pmod}[1]{\,(\textup{mod}\,#1)}
\numberwithin{equation}{section}
 \theoremstyle{plain}
\newtheorem{theorem}{Theorem}[section]
\numberwithin{equation}{section}
\theoremstyle{plain}
\newtheorem{lemma}[theorem]{Lemma}
\newtheorem{corollary}[theorem]{Corollary}
\newtheorem{example}[theorem]{Example}
\begin{document}

\title[Evaluations and Reciprocity Theorems for Finite Trigonometric Sums]
{Exact Evaluations and Reciprocity Theorems for Finite Trigonometric Sums}
\author{Bruce C.~Berndt, Sun Kim, Alexandru Zaharescu}
\address{Department of Mathematics, University of Illinois, 1409 West Green
Street, Urbana, IL 61801, USA} \email{berndt@illinois.edu}
\address{Department of Mathematics, and Institute of Pure and Applied Mathematics, Jeonbuk National
University, 567 Baekje-daero, Jeonju-si, Jeollabuk-do 54896, Republic of Korea}
\email{sunkim@jbnu.ac.kr}
\address{Department of Mathematics, University of Illinois, 1409 West Green
Street, Urbana, IL 61801, USA; Institute of Mathematics of the Romanian
Academy, P.O.~Box 1-764, Bucharest RO-70700, Romania}
\email{zaharesc@illinois.edu}

\begin{abstract} We evaluate in closed form several classes of finite trigonometric sums. Two general methods are used.  The first is new and involves sums of roots of unity.  The second uses contour integration and extends a previous method used by two of the authors.  Reciprocity theorems for certain trigonometric sums are also established.
\end{abstract}

\subjclass[2020]{Primary 11L03; Secondary  33B10}
\keywords{Finite trigonometric sums, Characters, Reciprocity theorems, Gauss sums}

\maketitle

\tableofcontents

\section{Introduction}
Sums involving the
exponential function or trigonometric functions have been used extensively in
mathematics. In particular, they appear in many problems in number theory. In most instances, the
relevant sums cannot be evaluated in closed form, and in such cases one is content to provide either upper or
 lower bounds for those sums. Here, we focus on certain classes of trigonometric  sums with the aim of obtaining exact evaluations

Another important goal of the paper is to provide
reciprocity theorems for certain trigonometric sums. Besides their
intrinsic elegance, these
reciprocity theorems can also be interpreted as a decisive step toward
the exact evaluation of the corresponding sums.  This is similar to how the law of quadratic reciprocity can be applied
repeatedly in order to compute the Legendre symbol in any given concrete case.

  We continue the studies of trigonometric sum evaluations that were made in earlier papers \cite{bbyz}, \cite{yeap}, \cite{mathann} by two of the present authors and their collaborators.  Since then, several other authors, for example, C.~M.~ da Fonseca,  M.~L.~Glasser, and V.~ Kowalenko \cite{fgk}, \cite{fgk1}, X. ~Wang and D.~Y. Zheng \cite{wangzheng}, and Y.~He \cite{he}, have made excellent contributions to the study and evaluations of trigonometric sums.  Later in this paper, we give more details, especially in regard to the trigonometric sums studied here.

We introduce a new method for evaluating trigonometric sums.  The method features logarithmic differentiation, partial fractions, and the  evaluations of several necessary sums of roots of unity.  Although our primary motivation was to use these identities to evaluate  trigonometric sums, particularly, involving $\sin$'s and $\cos$'s, we think that the theorems involving roots of unity are interesting in themselves and potentially useful in other investigations.  Several examples are given to illustrate our method.  It should be emphasized that some of these evaluations were previously established in the literature by other means.  Nonetheless, our methods are new and perhaps more elementary.

Our elementary approach rests upon four elementary logarithmic differentiations. It  can be extended with the use of $n$ successive logarithmic differentiations, for any natural number $n$.  However, the calculations become increasingly laborious, and a computer algebra system, such as \emph{Mathematica}, would be advisable.

One of our primary goals is to evaluate sums, in which the trigonometric functions have two distinct periods.  To the best of our knowledge, there are no previous explicit evaluations of trigonometric  sums with two distinct periods in the literature.  However, Richard McIntosh \cite[p.~202]{mcintosh} expressed one such sum in terms of generalized Dedekind sums; see \eqref{mc6} below.  Although his representation is not an `evaluation' in the sense we use the term in this paper, his result was of enormous importance in motivating us to do the research in our paper.

Reciprocity theorems for sums with two distinct periods are found in the literature. In particular, one of the reciprocity theorems involving cotangents is equivalent to the reciprocity theorem for Dedekind sums \cite{rademacher}, \cite{rademachergrosswald}.  See also a paper of S.~Fukuhara \cite{fukuhara}, in which he obtains reciprocity theorems for cotangent sums connected with certain generalized Dedekind sums.  We establish some apparently new reciprocity theorems for other cotangent sums.  We indicate how to prove reciprocity theorems for
 $2n$ sums with $2n$ distinct periods, although we work out the details for only $n=1,2$.

\section{Elementary Preliminary Lemmas}

\begin{lemma}\label{1zi} Let $k$ be an odd positive integer and  $z_n=e^{2\pi in/k}.$
If $k\equiv 1 \pmod{4},$ then

 $\displaystyle\sum_{n=1}^k\frac{1}{z_n\pm i}=\pm\frac{k}{i\pm 1},$

$\displaystyle\sum_{n=1}^k\frac{1}{(z_n\pm i)^2}=\frac{k(k-1)i}{i\pm 1}\mp \frac{k^2i}{2},$

 $\displaystyle\sum_{n=1}^k\frac{1}{(z_n\pm i)^3}=
\frac{\mp k(k-1)(k-2)-k^3i}{2(i\pm 1)}+\frac{3k^2(k-1)}{4},$

 $\displaystyle\sum_{n=1}^k\frac{1}{(z_n\pm i)^4}=
-\frac{k(k-1)(k-2)(k-3)i}{6(i\pm 1)}\pm\frac{k^3(k-1)}{i\pm 1}\pm \frac{k^2(k-1)(7k-11)i}{12}-\frac{k^4}{4}.$

\medskip

If $k\equiv 3 \pmod{4},$ then we have

 $\displaystyle\sum_{n=1}^k\frac{1}{z_n\pm i}=\pm\frac{k}{i\mp 1},$

$\displaystyle\sum_{n=1}^k\frac{1}{(z_n\pm i)^2}=\frac{k(k-1)i}{i\mp 1}\pm \frac{k^2i}{2},$

 $\displaystyle\sum_{n=1}^k\frac{1}{(z_n\pm i)^3}=
\frac{\mp k(k-1)(k-2)+k^3i}{2(i\mp 1)}-\frac{3k^2(k-1)}{4},$

 $\displaystyle\sum_{n=1}^k\frac{1}{(z_n\pm i)^4}=
-\frac{k(k-1)(k-2)(k-3)i}{6(i\mp 1)}\mp\frac{k^3(k-1)}{i\mp 1}\mp \frac{k^2(k-1)(7k-11)i}{12}-\frac{k^4}{4}.$
\end{lemma}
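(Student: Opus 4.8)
The starting point is the observation that $z_1,\dots,z_k$ are precisely the $k$-th roots of unity, that is, the zeros of $P(z):=z^k-1=\prod_{n=1}^k(z-z_n)$. Logarithmic differentiation gives $\dfrac{P'(z)}{P(z)}=\sum_{n=1}^k\dfrac{1}{z-z_n}$, and differentiating this identity repeatedly expresses the power sums $G_m(z):=\sum_{n=1}^k(z-z_n)^{-m}$ through the higher logarithmic derivatives of $P$. Writing $L=\log P$, one has $G_1=L'$, $G_2=-L''$, $G_3=\tfrac12 L'''$, $G_4=-\tfrac16 L''''$, together with the standard cumulant expansions
\begin{align*}
L'&=\frac{P'}{P}, &
L''&=\frac{P''}{P}-\frac{(P')^2}{P^2},\\
L'''&=\frac{P'''}{P}-\frac{3P''P'}{P^2}+\frac{2(P')^3}{P^3}, &
L''''&=\frac{P''''}{P}-\frac{4P'''P'}{P^2}-\frac{3(P'')^2}{P^2}+\frac{12P''(P')^2}{P^3}-\frac{6(P')^4}{P^4}.
\end{align*}
This is exactly the logarithmic-differentiation device advertised in the introduction, here applied to the simplest polynomial $z^k-1$.

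To reach the sums in the statement I would set $a=\mp i$ and use $z_n\pm i=z_n-a$, together with $(z_n-a)^{-m}=(-1)^m(a-z_n)^{-m}$, to obtain the master relation
\begin{equation*}
\sum_{n=1}^k\frac{1}{(z_n\pm i)^m}=(-1)^m\,G_m(\mp i).
\end{equation*}
Thus every entry of the lemma is the value at $z=\pm i$ of an explicit combination of $P,P',P'',P''',P''''$. The decisive simplification is that $a=\pm i$ satisfies $a^4=1$, so that in $P^{(j)}(a)=k(k-1)\cdots(k-j+1)\,a^{k-j}$ the power $a^{k-j}$ depends only on $k-j\bmod 4$. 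This is precisely where the hypothesis that $k$ is odd enters, split into the two cases $k\equiv1$ and $k\equiv3\pmod 4$: a short table of the four values $a^{k},a^{k-1},a^{k-2},a^{k-3}$ in each of the four situations $a=\pm i$, $k\equiv 1,3$ determines all the $P^{(j)}(a)$ at once.

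With those values fixed, the proof becomes substitution and algebraic cleanup, driven by the identities $(1\pm i)^2=\pm 2i$, $(1+i)^3=2(i-1)$, $(1-i)^3=-2(1+i)$, and $\tfrac1{1-i}=\tfrac{i}{1+i}$, the last of which is what converts cleared denominators into the prescribed form $i\pm 1$ (respectively $i\mp1$). As a sample, for $m=2$ in the $+i$ case with $k\equiv1$ one has $P(-i)=-(1+i)$, $P'(-i)=k$, $P''(-i)=k(k-1)i$, and $(1+i)^2=2i$, whence
\begin{equation*}
\sum_{n=1}^k\frac{1}{(z_n+i)^2}=-L''(-i)=\frac{k(k-1)i}{i+1}-\frac{k^2 i}{2},
\end{equation*}
exactly the stated value; the remaining $m=1,3,4$ entries, and the class $k\equiv3$, follow in the same mechanical way.

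The only genuine difficulty is bookkeeping. One must carry the two sign choices $\pm$ and the two residue classes through four increasingly long expressions while uniformly rewriting the cleared denominators in the required shape. I expect the $m=4$ entries to be the most error-prone, since $L''''$ has five terms and the fourth-order products of the $P^{(j)}(a)$ generate many like terms that must be collected and re-expressed via the $\tfrac1{1-i}=\tfrac{i}{1+i}$ manipulation; a symbolic check is advisable here, in the spirit of the computer-algebra remark the authors make for higher-order logarithmic differentiations.
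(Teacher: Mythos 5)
Your proposal is correct and follows essentially the same route as the paper: logarithmic differentiation of $x^k-1$ to express $\sum_n (x-z_n)^{-m}$ for $m=1,2,3,4$ as explicit rational functions, followed by evaluation at $x=\pm i$ using $i^4=1$ and the two residue classes of $k$ modulo $4$. Your explicit tracking of the sign via $\sum_n (z_n\pm i)^{-m}=(-1)^m G_m(\mp i)$ is a slightly more careful bookkeeping of a step the paper leaves implicit, and your sample $m=2$ computation checks out against the stated value.
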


\begin{proof}
Defining $f(x):=x^k-1=(x-z_1)\cdots(x-z_k),$  we obtain the identities,
\begin{equation}\label{f}
  \begin{split}
\sum_{n=1}^k\frac{1}{x-z_n}&=\frac{f'}{f}=\frac{k x^{k-1}}{x^k-1}, \\
\sum_{n=1}^k\frac{1}{(x-z_n)^2}&=-\Big(\frac{f'}{f}\Big)'=
-\frac{k(k-1)x^{k-2}}{x^k-1}+\frac{k^2x^{2k-2}}{(x^k-1)^2}, \\
\sum_{n=1}^k\frac{1}{(x-z_n)^3}&=\frac12\Big(\frac{f'}{f}\Big)''
=\frac{k(k-1)(k-2)x^{k-3}}{2(x^k-1)}-\frac{3k^2(k-1)x^{2k-3}}{2(x^k-1)^2}+\frac{k^3x^{3k-3}}{(x^k-1)^3},\\
\sum_{n=1}^k\frac{1}{(x-z_n)^4}&=-\frac16\Big(\frac{f'}{f}\Big)'''
=-\frac{k(k-1)(k-2)(k-3)x^{k-4}}{6(x^k-1)}+\frac{k^2(k-1)(7k-11)x^{2k-4}}{6(x^k-1)^2}\\
&\qquad \qquad \qquad -\frac{2k^3(k-1)x^{3k-4}}{(x^k-1)^3}+\frac{k^4x^{4k-4}}{(x^k-1)^4}.
\end{split}
\end{equation}
Setting $x=i$ and  $x=-i$ in these identities, we derive the identities of Lemma \ref{1zi}.
\end{proof}

\begin{lemma}\label{1z1}
Let $k$ be an odd positive integer and  $z_n=e^{2\pi in/k}.$ Then,

\medskip

$\displaystyle\sum_{n=1}^{k-1}\frac{1}{z_n+1}=\frac{k-1}{2}, \qquad\qquad\qquad\qquad \sum_{n=1}^{k-1}\frac{1}{(z_n+1)^2}=-\frac{(k-1)^2}{4},$

$\displaystyle \sum_{n=1}^{k-1}\frac{1}{(z_n+1)^3}=-\frac{(k-1)(3k-1)}{8}, \qquad \sum_{n=1}^{k-1}\frac{1}{(z_n+1)^4}=\frac{(k-1)(k^3+k^2-21k+3)}{48}.$
\end{lemma}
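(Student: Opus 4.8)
The plan is to reuse the polynomial identities in \eqref{f} from the proof of Lemma \ref{1zi}, but now to evaluate them at $x=-1$ and to restrict the summation so as to omit the root $z_k=1$. Since $k$ is odd, $-1$ is not a $k$th root of unity, so $x=-1$ is not a pole of any side of \eqref{f}; indeed every denominator there is a power of $x^k-1$, which equals $(-1)^k-1=-2$ at $x=-1$. Thus the only bookkeeping needed is to pass from $\sum_{n=1}^{k}\frac{1}{(x-z_n)^j}$ evaluated at $x=-1$ to the sums $\sum_{n=1}^{k-1}\frac{1}{(z_n+1)^j}$ in the statement.

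First I would record the sign conversion $\frac{1}{(z_n+1)^j}=\frac{(-1)^j}{((-1)-z_n)^j}$, which comes from $z_n+1=-\big((-1)-z_n\big)$. Next, since $z_k=e^{2\pi i k/k}=1$ is the single root excluded by the range $1\le n\le k-1$, I would subtract its contribution, which at $x=-1$ equals $\frac{1}{((-1)-1)^j}=\frac{1}{(-2)^j}$. Writing $S_j(x)$ for the rational function on the right-hand side of the $j$th identity in \eqref{f}, these two observations combine into the master formula
\[
\sum_{n=1}^{k-1}\frac{1}{(z_n+1)^j}=(-1)^j\Big(S_j(-1)-\frac{1}{(-2)^j}\Big).
\]

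It then remains to evaluate each $S_j(-1)$. Here I would use that, for $x=-1$ with $k$ odd, every power reduces via $x^m=(-1)^m$, so that $x^k-1=-2$, $x^{k-1}=1$, $x^{k-2}=-1$, $x^{2k-2}=1$, $x^{2k-3}=-1$, $x^{3k-3}=1$, and so on, turning each $S_j(-1)$ into an explicit rational number in $k$. For $j=1$ this gives $S_1(-1)=-\tfrac{k}{2}$, and the master formula yields $\tfrac{k-1}{2}$; for $j=2$ one similarly obtains $-\tfrac{(k-1)^2}{4}$ after collecting $-\tfrac{k(k-1)}{2}+\tfrac{k^2}{4}-\tfrac14$ over a common denominator.

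The one genuinely laborious step is the simplification for $j=3$ and $j=4$: after substituting $x=-1$, the several terms with denominators $(x^k-1),(x^k-1)^2,\dots$ must be placed over a common denominator and collected, and the signs $(-1)^m$ tracked correctly, before the expression collapses to the stated polynomials $-\tfrac{(k-1)(3k-1)}{8}$ and $\tfrac{(k-1)(k^3+k^2-21k+3)}{48}$. (For instance, in the case $j=3$ the numerator telescopes as $-2k(k-1)(k-2)+3k^2(k-1)-k^3+1=3k^2-4k+1=(k-1)(3k-1)$.) This is elementary but error-prone, and I would confirm each final polynomial identity by checking it against a few small odd values of $k$.
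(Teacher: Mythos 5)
Your proposal is correct and follows essentially the same route as the paper: the paper likewise evaluates the logarithmic-derivative identities \eqref{f} at $x=-1$ (where nothing is singular since $k$ is odd) and removes the contribution of the root $z_k=1$, writing $\sum_{n=1}^{k-1}\frac{1}{(z_n+1)^j}=\sum_{n=1}^{k}\frac{1}{(z_n+1)^j}-\frac{1}{2^j}$; your master formula with the explicit $(-1)^j$ sign conversion is just a slightly more systematic bookkeeping of the same computation. Your sample simplifications (e.g.\ the $j=3$ numerator collapsing to $(k-1)(3k-1)$ before the outer sign) agree with the paper's.
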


\begin{proof}
Setting $x=-1$ in the identities in \eqref{f} , we can easily derive
\begin{align*}
\sum_{n=1}^{k-1}\frac{1}{z_n+1}&=\sum_{n=1}^{k}\frac{1}{z_n+1}-\frac12=\frac{k-1}{2}, \\
\sum_{n=1}^{k-1}\frac{1}{(z_n+1)^2}&=\sum_{n=1}^{k}\frac{1}{(z_n+1)^2}-\frac14
=-\frac{k(k-1)}{2}+\frac{k^2}{4}-\frac14=-\frac{(k-1)^2}{4}, \\
\sum_{n=1}^{k-1}\frac{1}{(z_n+1)^3}&=\sum_{n=1}^{k}\frac{1}{(z_n+1)^3}-\frac18\\
&=\frac{k(k-1)(k-2)}{4}-\frac{3k^2(k-1)}{8}+\frac{k^3}{8}-\frac18=-\frac{(k-1)(3k-1)}{8},\\
\sum_{n=1}^{k-1}\frac{1}{(z_n+1)^4}&=\sum_{n=1}^{k}\frac{1}{(z_n+1)^4}-\frac{1}{16} \\
&=-\frac{k(k-1)(k-2)(k-3)}{12}+\frac{k^2(k-1)(7k-11)}{24}-\frac{k^3(k-1)}{4}+\frac{k^4}{16}-\frac{1}{16}\\
&=\frac{(k-1)(k^3+k^2-21k+3)}{48}.
\end{align*}
This completes our proof.
\end{proof}

\begin{lemma}\label{1z-1}
Let $k$ be a positive integer and  $z_n=e^{2\pi in/k}.$ Then,

$\displaystyle\sum_{n=1}^{k-1}\frac{1}{z_n-1}=-\frac{k-1}{2}, \qquad\qquad\qquad \sum_{n=1}^{k-1}\frac{1}{(z_n-1)^2}=-\frac{(k-1)(k-5)}{12},$

$\displaystyle\sum_{n=1}^{k-1}\frac{1}{(z_n-1)^3}=\frac{(k-1)(k-3)}{8}, \qquad \sum_{n=1}^{k-1}\frac{1}{(z_n-1)^4}=\frac{(k-1)(k^3+k^2-109k+251)}{720}.$
\end{lemma}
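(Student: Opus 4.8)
The plan is to reuse the logarithmic-differentiation idea behind \eqref{f}, but with one essential change forced by the fact that the evaluation point now coincides with a root. In Lemmas \ref{1zi} and \ref{1z1} we substituted into \eqref{f} at $x=\pm i$ and (for odd $k$) at $x=-1$, none of which is a zero of $x^k-1$. Here we must evaluate at $x=1$, and since $z_k=e^{2\pi i}=1$ is itself a root, the right-hand sides of \eqref{f} are singular there. To avoid this I would discard the offending factor at the outset and work with
\begin{equation*}
g(x):=\frac{x^k-1}{x-1}=1+x+\cdots+x^{k-1}=\prod_{n=1}^{k-1}(x-z_n),
\end{equation*}
whose zeros are exactly the $z_1,\dots,z_{k-1}$ appearing in our sums. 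Note that, in contrast to Lemma \ref{1z1}, no parity restriction on $k$ is needed, because the excluded root is always $z_k=1$.

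Logarithmic differentiation of $g$ yields the analogues of \eqref{f},
\begin{align*}
\sum_{n=1}^{k-1}\frac{1}{x-z_n}&=\frac{g'}{g}, &
\sum_{n=1}^{k-1}\frac{1}{(x-z_n)^2}&=-\left(\frac{g'}{g}\right)', \\
\sum_{n=1}^{k-1}\frac{1}{(x-z_n)^3}&=\frac12\left(\frac{g'}{g}\right)'', &
\sum_{n=1}^{k-1}\frac{1}{(x-z_n)^4}&=-\frac16\left(\frac{g'}{g}\right)''',
\end{align*}
and now every right-hand side is regular at $x=1$ because $g$ carries no factor $x-1$. The derivatives of $g$ at $1$ have a clean closed form: from $g(x)=\sum_{m=0}^{k-1}x^m$ and the identity $\sum_{m=0}^{k-1}\binom{m}{j}=\binom{k}{j+1}$ one obtains
\begin{equation*}
g^{(j)}(1)=\sum_{m=0}^{k-1}m(m-1)\cdots(m-j+1)=j!\binom{k}{j+1},
\end{equation*}
so that $g(1)=k$, $g'(1)=\tfrac12k(k-1)$, $g''(1)=\tfrac13k(k-1)(k-2)$, $g'''(1)=\tfrac14k(k-1)(k-2)(k-3)$, and $g^{(4)}(1)=\tfrac15k(k-1)(k-2)(k-3)(k-4)$.

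The remaining steps are mechanical. Expanding $(g'/g)'$, $(g'/g)''$, and $(g'/g)'''$ by the quotient rule into the standard combinations $g''/g-(g'/g)^2$, then $g'''/g-3(g'g''/g^2)+2(g'/g)^3$, and so on, and substituting the five values above, gives each power sum $\sum_{n=1}^{k-1}(1-z_n)^{-m}$ at $x=1$. Finally, since $(1-z_n)^{-m}=(-1)^m(z_n-1)^{-m}$, I would multiply by $(-1)^m$ to reach the stated sums; this flips the odd cases $m=1,3$ and leaves the even cases $m=2,4$ unchanged.

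The only real difficulty is the bookkeeping in the last two cases. The $m=1,2$ sums collapse in a line or two, but $(g'/g)''$ and especially $(g'/g)'''$ at $x=1$ are sums of several rational expressions in $k$ that must be combined over a common denominator (ultimately $8$ for $m=3$ and $720$ for $m=4$). For $m=4$ it pays to factor out $(k-1)$ first and then track the coefficients of $k^3,k^2,k,1$ separately; the cubic and quadratic coefficients cancel down to $-1$ while the linear and constant contributions accumulate to the asserted $109$ and $251$, producing $(k-1)(k^3+k^2-109k+251)$. Apart from this algebra the argument runs exactly parallel to the two preceding lemmas.
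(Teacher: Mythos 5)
Your proposal is correct and is essentially identical to the paper's own proof: the paper likewise passes to $g(x)=(x^k-1)/(x-1)=\prod_{n=1}^{k-1}(x-z_n)$, records the same values $g(1)=k$, $g'(1)=\tfrac12 k(k-1)$, \dots, $g^{(4)}(1)=\tfrac15 k(k-1)(k-2)(k-3)(k-4)$, and evaluates $-g'/g$, $-(g'/g)'$, $-\tfrac12(g'/g)''$, $-\tfrac16(g'/g)'''$ at $x=1$. Your closed form $g^{(j)}(1)=j!\binom{k}{j+1}$ is a tidy way to justify those values, but the argument is otherwise the same.
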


\begin{proof}
Defining
$$g(x):=\dfrac{x^k-1}{x-1}=(x-z_1)\cdots(x-z_{k-1})=\sum_{j=0}^{k-1}x^{j},$$
we find that
$$g(1)=k, \qquad g'(1)=\frac{k(k-1)}{2}, \qquad  g''(1)=\frac{k(k-1)(k-2)}{3},$$
$$g'''(1)=\frac{k(k-1)(k-2)(k-3)}{4}, \qquad g^{(4)}(1)=\frac{k(k-1)(k-2)(k-3)(k-4)}{5}.$$
Therefore, we see that
\begin{align*}
\sum_{n=1}^{k-1}\frac{1}{z_n-1}&=-\left[\frac{g'}{g}\right]_{x=1}=-\frac{k-1}{2}, \\
\sum_{n=1}^{k-1}\frac{1}{(z_n-1)^2}&=-\left[\Big(\frac{g'}{g}\Big)'\right]_{x=1}
=\left[\frac{(g')^2-gg''}{g^2}\right]_{x=1}=-\frac{(k-1)(k-5)}{12}, \\
\sum_{n=1}^{k-1}\frac{1}{(z_n-1)^3}&=-\frac12\left[\Big(\frac{g'}{g}\Big)''\right]_{x=1}=
-\left[\frac{g'''}{2g}-\frac{3g'g''}{2g^2}+\frac{(g')^3}{g^3}\right]_{x=1}
=\frac{(k-1)(k-3)}{8}, \\
\sum_{n=1}^{k-1}\frac{1}{(z_n-1)^4}&=-\frac16\left[\Big(\frac{g'}{g}\Big)'''\right]_{x=1}=
\left[-\frac{g^{(4)}}{6g}+\frac{3(g'')^2+4g'g'''}{6g^2}-\frac{2(g')^2g''}{g^3}+
\frac{(g')^4}{g^4}\right]_{x=1} \\
&=\frac{(k-1)(k^3+k^2-109k+251)}{720}.
\end{align*}
This completes the proof.
\end{proof}

\section{Explicit Evaluations of Several Trigonometric Sums}

\begin{theorem}\label{cc}
Let $k$ denote an odd positive integer. Then
\begin{align*}
\sum_{n=1}^k\frac{\cos^2(\pi n/k)}{\cos^4(2\pi n/k)}=
\begin{cases}
\dfrac{k}{12}(3+4k+3k^2+2k^3), & \text{if} \quad k\equiv 1 \pmod{4}, \\[.15in]
\dfrac{k}{12}(-3+4k-3k^2+2k^3), & \text{if} \quad k\equiv 3 \pmod{4}.
\end{cases}
\end{align*}
\end{theorem}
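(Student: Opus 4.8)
The plan is to reduce this trigonometric sum to the root-of-unity sums evaluated in Lemma \ref{1zi}. First I would clear the half-angle in the numerator by the double-angle identity $\cos^2(\pi n/k)=\tfrac12(1+\cos(2\pi n/k))$, so that with $c_n:=\cos(2\pi n/k)$ the sum becomes
\begin{align*}
\sum_{n=1}^k\frac{\cos^2(\pi n/k)}{\cos^4(2\pi n/k)}=\frac12\sum_{n=1}^k\frac{1}{c_n^4}+\frac12\sum_{n=1}^k\frac{1}{c_n^3}.
\end{align*}
Since $k$ is odd, $c_n$ never vanishes (a zero would force $4n=k(2m+1)$, impossible by parity), so every term is well defined; likewise $z_n\pm i\ne 0$, because $\pm i$ are not $k$-th roots of unity when $k$ is odd.

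Next I would pass to roots of unity. Writing $z_n=e^{2\pi in/k}$ gives $c_n=\tfrac12(z_n+z_n^{-1})=(z_n^2+1)/(2z_n)$, and a partial-fraction split yields the clean identity
\begin{align*}
\frac{1}{c_n}=\frac{2z_n}{(z_n+i)(z_n-i)}=\frac{1}{z_n+i}+\frac{1}{z_n-i}=:a_n+b_n.
\end{align*}
Thus $1/c_n^3=(a_n+b_n)^3$ and $1/c_n^4=(a_n+b_n)^4$. Expanding by the binomial theorem produces mixed terms $a_n^{\,j}b_n^{\,m}$, which I would reduce to pure powers using the key relation $a_nb_n=(b_n-a_n)/(2i)$ (equivalently $1/a_n-1/b_n=2i$). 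Repeated application of this relation rewrites every mixed product $a_n^{\,j}b_n^{\,m}$ as a linear combination of the pure powers $a_n^{\,r}$ and $b_n^{\,r}$ with $r\le 4$.

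Summing over $n$ then reduces everything to the eight sums $\sum_{n=1}^k a_n^{\,r}=\sum_{n=1}^k(z_n+i)^{-r}$ and $\sum_{n=1}^k b_n^{\,r}=\sum_{n=1}^k(z_n-i)^{-r}$ for $1\le r\le 4$, all of which are supplied by Lemma \ref{1zi} (the `$+$' branch for $a_n$ and the `$-$' branch for $b_n$). Substituting these closed forms and collecting terms finishes the evaluation; because Lemma \ref{1zi} records separate formulas according to whether $k\equiv 1$ or $k\equiv 3\pmod 4$, the two cases of the theorem emerge automatically.

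I expect the main obstacle to be purely bookkeeping: tracking the $\pm$/$\mp$ sign patterns of Lemma \ref{1zi} consistently through both residue classes, and verifying that the numerous terms carrying the singular denominator $1/(i\pm 1)$ cancel, so that the final answer collapses to the stated polynomial in $k$. As a sanity check, the reduction gives $1$ when $k=1$ and $9$ when $k=3$, matching both a direct computation and the claimed formulas.
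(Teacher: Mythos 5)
Your proposal is correct and follows essentially the same route as the paper: both reduce the summand to a rational function of $z_n=e^{2\pi in/k}$ with poles only at $\pm i$ and then invoke Lemma \ref{1zi} for the sums $\sum_n(z_n\pm i)^{-r}$, $1\le r\le 4$. The only difference is organizational: the paper writes the summand as $R(z_n)$ with $R(x)=4x^3(x+1)^2/(x^2+1)^4$ and performs a single partial-fraction decomposition, whereas you first split off $\cos^2(\pi n/k)=\tfrac12(1+\cos(2\pi n/k))$ and then reach the same decomposition through the binomial expansion of $(a_n+b_n)^m$ together with the reduction $a_nb_n=(b_n-a_n)/(2i)$.
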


\begin{proof}
We let
\begin{align*}
S=\sum_{n=1}^k\frac{\cos^2(\pi n/k)}{\cos^4(2\pi n/k)}, \quad  z_n=e^{2\pi i n/k}, \quad \text{and} \quad R(x)=4\frac{x^3(x+1)^2}{(x^2+1)^4}.
\end{align*}
Note that
\begin{align}\label{zcos}
z_n+\frac{1}{z_n}=2\cos(2\pi n/k)=4\cos^2(\pi n/k)-2.
\end{align}
Thus,
\begin{align*}
R(z_n)=4\frac{z_n+2+1/z_n}{(z_n+1/z_n)^4}=\frac{\cos^2(\pi n/k)}{\cos^4(2\pi n/k)}, \quad \text{and}
\quad S=\sum_{n=1}^kR(z_n).
\end{align*}
We can check that $R(x)$  has the following partial fraction decomposition:
\begin{align*}
R(x)&=\frac{i}{4(x+i)}-\frac{1-3i}{4(x+i)^2}+\frac{1+2i}{2(x+i)^3}+\frac{1}{2(x+i)^4} \notag\\
&\quad -\frac{i}{4(x-i)}-\frac{1+3i}{4(x-i)^2}+\frac{1-2i}{2(x-i)^3}+\frac{1}{2(x-i)^4}.
\end{align*}
Therefore, we have
\begin{align*}
S&=\sum_{n=1}^kR(z_n)\\
&=\frac{i}{4}\sum_{n=1}^k\left(\frac{1}{z_n+i}-\frac{1}{z_n-i}\right)-\frac14\sum_{n=1}^k\left(\frac{1}{(z_n+i)^2}+\frac{1}{(z_n-i)^2}\right)\\
&\quad +\frac{3i}{4}\sum_{n=1}^k\left(\frac{1}{(z_n+i)^2}-\frac{1}{(z_n-i)^2}\right)+\frac12
\sum_{n=1}^k\left(\frac{1}{(z_n+i)^3}+\frac{1}{(z_n-i)^3}\right)\\&\quad +i\sum_{n=1}^k\left(\frac{1}{(z_n+i)^3}-\frac{1}{(z_n-i)^3}\right)
+\frac12\sum_{n=1}^k\left(\frac{1}{(z_n+i)^4}+\frac{1}{(z_n-i)^4}\right).
\end{align*}

By Lemma \ref{1zi}, for $k\equiv 1 \pmod{4}$,
\begin{align*}
S=&\frac{ik}{4}\left(\frac{1}{i+1}+\frac{1}{i-1}\right)
-\frac{k(k-1)i}{4}\left(\frac{1}{i+1}+\frac{1}{i-1}\right)\\
&-\frac{3k(k-1)}{4}\left(\frac{1}{i+1}-\frac{1}{i-1}\right)+\frac{3k^2}{4}-\frac{k(k-1)(k-2)}{4}\left(\frac{1}{i+1}-\frac{1}{i-1}\right)\\
&-\frac{k^3i}{4}\left(\frac{1}{i+1}+\frac{1}{i-1}\right)+\frac{3k^2(k-1)}{4}-\frac{k(k-1)(k-2)i}{2}\left(\frac{1}{i+1}+\frac{1}{i-1}\right)\\
&+\frac{k^3}{2}\left(\frac{1}{i+1}-\frac{1}{i-1}\right)-\frac{k(k-1)(k-2)(k-3)i}{12}\left(\frac{1}{i+1}+\frac{1}{i-1}\right)\\
&+\frac{k^3(k-1)}{2}\left(\frac{1}{i+1}-\frac{1}{i-1}\right)-\frac{k^4}{4}.
\end{align*}
Using the identities
\begin{align}\label{i}
\frac{1}{i+1}+\frac{1}{i-1}=-i,  & \quad  \frac{1}{i+1}-\frac{1}{i-1}=1,
\end{align}
we arrive at
\begin{align*}
S&=\frac{k}{4}-\frac{k(k-1)}{4}-\frac{3k(k-1)}{4}+\frac{3k^2}{4}-\frac{k(k-1)(k-2)}{4}-\frac{k^3}{4}+\frac{3k^2(k-1)}{4}\\
&\quad  -\frac{k(k-1)(k-2)}{2}+\frac{k^3}{2}-\frac{k(k-1)(k-2)(k-3)}{12}+\frac{k^3(k-1)}{2}-\frac{k^4}{4}\\
&=\frac{k}{4}+\frac{k^2}{3}+\frac{k^3}{4}+\frac{k^4}{6}.
\end{align*}

Similarly, Lemma \ref{1zi} implies that, for $k\equiv 3 \pmod{4},$
\begin{align*}
S=&\frac{ik}{4}\left(\frac{1}{i-1}+\frac{1}{i+1}\right)
-\frac{k(k-1)i}{4}\left(\frac{1}{i-1}+\frac{1}{i+1}\right)\\
&-\frac{3k(k-1)}{4}\left(\frac{1}{i-1}-\frac{1}{i+1}\right)-\frac{3k^2}{4}\\
&-\frac{k(k-1)(k-2)}{4}\left(\frac{1}{i-1}-\frac{1}{i+1}\right)+\frac{k^3i}{4}\left(\frac{1}{i-1}+\frac{1}{i+1}\right)
-\frac{3k^2(k-1)}{4}\\
&-\frac{k(k-1)(k-2)i}{2}\left(\frac{1}{i-1}+\frac{1}{i+1}\right)
-\frac{k^3}{2}\left(\frac{1}{i-1}-\frac{1}{i+1}\right)\\
&-\frac{k(k-1)(k-2)(k-3)i}{12}\left(\frac{1}{i-1}+\frac{1}{i+1}\right)-\frac{k^3(k-1)}{2}\left(\frac{1}{i-1}-\frac{1}{i+1}\right)
-\frac{k^4}{4}.
\end{align*}
By  \eqref{i}, we obtain
\begin{align*}
S&=\frac{k}{4}-\frac{k(k-1)}{4}+\frac{3k(k-1)}{4}-\frac{3k^2}{4}+\frac{k(k-1)(k-2)}{4}+\frac{k^3}{4}-\frac{3k^2(k-1)}{4}\\
&\quad  -\frac{k(k-1)(k-2)}{2}+\frac{k^3}{2}-\frac{k(k-1)(k-2)(k-3)}{12}+\frac{k^3(k-1)}{2}-\frac{k^4}{4}\\
&=-\frac{k}{4}+\frac{k^2}{3}-\frac{k^3}{4}+\frac{k^4}{6}.
\end{align*}
This completes our proof.
\end{proof}

\begin{example} Let $k=5$.  An easy calculation shows that
$$\left(\df{\cos\frac{\pi}{5}}{2\cos^2\frac{2\pi}{5}}\right)^2
+\left(\df{\cos\frac{2\pi}{5}}{2\cos^2\frac{\pi}{5}}\right)^2=18.
$$
\end{example}

\begin{example}
Let $k=7$.  Then, after a moderate computation, we find that
$$\left(\df{\cos\frac{\pi}{7}}{2\cos^2\frac{2\pi}{7}}\right)^2
+\left(\df{\cos\frac{2\pi}{7}}{2\cos^2\frac{3\pi}{7}}\right)^2
+\left(\df{\cos\frac{3\pi}{7}}{2\cos^2\frac{\pi}{7}}\right)^2=41.
$$
\end{example}
This past example was also established by \"{O}rs Reb\'{a}k \cite{rebakseptic} in the course of solving a longstanding open problem in Ramanujan's lost notebook \cite{lnb}.

\begin{theorem}\label{1cos4}
If $k$ is an odd positive integer, then
\begin{align*}
\sum_{n=1}^k\frac{1}{\cos^4(2\pi n/k)}=\sum_{n=1}^{k}\frac{1}{\cos^4(\pi n/k)}=\frac{k^2(k^2+2)}{3}.
\end{align*}
\end{theorem}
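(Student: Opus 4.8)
The plan is to reduce the second sum to the first, and then evaluate the first by the root-of-unity method already used in Theorem \ref{cc}.

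First I would record the elementary fact that the two sums coincide. Since $\cos^4\theta$ has period $\pi$, the value of $\cos^4(2\pi n/k)=\cos^4(\pi\cdot 2n/k)$ depends only on $2n$ modulo $k$; writing $2n\equiv m\pmod{k}$ with $1\le m\le k$ gives $\cos^4(2\pi n/k)=\cos^4(\pi m/k)$. Because $k$ is odd, $n\mapsto 2n$ is a bijection of the residues modulo $k$, so as $n$ runs through $1,\dots,k$ the index $m$ runs through $1,\dots,k$ as well. Hence $\sum_{n=1}^k\sec^4(2\pi n/k)=\sum_{m=1}^k\sec^4(\pi m/k)$, and it suffices to evaluate $S:=\sum_{n=1}^k\sec^4(2\pi n/k)$. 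Here every term is finite, since $k$ odd forces $\cos(2\pi n/k)\ne 0$ and $\cos(\pi n/k)\ne 0$.

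Next, following the device in \eqref{zcos}, I set $z_n=e^{2\pi in/k}$, so that $2\cos(2\pi n/k)=z_n+1/z_n=(z_n^2+1)/z_n$ and therefore $\sec^4(2\pi n/k)=16z_n^4/(z_n^2+1)^4=R(z_n)$, where $R(x)=16x^4/(x^2+1)^4$. Thus $S=\sum_{n=1}^k R(z_n)$. I would then compute the partial-fraction decomposition of $R$ over the poles $x=\pm i$, each of order $4$; a Laurent expansion at $x=i$ yields
$$R(x)=\frac{1}{(x-i)^4}-\frac{2i}{(x-i)^3}-\frac{1}{2(x-i)^2}-\frac{i}{2(x-i)}+(\text{conjugate terms at }x=-i),$$
the coefficients at $x=-i$ being the complex conjugates because $R$ has real coefficients. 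Summing over $z_n$ and invoking Lemma \ref{1zi}, the evaluation reduces to the four combinations $\sum_n[(z_n+i)^{-4}+(z_n-i)^{-4}]$, $\sum_n[(z_n+i)^{-3}-(z_n-i)^{-3}]$, $\sum_n[(z_n+i)^{-2}+(z_n-i)^{-2}]$, and $\sum_n[(z_n+i)^{-1}-(z_n-i)^{-1}]$; because $R$ is even, only these symmetric and antisymmetric pairings survive. Using the identities \eqref{i}, a short computation shows that each of the four combinations takes the \emph{same} value whether $k\equiv 1$ or $k\equiv 3\pmod{4}$, which is exactly what one expects since the claimed answer is independent of $k$ modulo $4$. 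Substituting these values and collecting powers of $k$ leaves $S=\tfrac{k^4}{3}+\tfrac{2k^2}{3}=\tfrac{k^2(k^2+2)}{3}$, as desired.

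The proof is not conceptually hard once the machinery of Lemma \ref{1zi} is in place; the main labor is the partial-fraction bookkeeping and the cancellation of the odd-degree-in-$k$ terms when the pieces are combined. The one point that genuinely requires attention is the reduction step: one must check that $n\mapsto 2n$ is a bijection modulo $k$ (which uses $k$ odd) and that the $\pi$-periodicity of $\cos^4$ legitimately identifies $\cos^4(2\pi n/k)$ with $\cos^4(\pi m/k)$. I would also verify the $k\equiv 3\pmod{4}$ case explicitly rather than merely appeal to symmetry, to confirm that the case-dependent terms in Lemma \ref{1zi} really do cancel in the four surviving combinations.
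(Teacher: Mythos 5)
Your proposal is correct, and I verified the key computations: the Laurent coefficients at $x=i$ of $R(x)=16x^4/(x^2+1)^4$ are indeed $1,-2i,-\tfrac12,-\tfrac{i}{2}$ in orders $4,3,2,1$, and when the symmetric and antisymmetric pairings are fed into Lemma \ref{1zi} together with \eqref{i}, the case-dependent terms do cancel, leaving $\tfrac{k^4}{3}+\tfrac{2k^2}{3}$ in both residue classes. However, your evaluation step takes a genuinely different route from the paper. After the same reduction (the paper also begins by noting the two sums coincide, though it leaves the bijection $n\mapsto 2n$ to the reader, which you spell out), the paper evaluates $\sum_{n=1}^k\sec^4(\pi n/k)$ rather than $\sum_{n=1}^k\sec^4(2\pi n/k)$: using $\cos^2(\pi n/k)=(z_n+1)^2/(4z_n)$ as in \eqref{sq}, it writes the summand as $16z_n^2/(z_n+1)^4=16\{(z_n+1)^{-2}-2(z_n+1)^{-3}+(z_n+1)^{-4}\}$ and applies Lemma \ref{1z1}, which involves only the single real pole at $-1$ and carries no mod-$4$ case distinction. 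That choice makes the paper's computation shorter and structurally explains why the answer is independent of $k$ modulo $4$, whereas your route through the poles at $\pm i$ must discover that independence as a cancellation. What your approach buys is uniformity with the proof of Theorem \ref{cc}, reusing \eqref{zcos} and Lemma \ref{1zi} verbatim; your closing caution to check the $k\equiv 3\pmod 4$ case explicitly is the right instinct, and it does go through.
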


\begin{proof}
Since $k$ is odd, we can easily check that
$$\sum_{n=1}^k\frac{1}{\cos^4(2\pi n/k)}=\sum_{n=1}^{k}\frac{1}{\cos^4(\pi n/k)}.$$
Similarly, we let
\begin{align*}
z_n=e^{2\pi i n/k} \quad \text{and} \quad R(x)=\frac{16x^2}{(x+1)^4}.
\end{align*}
Then, by \eqref{zcos} and Lemma \ref{1z1}, we deduce that
\begin{align*}
&\sum_{n=1}^{k}\frac{1}{\cos^4(\pi n/k)}=\sum_{n=1}^{k}R(z_n)=
16\sum_{n=1}^{k}\left\{\frac{1}{(z_n+1)^2}-\frac{2}{(z_n+1)^3}+\frac{1}{(z_n+1)^4}\right\}\\
&=16\left\{- \frac{(k-1)^2}{4}+\frac{(k-1)(3k-1)}{4}+\frac{(k-1)(k^3+k^2-21k+3)}{48}+\frac{1}{16}\right\}\\
&=\frac{k^2(k^2+2)}{3}.
\end{align*}
\end{proof}

\begin{corollary}
If $k$ is an odd positive integer, then
\begin{align*}
\sum_{n=1}^k\frac{\sin^2(\pi n/k)}{\cos^4(2\pi n/k)}=
\begin{cases}
\dfrac{k^4}{6}-\dfrac{k^3}{4}+\dfrac{k^2}{3}-\dfrac{k}{4},   & \text{if} \quad k\equiv 1 \pmod{4}, \\[.15in]
\dfrac{k^4}{6}+\dfrac{k^3}{4}+\dfrac{k^2}{3}+\dfrac{k}{4},  & \text{if} \quad k\equiv 3 \pmod{4}.
\end{cases}.
\end{align*}
\end{corollary}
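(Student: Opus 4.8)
The plan is to avoid any fresh root-of-unity computation entirely and instead reduce the sum to the two evaluations already in hand. The single observation driving everything is the Pythagorean identity $\sin^2(\pi n/k)=1-\cos^2(\pi n/k)$, which lets me split the numerator and write
\begin{align*}
\sum_{n=1}^k\frac{\sin^2(\pi n/k)}{\cos^4(2\pi n/k)}
=\sum_{n=1}^k\frac{1}{\cos^4(2\pi n/k)}-\sum_{n=1}^k\frac{\cos^2(\pi n/k)}{\cos^4(2\pi n/k)}.
\end{align*}
Both sums on the right have been evaluated in closed form: the first is Theorem \ref{1cos4} and the second is Theorem \ref{cc}.

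Next I would substitute those evaluations. Theorem \ref{1cos4} gives the first sum as $\tfrac{1}{3}k^2(k^2+2)$, which is independent of the residue of $k$ modulo $4$; all of the parity dependence in the final answer therefore comes from the second sum. From Theorem \ref{cc}, the second sum equals $\tfrac{k}{12}(3+4k+3k^2+2k^3)$ when $k\equiv 1\pmod 4$ and $\tfrac{k}{12}(-3+4k-3k^2+2k^3)$ when $k\equiv 3\pmod 4$, so I would carry out the subtraction separately in the two cases.

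Finally, I would simplify each difference. Writing $\tfrac{1}{3}k^2(k^2+2)=\tfrac{k^4}{3}+\tfrac{2k^2}{3}$ and expanding the Theorem \ref{cc} expressions as $\tfrac{k^4}{6}+\tfrac{k^3}{4}+\tfrac{k^2}{3}+\tfrac{k}{4}$ and $\tfrac{k^4}{6}-\tfrac{k^3}{4}+\tfrac{k^2}{3}-\tfrac{k}{4}$ respectively, the $\tfrac{k^4}{3}-\tfrac{k^4}{6}=\tfrac{k^4}{6}$ and $\tfrac{2k^2}{3}-\tfrac{k^2}{3}=\tfrac{k^2}{3}$ terms combine cleanly, and the signs on the $k$ and $k^3$ terms flip between the two residue classes, producing exactly the two claimed formulas. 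There is essentially no obstacle here: the entire content is the decomposition via $\sin^2=1-\cos^2$, and the only thing to be careful about is keeping the two cases straight and tracking the signs of the odd-degree terms in the final elementary arithmetic.
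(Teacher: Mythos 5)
Your proposal is correct and is exactly the paper's own proof: the authors also write $\sin^2(\pi n/k)=1-\cos^2(\pi n/k)$ and subtract the evaluation of Theorem \ref{cc} from that of Theorem \ref{1cos4}. The case bookkeeping and arithmetic in your final step check out.
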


\begin{proof}
From the identity
\begin{align*}
\sum_{n=1}^k\frac{\sin^2(\pi n/k)}{\cos^4(2\pi n/k)}=\sum_{n=1}^k\frac{1-\cos^2(\pi n/k)}{\cos^4(2\pi n/k)}
\end{align*}
and Theorems \ref{cc} and \ref{1cos4}, the corollary immediately follows.
\end{proof}

\begin{theorem}\label{ss}
If $k$ is an odd positive integer, then
\begin{align}\label{triangular}
\sum_{n=1}^{k-1}\frac{\sin^2(\pi n/k)}{\sin^4(2\pi n/k)}=\frac{1}{48}(k^4+6k^2-7).
\end{align}
\end{theorem}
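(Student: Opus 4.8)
The plan is to imitate the proofs of Theorems \ref{cc} and \ref{1cos4}: convert the trigonometric summand into a rational function of the root of unity $z_n=e^{2\pi in/k}$, expand that rational function in partial fractions, and then read off the answer from the preliminary lemmas. Because the summand will involve only the factors $z_n\pm 1$ rather than $z_n\pm i$, the relevant tools here are Lemmas \ref{1z1} and \ref{1z-1} rather than Lemma \ref{1zi}.

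First I would express each factor through $z_n$. From \eqref{zcos} and the relation $\sin(2\pi n/k)=(z_n-z_n^{-1})/(2i)$ one obtains
\begin{align*}
\sin^2(\pi n/k)=\frac{-(z_n-1)^2}{4z_n}, \qquad \sin^4(2\pi n/k)=\frac{(z_n^2-1)^4}{16z_n^4}.
\end{align*}
Since $(z_n^2-1)^4=(z_n-1)^4(z_n+1)^4$, dividing and cancelling $(z_n-1)^2$ gives
\begin{align*}
\frac{\sin^2(\pi n/k)}{\sin^4(2\pi n/k)}=R(z_n), \qquad R(x):=\frac{-4x^3}{(x-1)^2(x+1)^4}.
\end{align*}
The index $n=k$ is omitted precisely because $z_k=1$ is a pole of $R$; for $1\le n\le k-1$ one has $z_n\neq 1$, and $z_n\neq -1$ as well since $k$ is odd, so each term $R(z_n)$ is finite and the sum in question equals $\sum_{n=1}^{k-1}R(z_n)$.

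Next I would compute the partial fraction decomposition, which is readily found to be
\begin{align*}
R(x)=-\frac{1}{4(x-1)}-\frac{1}{4(x-1)^2}+\frac{1}{4(x+1)}+\frac{3}{4(x+1)^2}-\frac{2}{(x+1)^3}+\frac{1}{(x+1)^4}.
\end{align*}
Summing term by term over $1\le n\le k-1$ and inserting the six closed forms supplied by Lemmas \ref{1z1} and \ref{1z-1} collapses the whole sum to a single rational expression in $k$. Factoring out $(k-1)$ and reducing over the common denominator $48$, the remaining numerator simplifies to $k^3+k^2+7k+7=(k+1)(k^2+7)$, whence the sum equals $(k-1)(k+1)(k^2+7)/48=(k^4+6k^2-7)/48$, as asserted in \eqref{triangular}.

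There is no genuine analytic difficulty here, since all the substantive work resides in the preliminary lemmas; the only delicate part is the bookkeeping. Specifically, pinning down the six partial fraction coefficients exactly (a stray sign in the $(x+1)^{-3}$ or $(x+1)^{-4}$ term would corrupt the final polynomial) and then correctly assembling the six lemma evaluations over a common denominator are the steps most prone to slips. A convenient sanity check is the case $k=1$, for which both the empty left-hand sum and the right-hand value $(1+6-7)/48$ equal $0$.
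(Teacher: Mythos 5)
Your proposal is correct and coincides with the paper's own proof: the same rational function $R(x)=-4x^3/((x-1)^2(x+1)^4)$, the same partial fraction decomposition, and the same appeal to Lemmas \ref{1z1} and \ref{1z-1}, with the final algebra checking out to $(k^2-1)(k^2+7)/48=(k^4+6k^2-7)/48$. The only cosmetic difference is that you compute $\sin^4(2\pi n/k)$ directly as $(z_n^2-1)^4/(16z_n^4)$, whereas the paper first rewrites the summand as $1/(16\sin^2(\pi n/k)\cos^4(\pi n/k))$ before invoking \eqref{sq}.
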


\begin{proof}
We set
\begin{align*}
S=\sum_{n=1}^{k-1}\frac{\sin^2(\pi n/k)}{\sin^4(2\pi n/k)}, \quad z_n=e^{2\pi i n/k},  \quad \text{and} \quad R(x)=-\frac{4x^3}{(x-1)^2(x+1)^4}.
\end{align*}
Observe that
\begin{align*}
\frac{\sin^2(\pi n/k)}{\sin^4(2\pi n/k)}=\frac{1}{16\sin^2(\pi n/k)\cos^4(\pi n/k)},
\end{align*}
and
\begin{align}\label{sq}
\cos^2(\pi n/k)=\frac{(z_n+1)^2}{4z_n}, \quad \sin^2(\pi n/k)=1-\cos^2(\pi n/k)=-\frac{(z_n-1)^2}{4z_n}.
\end{align}
This implies that
\begin{align*}
\frac{\sin^2(\pi n/k)}{\sin^4(2\pi n/k)}=-\frac{4z_n^3}{(z_n-1)^2(z_n+1)^4}=R(z_n).
\end{align*}

Noting that
\begin{align*}
R(x)&=\frac{1}{4(x+1)}+\frac{3}{4(x+1)^2}-\frac{2}{(x+1)^3}+\frac{1}{(x+1)^4}-\frac{1}{4(x-1)}-\frac{1}{4(x-1)^2},
\end{align*}
we obtain
\begin{align}\label{S4}
S&=\sum_{n=1}^{k-1}R(z_n)
=\sum_{n=1}^{k-1}\left\{\frac{1}{4(z_n+1)}+\frac{3}{4(z_n+1)^2}-\frac{2}{(z_n+1)^3}+\frac{1}{(z_n+1)^4} \right. \notag\\
&\qquad \qquad \qquad \qquad \qquad \qquad \left. -\frac{1}{4(z_n-1)}-\frac{1}{4(z_n-1)^2}\right\}.
\end{align}

Employing Lemmas \ref{1z1} and \ref{1z-1} in \eqref{S4}, we arrive at
\begin{align*}
S&=\frac{k-1}{4}-\frac{3(k-1)^2}{16}+\frac{(k-1)(3k-1)}{4}+\frac{(k-1)(k^3+k^2-21k+3)}{48}\\
&\quad +\frac{(k-1)(k-5)}{48}=\frac{k^4}{48}+\frac{k^2}{8}-\frac{7}{48}.
\end{align*}
This completes the proof.
\end{proof}

 \begin{corollary}
Let $k=7$ in \eqref{triangular}.  Then, by an elementary calculation,
\begin{equation}\label{17a}
\sum_{1\leq n\leq 3}\df{\sin^2(\pi n/7)}{\sin^4(2\pi n/7)}=28.
\end{equation}
\end{corollary}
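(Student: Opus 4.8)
The plan is to derive this directly from Theorem \ref{ss} by exploiting a symmetry of the summand, so that no genuinely new computation is required beyond evaluating the closed form at $k=7$. First I would record that specializing \eqref{triangular} to $k=7$ gives the value of the \emph{full} sum over $1\le n\le 6$, namely
$$\sum_{n=1}^{6}\frac{\sin^2(\pi n/7)}{\sin^4(2\pi n/7)}=\frac{1}{48}\bigl(7^4+6\cdot 7^2-7\bigr)=\frac{2688}{48}=56.$$

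The key step is to observe that the summand is invariant under the reflection $n\mapsto 7-n$. Indeed, $\sin(\pi(7-n)/7)=\sin(\pi-\pi n/7)=\sin(\pi n/7)$, so the numerator is unchanged, while $\sin(2\pi(7-n)/7)=\sin(2\pi-2\pi n/7)=-\sin(2\pi n/7)$, so the denominator, being a fourth power, is also unchanged. Since $k=7$ is odd, the involution $n\mapsto 7-n$ has no fixed point on $\{1,\dots,6\}$, and the six terms therefore split into the three pairs $\{1,6\}$, $\{2,5\}$, $\{3,4\}$, each consisting of two equal terms.

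It follows that the sum over $1\le n\le 3$ is exactly half of the full sum, so
$$\sum_{1\le n\le 3}\frac{\sin^2(\pi n/7)}{\sin^4(2\pi n/7)}=\frac12\cdot 56=28,$$
as claimed. There is no real obstacle here; the only point requiring attention is the reflection symmetry, which hinges on the denominator appearing as an even power of the sine, and this same observation would let one halve the sum in \eqref{triangular} for every odd $k$.
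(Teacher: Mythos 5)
Your proof is correct and is essentially the paper's own route: the paper's ``elementary calculation'' is precisely the specialization of \eqref{triangular} to $k=7$ (giving $2688/48=56$) combined with the reflection symmetry $n\mapsto k-n$, which the paper itself uses immediately afterwards to pass from \eqref{triangular} to the half-range identity \eqref{t}. Nothing further is needed.
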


The identity \eqref{17a} was first established by Berndt and Liang-Cheng Zhang \cite[p.~233]{berndtzhang} ``by a direct laborious computation.''

Replace $k$ by $2k+1$ in \eqref{triangular}.  Thus,
\begin{align}\label{t}
\sum_{n=1}^k\df{\sin^2(\pi n/(2k+1))}{\sin^4(2\pi n/(2k+1))}=&\df{(2k+1)^4+6(2k+1)^2-7}{96}\notag\\
=&\df{16k^4+32k^3+48k^2+32k}{96}\notag\\
=&\df{k(k^3+2k^2+3k+2)}{6}\notag\\
=&\df{k(k+1)(k^2+k+2)}{6}.
\end{align}
Recall that the $k$th triangular number $T(k)$ is defined by
\begin{equation*}
T(k):=\df{k(k+1)}{2}.
\end{equation*}
 Now,
 \begin{equation}\label{tt}
 T(T(k))=\df{k(k+1)}{4}\left(\df{k(k+1)}{2}+1\right)=\df{k(k+1)(k^2+k+2)}{8}.
 \end{equation}
 Comparing \eqref{t} with \eqref{tt}, we conclude that
 \begin{equation}\label{ttt}
 \sum_{n=1}^k\df{\sin^2(\pi n/(2k+1))}{\sin^4(2\pi n/(2k+1))}=\df43 T(T(k)).
 \end{equation}

 The beautiful identity \eqref{ttt} was established by \"{O}rs Reb\'{a}k \cite{rebak}.
   Is \eqref{ttt} just an "accident"?  Or is it an example of a deeper phenomenon?

The more general, related sum
\begin{equation*}
\sum_{j=1}^{k-1}\df{\sin^2(\pi aj/k)}{\sin^{2n}(\pi j/k)},
\end{equation*}
where $n, k$, and $a$ are positive integers with $a<k$, has been studied and evaluated by other authors.  See \cite[p.~379]{yeap} for one of these proofs and for further references.

\begin{theorem}\label{1sin4}
Let $k$ be a positive integer. Then
\begin{align*}
\sum_{n=1}^{k-1}\frac{1}{\sin^4(\pi n/k)}=\frac{1}{45}(k^4+10k^2-11).
\end{align*}
\end{theorem}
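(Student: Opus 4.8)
The plan is to follow the same root-of-unity strategy used in the proofs of Theorems \ref{1cos4} and \ref{ss}, which here turns out to be especially clean because only the poles at $x=1$ appear. First I would set $z_n=e^{2\pi i n/k}$ and introduce the rational function $R(x)=\dfrac{16x^2}{(x-1)^4}$. Using the second identity in \eqref{sq}, namely $\sin^2(\pi n/k)=-\dfrac{(z_n-1)^2}{4z_n}$, one gets $\sin^4(\pi n/k)=\dfrac{(z_n-1)^4}{16z_n^2}$, and hence
\begin{align*}
\frac{1}{\sin^4(\pi n/k)}=\frac{16z_n^2}{(z_n-1)^4}=R(z_n),
\qquad
\sum_{n=1}^{k-1}\frac{1}{\sin^4(\pi n/k)}=\sum_{n=1}^{k-1}R(z_n).
\end{align*}
Note that for $1\le n\le k-1$ we have $z_n\neq 1$, so every term is well defined and the range of summation matches that of Lemma \ref{1z-1}.

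Next I would record the partial fraction decomposition of $R$. Writing $u=x-1$ and expanding $16x^2=16(u+1)^2=16u^2+32u+16$ gives immediately
\begin{align*}
R(x)=\frac{16}{(x-1)^2}+\frac{32}{(x-1)^3}+\frac{16}{(x-1)^4},
\end{align*}
so that no poles at $x=\pm i$ or $x=-1$ occur, and only Lemma \ref{1z-1} is needed (in contrast to the earlier theorems). Summing over $n$ and inserting the three evaluations from Lemma \ref{1z-1} yields
\begin{align*}
S=16\left(-\frac{(k-1)(k-5)}{12}\right)+32\cdot\frac{(k-1)(k-3)}{8}+16\cdot\frac{(k-1)(k^3+k^2-109k+251)}{720}.
\end{align*}

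The only remaining work is algebraic simplification, which is routine rather than an obstacle. Factoring out $(k-1)$ and placing the bracket over the common denominator $45$, the numerator collapses to $k^3+k^2+11k+11=(k+1)(k^2+11)$, so that
\begin{align*}
S=\frac{(k-1)(k+1)(k^2+11)}{45}=\frac{(k^2-1)(k^2+11)}{45}=\frac{1}{45}(k^4+10k^2-11),
\end{align*}
which is the claimed identity. I expect the main (but mild) care point to be the bookkeeping in combining the three rational expressions and confirming the factorization $k^3+k^2+11k+11=(k+1)(k^2+11)$; everything else is a direct transcription of the preliminary lemma. Since Lemma \ref{1z-1} holds for every positive integer $k$, the result is valid for all $k$, with no parity restriction.
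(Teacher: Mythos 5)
Your proposal is correct and follows essentially the same route as the paper: the same rational function $R(x)=16x^2/(x-1)^4$, the same partial fraction decomposition $16\{(x-1)^{-2}+2(x-1)^{-3}+(x-1)^{-4}\}$, and the same application of Lemma \ref{1z-1}, with the final algebra (including the factorization $k^3+k^2+11k+11=(k+1)(k^2+11)$) checking out. The only difference is that you spell out the simplification in more detail than the paper does.
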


\begin{proof}
We define
\begin{align*}
z_n=e^{2\pi i n/k} \quad \text{and} \quad R(x)=\frac{16x^2}{(x-1)^4}.
\end{align*}
Note that
\begin{align*}
R(x)=16\left\{\frac{1}{(x-1)^2}+\frac{2}{(x-1)^3}+\frac{1}{(x-1)^4} \right\},
\end{align*}
so that by Lemma \ref{1z-1} and \eqref{sq},
\begin{align*}
\sum_{n=1}^{k-1}\frac{1}{\sin^4(\pi n/k)}&=\sum_{n=1}^{k-1}R(z_n)\\
&=16\sum_{n=1}^{k-1}\left\{\frac{1}{(z_n-1)^2}+\frac{2}{(z_n-1)^3}+\frac{1}{(z_n-1)^4} \right\}\\
&=-\frac{4(k-1)(k-5)}{3}+4(k-1)(k-3)+\frac{1}{45}(k-1)(k^3+k^2-109k+251)\\
&=\frac{1}{45}(k^4+10k^2-11).
\end{align*}
\end{proof}

\begin{theorem} \label{1sin42}
If $k$ is an odd positive integer, then
\begin{align*}
\sum_{n=1}^{k-1}\frac{1}{\sin^4(2\pi n/k)}=\frac{1}{45}(k^4+10k^2-11).
\end{align*}
\end{theorem}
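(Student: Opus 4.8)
The plan is to avoid repeating the partial-fraction computation altogether and instead to deduce this identity directly from Theorem \ref{1sin4} by a reindexing argument, exactly in the spirit of the remark that opens the proof of Theorem \ref{1cos4}. The starting observation is simply that $\sin(2\pi n/k)=\sin(\pi(2n)/k)$, so that
\[
\sum_{n=1}^{k-1}\frac{1}{\sin^4(2\pi n/k)}=\sum_{n=1}^{k-1}\frac{1}{\sin^4(\pi(2n)/k)}.
\]
Everything then reduces to comparing the multiset $\{\,2n \bmod k : 1\le n\le k-1\,\}$ with $\{1,2,\dots,k-1\}$.

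First I would record that the summand $1/\sin^4(\pi m/k)$ depends only on the residue of $m$ modulo $k$: replacing $m$ by $m+k$ sends $\sin(\pi m/k)$ to $-\sin(\pi m/k)$, and the fourth power absorbs the sign. Hence each term $1/\sin^4(\pi(2n)/k)$ is a well-defined function of $2n \bmod k$, and in particular none of these terms is singular, since $2n\not\equiv 0 \pmod{k}$ for $1\le n\le k-1$ when $k$ is odd. Next, because $k$ is odd we have $\gcd(2,k)=1$, so multiplication by $2$ is a bijection of $\Z/k\Z$ fixing $0$; therefore $n\mapsto 2n \bmod k$ permutes $\{1,2,\dots,k-1\}$. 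Combining this permutation with the periodicity just noted gives
\[
\sum_{n=1}^{k-1}\frac{1}{\sin^4(\pi(2n)/k)}=\sum_{m=1}^{k-1}\frac{1}{\sin^4(\pi m/k)},
\]
and an appeal to Theorem \ref{1sin4} finishes the argument with the value $\frac{1}{45}(k^4+10k^2-11)$.

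There is essentially no analytic difficulty here; the only point that requires care — and the one I would treat as the \emph{main obstacle} — is the bookkeeping that legitimizes the reindexing: confirming that the summand really is a function of $2n$ modulo $k$ (so that the $\pm$ sign introduced by a shift by $k$ is irrelevant) and that the residue $0$ is never produced, which is precisely where the hypothesis that $k$ is odd enters. This route also explains transparently \emph{why} the answer must coincide with that of Theorem \ref{1sin4}, rather than leaving the coincidence as a numerical surprise.

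Should a self-contained derivation be preferred instead, one can mimic the previous proofs directly: set $z_n=e^{2\pi in/k}$, use $\sin^2(2\pi n/k)=-(z_n^2-1)^2/(4z_n^2)$ so that the relevant rational function is $R(x)=16x^4/(x^2-1)^4=16x^4/\big((x-1)^4(x+1)^4\big)$, expand $R(x)$ in partial fractions about its two fourth-order poles $x=\pm1$, and evaluate the resulting sums with Lemmas \ref{1z1} and \ref{1z-1} (noting that $z_n=\pm1$ never occurs for odd $k$ and $1\le n\le k-1$). This works, but it is longer and less illuminating than the reindexing argument above, so I would relegate it to a remark.
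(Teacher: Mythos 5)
Your proposal is correct and follows exactly the paper's own route: the paper also reduces the claim to Theorem \ref{1sin4} via the reindexing $n\mapsto 2n\bmod k$ (stating only that "since $k$ is odd, we can easily check" the two sums agree), and your write-up simply makes explicit the periodicity-up-to-sign and bijectivity details that the paper leaves to the reader.
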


\begin{proof}
Since $k$ is odd, we can easily check that
\begin{align*}
\sum_{n=1}^{k-1}\frac{1}{\sin^4(2\pi n/k)}=\sum_{n=1}^{k-1}\frac{1}{\sin^4(\pi n/k)}.
\end{align*}
Hence, by Theorem \ref{1sin4}, we complete the proof.
\end{proof}

From Theorems \ref{ss} and \ref{1sin42}, we can easily derive the following corollary.
\begin{corollary}
If $k$ is an odd positive integer, then
\begin{align*}
\sum_{n=1}^{k-1}\frac{\cos^2(\pi n/k)}{\sin^4(2\pi n/k)}=\frac{1}{720}(k^4+70k^2-71).
\end{align*}
\end{corollary}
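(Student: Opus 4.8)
The plan is to reduce this sum directly to the two evaluations already established, namely Theorem~\ref{ss} and Theorem~\ref{1sin42}, by using the Pythagorean identity to rewrite the cosine-squared numerator. Since $\cos^2(\pi n/k)=1-\sin^2(\pi n/k)$, the summand splits into two pieces, each of which is one of the sums already computed, so no new root-of-unity identity is needed.

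First I would write
\begin{align*}
\sum_{n=1}^{k-1}\frac{\cos^2(\pi n/k)}{\sin^4(2\pi n/k)}
=\sum_{n=1}^{k-1}\frac{1-\sin^2(\pi n/k)}{\sin^4(2\pi n/k)}
=\sum_{n=1}^{k-1}\frac{1}{\sin^4(2\pi n/k)}-\sum_{n=1}^{k-1}\frac{\sin^2(\pi n/k)}{\sin^4(2\pi n/k)}.
\end{align*}
The first sum on the right is evaluated by Theorem~\ref{1sin42} as $\frac{1}{45}(k^4+10k^2-11)$, and the second by Theorem~\ref{ss} as $\frac{1}{48}(k^4+6k^2-7)$. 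Both theorems require $k$ to be an odd positive integer, which is exactly the hypothesis here, so both substitutions are legitimate.

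Next I would carry out the subtraction. Placing both expressions over the common denominator $720=\operatorname{lcm}(45,48)$, one has $\frac{1}{45}=\frac{16}{720}$ and $\frac{1}{48}=\frac{15}{720}$, so
\begin{align*}
\frac{16}{720}(k^4+10k^2-11)-\frac{15}{720}(k^4+6k^2-7)
=\frac{1}{720}(k^4+70k^2-71),
\end{align*}
which is precisely the claimed identity.

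I do not expect any genuine obstacle: the only step beyond invoking the two cited theorems is the final arithmetic, and that is entirely routine. The quartic coefficients cancel as $16-15=1$, the quadratic coefficients combine as $160-90=70$, and the constants as $-176+105=-71$. All of the analytic content has already been absorbed into the proofs of Theorems~\ref{ss} and~\ref{1sin42}, so this corollary reduces to a one-line application of $\cos^2+\sin^2=1$ followed by a coefficient computation.
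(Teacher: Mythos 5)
Your proposal is correct and follows exactly the route the paper intends: the corollary is stated immediately after Theorems \ref{ss} and \ref{1sin42} with the remark that it follows easily from them, and the intended derivation is precisely your split $\cos^2(\pi n/k)=1-\sin^2(\pi n/k)$ followed by subtraction. Your arithmetic over the common denominator $720$ checks out.
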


\begin{theorem}\label{1sin2}
If $k$ is a positive integer, then
\begin{align*}
\sum_{n=1}^{k-1}\frac{1}{\sin^2(\pi n/k)}=\frac{k^2-1}{3}.
\end{align*}
\end{theorem}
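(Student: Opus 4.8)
The plan is to follow the same root-of-unity strategy used throughout this section. Setting $z_n=e^{2\pi in/k}$, I would use the identity $\sin^2(\pi n/k)=-\dfrac{(z_n-1)^2}{4z_n}$ recorded in \eqref{sq} to write
\begin{align*}
\frac{1}{\sin^2(\pi n/k)}=-\frac{4z_n}{(z_n-1)^2}=R(z_n),
\end{align*}
where $R(x):=-\dfrac{4x}{(x-1)^2}$. This is legitimate for $n=1,\dots,k-1$ since then $z_n\neq 1$. Thus the sum becomes $\sum_{n=1}^{k-1}R(z_n)$, reducing the problem to evaluating elementary sums over the nontrivial $k$th roots of unity.

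Next I would compute the partial fraction expansion of $R$. Writing $x=(x-1)+1$ in the numerator gives immediately
\begin{align*}
R(x)=-\frac{4}{x-1}-\frac{4}{(x-1)^2}.
\end{align*}
Because this decomposition involves only the first two powers of $(x-1)$, I only need the first two evaluations from Lemma \ref{1z-1}, namely $\sum_{n=1}^{k-1}\frac{1}{z_n-1}=-\frac{k-1}{2}$ and $\sum_{n=1}^{k-1}\frac{1}{(z_n-1)^2}=-\frac{(k-1)(k-5)}{12}$.

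Substituting these and collecting terms, I expect
\begin{align*}
\sum_{n=1}^{k-1}\frac{1}{\sin^2(\pi n/k)}=2(k-1)+\frac{(k-1)(k-5)}{3}=\frac{(k-1)(k+1)}{3}=\frac{k^2-1}{3},
\end{align*}
which is the claimed formula. There is essentially no obstacle here: this is the lowest-degree instance of the method, so the partial fraction step is trivial and only the two simplest evaluations of Lemma \ref{1z-1} are invoked. The only point requiring a moment of care is that the sum runs over $n=1,\dots,k-1$ rather than over a full period, so that the excluded term $z_n=1$ never arises; since Lemma \ref{1z-1} is stated for exactly this range of nontrivial roots, no boundary correction (unlike the subtractions appearing in the proof of Lemma \ref{1z1}) is needed, and the result holds for every positive integer $k$.
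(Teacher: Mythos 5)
Your proposal is correct and follows exactly the paper's own argument: the same rational function $R(x)=-4x/(x-1)^2$, the same partial fraction decomposition $-4\left\{\frac{1}{x-1}+\frac{1}{(x-1)^2}\right\}$, and the same two evaluations from Lemma \ref{1z-1}, yielding $2(k-1)+\frac{(k-1)(k-5)}{3}=\frac{k^2-1}{3}$. Nothing to add.
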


\begin{proof}
If we define
\begin{align*}
z_n=e^{2\pi i n/k} \quad \text{and} \quad R(x)=-\frac{4x}{(x-1)^2},
\end{align*}
then  by \eqref{sq},
\begin{align*}
\sum_{n=1}^{k-1}\frac{1}{\sin^2(\pi n/k)}=\sum_{n=1}^{k-1}R(z_n).
\end{align*}
From the following identity and Lemma \ref{1z-1},
\begin{align*}
R(x)=-4\left\{\frac{1}{x-1}+\frac{1}{(x-1)^2} \right\},
\end{align*}
it follows that
\begin{align*}
\sum_{n=1}^{k-1}\frac{1}{\sin^2(\pi n/k)}&=-4\sum_{n=1}^{k-1}\left\{\frac{1}{z_n-1}+\frac{1}{(z_n-1)^2} \right\}\\
&=2(k-1)+\frac{(k-1)(k-5)}{3}=\frac{k^2-1}{3}.
\end{align*}
\end{proof}

See \cite[p.~365]{yeap} for another proof of Theorem \ref{1sin2}.

\begin{corollary}\label{mc1} For each positive integer  $k,$
\begin{align*}
S_3(1, k):=-\frac{3}{8k^3}\sum_{n=1}^{k-1}\cot^2(\pi n/k)\csc^2(\pi n/k)=-\frac{(k^2-1)(k^2-4)}{120k^3}.
\end{align*}
\end{corollary}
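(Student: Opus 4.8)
The plan is to reduce everything to the two sums already evaluated in Theorems~\ref{1sin4} and~\ref{1sin2}. First I would rewrite the summand using the elementary trigonometric identity
\begin{align*}
\cot^2(\pi n/k)\csc^2(\pi n/k)=\frac{\cos^2(\pi n/k)}{\sin^4(\pi n/k)}=\frac{1-\sin^2(\pi n/k)}{\sin^4(\pi n/k)}=\frac{1}{\sin^4(\pi n/k)}-\frac{1}{\sin^2(\pi n/k)}.
\end{align*}
This splits the target sum into a difference of the fourth-power reciprocal sine sum and the squared reciprocal sine sum, both taken over $1\le n\le k-1$.

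Next I would invoke the closed forms already in hand. Theorem~\ref{1sin4} gives $\sum_{n=1}^{k-1}\sin^{-4}(\pi n/k)=\tfrac{1}{45}(k^4+10k^2-11)$, and Theorem~\ref{1sin2} gives $\sum_{n=1}^{k-1}\sin^{-2}(\pi n/k)=\tfrac{1}{3}(k^2-1)$. Both hold for every positive integer $k$, which matches the hypothesis of the corollary, so no parity restriction is needed. Subtracting, and putting everything over the common denominator $45$, I would obtain
\begin{align*}
\sum_{n=1}^{k-1}\cot^2(\pi n/k)\csc^2(\pi n/k)=\frac{1}{45}\bigl(k^4+10k^2-11-15(k^2-1)\bigr)=\frac{1}{45}\bigl(k^4-5k^2+4\bigr).
\end{align*}

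The remaining step is purely algebraic: I would factor the quartic as $k^4-5k^2+4=(k^2-1)(k^2-4)$, giving
\begin{align*}
\sum_{n=1}^{k-1}\cot^2(\pi n/k)\csc^2(\pi n/k)=\frac{(k^2-1)(k^2-4)}{45}.
\end{align*}
Finally I would multiply by the prefactor $-\tfrac{3}{8k^3}$ and simplify the constant $\tfrac{3}{8\cdot 45}=\tfrac{1}{120}$ to arrive at
\begin{align*}
S_3(1,k)=-\frac{3}{8k^3}\cdot\frac{(k^2-1)(k^2-4)}{45}=-\frac{(k^2-1)(k^2-4)}{120k^3},
\end{align*}
as claimed. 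I do not anticipate any genuine obstacle here; the only point requiring care is the clean factorization of the quartic, which is exactly what makes the final answer collapse to the stated product form.
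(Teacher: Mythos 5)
Your proposal is correct and follows exactly the paper's own argument: the identity $\cot^2(\pi n/k)\csc^2(\pi n/k)=\sin^{-4}(\pi n/k)-\sin^{-2}(\pi n/k)$, followed by substitution of Theorems \ref{1sin4} and \ref{1sin2} and the factorization $k^4-5k^2+4=(k^2-1)(k^2-4)$. Nothing to add.
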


\begin{proof}
From Theorems \ref{1sin2} and \ref{1sin4}, it follows that
\begin{align*}
S_3(1, k)&=-\frac{3}{8k^3}\sum_{n=1}^{k-1}\frac{\cos^2(\pi n/k)}{\sin^4(\pi n/k)}
=-\frac{3}{8k^3}\sum_{n=1}^{k-1}\left\{\frac{1}{\sin^4(\pi n/k)}-\frac{1}{\sin^2(\pi n/k)}\right\}\\
&=-\frac{1}{8k^3}\left\{\frac{1}{15}(k^4+10k^2-11)-(k^2-1)\right\}=-\frac{(k^2-1)(k^2-4)}{120k^3}.
\end{align*}
\end{proof}

An evaluation related to that in Corollary \ref{mc1} can be found in \cite[p.~367, Corollary 2.7]{yeap}.

\begin{corollary}\label{mc7} For each odd positive integer  $k,$
\begin{align*}
S_3(2, k):=-\frac{3}{8k^3}\sum_{n=1}^{k-1}\cot(2\pi n/k)\cot(\pi n/k)\csc^2(\pi n/k)=-\frac{(k^2-1)(k^2-19)}{240k^3}.
\end{align*}
\end{corollary}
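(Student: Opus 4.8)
The plan is to mimic the proof of Corollary \ref{mc1}: reduce the summand to a linear combination of $\csc^4$ and $\csc^2$ terms whose sums have already been evaluated in the paper. Writing $\theta=\pi n/k$, the first step is the trigonometric simplification
\begin{align*}
\cot(2\theta)\cot(\theta)\csc^2(\theta)=\frac{\cos(2\theta)\cos\theta}{\sin(2\theta)\sin^3\theta}=\frac{\cos(2\theta)}{2\sin^4\theta},
\end{align*}
where the factor $\cos\theta$ cancels after using $\sin(2\theta)=2\sin\theta\cos\theta$. Here one uses that $k$ is odd, so that $\sin(2\pi n/k)\neq 0$ for $1\le n\le k-1$ and every term is well defined. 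Applying $\cos(2\theta)=1-2\sin^2\theta$ then gives
\begin{align*}
\cot(2\pi n/k)\cot(\pi n/k)\csc^2(\pi n/k)=\frac{1}{2\sin^4(\pi n/k)}-\frac{1}{\sin^2(\pi n/k)}.
\end{align*}

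Second, I would sum this identity over $1\le n\le k-1$ and insert the closed forms already proved: Theorem \ref{1sin4} gives $\sum_{n=1}^{k-1}\csc^4(\pi n/k)=\tfrac{1}{45}(k^4+10k^2-11)$, while Theorem \ref{1sin2} gives $\sum_{n=1}^{k-1}\csc^2(\pi n/k)=\tfrac{k^2-1}{3}$. Hence
\begin{align*}
\sum_{n=1}^{k-1}\cot(2\pi n/k)\cot(\pi n/k)\csc^2(\pi n/k)=\frac{k^4+10k^2-11}{90}-\frac{k^2-1}{3}.
\end{align*}

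Third, I would combine over a common denominator and factor, obtaining $\tfrac{1}{90}(k^4-20k^2+19)=\tfrac{1}{90}(k^2-1)(k^2-19)$. Multiplying by $-\tfrac{3}{8k^3}$ then yields $S_3(2,k)=-\tfrac{(k^2-1)(k^2-19)}{240k^3}$, as claimed.

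There is essentially no serious obstacle here; the whole argument is a short reduction to Theorems \ref{1sin4} and \ref{1sin2}. The only points requiring care are the clean cancellation of $\cos\theta$ in the opening identity (together with the remark that $k$ odd guarantees each cotangent is finite) and the final factorization $k^4-20k^2+19=(k^2-1)(k^2-19)$, which is precisely what produces the stated form.
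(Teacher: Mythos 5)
Your proposal is correct and follows essentially the same route as the paper: cancel the $\cos(\pi n/k)$ factor via the double-angle formula to reduce the summand to $\cos(2\pi n/k)/(2\sin^4(\pi n/k))$, write $\cos(2\theta)=1-2\sin^2\theta$, and invoke Theorems \ref{1sin4} and \ref{1sin2}. The arithmetic and the factorization $k^4-20k^2+19=(k^2-1)(k^2-19)$ both check out.
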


\begin{proof}
Analogously, by Theorems \ref{1sin2} and \ref{1sin4}, we have
\begin{align*}
S_3(2, k)&=-\frac{3}{8k^3}\sum_{n=1}^{k-1}\frac{\cos(2\pi n/k)\cos(\pi n/k)}{\sin(2\pi n/k)\sin^3(\pi n/k)}
=-\frac{3}{8k^3}\sum_{n=1}^{k-1}\frac{\cos(2\pi n/k)}{2\sin^4(\pi n/k)}\\
&=-\frac{3}{8k^3}\sum_{n=1}^{k-1}\left\{\frac{1}{2\sin^4(\pi n/k)}-\frac{1}{\sin^2(\pi n/k)}\right\}\\
&=-\frac{1}{8k^3}\left\{\frac{1}{30}(k^4+10k^2-11)-(k^2-1)\right\}=-\frac{(k^2-1)(k^2-19)}{240k^3}.
\end{align*}
\end{proof}

In fact, $S_3(1,k)$ and $S_3(2,k)$ are generalized Dedekind sums, which we now define.
Let $B_3(x)$ denote the third Bernoulli polynomial.  Then the generalized Dedekind sum $S_3(h,k)$ is defined by \cite[p.~198]{mcintosh}
\begin{align}\label{mc2}
 S_3(h,k)&=\sum_{n=1}^{k-1}\df{n}{k}B_3\left(\df{hn}{k}-\left[\df{hn}{k}\right]\right)\notag\\
 &=-\frac{3}{8k^3}\sum_{n=1}^{k-1}\cot(h\pi n/k)\cot(\pi n/k)\csc^2(\pi n/k),
 \end{align}
where $[x]$ is the greatest integer $\leq x$. We have therefore used McIntosh's formulation involving generalized Dedekind sums \cite[p.~199, Equation (6)]{mcintosh} in our expressions of  Corollaries \ref{mc1} and \ref{mc7}.

\begin{theorem}\label{1cos2}
If $k$ is an odd positive integer, then
\begin{align*}
\sum_{n=1}^{k}\frac{1}{\cos^2(\pi n/k)}=k^2.
\end{align*}
\end{theorem}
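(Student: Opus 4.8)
The plan is to follow the same root-of-unity template used in the proofs of Theorems \ref{1cos4} and \ref{ss}. First I would set $z_n=e^{2\pi in/k}$ and introduce the rational function $R(x)=\frac{4x}{(x+1)^2}$. By the first identity in \eqref{sq}, namely $\cos^2(\pi n/k)=\frac{(z_n+1)^2}{4z_n}$, one has $\frac{1}{\cos^2(\pi n/k)}=\frac{4z_n}{(z_n+1)^2}=R(z_n)$, so that the sum in question equals $\sum_{n=1}^{k}R(z_n)$. As $n$ runs from $1$ to $k$ the points $z_n$ run over all $k$th roots of unity, including $z_k=1$ (corresponding to $\cos^2(\pi k/k)=1$); since $k$ is odd, $z_n=-1$ never occurs, so no term of the sum is singular, which is exactly why the hypothesis that $k$ is odd is imposed.

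Next I would record the elementary partial fraction decomposition $R(x)=\frac{4}{x+1}-\frac{4}{(x+1)^2}$. Substituting $x=z_n$ and summing gives $S=4\sum_{n=1}^{k}\frac{1}{z_n+1}-4\sum_{n=1}^{k}\frac{1}{(z_n+1)^2}$, which reduces the problem to the two first-order formulas of Lemma \ref{1z1}.

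The only bookkeeping point is that Lemma \ref{1z1} records the sums $\sum_{n=1}^{k-1}$, i.e., over the roots other than $z_k=1$, so I would peel off the $n=k$ term; its contribution is $\frac{4}{1+1}-\frac{4}{(1+1)^2}=2-1=1$. For the remaining $k-1$ terms, Lemma \ref{1z1} yields $\sum_{n=1}^{k-1}\frac{1}{z_n+1}=\frac{k-1}{2}$ and $\sum_{n=1}^{k-1}\frac{1}{(z_n+1)^2}=-\frac{(k-1)^2}{4}$, whence $4\cdot\frac{k-1}{2}-4\cdot\left(-\frac{(k-1)^2}{4}\right)=2(k-1)+(k-1)^2=(k-1)(k+1)=k^2-1$. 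Adding the peeled-off term gives $S=(k^2-1)+1=k^2$, as claimed.

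There is essentially no hard step here: everything collapses to the two linear formulas in Lemma \ref{1z1} plus the trivial $n=k$ evaluation. The one place to be careful is the index range—forgetting that Lemma \ref{1z1} excludes $z_k=1$ would erroneously produce $k^2-1$ rather than $k^2$. As an alternative that avoids the peeling entirely, I could instead evaluate the first two identities of \eqref{f} directly at $x=-1$, using $\frac{1}{z_n+1}=-\frac{1}{x-z_n}$ and $\frac{1}{(z_n+1)^2}=\frac{1}{(x-z_n)^2}$ at $x=-1$, since those identities already sum over all $k$ roots; with $k$ odd one obtains $\sum_{n=1}^k\frac{1}{z_n+1}=\frac{k}{2}$ and $\sum_{n=1}^k\frac{1}{(z_n+1)^2}=-\frac{k(k-1)}{2}+\frac{k^2}{4}$, leading once more to $S=k^2$.
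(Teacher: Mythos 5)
Your proposal is correct and follows essentially the same route as the paper: the same rational function $R(x)=\frac{4x}{(x+1)^2}$, the same partial fraction decomposition, and the same reduction to the first two sums associated with Lemma \ref{1z1}. The only cosmetic difference is that you peel off the $n=k$ term to use the lemma as stated, whereas the paper directly quotes the full sums $\sum_{n=1}^{k}\frac{1}{z_n+1}=\frac{k}{2}$ and $\sum_{n=1}^{k}\frac{1}{(z_n+1)^2}=-\frac{k(k-2)}{4}$ from the lemma's proof — exactly the alternative you describe at the end.
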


\begin{proof}
Defining
\begin{align*}
z_n=e^{2\pi i n/k} \quad \text{and} \quad R(x)=\frac{4x}{(x+1)^2}
\end{align*}
and using  Lemma \ref{1z1}, we have
\begin{align*}
\sum_{n=1}^{k}\frac{1}{\cos^2(\pi n/k)}&=\sum_{n=1}^{k}R(z_n)=4\sum_{n=1}^{k}\left\{\frac{1}{z_n+1}-\frac{1}{(z_n+1)^2} \right\}\\
&=4\left\{ \frac{k}{2}+\frac{k(k-2)}{4}\right\}=k^2.
\end{align*}
\end{proof}

The problem of finding a general method for evaluating
\begin{equation}\label{1cos2general}
\sum_{n=1}^{k-1}\frac{1}{\cos^{2m}(\pi n/k)}, \quad m\geq1,
\end{equation}
was apparently first raised by L.~A.~Gardner \cite{gardner} and M.~E.~Fisher \cite{fisher}.
For each integer $m\geq1$, Y.~He \cite{he} established a generating function for \eqref{1cos2general}, providing their values in terms of polynomials in $k$.  Moreover, Y.~He also showed that
\begin{equation}\label{1sin2general}
\sum_{n=1}^{k-1}\frac{1}{\sin^{2m}(\pi n/k)}
\end{equation}
can be evaluated in terms of a certain sequence of polynomials in the variable $k$ .
Earlier,  W.~Chu and A.~Marini \cite{chumarini} established a generating function for the general sum \eqref{1cos2general}.
See also a further paper by Chu \cite{chu}.

C.~M.~ da Fonseca,  M.~L.~Glasser, and V.~ Kowalenko \cite{fgk1}, \cite{fgk} developed methods for determining values of  \eqref{1cos2general} and \eqref{1sin2general}.  These authors study further trigonometric sums, and their paper also provides an excellent survey of a broad expanse of related literature.

 Via generating functions, D.~ Cvijovi\'{c} \cite{dc} also showed how to evaluate
$$ \sum_{j=1}^k\csc^{2n}\left(\df{(2j-1)\pi}{4k}\right)=\sum_{j=1}^k\sec^{2n}\left(\df{(2j-1)\pi}{4k}\right), \quad n\geq 1.$$

  See the paper by X.~Wang and D.~Y.~Zheng \cite{wangzheng} for the evaluations of several further classes of trigonometric sums.

\begin{corollary} For each odd positive integer  $k,$
\begin{align*}
\sum_{n=1}^{k}\tan^2(\pi n/k)\sec^2(\pi n/k)=\frac{k^2(k^2-1)}{3}.
\end{align*}
\end{corollary}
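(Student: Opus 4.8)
The plan is to express the sum in terms of the two finite sums $\sum_{n=1}^{k}\cos^{-4}(\pi n/k)$ and $\sum_{n=1}^{k}\cos^{-2}(\pi n/k)$, both of which have already been evaluated in the excerpt. First I would record the elementary trigonometric identity
\begin{align*}
\tan^2(\pi n/k)\sec^2(\pi n/k)=\frac{\sin^2(\pi n/k)}{\cos^4(\pi n/k)}=\frac{1-\cos^2(\pi n/k)}{\cos^4(\pi n/k)}=\frac{1}{\cos^4(\pi n/k)}-\frac{1}{\cos^2(\pi n/k)}.
\end{align*}
Before summing I would note that, because $k$ is odd, $\cos(\pi n/k)$ never vanishes for $1\le n\le k$: a zero would force $2n=k(2m+1)$ for some integer $m$, and here the right side is odd while the left side is even. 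Hence every term above is well defined.

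Next I would sum over $n$ and invoke the earlier evaluations. Theorem \ref{1cos4} supplies $\sum_{n=1}^{k}\cos^{-4}(\pi n/k)=\dfrac{k^2(k^2+2)}{3}$, and Theorem \ref{1cos2} supplies $\sum_{n=1}^{k}\cos^{-2}(\pi n/k)=k^2$. Subtracting then yields
\begin{align*}
\sum_{n=1}^{k}\tan^2(\pi n/k)\sec^2(\pi n/k)=\frac{k^2(k^2+2)}{3}-k^2=\frac{k^2(k^2-1)}{3},
\end{align*}
which is exactly the asserted value.

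The computation is entirely mechanical once the decomposition is in hand, so there is no genuine obstacle here: both constituent sums are delivered verbatim by the preceding theorems, and the final simplification is a single line of algebra. The only step deserving a moment's attention is the well-definedness check above, which relies essentially on the hypothesis that $k$ is odd; without it the summand could be singular and the identity would fail.
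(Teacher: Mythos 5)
Your proof is correct and follows exactly the same route as the paper: the identity $\tan^2(\pi n/k)\sec^2(\pi n/k)=\cos^{-4}(\pi n/k)-\cos^{-2}(\pi n/k)$ combined with Theorems \ref{1cos4} and \ref{1cos2}. The added remark that oddness of $k$ guarantees the summands are finite is a sensible (if implicit in the paper) sanity check.
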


\begin{proof}
Using Theorems \ref{1cos2} and \ref{1cos4}, we derive
\begin{align*}
\sum_{n=1}^{k}\tan^2(\pi n/k)\sec^2(\pi n/k)&=\sum_{n=1}^{k}\frac{\sin^2(\pi n/k)}{\cos^4(\pi n/k)}=
\sum_{n=1}^{k}\left\{\frac{1}{\cos^4(\pi n/k)}-\frac{1}{\cos^2(\pi n/k)}\right\}\\
&=\frac{k^2(k^2+2)}{3}-k^2=\frac{k^2(k^2-1)}{3}.
\end{align*}
\end{proof}

\begin{corollary} For each odd positive integer  $k,$
\begin{align*}
\sum_{n=1}^{k}\cos(2\pi n/k)\sec^4(\pi n/k)=-\frac{k^2(k^2-4)}{3}.
\end{align*}
\end{corollary}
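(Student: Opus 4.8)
The plan is to reduce this evaluation to the two already-established sums $\sum_{n=1}^{k}\sec^2(\pi n/k)$ and $\sum_{n=1}^{k}\sec^4(\pi n/k)$, exactly as the preceding corollaries in this section reduce their sums to pairs of earlier theorems. The key observation is that the factor $\cos(2\pi n/k)$ in the summand can be eliminated via the double-angle identity $\cos(2\pi n/k)=2\cos^2(\pi n/k)-1$, which converts the mixed cosine sum into a linear combination of pure secant powers.

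First I would rewrite the summand as
\begin{align*}
\cos(2\pi n/k)\sec^4(\pi n/k)=\frac{2\cos^2(\pi n/k)-1}{\cos^4(\pi n/k)}=\frac{2}{\cos^2(\pi n/k)}-\frac{1}{\cos^4(\pi n/k)}.
\end{align*}
Summing over $n$ from $1$ to $k$ and using linearity then gives
\begin{align*}
\sum_{n=1}^{k}\cos(2\pi n/k)\sec^4(\pi n/k)=2\sum_{n=1}^{k}\frac{1}{\cos^2(\pi n/k)}-\sum_{n=1}^{k}\frac{1}{\cos^4(\pi n/k)}.
\end{align*}

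Next I would invoke Theorem \ref{1cos2}, which supplies $\sum_{n=1}^{k}\cos^{-2}(\pi n/k)=k^2$, and Theorem \ref{1cos4}, which supplies $\sum_{n=1}^{k}\cos^{-4}(\pi n/k)=\tfrac13 k^2(k^2+2)$; both require only that $k$ be an odd positive integer, which is the hypothesis here. Substituting these values yields
\begin{align*}
2k^2-\frac{k^2(k^2+2)}{3}=\frac{6k^2-k^2(k^2+2)}{3}=\frac{k^2(4-k^2)}{3}=-\frac{k^2(k^2-4)}{3},
\end{align*}
which is the claimed closed form.

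There is essentially no serious obstacle: the only thing to watch is that both invoked theorems run the index up to $n=k$ (not $k-1$), matching the range in the statement, so no boundary term needs separate treatment. The remaining work is the one-line algebraic simplification of the combined fraction, which is routine.
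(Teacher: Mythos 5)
Your proof is correct and follows exactly the paper's own argument: the double-angle identity converts the summand into $2\sec^2(\pi n/k)-\sec^4(\pi n/k)$, and Theorems \ref{1cos2} and \ref{1cos4} finish the computation. Nothing further is needed.
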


\begin{proof} From Theorems \ref{1cos2} and \ref{1cos4}, it follows that
\begin{align*}
\sum_{n=1}^{k}\cos(2\pi n/k)\sec^4(\pi n/k)&=\sum_{n=1}^{k}\frac{2\cos^2(\pi n/k)-1}{\cos^4(\pi n/k)}\\
&=\sum_{n=1}^{k}\left\{\frac{2}{\cos^2(\pi n/k)} -\frac{1}{\cos^4(\pi n/k)}\right\}\\
&=2k^2-\frac{k^2(k^2+2)}{3}
=-\frac{k^2(k^2-4)}{3}.
\end{align*}
\end{proof}


\section{Further Lemmas on Sums of Rational Functions of Roots of Unity}

Applying Lemma \ref{1z-1} and partial fraction decompositions, we can readily derive the following lemma.
\begin{lemma}\label{zz1}
If $k$ is a positive integer and  $z_n=e^{2\pi in/k},$ then
\begin{align*}
&\sum_{n=1}^{k-1}\frac{z_n}{z_n-1}=\frac{k-1}{2}, \qquad \qquad \qquad \qquad \qquad
\sum_{n=1}^{k-1}\frac{z_n}{(z_n-1)^2}=-\frac{(k-1)(k+1)}{12}, \\
&\sum_{n=1}^{k-1}\frac{z_n^2}{(z_n-1)^2}=-\frac{(k-1)(k-5)}{12}, \qquad \qquad
\sum_{n=1}^{k-1}\frac{z_n}{(z_n-1)^3}=\frac{(k-1)(k+1)}{24}, \\
&\sum_{n=1}^{k-1}\frac{z_n^2}{(z_n-1)^3}=-\frac{(k-1)(k+1)}{24}, \qquad \qquad
\sum_{n=1}^{k-1}\frac{z_n^3}{(z_n-1)^3}=-\frac{(k-1)(k-3)}{8},\\
&\sum_{n=1}^{k-1}\frac{z_n}{(z_n-1)^4}=\frac{(k-1)(k+1)(k^2-19)}{720}, \quad
\sum_{n=1}^{k-1}\frac{z_n^2}{(z_n-1)^4}=\frac{(k-1)(k+1)(k^2+11)}{720}, \\
&\sum_{n=1}^{k-1}\frac{z_n^3}{(z_n-1)^4}=\frac{(k-1)(k+1)(k^2-19)}{720}, \\
&\sum_{n=1}^{k-1}\frac{z_n^4}{(z_n-1)^4}=\frac{(k-1)(k^3+k^2-109k+251)}{720}.
\end{align*}
\end{lemma}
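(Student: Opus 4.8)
The plan is to reduce every sum in the list to a linear combination of the basic sums
$\sum_{n=1}^{k-1} (z_n-1)^{-m}$ for $m=1,2,3,4$, whose values are supplied by Lemma \ref{1z-1}. The mechanism is the algebraic identity $z_n = (z_n-1)+1$, which lets me expand each numerator $z_n^{\,j}$ as a polynomial in $(z_n-1)$ by the binomial theorem and then divide by the denominator $(z_n-1)^m$. Concretely, I would write $z_n^{\,j} = \sum_{r=0}^{j}\binom{j}{r}(z_n-1)^r$, so that
\begin{align*}
\frac{z_n^{\,j}}{(z_n-1)^m} = \sum_{r=0}^{j}\binom{j}{r}\frac{1}{(z_n-1)^{m-r}},
\end{align*}
and every resulting term is either one of the four sums evaluated in Lemma \ref{1z-1}, or — when $m-r\le 0$ — a constant term $(z_n-1)^{r-m}$ which is a polynomial in $z_n$ and whose sum over $n=1,\dots,k-1$ is elementary (one uses $\sum_{n=1}^{k-1} z_n = -1$ and, more generally, $\sum_{n=1}^{k-1} z_n^{\,s} = -1$ for $k\nmid s$ and $=k-1$ for $k\mid s$, since the $z_n$ run over the nonidentity $k$th roots of unity).

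First I would record the four input values from Lemma \ref{1z-1} and the auxiliary constant-sum facts just noted, since they are the only external ingredients. Then I would treat the entries in order of increasing denominator power. For the $m=2$ entries I expand $z_n=(z_n-1)+1$ to get $z_n/(z_n-1)^2 = (z_n-1)^{-1}+(z_n-1)^{-2}$ and $z_n^2/(z_n-1)^2 = 1 + 2(z_n-1)^{-1}+(z_n-1)^{-2}$, summing via Lemma \ref{1z-1}; this already reproduces $\sum z_n^2/(z_n-1)^2 = -\tfrac{(k-1)(k-5)}{12}$, matching the fact that this equals $\sum (z_n-1)^{-2}$ up to a telescoping of the lower-order pieces. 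The $m=3$ and $m=4$ entries proceed identically: each numerator power $z_n^{\,j}$ with $j\le m$ contributes only negative powers of $(z_n-1)$ plus constants, so no new basic sums are needed. As a consistency check I would note that the symmetry $z_n \mapsto z_n^{-1}=z_{k-n}$ sends $z_n^{\,j}/(z_n-1)^m$ to $(-1)^m z_n^{\,m-j}/(z_n-1)^m$, which predicts the observed relations such as $\sum z_n/(z_n-1)^3 = -\sum z_n^2/(z_n-1)^3$ and $\sum z_n/(z_n-1)^4 = \sum z_n^3/(z_n-1)^4$; these give free verifications of several entries.

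The computation is entirely routine once the reduction is set up, so there is no serious obstacle. The only place demanding care is bookkeeping: in the $m=4$ rows the binomial expansion produces four or five terms each, and one must correctly collect the coefficients multiplying the four values from Lemma \ref{1z-1} together with the constant terms coming from $\sum_{n=1}^{k-1} z_n^{\,s}$. I would organize this by tabulating, for each target sum, the vector of coefficients against the basis $\bigl(1,\ (z_n-1)^{-1},\ (z_n-1)^{-2},\ (z_n-1)^{-3},\ (z_n-1)^{-4}\bigr)$ and then substituting the known values in a single pass; the symmetry relations above serve as an error check on the final polynomials in $k$.
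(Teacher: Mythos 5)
Your proposal is correct and follows essentially the same route as the paper, which proves this lemma in one line by "applying Lemma \ref{1z-1} and partial fraction decompositions" --- precisely the reduction $z_n^{\,j}=((z_n-1)+1)^j$ that you carry out, with the values from Lemma \ref{1z-1} substituted at the end. Your symmetry check via $z_n\mapsto z_n^{-1}$ is a nice extra verification not mentioned in the paper, but the core argument is identical.
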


\begin{lemma}\label{R} Let $p$ and $q$ be positive integers  with $p, q \geq 2$ and $(p, q)=1.$
If $\omega_j=e^{2\pi ij/p}$ and $\xi_j=e^{2\pi ij/q},$ then
\begin{align*}
&\frac{x(x^p+1)(x^q+1)}{(x-1)^2(x^p-1)(x^q-1)}=\frac{p^2+q^2+1}{3pq(x-1)}+\frac{p^2+q^2+13}{3pq(x-1)^2}
+\frac{8}{pq(x-1)^3}+\frac{4}{pq(x-1)^4}\\
&\qquad \qquad +\frac{2}{p}\sum_{j=1}^{p-1}\frac{\omega_j^2(\omega_j^q+1)}{(\omega_j-1)^2(\omega_j^q-1)(x-\o_j)}
+\frac{2}{q}\sum_{j=1}^{q-1}\frac{\xi_j^2(\xi_j^p+1)}{(\xi_j-1)^2(\xi_j^p-1)(x-\xi_j)}.
\end{align*}
\end{lemma}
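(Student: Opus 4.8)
The plan is to compute the partial fraction decomposition of
\begin{equation*}
F(x):=\frac{x(x^p+1)(x^q+1)}{(x-1)^2(x^p-1)(x^q-1)}
\end{equation*}
directly, by locating the poles and reading off each coefficient. Since $(p,q)=1$, the polynomials $x^p-1$ and $x^q-1$ share only the root $x=1$; hence every $\omega_j$ with $1\le j\le p-1$ and every $\xi_j$ with $1\le j\le q-1$ is a simple pole (these are all distinct, again by coprimality), while $x=1$ is a pole of order $2+1+1=4$. The numerator has degree $p+q+1$ and the denominator degree $p+q+2$, so there is no polynomial part and
\begin{equation*}
F(x)=\sum_{m=1}^{4}\frac{A_m}{(x-1)^m}+\sum_{j=1}^{p-1}\frac{B_j}{x-\omega_j}+\sum_{j=1}^{q-1}\frac{C_j}{x-\xi_j}.
\end{equation*}
It remains to identify these coefficients and match them against the asserted formula.

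The simple poles are the easy part. At $x=\omega_j$ the factor $x^p-1$ contributes $p\,\omega_j^{p-1}=p/\omega_j$ to the derivative of the denominator, while $\omega_j^p=1$ gives $\omega_j^p+1=2$; hence
\begin{equation*}
B_j=\Res_{x=\omega_j}F(x)=\frac{\omega_j\cdot 2\cdot(\omega_j^q+1)}{(\omega_j-1)^2\,(p/\omega_j)\,(\omega_j^q-1)}=\frac{2}{p}\cdot\frac{\omega_j^2(\omega_j^q+1)}{(\omega_j-1)^2(\omega_j^q-1)},
\end{equation*}
which is exactly the stated coefficient of $1/(x-\omega_j)$; interchanging the roles of $p$ and $q$ yields the formula for $C_j$.

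The order-$4$ pole at $x=1$ is the substantive step. Writing $g_p(x)=(x^p-1)/(x-1)$ and $g_q(x)=(x^q-1)/(x-1)$, the two spurious factors $(x-1)$ cancel, so
\begin{equation*}
H(x):=(x-1)^4F(x)=\frac{x(x^p+1)(x^q+1)}{g_p(x)\,g_q(x)}
\end{equation*}
is analytic at $x=1$, and comparing Taylor expansions gives $A_4=H(1)$, $A_3=H'(1)$, $A_2=\tfrac12 H''(1)$, $A_1=\tfrac16 H'''(1)$. I would evaluate these by logarithmic differentiation: with $L=H'/H$ one has $H'=HL$, $H''=H(L^2+L')$, and $H'''=H(L^3+3LL'+L'')$, where
\begin{equation*}
L(x)=\frac1x+\frac{px^{p-1}}{x^p+1}+\frac{qx^{q-1}}{x^q+1}-\frac{g_p'(x)}{g_p(x)}-\frac{g_q'(x)}{g_q(x)}.
\end{equation*}
The values $g_p(1)=p$, $g_p'(1)=\tfrac{p(p-1)}2$, $g_p''(1)=\tfrac{p(p-1)(p-2)}3$, $g_p'''(1)=\tfrac{p(p-1)(p-2)(p-3)}4$ (and likewise in $q$) are already recorded in the proof of Lemma~\ref{1z-1}, so all the evaluations at $x=1$ reduce to bookkeeping.

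Carrying this out gives $H(1)=4/(pq)$ and $L(1)=2$, whence $A_4=4/(pq)$ and $A_3=8/(pq)$; the further evaluations $L'(1)=\tfrac{p^2+q^2-11}{6}$ and $L''(1)=\tfrac{7-p^2-q^2}{2}$ then yield $A_2=\tfrac{p^2+q^2+13}{3pq}$ and $A_1=\tfrac{p^2+q^2+1}{3pq}$, in agreement with the claim. The main obstacle is purely the algebra in this last step: $L'(1)$ and $L''(1)$ are each five-term sums in which the top-degree pieces must cancel (for example the $p^3$ and $q^3$ contributions have to vanish), so the computation, though elementary, should be arranged carefully or verified with a computer algebra system.
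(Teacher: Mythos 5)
Your proof is correct, and it takes a genuinely different route from the paper's. The paper \emph{synthesizes} the decomposition: it writes the left side as $\frac{x}{(x-1)^2}\bigl(1+\frac{2}{x^p-1}\bigr)\bigl(1+\frac{2}{x^q-1}\bigr)$, expands $\frac{1}{x^p-1}$ and $\frac{1}{x^q-1}$ over roots of unity, splits the product into six pieces, applies template partial-fraction identities to each, and then recombines the coefficients using Lemma \ref{zz1} and the identity \eqref{1xz} — including a double sum over $\omega_i,\xi_j$ that must be collapsed. You instead \emph{analyze} the function directly: the one-line residue computation $B_j=N(\omega_j)/D'(\omega_j)$ gives the coefficients at the simple poles immediately (no root-of-unity sum lemmas needed there), and the order-$4$ principal part at $x=1$ is read off from the Taylor expansion of $H(x)=(x-1)^4F(x)$ via logarithmic differentiation. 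I checked your stated values: $H(1)=4/(pq)$, $L(1)=2$, $L'(1)=\tfrac{p^2+q^2-11}{6}$, and $L''(1)=\tfrac{7-p^2-q^2}{2}$ are all correct (the last two follow from $\bigl(\log(x^p+1)\bigr)''(1)=\tfrac{p^2-2p}{4}$, $\bigl(\log(x^p+1)\bigr)'''(1)=\tfrac{p(4-3p)}{4}$, together with the derivatives of $g_p$ at $1$ recorded in the proof of Lemma \ref{1z-1}), and they reproduce $A_4,\dots,A_1$ exactly as claimed. The trade-off: the paper's route reuses machinery (Lemma \ref{zz1}, the decompositions \eqref{p2}--\eqref{p4}, identity \eqref{1xz}) that it needs elsewhere anyway, whereas yours is shorter and self-contained, concentrating all the labor in a single third-order logarithmic-derivative evaluation at $x=1$.
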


\begin{proof}
We first note that by taking logarithmic derivatives, we can easily derive
\begin{align}\label{xp1}
\frac{1}{x^p-1}=\frac{1}{p}\sum_{j=1}^p\frac{\o_j}{x-\omega_j} \quad  \text{and} \quad  \frac{1}{x^q-1}=\frac{1}{q}\sum_{j=1}^q\frac{\xi_j}{x-\xi_j}.
\end{align}
So, we have
\begin{align}
&\frac{x(x^p+1)(x^q+1)}{(x-1)^2(x^p-1)(x^q-1)}=\frac{x}{(x-1)^2}\Big(1+\frac{2}{x^p-1}\Big)\Big(1+\frac{2}{x^q-1}\Big)\notag\\
=&\frac{x}{(x-1)^2}\Big(1+\frac{2}{p}\sum_{j=1}^p\frac{\o_j}{x-\omega_j}\Big)
\Big(1+\frac{2}{q}\sum_{j=1}^q\frac{\xi_j}{x-\xi_j}\Big)\notag\\
=&\frac{x}{(x-1)^2}\Big(1+\frac{2}{p(x-1)}+\frac{2}{p}\sum_{j=1}^{p-1}\frac{\o_j}{x-\omega_j}\Big)
\Big(1+\frac{2}{q(x-1)}+\frac{2}{q}\sum_{j=1}^{q-1}\frac{\xi_j}{x-\xi_j}\Big)\notag\\
=&\frac{x}{(x-1)^2}+\frac{2x}{p(x-1)^3}+\frac{2x}{q(x-1)^3}+\frac{4x}{pq(x-1)^4}
+\frac{2}{p}\sum_{j=1}^{p-1}\frac{\o_j x}{(x-1)^2(x-\omega_j)}\notag\\
&+\frac{2}{q}\sum_{j=1}^{q-1}\frac{\xi_j x}{(x-1)^2(x-\xi_j)}+\frac{4}{pq}\sum_{j=1}^{p-1}\frac{\o_j x}{(x-1)^3(x-\omega_j)}
+\frac{4}{pq}\sum_{j=1}^{p-1}\frac{\xi_j x}{(x-1)^3(x-\xi_j)}\notag\\
&+\frac{4}{pq}\sum_{i=1}^{p-1}\sum_{j=1}^{q-1}\frac{\o_i\xi_j x}{(x-1)^2(x-\o_i)(x-\xi_j)}.\label{A}
\end{align}
We divide the far right side of \eqref{A} into six parts, $A_1, A_2, A_3, A_4, A_5, A_6$, and examine each separately.

First, we consider the following partial fraction decompositions:
\begin{align}
&\frac{x}{(x-1)^2}=\frac{1}{x-1}+\frac{1}{(x-1)^2}, \label{p1}\\
&\frac{x}{(x-1)^2(x-a)}=-\frac{a}{(a-1)^2(x-1)}-\frac{1}{(a-1)(x-1)^2}+\frac{a}{(a-1)^2(x-a)}, \label{p2}\\
&\frac{x}{(x-1)^3(x-a)}=-\frac{a}{(a-1)^3(x-1)}-\frac{a}{(a-1)^2(x-1)^2} \notag\\
&\qquad \qquad \qquad \qquad \qquad \,\,-\frac{1}{(a-1)(x-1)^3}+\frac{a}{(a-1)^3(x-a)}, \label{p3}\\
&\frac{x}{(x-1)^2(x-a)(x-b)}=\frac{ab-1}{(a-1)^2(b-1)^2(x-1)}+\frac{1}{(a-1)(b-1)(x-1)^2} \notag \\
&\qquad \qquad \qquad \qquad \qquad\,\,\, +\frac{a}{(a-1)^2(a-b)(x-a)}+\frac{b}{(b-1)^2(b-a)(x-b)}. \label{p4}
\end{align}
By \eqref{p1},
\begin{align}
A_1:=&\frac{x}{(x-1)^2}+\frac{2x}{p(x-1)^3}+\frac{2x}{q(x-1)^3}+\frac{4x}{pq(x-1)^4}\notag\\
 =&\frac{1}{x-1}+\frac{1}{(x-1)^2}+\frac{2}{p}\Big( \frac{1}{(x-1)^2}+\frac{1}{(x-1)^3}\Big)\notag\\
&  +\frac{2}{q}\Big(\frac{1}{(x-1)^2}+\frac{1}{(x-1)^3}\Big)+\frac{4}{pq}\Big( \frac{1}{(x-1)^3}+\frac{1}{(x-1)^4}\Big)\notag\\
 =&\frac{1}{x-1}+\Big(1+\frac{2}{p}+\frac{2}{q}\Big)\frac{1}{(x-1)^2}
+\Big(\frac{2}{p}+\frac{2}{q}+\frac{4}{pq}\Big)\frac{1}{(x-1)^3}+\frac{4}{pq}\frac{1}{(x-1)^4}.\label{A1}
\end{align}

Next, using \eqref{p2}, we have
\begin{align}\label{A2}
A_2:=\frac{2}{p}\sum_{j=1}^{p-1}\frac{\o_jx}{(x-1)^2(x-\o_j)}=\frac{2}{p}\sum_{j=1}^{p-1}
\left\{-\frac{\o_j^2}{(\o_j-1)^2(x-1)}-\frac{\o_j}{(\o_j-1)(x-1)^2}+\frac{\o_j^2}{(\o_j-1)^2(x-\o_j)}\right\},
\end{align}
and
\begin{align}\label{A3}
A_3:=\frac{2}{q}\sum_{j=1}^{q-1}\frac{\x_jx}{(x-1)^2(x-\x_j)}=\frac{2}{q}\sum_{j=1}^{q-1}
\left\{-\frac{\x_j^2}{(\x_j-1)^2(x-1)}-\frac{\x_j}{(\x_j-1)(x-1)^2}+\frac{\x_j^2}{(\x_j-1)^2(x-\x_j)}\right\}.
\end{align}

By \eqref{p3} and \eqref{p4}, we successively obtain
\begin{align}\label{A4}
A_4=:&\frac{4}{pq}\sum_{j=1}^{p-1}\frac{\o_jx}{(x-1)^3(x-\o_j)}\\
=&\frac{4}{pq}\sum_{j=1}^{p-1}
\left\{-\frac{\o_j^2}{(\o_j-1)^3(x-1)}-\frac{\o_j^2}{(\o_j-1)^2(x-1)^2}-\frac{\o_j}{(\o_j-1)(x-1)^3}
+\frac{\o_j^2}{(\o_j-1)^3(x-\o_j)}\right\},\notag
\end{align}

\begin{align}\label{A5}
A_5:=&\frac{4}{pq}\sum_{j=1}^{q-1}\frac{\x_jx}{(x-1)^3(x-\x_j)}\\
=&\frac{4}{pq}\sum_{j=1}^{q-1}
\left\{-\frac{\x_j^2}{(\x_j-1)^3(x-1)}-\frac{\x_j^2}{(\x_j-1)^2(x-1)^2}-\frac{\x_j}{(\x_j-1)(x-1)^3}
+\frac{\x_j^2}{(\x_j-1)^3(x-\x_j)}\right\},\notag
\end{align}
and
\begin{align}\label{A6}
A_6:=&\frac{4}{pq}\sum_{i=1}^{p-1}\sum_{j=1}^{q-1}\frac{\o_i\x_jx}{(x-1)^2(x-\o_i)(x-\x_j)}
=\frac{4}{pq}\sum_{i=1}^{p-1}\sum_{j=1}^{q-1}
\left\{\frac{\o_i\x_j(\o_i\x_j-1)}{(\o_i-1)^2(\x_j-1)^2(x-1)} \right.\notag \\
&   \left. +\frac{\o_i\x_j}{(\o_i-1)(\x_j-1)(x-1)^2} +\frac{\o_i^2\x_j}{(\o_i-1)^2(\o_i-\x_j)(x-\o_i)}
+\frac{\o_i\x_j^2}{(\x_i-1)^2(\x_j-\o_i)(x-\x_j)}\right\}.
\end{align}
We now substitute \eqref{A1}--\eqref{A6} into \eqref{A}.  Then, we calculate the partial fraction decompositions of $\dfrac{1}{(x-1)^n}, n=1,2,3,4$, and $\dfrac{1}{x-\omega_j}$, $\dfrac{1}{x-\xi_j}$, all of which now appear in the new representation of \eqref{A}.

  First, by Lemma \ref{zz1}, the coefficient of $\dfrac{1}{x-1}$ is
\begin{align}
&1-\frac{2}{p}\sum_{j=1}^{p-1}\frac{\o_j^2}{(\o_j-1)^2}-\frac{2}{q}\sum_{j=1}^{q-1}\frac{\x_j^2}{(\x_j-1)^2}
-\frac{4}{pq}\sum_{j=1}^{p-1}\frac{\o_j^2}{(\o_j-1)^3}-\frac{4}{pq}\sum_{j=1}^{q-1}\frac{\x_j^2}{(\x_j-1)^3}\notag\\
&\qquad +\frac{4}{pq}\left\{\sum_{i=1}^{p-1}\frac{\o_i^2}{(\o_i-1)^2}\sum_{j=1}^{q-1}\frac{\x_j^2}{(\x_j-1)^2}
-\sum_{i=1}^{p-1}\frac{\o_i}{(\o_i-1)^2}\sum_{j=1}^{q-1}\frac{\x_j}{(\x_j-1)^2}\right\}\notag\\
&=1+\frac{(p-1)(p-5)}{6p}+\frac{(q-1)(q-5)}{6q}+\frac{(p-1)(p+1)}{6pq}+\frac{(q-1)(q+1)}{6pq}\notag\\
&\qquad +\frac{(p-1)(p-5)(q-1)(q-5)}{36pq}-\frac{(p-1)(p+1)(q-1)(q+1)}{36pq}\notag\\
&=\frac{p^2+q^2+1}{3pq}.\label{B1}
\end{align}

Similarly, we see that the coefficient of $\dfrac{1}{(x-1)^2}$ is
 \begin{align}
&1+\frac{2}{p}+\frac{2}{q}-\frac{2}{p}\sum_{j=1}^{p-1}\frac{\o_j}{\o_j-1}-\frac{2}{q}\sum_{j=1}^{q-1}\frac{\x_j}{\x_j-1}
 -\frac{4}{pq}\sum_{j=1}^{p-1}\frac{\o_j^2}{(\o_j-1)^2} -\frac{4}{pq}\sum_{j=1}^{q-1}\frac{\x_j^2}{(\x_j-1)^2}\notag\\
& +\frac{4}{pq}\sum_{i=1}^{p-1}\frac{\o_i}{\o_i-1}\sum_{j=1}^{q-1}\frac{\x_j}{\x_j-1}\notag\\
&=1+\frac{2}{p}+\frac{2}{q}-\frac{p-1}{p}-\frac{q-1}{q}+\frac{(p-1)(p-5)}{3pq}+\frac{(q-1)(q-5)}{3pq}
+\frac{(p-1)(q-1)}{pq}\notag\\
&=\frac{p^2+q^2+13}{3pq},\label{B2}
  \end{align}
and the coefficient of $\dfrac{1}{(x-1)^3}$ is
 \begin{align}
&\frac{2}{p}+\frac{2}{q}+\frac{4}{pq}-\frac{4}{pq}\sum_{j=1}^{p-1}\frac{\o_j}{\o_j-1}-\frac{4}{pq}\sum_{j=1}^{q-1}\frac{\x_j}{\x_j-1}\notag\\
&\qquad =\frac{2}{p}+\frac{2}{q}+\frac{4}{pq}-\frac{2(p-1)}{pq}-\frac{2(q-1)}{pq}=\frac{8}{pq}.\label{B3}
\end{align}
It is easy to see that the coefficient of $\dfrac{1}{(x-1)^4}$ is
\begin{equation}\dfrac{4}{pq}.\label{B4}\end{equation}

Next, for each $1\leq j\leq p-1,$ the coefficient of $\dfrac{1}{x-\o_j}$ is
\begin{align*}
&\frac{2\o_j^2}{p(\o_j-1)^2}+\frac{4\o_j^2}{pq(\o_j-1)^3}+\frac{4\o_j^2}{pq(\o_j-1)^2}\sum_{i=1}^{q-1}\frac{\x_i}{\o_j-\x_i} \notag\\
&=\frac{2\o_j^2}{p(\o_j-1)^2}\left\{1+\frac{2}{q(\o_j-1)}+\frac{2}{q}\sum_{i=1}^{q-1}
\Big(\frac{\o_j}{\o_j-\x_i}-1\Big)\right\} \notag\\
&=\frac{2\o_j^2}{p(\o_j-1)^2}\left\{1+\frac{2}{q(\o_j-1)}+\frac{2\o_j}{q}\sum_{i=1}^{q-1}\frac{1}{\o_j-\x_i}
-\frac{2(q-1)}{q}\right\}.
\end{align*}

From the identity
\begin{align}\label{1xz}
\sum_{j=1}^{k-1}\frac{1}{x-z_j}=\frac{kx^{k-1}}{x^k-1}-\frac{1}{x-1},
\end{align}
it follows that  the coefficient of $\dfrac{1}{x-\o_j}$ is equal to
\begin{align}
&\frac{2\o_j^2}{p(\o_j-1)^2}\left\{1+\frac{2}{q(\o_j-1)}+\frac{2\o_j}{q}\Big(\frac{q\o_j^{q-1}}{\o_j^q-1}-\frac{1}{\o_j-1}\Big)
-\frac{2(q-1)}{q}\right\}\notag\\
&=\frac{2\o_j^2}{p(\o_j-1)^2}\left\{-1+\frac{2}{q(\o_j-1)}+\frac{2\o_j^q}{\o_j^q-1}-\frac{2\o_j}{q(\o_j-1)}+\frac{2}{q}\right\}\notag\\
&=\frac{2\o_j^2(\o_j^q+1)}{p(\o_j-1)^2(\o_j^q-1)}.\label{B5}
\end{align}
Analogously, for each $1\leq j\leq q-1,$ the coefficient of $\dfrac{1}{x-\x_j}$ is
\begin{equation}\label{B6}
\frac{2\xi_j^2(\xi_j^p+1)}{q(\xi_j-1)^2(\xi_j^p-1)}.
\end{equation}

Putting \eqref{B1}--\eqref{B6} into \eqref{A},  we complete the proof of Lemma \ref{R}.
\end{proof}

The following two beautiful  theorems for sums of roots of unity are apparently new.

\begin{lemma}\label{lemr1}
Let $p$ and $q$ be positive integers  with $q \geq 2$ and $(p, q)=1.$
If $\xi_j=e^{2\pi ij/q}$, then
\begin{align*}
\sum_{j=1}^{q-1}\frac{\xi_j}{(\xi_j-1)^2(\xi_j^p-1)}=\frac{q^2-1}{24}.
\end{align*}
\end{lemma}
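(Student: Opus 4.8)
The plan is to exploit the symmetry of the summation under the reflection $j\mapsto q-j$, i.e.\ under $\xi_j\mapsto\xi_j^{-1}$. This symmetry turns out to annihilate all dependence on $p$ and to reduce the sum to one already evaluated in Lemma \ref{zz1}. Write $S:=\sum_{j=1}^{q-1}\frac{\xi_j}{(\xi_j-1)^2(\xi_j^p-1)}$, and observe at the outset that the hypothesis $(p,q)=1$ is precisely what guarantees $\xi_j^p\neq 1$ for every $1\le j\le q-1$, so that every summand is well defined.

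First I would reindex by $j\mapsto q-j$. Since this is a bijection of $\{1,\dots,q-1\}$ and $\xi_{q-j}=\xi_j^{-1}$, we obtain $S=\sum_{j=1}^{q-1}\frac{\xi_j^{-1}}{(\xi_j^{-1}-1)^2(\xi_j^{-p}-1)}$. Using $\xi_j^{-1}-1=-(\xi_j-1)/\xi_j$ and $\xi_j^{-p}-1=-(\xi_j^p-1)/\xi_j^p$, the generic summand collapses to $-\xi_j^{p+1}/\big[(\xi_j-1)^2(\xi_j^p-1)\big]$, so that $S=-\sum_{j=1}^{q-1}\frac{\xi_j^{p+1}}{(\xi_j-1)^2(\xi_j^p-1)}$.

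Next I would add the two expressions for $S$. The decisive simplification is that the numerators combine as $\xi_j-\xi_j^{p+1}=\xi_j(1-\xi_j^p)$, and the factor $1-\xi_j^p$ cancels the troublesome denominator $\xi_j^p-1$ up to sign, since $(1-\xi_j^p)/(\xi_j^p-1)=-1$. Hence $2S=-\sum_{j=1}^{q-1}\frac{\xi_j}{(\xi_j-1)^2}$, which is now completely free of $p$. This cancellation is the heart of the argument, and it explains the (a priori surprising) fact that the value of the sum does not depend on $p$ at all.

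Finally, by Lemma \ref{zz1} with $k=q$ we have $\sum_{j=1}^{q-1}\frac{\xi_j}{(\xi_j-1)^2}=-\frac{(q-1)(q+1)}{12}=-\frac{q^2-1}{12}$, whence $2S=\frac{q^2-1}{12}$ and $S=\frac{q^2-1}{24}$, as claimed. I do not expect a serious obstacle once the reflection symmetry is spotted; the only points demanding care are the bookkeeping of signs and powers of $\xi_j$ in the reindexed summand, and noting explicitly that $(p,q)=1$ is invoked solely to keep each term finite, not in any deeper way.
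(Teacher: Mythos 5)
Your proof is correct and rests on exactly the same idea as the paper's: the reflection $j\mapsto q-j$ kills the $p$-dependence and reduces the sum to $-\tfrac12\sum_{j=1}^{q-1}\xi_j/(\xi_j-1)^2$, which is then evaluated by Lemma \ref{zz1}. The only (cosmetic) difference is that you carry out the reindexing directly on the roots of unity, whereas the paper expresses the same symmetry through the vanishing of $\sum_{n=1}^{q-1}\cot(\pi np/q)\csc^2(\pi n/q)$ and then translates back.
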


\begin{proof}
We first note that by symmetry
\begin{align*}
\sum_{n=1}^{q-1}\cot\Big(\frac{\pi np}{q}\Big)\csc^2\Big(\frac{\pi n}{q}\Big)=0.
\end{align*}
On the other hand,
\begin{align*}
\sum_{n=1}^{q-1}\cot\Big(\frac{\pi np}{q}\Big)\csc^2\Big(\frac{\pi n}{q}\Big)
&=-4i\sum_{n=1}^{q-1}\frac{\xi_n^p+1}{\xi_n^p-1}\frac{\xi_n}{(\xi_n-1)^2} \notag\\
&=-4i\sum_{n=1}^{q-1}\left(1+\frac{2}{\xi_n^p-1}\right)\frac{\xi_n}{(\xi_n-1)^2}.
\end{align*}
Therefore, by Lemma \ref{zz1}, we arrive at
\begin{align*}
\sum_{n=1}^{q-1}\frac{\xi_n}{(\xi_n-1)^2(\xi_n^p-1)}=-\frac12\sum_{n=1}^{q-1}\frac{\xi_n}{(\xi_n-1)^2}
=\frac{q^2-1}{24}.
\end{align*}
\end{proof}

\begin{lemma}\label{lemr2}
Let $p$ and $q$ be  positive integers  with $p, q \geq 2$ and $(p, q)=1.$
If $\omega_j=e^{2\pi ij/p}$ and $\xi_j=e^{2\pi ij/q},$ then
\begin{align*}
&p\sum_{j=1}^{q-1}\frac{\xi_j}{(\xi_j-1)^3(\xi_j^p-1)}+q\sum_{j=1}^{p-1}\frac{\o_j}{(\o_j-1)^3(\o_j^q-1)}\\
&\qquad \qquad =\frac{1}{720}(p^4+q^4-5p^2q^2-15p^2q-15pq^2+15p+15q+3).
\end{align*}
\end{lemma}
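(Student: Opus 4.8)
The plan is to treat the two sums as a reciprocity pair and to evaluate their combination by collecting all the residues of a single rational function. Set
\begin{equation*}
R(x)=\frac{1}{(x-1)^3(x^p-1)(x^q-1)}.
\end{equation*}
Since $(p,q)=1$, the only common zero of $x^p-1$ and $x^q-1$ is $x=1$; hence for $1\le j\le q-1$ the point $\xi_j$ is a simple pole of $R$ (one has $\xi_j\ne1$ and $\xi_j^p\ne1$), and likewise each $\omega_j$, $1\le j\le p-1$, is a simple pole. Using $\frac{d}{dx}(x^q-1)\big|_{x=\xi_j}=q\xi_j^{q-1}=q\xi_j^{-1}$, I would compute
\begin{equation*}
\Res_{x=\xi_j}R=\frac{\xi_j}{q(\xi_j-1)^3(\xi_j^p-1)},\qquad
\Res_{x=\omega_j}R=\frac{\omega_j}{p(\omega_j-1)^3(\omega_j^q-1)}.
\end{equation*}
Multiplying the first identity by $pq$ and summing over $j$ reproduces exactly $p\sum_{j=1}^{q-1}\xi_j\big((\xi_j-1)^3(\xi_j^p-1)\big)^{-1}$, and similarly for the second, so the left-hand side of the lemma equals $pq\big(\sum_{j=1}^{q-1}\Res_{\xi_j}R+\sum_{j=1}^{p-1}\Res_{\omega_j}R\big)$.

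Next I would invoke the residue theorem on the sphere. The only remaining finite pole of $R$ is $x=1$, which has order $5$ since $(x^p-1)(x^q-1)$ contributes a double zero there. Because the denominator exceeds the numerator in degree by $p+q+3\ge2$, the residue of $R$ at infinity vanishes, so the sum of all finite residues is zero; thus
\begin{equation*}
\sum_{j=1}^{q-1}\Res_{x=\xi_j}R+\sum_{j=1}^{p-1}\Res_{x=\omega_j}R=-\Res_{x=1}R,
\end{equation*}
and the whole left-hand side collapses to $-pq\,\Res_{x=1}R$. This is the structural heart of the argument: neither sum is individually a polynomial in $p,q$, but their weighted combination is governed entirely by the local behavior of $R$ at $x=1$.

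It therefore remains to compute the residue at $x=1$. Writing $x=1+t$ and factoring $x^p-1=pt\,A(t)$ with $A(t)=\sum_{k\ge0}\frac1p\binom{p}{k+1}t^k$, and $B(t)$ the analogous series for $q$, one gets $R=\big(pq\,t^5A(t)B(t)\big)^{-1}$, so $\Res_{x=1}R=\frac1{pq}[t^4]\,\frac1{A(t)B(t)}$ and the left-hand side equals $-[t^4]\,\frac1{A(t)B(t)}$. The main obstacle is this single coefficient extraction. Expanding $1/A(t)=\sum_k u_kt^k$, the first five coefficients collapse to
\begin{equation*}
u_0=1,\quad u_1=-\tfrac{p-1}{2},\quad u_2=\tfrac{p^2-1}{12},\quad u_3=-\tfrac{p^2-1}{24},\quad u_4=-\tfrac{(p^2-1)(p^2-19)}{720},
\end{equation*}
with the analogous $v_k$ for $q$. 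The only genuinely delicate reductions are those of $u_3$ and $u_4$, where one must expand a polynomial in $p$ and recognize the factor $p^2-1$. Forming the convolution $[t^4]\frac1{AB}=u_0v_4+u_1v_3+u_2v_2+u_3v_1+u_4v_0$, clearing denominators to $720$, and expanding, all the stray $p^2$ and $q^2$ contributions cancel and one obtains $[t^4]\frac1{AB}=-\frac1{720}(p^4+q^4-5p^2q^2-15p^2q-15pq^2+15p+15q+3)$; negating gives precisely the asserted value. A reassuring internal check is that the top-degree part $\frac1{720}(p^4+q^4-5p^2q^2)$ is exactly the Bernoulli-number prediction from the approximation $1/A(t)\approx pt/(e^{pt}-1)$, whose coefficients are $B_n p^n/n!$, while the lower-order terms arise from the exact binomial coefficients.
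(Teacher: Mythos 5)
Your argument is correct; I have checked the residue computations, the vanishing of the residue at infinity, the coefficients $u_1,\dots,u_4$ of $1/A(t)$, and the final convolution, all of which come out right and yield the stated polynomial. However, your route is genuinely different from the paper's. The paper first expands $\dfrac{qx}{(x-1)^3(x^q-1)}$ into partial fractions (using the logarithmic-derivative identity for $1/(x^q-1)$ and the precomputed sums $\sum z_n^a/(z_n-1)^b$ of Lemma \ref{zz1}), then evaluates at $x=\omega_j$ and sums over $j$; the companion sum $p\sum_i \xi_i/\bigl((\xi_i-1)^3(\xi_i^p-1)\bigr)$ emerges as a by-product of re-expanding the double sum $\sum_{i,j}1/(\omega_j-\xi_i)$ via \eqref{1xz}, and the remaining explicit sums are read off from Lemmas \ref{1z-1} and \ref{zz1}. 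You instead package everything into the single symmetric rational function $R(x)=\bigl((x-1)^3(x^p-1)(x^q-1)\bigr)^{-1}$ and invoke the global residue theorem, so that the weighted combination collapses to $-pq\,\Res_{x=1}R$, which you extract as a Taylor coefficient of $1/(A(t)B(t))$ at $t=0$. The two are of course two faces of the same partial-fraction calculus (your $u_k$ encode exactly the power sums $\sum_n(1-\omega_n)^{-k}$ appearing in Lemmas \ref{1z-1} and \ref{zz1}), but your version is more symmetric in $p$ and $q$, explains conceptually why the combination $p(\cdot)+q(\cdot)$ has a closed form at all, and is closer in spirit to the contour-integration proofs the paper gives later (e.g.\ Theorem \ref{aaa1}); the paper's version, by contrast, produces the stronger one-sided identity expressing $q\sum_j\omega_j/\bigl((\omega_j-1)^3(\omega_j^q-1)\bigr)$ alone as an explicit polynomial minus the companion sum, which is reused as such inside Lemma \ref{lemr4} and Theorem \ref{ccspq}, so the reciprocity-only formulation you prove would not quite suffice for those later applications without re-deriving the intermediate identity.
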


\begin{proof}
We first observe that  by \eqref{xp1}, \eqref{p3}, and Lemma \ref{zz1},
\begin{align*}
&\frac{qx}{(x-1)^3(x^q-1)}=\frac{x}{(x-1)^4}+\frac{x}{(x-1)^3}\sum_{i=1}^{q-1}\frac{\xi_i}{x-\xi_i} \\
&\qquad=\frac{1}{(x-1)^3}+\frac{1}{(x-1)^4}-\sum_{i=1}^{q-1}\frac{\xi_i^2}{(\xi_i-1)^3(x-1)}
-\sum_{i=1}^{q-1}\frac{\xi_i^2}{(\xi_i-1)^2(x-1)^2}\\
&\qquad \qquad-\sum_{i=1}^{q-1}\frac{\xi_i}{(\xi_i-1)(x-1)^3}+\sum_{i=1}^{q-1}\frac{\xi_i^2}{(\xi_i-1)^3(x-\xi_i)}\\
&\qquad  =\frac{(q-1)(q+1)}{24}\frac{1}{x-1}+\frac{(q-1)(q-5)}{12}\frac{1}{(x-1)^2}-\frac{q-3}{2}\frac{1}{(x-1)^3}\\
&\qquad \qquad +\frac{1}{(x-1)^4}+\sum_{i=1}^{q-1}\frac{\xi_i^2}{(\xi_i-1)^3(x-\xi_i)}.
\end{align*}
Thus, it follows that
\begin{align}
q\sum_{j=1}^{p-1}\frac{\o_j}{(\o_j-1)^3(\o_j^q-1)}&=\frac{(q-1)(q+1)}{24}
\sum_{j=1}^{p-1}\frac{1}{\o_j-1}+\frac{(q-1)(q-5)}{12}\sum_{j=1}^{p-1}\frac{1}{(\o_j-1)^2}\notag\\
&-\frac{q-3}{2}\sum_{j=1}^{p-1}\frac{1}{(\o_j-1)^3}+\sum_{j=1}^{p-1}\frac{1}{(\o_j-1)^4}
+\sum_{j=1}^{p-1}\sum_{i=1}^{q-1}\frac{\xi_i^2}{(\xi_i-1)^3(\o_j-\xi_i)}.\label{D1}
\end{align}
Using \eqref{1xz}, we deduce that
\begin{align}\label{xi2}
&\sum_{j=1}^{p-1}\sum_{i=1}^{q-1}\frac{\xi_i^2}{(\xi_i-1)^3(\o_j-\xi_i)}
=\sum_{i=1}^{q-1}\frac{\xi_i^2}{(\xi_i-1)^3}\left\{\frac{1}{\xi_i-1}-\frac{p}{\xi_i}-\frac{p}{\xi_i(\xi_i^p-1)} \right\} \notag\\
&\qquad =\sum_{i=1}^{q-1}\frac{\xi_i^2}{(\xi_i-1)^4}-p\sum_{i=1}^{q-1}\frac{\xi_i}{(\xi_i-1)^3}
-p\sum_{i=1}^{q-1}\frac{\xi_i}{(\xi_i-1)^3(\xi_i^p-1)}.
\end{align}
Hence, by Lemmas \ref{1z-1} and \ref{zz1}, \eqref{D1}, and \eqref{xi2},
\begin{align*}
q\sum_{j=1}^{p-1}\frac{\o_j}{(\o_j-1)^3(\o_j^q-1)}=&
\frac{1}{720}(p^4+q^4-5p^2q^2-15p^2q-15pq^2+15p+15q+3)\\
&-p\sum_{i=1}^{q-1}\frac{\xi_i}{(\xi_i-1)^3(\xi_i^p-1)},
\end{align*}
which completes the proof.
\end{proof}

\begin{lemma}\label{xk2}
Let $k>1$ be an integer  and $z_j=e^{2\pi ij/k}.$ Then,
\begin{align*}
\frac{x}{(x-1)^2(x^k-1)^2}=&-\frac{(k-1)(k^2-4k+1)}{12k^2(x-1)}+\frac{(k-1)(5k-13)}{12k^2(x-1)^2}
-\frac{k-2}{k^2(x-1)^3}+\frac{1}{k^2(x-1)^4}\\
&-\frac{1}{k^2}\sum_{j=1}^{k-1}\left\{\frac{kz_j^2}{(z_j-1)^2}
+\frac{2z_j^2}{(z_j-1)^3}\right\}\frac{1}{(x-z_j)}+\frac{1}{k^2}\sum_{j=1}^{k-1}\frac{z_j^3}{(z_j-1)^2(x-z_j)^2}.
\end{align*}
\end{lemma}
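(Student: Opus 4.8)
The plan is to read the partial fraction decomposition directly off the pole structure of
$R(x):=\df{x}{(x-1)^2(x^k-1)^2}$. Since $x^k-1=(x-1)\prod_{j=1}^{k-1}(x-z_j)$, we have $(x^k-1)^2=(x-1)^2\prod_{j=1}^{k-1}(x-z_j)^2$, so $R$ has a pole of order $4$ at $x=1$, a double pole at each $z_j$ with $1\le j\le k-1$, and no other singularities; moreover $R(x)\to 0$ as $x\to\infty$, so there is no polynomial part. Hence $R(x)=\sum_{m=1}^4\df{a_m}{(x-1)^m}+\sum_{j=1}^{k-1}\Big(\df{b_j}{x-z_j}+\df{c_j}{(x-z_j)^2}\Big)$, and it remains only to compute the two families of coefficients and match them against the claim.

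First I would treat the double poles. Writing $h(x)=x^k-1$ and using $z_j^k=1$, the local data $h'(z_j)=kz_j^{k-1}=k/z_j$ and $h''(z_j)=k(k-1)z_j^{k-2}=k(k-1)/z_j^2$ give the expansion $h(x)=\tfrac{k}{z_j}(x-z_j)\big(1+\tfrac{k-1}{2z_j}(x-z_j)+\cdots\big)$ near $z_j$. Then $c_j=\lim_{x\to z_j}(x-z_j)^2R(x)=\df{z_j^3}{k^2(z_j-1)^2}$, which reproduces the $1/(x-z_j)^2$ sum in the statement. For $b_j=\tfrac{d}{dx}\big[(x-z_j)^2R(x)\big]\big|_{x=z_j}$ I would apply the product rule to $\df{x}{(x-1)^2}$ and $\big((x-z_j)/h(x)\big)^2$, whose values and first derivatives at $z_j$ are $-\df{z_j+1}{(z_j-1)^3}$, $\df{z_j}{(z_j-1)^2}$, and $\df{z_j^2}{k^2}$, $-\df{z_j(k-1)}{k^2}$, respectively. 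This yields $b_j=-\df{z_j^2}{k^2}\Big(\df{z_j+1}{(z_j-1)^3}+\df{k-1}{(z_j-1)^2}\Big)$, which collapses to $-\df{z_j^2}{k^2}\Big(\df{2}{(z_j-1)^3}+\df{k}{(z_j-1)^2}\Big)$ via the trivial identity $\df{z_j-1}{(z_j-1)^3}=\df{1}{(z_j-1)^2}$, matching the $1/(x-z_j)$ sum.

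For the fourth-order pole at $x=1$ I would factor through $g(x)=\df{x^k-1}{x-1}$, writing $R(x)=\df{F(x)}{(x-1)^4}$ with $F(x)=\df{x}{g(x)^2}$, so that $a_{4-n}=\df{F^{(n)}(1)}{n!}$ for $n=0,1,2,3$. To organize the derivatives I would use the logarithmic derivative $L:=F'/F=\df1x-2\df{g'}{g}$, which gives $F'=FL$, $F''=F(L^2+L')$, and $F'''=F(L^3+3LL'+L'')$, and then substitute $g(1)=k$, $g'(1)=\tfrac{k(k-1)}{2}$, $g''(1)=\tfrac{k(k-1)(k-2)}{3}$, $g'''(1)=\tfrac{k(k-1)(k-2)(k-3)}{4}$ from Lemma \ref{1z-1}. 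This produces $a_4=1/k^2$, $a_3=-(k-2)/k^2$, $a_2=\df{(k-1)(5k-13)}{12k^2}$, and $a_1=-\df{(k-1)(k^2-4k+1)}{12k^2}$, the four explicit $(x-1)^{-m}$ terms in the claim.

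The main obstacle is the lowest coefficient $a_1=F'''(1)/6$, the only place a third derivative is needed; a direct quotient-rule expansion of $x/g(x)^2$ is unwieldy. The logarithmic-derivative bookkeeping keeps it tractable: one finds $L(1)=2-k$, $L'(1)=\tfrac16(-k^2+6k-11)$, and $L''(1)=\tfrac12(k^2-4k+7)$, whence $L(1)^3+3L(1)L'(1)+L''(1)=\tfrac12(-k^3+5k^2-5k+1)$ and $a_1=\df{-k^3+5k^2-5k+1}{12k^2}$. The asserted polynomial then emerges from the factorization $-k^3+5k^2-5k+1=-(k-1)(k^2-4k+1)$. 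The step demanding the most care is pulling the common factor $(k-1)$ and the appropriate powers of $k$ out of each intermediate numerator (for instance, $k^2(k-1)$ in the $F''$ computation and $k^3(k-1)$ inside $L''$) so that these clean polynomials appear; once $a_1,\dots,a_4,b_j,c_j$ are in hand, substituting them into the decomposition above completes the proof.
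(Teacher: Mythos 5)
Your proof is correct, and all the intermediate values check out ($L(1)=2-k$, $L'(1)=\tfrac16(-k^2+6k-11)$, $L''(1)=\tfrac12(k^2-4k+7)$, and the resulting $a_1,\dots,a_4$, $b_j$, $c_j$ all agree with the statement), but your route is genuinely different from the paper's. The paper never computes Laurent data at the poles directly: it substitutes the expansion \eqref{xp1} to write $\tfrac{k^2x}{(x-1)^2(x^k-1)^2}=\tfrac{x}{(x-1)^2}\bigl(\tfrac{1}{x-1}+\sum_{j}\tfrac{z_j}{x-z_j}\bigr)^2$, expands the square (which produces off-diagonal cross terms $\sum_{i\neq j}$), applies the partial-fraction templates \eqref{p3}, \eqref{p4} and an analogous one for $x/((x-1)^2(x-a)^2)$ to each piece, and then evaluates the resulting coefficients — diagonal sums and squares of sums such as $\sum_j z_j^2/(z_j-1)^3$ and $\bigl(\sum_j z_j/(z_j-1)^2\bigr)^2$ — by the closed forms in Lemmas \ref{zz1} and \ref{1z-1}. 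Your argument replaces all of that with a classical local analysis: the double-pole coefficients come from $h'(z_j)=k/z_j$ and $h''(z_j)=k(k-1)/z_j^2$, and the order-four pole at $x=1$ from the derivatives $g^{(n)}(1)$ via logarithmic differentiation of $F=x/g^2$, so you never need the cross-term bookkeeping or the evaluations of Lemma \ref{zz1} at all. What the paper's method buys is uniformity — the identical machinery is reused for Lemmas \ref{R}, \ref{+xk2}, and the two-period theorems, so the root-of-unity sums are a sunk cost; what yours buys is a shorter, more self-contained derivation whose only external input is the list of $g^{(n)}(1)$ already computed in the proof of Lemma \ref{1z-1}. One cosmetic remark: in your second paragraph the value and the derivative of $x/(x-1)^2$ at $z_j$ are listed in the opposite order from how you announce them, though the formula for $b_j$ that follows is correct.
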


\begin{proof}
We first observe that by \eqref{xp1},
\begin{align}\label{xk2-1}
&\frac{k^2x}{(x-1)^2(x^k-1)^2}=\frac{x}{(x-1)^2}\left(\frac{1}{x-1}+\sum_{j=1}^{k-1}\frac{z_j}{x-z_j}\right)^2 \notag\\
&=\frac{x}{(x-1)^4}+\frac{2x}{(x-1)^3}\sum_{j=1}^{k-1}\frac{z_j}{x-z_j}+\frac{x}{(x-1)^2}
\left\{\sum_{j=1}^{k-1}\frac{z^2_j}{(x-z_j)^2}+\sum_{\substack{i,j=1 \\ i\neq j}}^{k-1}\frac{z_iz_j}{(x-z_i)(x-z_j)}\right\}.
\end{align}
Applying \eqref{p3}, \eqref{p4}, and the following partial fraction decomposition
\begin{align*}
\frac{x}{(x-1)^2(x-a)^2}=&\frac{a+1}{(a-1)^3(x-1)}-\frac{a+1}{(a-1)^3(x-a)}\\
&+\frac{1}{(a-1)^2(x-1)^2}+\frac{a}{(a-1)^2(x-a)^2},
\end{align*}
 we deduce that
\begin{align}
&\frac{k^2x}{(x-1)^2(x^k-1)^2}=\frac{1}{(x-1)^3}+\frac{1}{(x-1)^4}\notag\\
&-2\sum_{j=1}^{k-1}\left\{\frac{z_j^2}{(z_j-1)^3(x-1)} +\frac{z_j^2}{(z_j-1)^2(x-1)^2}
+\frac{z_j}{(z_j-1)(x-1)^3}-\frac{z_j^2}{(z_j-1)^3(x-z_j)}\right\}\notag\\
&+\sum_{j=1}^{k-1}\left\{\frac{z_j^2(z_j+1)}{(z_j-1)^3(x-1)} -\frac{z_j^2(z_j+1)}{(z_j-1)^3(x-z_j)}
+\frac{z_j^2}{(z_j-1)^2(x-1)^2}+\frac{z_j^3}{(z_j-1)^2(x-z_j)^2}\right\}\notag\\
&+\sum_{\substack{i,j=1 \\ i\neq j}}^{k-1}\left\{\frac{z_iz_j(z_iz_j-1)}{(z_i-1)^2(z_j-1)^2(x-1)}
+\frac{z_iz_j}{(z_i-1)(z_j-1)(x-1)^2}+\frac{2z_iz_j^2}{(z_j-1)^2(z_j-z_i)(x-z_j)}\right\}.\label{E}
\end{align}

We now calculate the coefficients of $\dfrac{1}{(x-1)^n}$, $1\leq n \leq 4$, say, $E_1, E_2, E_3, E_4$, respectively, and also those of $\dfrac{1}{(x-z_j)^n}$, $n=1,2$, say, $E_5, E_6$, respectively, in \eqref{E}.

First, we find that the coefficient of $\dfrac{1}{x-1}$ is
\begin{align*}
&-2\sum_{j=1}^{k-1}\frac{z_j^2}{(z_j-1)^3}+\sum_{j=1}^{k-1}\frac{z_j^3+z_j^2}{(z_j-1)^3}
+\sum_{\substack{i,j=1 \\ i\neq j}}^{k-1}\frac{z^2_iz^2_j-z_iz_j}{(z_i-1)^2(z_j-1)^2} \\
&\quad =\sum_{j=1}^{k-1}\frac{z_j^3}{(z_j-1)^3}-\sum_{j=1}^{k-1}\frac{z_j^2}{(z_j-1)^3}
+\left(\sum_{j=1}^{k-1}\frac{z_j^2}{(z_j-1)^2}\right)^2-\sum_{j=1}^{k-1}\frac{z_j^4}{(z_j-1)^4}\\
&\qquad -\left(\sum_{j=1}^{k-1}\frac{z_j}{(z_j-1)^2}\right)^2+\sum_{j=1}^{k-1}\frac{z_j^2}{(z_j-1)^4},
\end{align*}
which reduces by Lemma \ref{zz1}  to
\begin{align}\label{E1}
E_1=-\dfrac{(k-1)(k^2-4k+1)}{12}.
\end{align}
Similarly, by Lemma \ref{zz1}  the coefficient of $\dfrac{1}{(x-1)^2}$ is
\begin{align}
E_2=&-2\sum_{j=1}^{k-1}\frac{z_j^2}{(z_j-1)^2}+\sum_{j=1}^{k-1}\frac{z_j^2}{(z_j-1)^2}+
\sum_{\substack{i,j=1 \\ i\neq j}}^{k-1}\frac{z_iz_j}{(z_i-1)(z_j-1)}\notag\\
 =&-2\sum_{j=1}^{k-1}\frac{z_j^2}{(z_j-1)^2}+\left(\sum_{j=1}^{k-1}\frac{z_j}{(z_j-1)}\right)^2
=\frac{(k-1)(5k-13)}{12}.\label{E2}
\end{align}
Also, the coefficient of $\dfrac{1}{(x-1)^3}$  is
\begin{align}
E_3=1-2\sum_{j=1}^{k-1}\frac{z_j}{(z_j-1)}=2-k,\label{E3}
\end{align}
and the coefficient of $\dfrac{1}{(x-1)^4}$ is
\begin{equation}\label{E4}
E_4=1.
\end{equation}

Next, for each $1\leq j \leq k-1,$ the coefficient of $\dfrac{1}{x-z_j}$ is
\begin{align}\label{E5}
E_5=&\frac{2z_j^2}{(z_j-1)^3}-\frac{z_j^3+z_j^2}{(z_j-1)^3}+\frac{2z_j^2}{(z_j-1)^2}
\sum_{\substack{i=1 \\ i \neq j}}^{k-1}\frac{z_i}{(z_j-z_i)}\notag\\
&\quad =-\frac{z_j^2}{(z_j-1)^2}+\frac{2z_j^2}{(z_j-1)^2}\sum_{\substack{i=1 \\ i \neq j}}^{k-1}\Big(\frac{z_j}{(z_j-z_i)}-1\Big)\notag\\
&\quad =-\frac{(2k-3)z_j^2}{(z_j-1)^2}+\frac{2z_j^3}{(z_j-1)^2}\sum_{\substack{i=1 \\ i \neq j}}^{k-1}\frac{1}{(z_j-z_i)}\notag\\
&\quad =-\frac{(2k-3)z_j^2}{(z_j-1)^2}+\frac{2z_j^3}{(z_j-1)^2}
\left\{\sum_{\substack{i=1 \\ i \neq j}}^{k}\frac{1}{(z_j-z_i)}-\frac{1}{z_j-1}\right\}\notag\\
&\quad =-\frac{(2k-3)z_j^2}{(z_j-1)^2}-\frac{2z_j^3}{(z_j-1)^3}+\frac{2z_j^2}{(z_j-1)^2}\sum_{s=1}^{k-1}\frac{1}{1-z_s}\notag\\
&\quad =-\frac{(2k-3)z_j^2}{(z_j-1)^2}-\frac{2z_j^3}{(z_j-1)^3}+\frac{(k-1)z_j^2}{(z_j-1)^2}\notag\\
&\quad =-\frac{kz_j^2}{(z_j-1)^2}-\frac{2z_j^2}{(z_j-1)^3},
\end{align}
by Lemma \ref{1z-1}.

Lastly, for each $1\leq j \leq k-1,$ we can easily see that the coefficient of $\dfrac{1}{(x-z_j)^2}$ is
\begin{align}\label{E6}
E_6=\frac{z_j^3}{(z_j-1)^2}.
\end{align}
Putting \eqref{E1}--\eqref{E6} in \eqref{E}, we finish the proof of Lemma \ref{xk2}.
\end{proof}

\begin{lemma}\label{lemr4}
Let $p$ and $q$ be  positive integers  with $p, q \geq 2$ and $(p, q)=1.$
If $\omega_j=e^{2\pi ij/p}$ and $\xi_j=e^{2\pi ij/q},$ then
\begin{align*}
&p^2\sum_{j=1}^{q-1}\frac{\xi_j}{(\xi_j-1)^2(\xi_j^p-1)^2}-q^2\sum_{j=1}^{p-1}\frac{\o_j}{(\o_j-1)^2(\o_j^q-1)^2}\\
&=-\frac{1}{720}(3p^4-q^4-25p^2+25q^2-5p^2q^2-30p^2q+30q+3)\\
&\qquad \qquad +2q\sum_{j=1}^{p-1}\frac{\o_j}{(\o_j-1)^3(\o_j^q-1)}.
\end{align*}
\end{lemma}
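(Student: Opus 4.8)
The plan is to follow exactly the template established in the proof of Lemma \ref{lemr2}, replacing the single factor $1/(x^q-1)$ by the squared factor $1/(x^q-1)^2$ and using the already-computed partial fraction decomposition from Lemma \ref{xk2} in place of \eqref{p3}. Concretely, I would start from Lemma \ref{xk2} with $k=q$, which expresses
\begin{align*}
\frac{q^2 x}{(x-1)^2(x^q-1)^2}
\end{align*}
as a linear combination of $1/(x-1)^n$ ($1\le n\le 4$) together with the single and double sums
\begin{align*}
\sum_{j=1}^{q-1}\Big\{\tfrac{q\xi_j^2}{(\xi_j-1)^2}+\tfrac{2\xi_j^2}{(\xi_j-1)^3}\Big\}\frac{1}{x-\xi_j}
\quad\text{and}\quad
\sum_{j=1}^{q-1}\frac{\xi_j^3}{(\xi_j-1)^2(x-\xi_j)^2}.
\end{align*}
The coefficients of the $1/(x-1)^n$ terms are the explicit polynomials in $q$ given by \eqref{E1}--\eqref{E4}.

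Next I would substitute $x=\omega_j$ and sum over $j=1,\dots,p-1$, exactly as the passage from the boxed display to \eqref{D1} does in Lemma \ref{lemr2}. The four $1/(x-1)^n$ terms then produce sums $\sum_{j=1}^{p-1} 1/(\omega_j-1)^n$, which are evaluated in closed form by Lemma \ref{1z-1}; these contribute the bulk of the rational-in-$p,q$ right-hand side. The genuinely new work is handling the two $\xi$-sums after the substitution $x=\omega_j$. For the double-pole term I would interchange the order of summation and apply the identity \eqref{1xz} to collapse $\sum_{j=1}^{p-1} 1/(\omega_j-\xi_i)^2$; for the simple-pole term I would likewise use \eqref{1xz} (together with its derivative, to reach the $1/(x-\xi_i)^2$ contributions) to convert $\sum_{j=1}^{p-1} 1/(\omega_j-\xi_i)$ into $p\,\xi_i^{p-1}/(\xi_i^p-1)$ minus a $1/(\xi_i-1)$ boundary term. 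This mirrors precisely the derivation of \eqref{xi2}, and it is where I expect the calculation to be most delicate: keeping track of which powers of $\xi_i$ and which $(\xi_i-1)^m$ denominators appear, and correctly separating the $1/(\xi_i^p-1)$ pieces from the $1/(\xi_i^p-1)^2$ pieces.

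After this reduction, the $\xi$-sums regroup into three families: a term $p^2\sum_{j=1}^{q-1}\xi_j/((\xi_j-1)^2(\xi_j^p-1)^2)$ (which is the first sum appearing in the statement), a term proportional to $p\sum_{j=1}^{q-1}\xi_j/((\xi_j-1)^3(\xi_j^p-1))$, and a collection of sums of the form $\sum_{j=1}^{q-1}\xi_j^a/(\xi_j-1)^b$ that are all evaluated by Lemma \ref{zz1}. Crucially, the term $2q\sum_{j=1}^{p-1}\omega_j/((\omega_j-1)^3(\omega_j^q-1))$ on the right-hand side of the statement is exactly the left-hand quantity $q^2\sum_{j=1}^{p-1}\omega_j/((\omega_j-1)^2(\omega_j^q-1)^2)$ re-expressed: indeed, applying Lemma \ref{xk2} with $k=q$ and $x=\omega_j$ already introduces such a $1/(\omega_j-1)^3$ weight, so I expect the $\omega$-side and the $p\leftrightarrow q$-symmetric $\xi$-side to combine with opposite signs, producing the antisymmetric combination $p^2(\cdots)-q^2(\cdots)$ on the left. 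The final step is purely bookkeeping: assemble all closed-form contributions from Lemmas \ref{1z-1} and \ref{zz1}, verify that the polynomial-in-$p,q$ remainder equals $-\tfrac{1}{720}(3p^4-q^4-25p^2+25q^2-5p^2q^2-30p^2q+30q+3)$, and check that the leftover sum is precisely $2q\sum_{j=1}^{p-1}\omega_j/((\omega_j-1)^3(\omega_j^q-1))$. The main obstacle is thus not conceptual but the careful algebraic reduction of the mixed double sums via \eqref{1xz}, since an error in a single boundary term would corrupt the entire polynomial identity.
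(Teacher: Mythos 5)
Your overall strategy is the paper's: expand the squared-denominator sum via Lemma \ref{xk2}, evaluate at the complementary roots of unity, collapse the resulting inner sums with \eqref{1xz} and \eqref{1xz2}, and finish with Lemmas \ref{1z-1}, \ref{zz1}, and \ref{lemr1}. But you have instantiated it in the mirror image of what the statement requires, and your account of where the residual cotangent-type sum comes from is not right. The paper applies Lemma \ref{xk2} with $k=p$ and $x=\xi_j$, summing over $j=1,\dots,q-1$: the left side is then $p^2\sum_{j=1}^{q-1}\xi_j/((\xi_j-1)^2(\xi_j^p-1)^2)$, the double-pole term collapses (via \eqref{1xz2}) to $q^2\sum_{i=1}^{p-1}\omega_i/((\omega_i-1)^2(\omega_i^q-1)^2)$ plus closed forms, and the simple-pole term collapses (via \eqref{1xz}) to closed forms plus $2q\sum_{i=1}^{p-1}\omega_i/((\omega_i-1)^3(\omega_i^q-1))$; the latter arises specifically from the product of the weight $2\omega_i^2/(\omega_i-1)^3$ with the piece $-q/(\omega_i(\omega_i^q-1))$ of $\sum_{j=1}^{q-1}1/(\xi_j-\omega_i)$. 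It is \emph{not} a re-expression of $q^2\sum\omega_i/((\omega_i-1)^2(\omega_i^q-1)^2)$, and it cannot come from the $1/(x-1)^3$ term of the decomposition, which after summation is just a number by Lemma \ref{1z-1}.

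With your choice ($k=q$, $x=\omega_j$, summed over $j=1,\dots,p-1$), the same mechanics produce instead
\begin{equation*}
q^2\sum_{j=1}^{p-1}\frac{\omega_j}{(\omega_j-1)^2(\omega_j^q-1)^2}
= P(p,q) - 2p\sum_{i=1}^{q-1}\frac{\xi_i}{(\xi_i-1)^3(\xi_i^p-1)}
 + p^2\sum_{i=1}^{q-1}\frac{\xi_i}{(\xi_i-1)^2(\xi_i^p-1)^2},
\end{equation*}
i.e.\ the leftover single-power sum runs over the $q$-th roots of unity, not the $p$-th. That is a true identity (it is the stated one with $p$ and $q$ interchanged and the sign flipped), but it is not the identity you were asked to prove: to convert $-2p\sum_{i}\xi_i/((\xi_i-1)^3(\xi_i^p-1))$ into $+2q\sum_{j}\omega_j/((\omega_j-1)^3(\omega_j^q-1))$ plus a polynomial you must additionally invoke Lemma \ref{lemr2}, a step your outline omits because you believed the $\omega$-sum would appear directly. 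The fix is either to add that one application of Lemma \ref{lemr2} at the end, or simply to swap your instantiation to $k=p$, $x=\xi_j$ as the paper does, after which your bookkeeping plan goes through unchanged. (Minor point: the derivative identity \eqref{1xz2} is needed for the double-pole sums $\sum 1/(x-\xi_i)^2$, not for the simple-pole ones as your parenthetical suggests.)
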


\begin{proof}
We note that applying Lemma \ref{xk2} with $x=\xi_j,$ $k=p$, and $z_j=\o_j$ and summing over $1\leq j\leq q-1$ yields
\begin{align}\label{r4e1}
&p^2\sum_{j=1}^{q-1}\frac{\xi_j}{(\xi_j-1)^2(\xi_j^p-1)^2}=
-\frac{(p-1)(p^2-4p+1)}{12}\sum_{j=1}^{q-1}\frac{1}{\xi_j-1} \notag\\
&\quad +\frac{(p-1)(5p-13)}{12}\sum_{j=1}^{q-1}\frac{1}{(\xi_j-1)^2}
-(p-2)\sum_{j=1}^{q-1}\frac{1}{(\xi_j-1)^3}+\sum_{j=1}^{q-1}\frac{1}{(\xi_j-1)^4}\notag\\
&-\sum_{i=1}^{p-1}\left\{\frac{p\o_i^2}{(\o_i-1)^2}
+\frac{2\o_i^2}{(\o_i-1)^3}\right\}\sum_{j=1}^{q-1}\frac{1}{(\xi_j-\o_i)}
+\sum_{i=1}^{p-1}\frac{\o_i^3}{(\o_i-1)^2}\sum_{j=1}^{q-1}\frac{1}{(\xi_j-\o_i)^2}.
\end{align}
By Lemma \ref{1z-1}, the sum of the first four summations in \eqref{r4e1} can be written as
\begin{align}\label{r4e2}
\frac{1}{720}(q^4+30p^3q-30p^3-25p^2q^2+25p^2+5q^2-30pq+30p-6).
\end{align}
Next, we consider that by \eqref{1xz} and Lemmas \ref{zz1} and \ref{lemr1},
\begin{align}\label{r4e3}
&\sum_{i=1}^{p-1}\left\{\frac{p\o_i^2}{(\o_i-1)^2}+\frac{2\o_i^2}{(\o_i-1)^3}\right\}\sum_{j=1}^{q-1}\frac{1}{(\xi_j-\o_i)} \notag\\
&=\sum_{i=1}^{p-1}\left\{\frac{p\o_i^2}{(\o_i-1)^2}+\frac{2\o_i^2}{(\o_i-1)^3}\right\}
\left\{\frac{1}{\o_i-1}-\frac{q}{\o_i}-\frac{q}{\o_i(\o_i^q-1)}\right\} \notag\\
&=\sum_{i=1}^{p-1}\left\{\frac{p\o_i^2}{(\o_i-1)^3}+\frac{2\o_i^2}{(\o_i-1)^4}-\frac{pq\o_i}{(\o_i-1)^2}-\frac{2q\o_i}{(\o_i-1)^3}\right.\notag\\
&\left.\qquad\qquad  -\frac{pq\o_i}{(\o_i-1)^2(\o_i^q-1)}-\frac{2q\o_i}{(\o_i-1)^3(\o_i^q-1)}\right\} \notag\\
&=\frac{1}{360}(p+1)(p-1)(p^2+15pq-15p-30q+11)\notag\\
&\qquad \qquad -2q\sum_{i=1}^{p-1}\frac{\o_i}{(\o_i-1)^3(\o_i^q-1)}.
\end{align}
Lastly, taking the derivative of both sides of the identity in \eqref{1xz}, we see that
\begin{align}\label{1xz2}
\sum_{j=1}^{k-1}\frac{1}{(x-z_j)^2}=-\frac{1}{(x-1)^2}-\frac{k(k-1)x^{k-2}}{x^k-1}+\frac{k^2x^{2k-2}}{(x^k-1)^2}.
\end{align}
With the use of \eqref{1xz2} and Lemmas \ref{zz1} and \ref{lemr1}, we have
\begin{align}\label{r4e4}
&\sum_{i=1}^{p-1}\frac{\o_i^3}{(\o_i-1)^2}\sum_{j=1}^{q-1}\frac{1}{(\xi_j-\o_i)^2}=-
\sum_{i=1}^{p-1}\frac{\o_i^3}{(\o_i-1)^2}\left\{\frac{1}{(\o_i-1)^2}+\frac{q(q-1)\o_i^{q-2}}{\o_i^q-1}-
\frac{q^2\o_i^{2q-2}}{(\o_i^q-1)^2}\right\} \notag\\
&=-\sum_{i=1}^{p-1}\frac{\o_i^3}{(\o_i-1)^4}-q(q-1)\sum_{i=1}^{p-1}\frac{\o_i^{q+1}}{(\o_i-1)^2(\o_i^q-1)}
+q^2\sum_{i=1}^{p-1}\frac{\o_i^{2q+1}}{(\o_i-1)^2(\o_i^q-1)^2} \notag\\
&=-\sum_{i=1}^{p-1}\frac{\o_i^3}{(\o_i-1)^4}-q(q-1)\sum_{i=1}^{p-1}\left(\frac{\o_i}{(\o_i-1)^2}
+\frac{\o_i}{(\o_i-1)^2(\o_i^q-1)}\right) \notag\\
&\quad +q^2\sum_{i=1}^{p-1}\left(\frac{\o_i}{(\o_i-1)^2}+\frac{2\o_i}{(\o_i-1)^2(\o_i^q-1)}+
\frac{\o_i}{(\o_i-1)^2(\o_i^q-1)^2}\right)\notag\\
&=-\sum_{i=1}^{p-1}\frac{\o_i^3}{(\o_i-1)^4}+\sum_{i=1}^{p-1}\frac{q\o_i}{(\o_i-1)^2}
+\sum_{i=1}^{p-1}\frac{q(q+1)\o_i}{(\o_i-1)^2(\o_i^q-1)}+\sum_{i=1}^{p-1}\frac{q^2\o_i}{(\o_i-1)^2(\o_i^q-1)^2} \notag\\
&=-\frac{1}{720}(p^2-1)(p^2+60q-19)+\frac{q(q+1)(p^2-1)}{24}+\sum_{i=1}^{p-1}\frac{q^2\o_i}{(\o_i-1)^2(\o_i^q-1)^2} \notag\\
&=-\frac{1}{720}(p^2-1)(p^2-30q^2+30q-19)+\sum_{i=1}^{p-1}\frac{q^2\o_i}{(\o_i-1)^2(\o_i^q-1)^2}.
\end{align}
From \eqref{r4e1},  \eqref{r4e2}, \eqref{r4e3}, and \eqref{r4e4}, the lemma follows.
\end{proof}

\section{The Determination of a Trigonometric Sum with Two Independent Periods}

\begin{theorem}\label{ccspq}
Let $p$ and $q$ be positive integers  with $p, q \geq 2$ and $(p, q)=1,$ and let $\o_j=e^{2\pi ij/p}.$ Then,
\begin{align*}
&\sum_{\substack{n=1\\ p \nmid n, \, q \nmid n}}^{pq-1}
\dfrac{\cot\Big(\dfrac{\pi n}{p}\Big)\cot\Big(\dfrac{\pi n}{q}\Big)}{\sin^2\Big(\dfrac{\pi n}{pq}\Big)}\\
&\quad =\frac{1}{45p}(p^4q^3-5p^4q+4q^3-5p^2q^3+55p^2q+30p^2-50q-30)\\
&\qquad+\frac{32}{p}\sum_{j=1}^{p-1}\frac{\o_j}{(\o_j-1)^3(\o_j^q-1)}
+\frac{32q}{p}\sum_{j=1}^{p-1}\frac{\o_j}{(\o_j-1)^2(\o_j^q-1)^2}.
\end{align*}
\end{theorem}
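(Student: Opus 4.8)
The plan is to recognize the summand as (four times) the rational function whose partial-fraction decomposition is supplied by Lemma \ref{R}, to sum that decomposition over the relevant roots of unity, and to reduce the resulting root-of-unity sums by means of the elementary lemmas together with Lemmas \ref{lemr1}, \ref{lemr2}, and \ref{lemr4}. First I would put $z_n=e^{2\pi in/(pq)}$, so that $z_n^q=e^{2\pi in/p}$ and $z_n^p=e^{2\pi in/q}$. Writing $\cot(\pi n/p)=i(z_n^q+1)/(z_n^q-1)$ and $\cot(\pi n/q)=i(z_n^p+1)/(z_n^p-1)$, and using $\sin^2(\pi n/(pq))=-(z_n-1)^2/(4z_n)$ from \eqref{sq}, the $n$-th term becomes
\begin{equation*}
R(z_n):=\frac{4z_n(z_n^p+1)(z_n^q+1)}{(z_n-1)^2(z_n^p-1)(z_n^q-1)}.
\end{equation*}
The key observation is that the restriction $p\nmid n$, $q\nmid n$ is exactly the condition $z_n^q\ne1$ and $z_n^p\ne1$; hence $n$ runs over precisely those $pq$-th roots of unity that avoid the poles $x=1$, $x=\omega_j$, $x=\xi_j$ of $R$. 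Since $R$ is four times the rational function of Lemma \ref{R}, I would insert its partial-fraction decomposition and interchange the (finite) sums.

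Next I would evaluate each piece over the restricted index set $W':=\{z_n:1\le n\le pq-1,\ p\nmid n,\ q\nmid n\}$. Because $(p,q)=1$, the $p$-th and $q$-th roots of unity meet only at $1$, so $W'$ arises from all $pq$-th roots $\ne1$ by deleting the $p$-th roots $\ne1$ and the $q$-th roots $\ne1$ disjointly; thus for any $g$,
\begin{equation*}
\sum_{\zeta\in W'}g(\zeta)=\sum_{\substack{\zeta^{pq}=1\\ \zeta\ne1}}g(\zeta)-\sum_{\substack{\zeta^{p}=1\\ \zeta\ne1}}g(\zeta)-\sum_{\substack{\zeta^{q}=1\\ \zeta\ne1}}g(\zeta).
\end{equation*}
The terms $(x-1)^{-m}$ then contribute explicit polynomials in $p$ and $q$ via three applications of Lemma \ref{1z-1} (with $k=pq,p,q$). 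For the pole terms I would compute the inner sums $\sum_{\zeta\in W'}(\zeta-\omega_j)^{-1}$, and the analogue for $\xi_j$, by the same inclusion--exclusion, using the elementary identity $\sum_{\zeta^{k}=1,\,\zeta\ne a}(\zeta-a)^{-1}=-(k-1)/(2a)$ valid when $a^{k}=1$, together with \eqref{1xz}. This collapses each inner sum to an expression of the shape $-\tfrac{p(q-1)}{2\omega_j}+\tfrac{q\omega_j^{q-1}}{\omega_j^{q}-1}-\tfrac{1}{\omega_j-1}$ (and symmetrically in $\xi_j$), exactly the manipulation already used in the proofs of Lemmas \ref{lemr2} and \ref{lemr4}.

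Finally I would multiply these inner sums by the coefficients $\omega_j^2(\omega_j^q+1)/((\omega_j-1)^2(\omega_j^q-1))$ and $\xi_j^2(\xi_j^p+1)/((\xi_j-1)^2(\xi_j^p-1))$ coming from Lemma \ref{R} and sum over $j$. After clearing the factors $\omega_j^q\pm1$ via $\frac{\omega_j^q+1}{\omega_j^q-1}=1+\frac{2}{\omega_j^q-1}$, every surviving sum is of a type already evaluated: the purely polynomial pieces come from Lemma \ref{zz1} and Lemma \ref{1z-1}, the $\xi$-sums collapse in closed form through Lemma \ref{lemr1}, and the two genuinely irreducible families $\sum_j\omega_j/((\omega_j-1)^3(\omega_j^q-1))$ and $\sum_j\omega_j/((\omega_j-1)^2(\omega_j^q-1)^2)$ remain, producing the last two terms of the claimed formula. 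The symmetric $q$-root counterparts that also appear are converted back into these two $p$-root sums, up to polynomials, by Lemmas \ref{lemr2} and \ref{lemr4}, after which collecting all polynomial contributions should yield $\tfrac{1}{45p}(p^4q^3-5p^4q+4q^3-5p^2q^3+55p^2q+30p^2-50q-30)$.

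I expect the main obstacle to be precisely this last stage: the sheer volume of bookkeeping in tracking the coefficients of every $(x-1)^{-m}$ and of every $\omega_j$- and $\xi_j$-pole, and the delicate symmetry-breaking whereby Lemmas \ref{lemr1}, \ref{lemr2}, and \ref{lemr4} are invoked to eliminate all $q$-root sums in favor of the two residual $p$-root sums. The substantive check is that, after this massive cancellation, the accumulated polynomial-in-$(p,q)$ contributions combine to exactly the stated expression and nothing else survives beyond the two displayed $\omega_j$-sums.
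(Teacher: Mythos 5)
Your proposal is correct and follows essentially the same route as the paper: rewrite the summand as $R(z_n)$ with $R$ four times the rational function of Lemma \ref{R}, substitute the partial-fraction decomposition, evaluate the inner sums $\sum(z_n-1)^{-m}$ and $\sum(z_n-\omega_j)^{-1}$, $\sum(z_n-\xi_j)^{-1}$ in closed form, and reduce via Lemmas \ref{zz1}, \ref{lemr1}, \ref{lemr2}, and \ref{lemr4}. The only cosmetic difference is that the paper obtains those inner sums from logarithmic derivatives of $f(x)=(x-1)(x^{pq}-1)/((x^p-1)(x^q-1))$ rather than by explicit inclusion--exclusion, which is the same computation in disguise since $\log f$ is exactly that signed decomposition.
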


\begin{proof}
We define
\begin{align*}
z_n=e^{2\pi i n/(pq)}, \quad f(x)=\dfrac{(x-1)(x^{pq}-1)}{(x^p-1)(x^q-1)}
=\prod_{\substack{n=1\\ p \nmid n, \, q \nmid n}}^{pq-1}(x-z_n),
\end{align*}
and
\begin{align*}
R(x)=\dfrac{4x(x^p+1)(x^q+1)}{(x-1)^2(x^p-1)(x^q-1)}.
\end{align*}
From the identities
\begin{align}\label{s2cc}
\sin^2\Big(\dfrac{\pi n}{pq}\Big)=-\dfrac{(z_n-1)^2}{4z_n},
\quad \cot\Big(\dfrac{\pi n}{p}\Big)=i\dfrac{z_n^q+1}{z_n^q-1},
\quad \cot\Big(\dfrac{\pi n}{q}\Big)=i\dfrac{z_n^p+1}{z_n^p-1},
\end{align}
it follows that
\begin{align}\label{ccsR}
\sum_{\substack{n=1\\ p \nmid n, \, q \nmid n}}^{pq-1}
\dfrac{\cot\Big(\dfrac{\pi n}{p}\Big)\cot\Big(\dfrac{\pi n}{q}\Big)}{\sin^2\Big(\dfrac{\pi n}{pq}\Big)}=
\sum_{\substack{n=1\\ p \nmid n, \, q \nmid n}}^{pq-1}R(z_n).
\end{align}

By Lemma \ref{R},
\begin{align}\label{Rz}
\sum_{\substack{n=1\\ p \nmid n, \, q \nmid n}}^{pq-1}R(z_n)&
=4\sum_{\substack{n=1\\ p \nmid n, \, q \nmid n}}^{pq-1}\left\{\frac{p^2+q^2+1}{3pq(z_n-1)}+\frac{p^2+q^2+13}{3pq(z_n-1)^2}
+\frac{8}{pq(z_n-1)^3}+\frac{4}{pq(z_n-1)^4} \right. \notag\\
&\quad \left. +\sum_{j=1}^{p-1}\frac{2\omega_j^2(\omega_j^q+1)}{p(\omega_j-1)^2(\omega_j^q-1)(z_n-\o_j)}
+\sum_{j=1}^{q-1}\frac{2\xi_j^2(\xi_j^p+1)}{q(\xi_j-1)^2(\xi_j^p-1)(z_n-\xi_j)} \right\},
\end{align}
where $\x_j=z_j^p=e^{2\pi ij/q}.$
\nopagebreak
Now, we examine 
\begin{align*}
\sum_{\substack{n=1\\ p \nmid n, \, q \nmid n}}^{pq-1}\frac{1}{x-z_n}=&\frac{f'}{f}
=\frac{pqx^{pq-1}}{x^{pq}-1}+\frac{1}{x-1}
-\sum_{a=p, q}\frac{ax^{a-1}}{x^{a}-1},\notag\\ 
\sum_{\substack{n=1\\ p \nmid n, \, q \nmid n}}^{pq-1}\frac{1}{(x-z_n)^2}=&-\Big(\frac{f'}{f}\Big)'
=\frac{pqx^{pq-2}(x^{pq}+pq-1)}{(x^{pq}-1)^2} \notag \\
& +\frac{1}{(x-1)^2} -\sum_{a=p, q}\frac{ax^{a-2}(x^{a}+a-1)}{(x^{a}-1)^2},\notag\\
\sum_{\substack{n=1\\ p \nmid n, \, q \nmid n}}^{pq-1}\frac{1}{(x-z_n)^3}=&\frac12\Big(\frac{f'}{f}\Big)'' 
=\frac{pqx^{pq-3}\big(2x^{2pq}+(p^2q^2+3pq-4)x^{pq}+p^2q^2-3pq+2\big)}{2(x^{pq}-1)^3} \notag\\
&+\frac{1}{(x-1)^3}-\sum_{a=p, q}\frac{ax^{a-3}\big(2x^{2a}+(a^2+3a-4)x^{a}+a^2-3a+2\big)}{2(x^{a}-1)^3}, \notag\\
\sum_{\substack{n=1\\ p \nmid n, \, q \nmid n}}^{pq-1}\frac{1}{(x-z_n)^4}=&-\frac16\Big(\frac{f'}{f}\Big)'''
=\frac{pqx^{pq-4}\big(6x^{3pq}+(p^3q^3+6p^2q^2+11pq-18)x^{2pq}\big)}{6(x^{pq}-1)^4} \notag\\
&+\frac{pqx^{pq-4}\big((4p^3q^3-22pq+18)x^{pq}+p^3q^3-6p^2q^2+11pq-6\big)}{6(x^{pq}-1)^4} \notag\\
&+\frac{1}{(x-1)^4}-\sum_{a=p, q}\left\{\frac{ax^{a-4}\big(6x^{3a}+(a^3+6a^2+11a-18)x^{2a}\big)}{6(x^{a}-1)^4}\right. \notag\\
&   \left.+ \frac{ax^{a-4}\big((4a^3-22a+18)x^{a}+a^3-6a^2+11a-6\big)}{6(x^{a}-1)^4} \right\}.
\end{align*}

Evaluating these identities at $x=1,$  we obtain

\begin{equation}\label{pqz1}
  \begin{split}
\sum_{\substack{n=1\\ p \nmid n, \, q \nmid n}}^{pq-1}\frac{1}{z_n-1}&=-\frac{(p-1)(q-1)}{2}, \\
\sum_{\substack{n=1\\ p \nmid n, \, q \nmid n}}^{pq-1}\frac{1}{(z_n-1)^2}&=-\frac{(p^2-1)(q^2-1)}{12}+\frac{(p-1)(q-1)}{2}, \\
\sum_{\substack{n=1\\ p \nmid n, \, q \nmid n}}^{pq-1}\frac{1}{(z_n-1)^3}&
=\frac{1}{8}(pq+p+q-3)(pq-p-q+1),\\
\sum_{\substack{n=1\\ p \nmid n, \, q \nmid n}}^{pq-1}\frac{1}{(z_n-1)^4}&
=\frac{1}{720}(p^4q^4 -110p^2q^2 -p^4 +110p^2 -q^4 +110q^2-109)+\frac12(p-1)(q-1).
\end{split}
\end{equation}
Next, note that
\begin{align*}
\frac{f'}{f}
&=\frac{pqx^{pq-1}}{x^{pq}-1}+\frac{1}{x-1}-\frac{px^{p-1}}{x^{p}-1}-\frac{qx^{q-1}}{x^{q}-1}\\
&=p\frac{(q-1)x^{pq}-x^{p(q-1)}-\cdots-x^{2p}-x^p}{x(x^{pq}-1)}+\frac{1}{x-1}-\frac{qx^{q-1}}{x^{q}-1}\\
&=p\frac{(q-1)x^{p(q-1)}+(q-2)x^{p(q-2)}+\cdots+x^p}{x(x^{p(q-1)}+\cdots+x^p+1)}
+\frac{1}{x-1}-\frac{qx^{q-1}}{x^{q}-1}.
\end{align*}
Therefore, it follows that
\begin{align}\label{pqo}
\sum_{\substack{n=1\\ p \nmid n, \, q \nmid n}}^{pq-1}\frac{1}{z_n-\o_j}&=-\Big[\frac{f'}{f}\Big]_{\o_j}
=-\frac{p(q-1)}{2\o_j}-\frac{1}{\o_j-1}+\frac{q\o_j^{q-1}}{\o_j^q-1}.
\end{align}
Analogously, we have
\begin{align}\label{pqxi}
\sum_{\substack{n=1\\ p \nmid n, \, q \nmid n}}^{pq-1}\frac{1}{z_n-\xi_j}&=-\Big[\frac{f'}{f}\Big]_{\xi_j}
=-\frac{q(p-1)}{2\xi_j}-\frac{1}{\xi_j-1}+\frac{p\xi_j^{p-1}}{\xi_j^p-1}.
\end{align}

Now, by \eqref{pqz1}, we obtain
\begin{align}\label{pqe1}
&\sum_{\substack{n=1\\ p \nmid n, \, q \nmid n}}^{pq-1}\left\{\frac{p^2+q^2+1}{3pq(z_n-1)}+\frac{p^2+q^2+13}{3pq(z_n-1)^2}
+\frac{8}{pq(z_n-1)^3}+\frac{4}{pq(z_n-1)^4} \right\} \notag\\
&=\frac{1}{180pq}(p^4q^4-5p^4q^2-5p^2q^4+4p^4+4q^4+15p^2q^2-10q^2-10p^2+6).
\end{align}
Also, with the use of \eqref{pqo} and Lemmas \ref{zz1} and \ref{lemr1}, we derive
\begin{align}\label{pqe2}
&\sum_{\substack{n=1\\ p \nmid n, \, q \nmid n}}^{pq-1}\sum_{j=1}^{p-1}\frac{2\omega_j^2(\omega_j^q+1)}{p(\omega_j-1)^2(\omega_j^q-1)(z_n-\o_j)} \notag\\
&=\sum_{j=1}^{p-1}\left\{\frac{2\omega_j^2}{p(\omega_j-1)^2}+\frac{4\omega_j^2}{p(\omega_j-1)^2(\omega^q_j-1)}\right\}
\left\{-\frac{p(q-1)}{2\o_j}-\frac{1}{\o_j-1}+\frac{q\o_j^{q-1}}{\o_j^q-1}\right\}\notag\\
&=-(q-1)\sum_{j=1}^{p-1}\frac{\o_j}{(\o_j-1)^2}-\frac{2}{p}\sum_{j=1}^{p-1}\frac{\o_j^2}{(\o_j-1)^3}
+\frac{2q}{p}\sum_{j=1}^{p-1}\frac{\o_j^{q+1}}{(\o_j-1)^2(\o_j^q-1)}\notag\\
&\quad -2(q-1)\sum_{j=1}^{p-1}\frac{\o_j}{(\o_j-1)^2(\o_j^q-1)}-\frac{4}{p}\sum_{j=1}^{p-1}\frac{\o_j^2}{(\o_j-1)^3(\o_j^q-1)}
+\frac{4q}{p}\sum_{j=1}^{p-1}\frac{\o_j^{q+1}}{(\o_j-1)^2(\o_j^q-1)^2} \notag\\
&=-(q-1)\sum_{j=1}^{p-1}\frac{\o_j}{(\o_j-1)^2}-\frac{2}{p}\sum_{j=1}^{p-1}\frac{\o_j^2}{(\o_j-1)^3}
+\frac{2q}{p}\sum_{j=1}^{p-1}\left\{\frac{\o_j}{(\o_j-1)^2}+\frac{\o_j}{(\o_j-1)^2(\o_j^q-1)}\right\}\notag\\
&\qquad -2(q-1)\sum_{j=1}^{p-1}\frac{\o_j}{(\o_j-1)^2(\o_j^q-1)}
-\frac{4}{p}\sum_{j=1}^{p-1}\left\{\frac{\o_j}{(\o_j-1)^2(\o_j^q-1)}+\frac{\o_j}{(\o_j-1)^3(\o_j^q-1)}\right\}\notag\\
&\qquad \qquad \qquad +\frac{4q}{p}\sum_{j=1}^{p-1}\left\{\frac{\o_j}{(\o_j-1)^2(\o_j^q-1)}+
\frac{\o_j}{(\o_j-1)^2(\o_j^q-1)^2}\right\}\notag\\
&=\frac{2q-pq+p}{p}\sum_{j=1}^{p-1}\frac{\omega_j}{(\omega_j-1)^2}-\frac{2}{p}
\sum_{j=1}^{p-1}\frac{\omega^2_j}{(\omega_j-1)^3}+\frac{6q-2pq+2p-4}{p}\sum_{j=1}^{p-1}\frac{\o_j}{(\o_j-1)^2(\o_j^q-1)}\notag\\
&\qquad -\frac{4}{p}\sum_{j=1}^{p-1}\frac{\o_j}{(\o_j-1)^3(\o_j^q-1)}+\frac{4q}{p}\sum_{j=1}^{p-1}\frac{\o_j}{(\o_j-1)^2(\o_j^q-1)^2}\notag\\
&=\frac{1}{12p}(p^2-1)(q-1)-\frac{4}{p}\sum_{j=1}^{p-1}\frac{\o_j}{(\o_j-1)^3(\o_j^q-1)}
+\frac{4q}{p}\sum_{j=1}^{p-1}\frac{\o_j}{(\o_j-1)^2(\o_j^q-1)^2}.
\end{align}
Analogously, by  \eqref{pqxi} and Lemma \ref{zz1}, we find that
\begin{align}\label{pqe4}
&\sum_{\substack{n=1\\ p \nmid n, \, q \nmid n}}^{pq-1}
\sum_{j=1}^{q-1}\frac{2\xi_j^2(\xi_j^p+1)}{q(\xi_j-1)^2(\xi_j^p-1)(z_n-\xi_j)} \notag\\
&=\frac{1}{12q}(q^2-1)(p-1)-\frac{4}{q}\sum_{j=1}^{q-1}\frac{\xi_j}{(\xi_j-1)^3(\xi_j^p-1)}
+\frac{4p}{q}\sum_{j=1}^{q-1}\frac{\xi_j}{(\xi_j-1)^2(\xi_j^p-1)^2}\notag\\
&=\frac{1}{180pq}(-4p^4+10p^2+25p^2q^2+45p^2q-25q^2-45q-6) \notag\\
&\qquad +\frac{12}{p}\sum_{j=1}^{p-1}\frac{\o_j}{(\o_j-1)^3(\o_j^q-1)}+\frac{4q}{p}\sum_{j=1}^{p-1}\frac{\o_j}{(\o_j-1)^2(\o_j^q-1)^2},
\end{align}
where we employed Lemmas  \ref{lemr2} and \ref{lemr4}.
Referring to  \eqref{ccsR} and \eqref{Rz}, and adding the identities in \eqref{pqe1}, \eqref{pqe2} and \eqref{pqe4}, we complete our proof.
\end{proof}

McIntosh \cite[p.~202]{mcintosh}, and G.~R.~H.~Greaves, R.~R.~Hall, M.~N.~Huxley, and J.~C.~ Wilson \cite[p.~65, Equation (29)]{greaves}  established a representation for
\begin{equation}\label{mc6}
 \sum_{\substack{n=1\\ p \nmid n, \, q \nmid n}}^{pq-1}
\dfrac{\cot\Big(\dfrac{\pi n}{p}\Big)\cot\Big(\dfrac{\pi n}{q}\Big)}{\sin^2\Big(\dfrac{\pi n}{pq}\Big)}
\end{equation}
in terms of the generalized Dedekind sums $S_3(p,q)$ and $S_3(q,p)$, defined in \eqref{mc2}.

\begin{corollary}\label{pqcor1}
Let $p$ and $q$ be  positive integers such that $p, q\geq 2$ and $q\equiv \pm1 \pmod{p}.$ Then,
\begin{align*}
&\sum_{\substack{n=1\\ p \nmid n, \, q \nmid n}}^{pq-1}
\dfrac{\cot\Big(\dfrac{\pi n}{p}\Big)\cot\Big(\dfrac{\pi n}{q}\Big)}{\sin^2\Big(\dfrac{\pi n}{pq}\Big)}\\
&\qquad =\begin{cases}
\dfrac{1}{45p}(p^2-1)(p^2-4)(q-1)^2(q+2) & \text{if} ~ q \equiv 1 \pmod{p},\\[0.2in]
\dfrac{1}{45p}(p^2-1)(p^2-4)(q+1)^2(q-2) & \text{if} ~ q \equiv -1 \pmod{p}.
\end{cases}
\end{align*}
\end{corollary}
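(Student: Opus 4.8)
The plan is to specialize Theorem \ref{ccspq}, whose statement already concentrates the entire dependence on the finer arithmetic of $(p,q)$ into the two residual sums
\begin{align*}
T_1:=\sum_{j=1}^{p-1}\frac{\o_j}{(\o_j-1)^3(\o_j^q-1)},\qquad
T_2:=\sum_{j=1}^{p-1}\frac{\o_j}{(\o_j-1)^2(\o_j^q-1)^2}.
\end{align*}
Since $\o_j=e^{2\pi ij/p}$ satisfies $\o_j^p=1$, the only role of $q$ inside $T_1$ and $T_2$ is through the residue of $q$ modulo $p$. The hypothesis $q\equiv\pm1\pmod p$ is precisely what collapses $\o_j^q$ to a fixed power of $\o_j$: when $q\equiv1\pmod p$ we have $\o_j^q=\o_j$, and when $q\equiv-1\pmod p$ we have $\o_j^q=\o_j^{-1}$. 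In either case $T_1$ and $T_2$ reduce to sums already evaluated in Lemma \ref{zz1}.

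First I would treat $q\equiv1\pmod p$. Then $\o_j^q-1=\o_j-1$, so both residual sums collapse to $\sum_{j=1}^{p-1}\o_j/(\o_j-1)^4$, which by Lemma \ref{zz1} equals $(p^2-1)(p^2-19)/720$. For $q\equiv-1\pmod p$ I would use the identity $\o_j^{-1}-1=-(\o_j-1)/\o_j$; this turns $T_1$ into $-\sum_{j=1}^{p-1}\o_j^2/(\o_j-1)^4=-(p^2-1)(p^2+11)/720$ and $T_2$ into $\sum_{j=1}^{p-1}\o_j^3/(\o_j-1)^4=(p^2-1)(p^2-19)/720$, both read off directly from Lemma \ref{zz1}.

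With these closed forms in hand, the last step is to substitute $T_1$ and $T_2$ into the formula of Theorem \ref{ccspq} and combine with its polynomial term. In the case $q\equiv1$, the coefficients $32/p$ and $32q/p$ multiply the common value $T_1=T_2$, contributing $\tfrac{2(q+1)}{45p}(p^2-1)(p^2-19)$; in the case $q\equiv-1$ the two distinct values combine to $\tfrac{2}{45p}(p^2-1)\big(q(p^2-19)-(p^2+11)\big)$. Adding either expression to the polynomial term and clearing the factor $45p$ leaves a polynomial identity in $p$ and $q$ to be verified.

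The only place demanding care is this final verification, so I would organize it to minimize risk: factor $(p^2-1)(p^2-4)=p^4-5p^2+4$ on the target side, use $(q-1)^2(q+2)=q^3-3q+2$ and $(q+1)^2(q-2)=q^3-3q-2$, and then simply match the coefficients of the monomials $p^4q^3,\,p^4q,\,p^4,\,p^2q^3,\,p^2q,\,p^2,\,q^3,\,q,\,1$ on the two sides. There is no genuine analytic obstacle here; once the congruence hypothesis has replaced $\o_j^q$ by $\o_j^{\pm1}$, the corollary is an exact polynomial consequence of Theorem \ref{ccspq} and Lemma \ref{zz1}.
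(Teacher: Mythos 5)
Your proposal is correct and follows essentially the same route as the paper: specialize Theorem \ref{ccspq}, use $q\equiv\pm1\pmod p$ to replace $\o_j^q$ by $\o_j^{\pm1}$ so that both residual sums reduce to the entries $\sum\o_j^m/(\o_j-1)^4$ of Lemma \ref{zz1}, and finish with a polynomial simplification. The intermediate values you compute (including $T_1=-(p^2-1)(p^2+11)/720$ and $T_2=(p^2-1)(p^2-19)/720$ in the case $q\equiv-1$) match the paper's exactly.
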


\begin{proof} Suppose that $q\equiv 1 \pmod{p}.$ Then, Theorem \ref{ccspq} and Lemma \ref{zz1} imply
\begin{align*}
&\sum_{\substack{n=1\\ p \nmid n, \, q \nmid n}}^{pq-1}
\dfrac{\cot\Big(\dfrac{\pi n}{p}\Big)\cot\Big(\dfrac{\pi n}{q}\Big)}{\sin^2\Big(\dfrac{\pi n}{pq}\Big)}\\
&=\frac{1}{45p}(p^4q^3-5p^4q+4q^3-5p^2q^3+55p^2q+30p^2-50q-30) \\
&\qquad +\frac{32}{p}\sum_{j=1}^{p-1}\frac{\o_j}{(\o_j-1)^4}+\frac{32q}{p}\sum_{j=1}^{p-1}\frac{\o_j}{(\o_j-1)^4}\\
&=\frac{1}{45p}(p^4q^3-5p^4q-5p^2q^3-5p^2q+4q^3+10q)+\frac{32(q+1)}{p}\cdot\frac{(p-1)(p+1)(p^2-19)}{720}\\
&=\frac{1}{45p}(p^4q^3-3p^4q-5p^2q^3+2p^4+4q^3+15p^2q-10p^2-12q+8) \\
&=\frac{1}{45p}(p^2-1)(p^2-4)(q-1)^2(q+2).
\end{align*}
Similarly, if $q\equiv -1 \pmod{p},$ then
\begin{align*}
&\sum_{\substack{n=1\\ p \nmid n, \, q \nmid n}}^{pq-1}
\dfrac{\cot\Big(\dfrac{\pi n}{p}\Big)\cot\Big(\dfrac{\pi n}{q}\Big)}{\sin^2\Big(\dfrac{\pi n}{pq}\Big)} \notag\\
&=\frac{1}{45p}(p^4q^3-5p^4q+4q^3-5p^2q^3+55p^2q+30p^2-50q-30)\\
&\qquad -\frac{32}{p}\sum_{j=1}^{p-1}\frac{\o_j^2}{(\o_j-1)^4}+\frac{32q}{p}\sum_{j=1}^{p-1}\frac{\o_j^3}{(\o_j-1)^4}\\
&=\frac{1}{45p}(p^4q^3-5p^4q+4q^3-5p^2q^3+55p^2q+30p^2-50q-30)\\
&\qquad -\frac{32}{p}\cdot\frac{(p^2-1)(p^2+11)}{720}+\frac{32q}{p}\cdot\frac{(p^2-1)(p^2-19)}{720}\\
&=\frac{1}{45p}(p^4q^3-3p^4q-5p^2q^3-2p^4+4q^3+15p^2q+10p^2-12q-8)\\
&=\dfrac{1}{45p}(p^2-1)(p^2-4)(q+1)^2(q-2).
\end{align*}
Hence, we complete the proof.
\end{proof}

\section{Further Lemmas on Sums Involving Roots of Unity}

From Lemma \ref{1z1}, we can easily derive the following lemma.
\begin{lemma}\label{zz+1}
Let $k$ be an odd positive integer and  $z_n=e^{2\pi in/k}.$ Then, we have
\begin{align*}
&\sum_{n=1}^{k-1}\frac{z_n}{z_n+1}=\frac{k-1}{2},
\qquad \qquad \qquad \quad \sum_{n=1}^{k-1}\frac{z_n}{(z_n+1)^2}=\frac{k^2-1}{4}, \\
 &\sum_{n=1}^{k-1}\frac{z_n^2}{(z_n+1)^2}=-\frac{(k-1)^2}{4},
\qquad \qquad  \sum_{n=1}^{k-1}\frac{z_n}{(z_n+1)^3}=\frac{k^2-1}{8},  \\
&\sum_{n=1}^{k-1}\frac{z_n^2}{(z_n+1)^3}=\frac{k^2-1}{8},
\qquad \qquad  \qquad \sum_{n=1}^{k-1}\frac{z_n^3}{(z_n+1)^3}=-\frac{(k-1)(3k-1)}{8}, \\
&\sum_{n=1}^{k-1}\frac{z_n}{(z_n+1)^4}=-\frac{(k^2-1)(k^2-3)}{48},
\quad \sum_{n=1}^{k-1}\frac{z_n^2}{(z_n+1)^4}=\frac{(k^2-1)(k^2+3)}{48}, \\
&\sum_{n=1}^{k-1}\frac{z_n^3}{(z_n+1)^4}=-\frac{(k^2-1)(k^2-3)}{48},
\quad \sum_{n=1}^{k-1}\frac{z_n^4}{(z_n+1)^4}=\frac{(k-1)(k^3+k^2-21k+3)}{48}.
\end{align*}
\end{lemma}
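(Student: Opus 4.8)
The plan is to reduce every sum in the statement to a linear combination of the four evaluations already furnished by Lemma~\ref{1z1}. The one device needed is the substitution $z_n=(z_n+1)-1$, which expands each numerator as a polynomial in $(z_n+1)$ through the binomial theorem,
\begin{equation*}
z_n^a=\bigl((z_n+1)-1\bigr)^a=\sum_{m=0}^{a}\binom{a}{m}(-1)^{a-m}(z_n+1)^m.
\end{equation*}
Since $a\le b$ in every entry of the lemma, dividing by $(z_n+1)^b$ yields
\begin{equation*}
\frac{z_n^a}{(z_n+1)^b}=\sum_{m=0}^{a}\binom{a}{m}(-1)^{a-m}\frac{1}{(z_n+1)^{b-m}},
\end{equation*}
where each exponent $b-m$ runs from $b-a\ge 0$ up to $b$. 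Thus every summand is a linear combination of $(z_n+1)^{-j}$ with $0\le j\le b$.

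I would then sum this identity over $1\le n\le k-1$ and insert the four closed forms of Lemma~\ref{1z1}, together with the trivial sum $\sum_{n=1}^{k-1}1=k-1$, which supplies the $j=0$ contribution occurring precisely when $a=b$. For the simplest case this is immediate: $\dfrac{z_n}{(z_n+1)^2}=\dfrac{1}{z_n+1}-\dfrac{1}{(z_n+1)^2}$, so Lemma~\ref{1z1} gives $\dfrac{k-1}{2}+\dfrac{(k-1)^2}{4}=\dfrac{k^2-1}{4}$, as claimed. All remaining entries follow the same template, and the stated closed forms emerge after collecting the terms over a common denominator and factoring.

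A symmetry shortcut halves the bookkeeping. The map $n\mapsto k-n$ permutes $\{z_1,\dots,z_{k-1}\}$ and sends $z_n\mapsto 1/z_n$, and since
\begin{equation*}
\frac{(1/z)^a}{(1/z+1)^b}=\frac{z^{\,b-a}}{(z+1)^b},
\end{equation*}
we obtain $\sum_{n=1}^{k-1}\dfrac{z_n^a}{(z_n+1)^b}=\sum_{n=1}^{k-1}\dfrac{z_n^{\,b-a}}{(z_n+1)^b}$. This pairs $(a,b)$ with $(b-a,b)$, so, for instance, the values for $z_n^2/(z_n+1)^3$ and $z_n/(z_n+1)^3$ must agree, and $z_n^4/(z_n+1)^4$ simply reproduces the value of $1/(z_n+1)^4$ recorded in Lemma~\ref{1z1}.

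I expect no real obstacle, as the argument is mechanical once the binomial expansion is set up. The only step requiring attention is the algebra in the fourth-order cases, where three or four evaluations from Lemma~\ref{1z1} must be combined over the denominator $48$ and the resulting quartic in $k$ factored into the forms $-(k^2-1)(k^2-3)/48$ and $(k^2-1)(k^2+3)/48$; verifying those factorizations with a computer algebra system is advisable.
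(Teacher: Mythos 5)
Your proposal is correct and is essentially the paper's own (unwritten) argument: the paper simply asserts that the lemma follows from Lemma \ref{1z1} via partial fractions, and the binomial expansion of $z_n^a=((z_n+1)-1)^a$ is exactly that partial fraction decomposition, reducing each entry to the four sums of Lemma \ref{1z1} plus $\sum_{n=1}^{k-1}1=k-1$. Your symmetry observation $\sum z_n^a/(z_n+1)^b=\sum z_n^{b-a}/(z_n+1)^b$ is a correct and useful consistency check that the paper does not mention.
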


\begin{lemma} \label{R2}
Let $p$ and $q$ be positive integers  with $p, q \geq 2$ and $(p, q)=1.$
If $\omega_j=e^{2\pi ij/p}$ and $\xi_j=e^{2\pi ij/q},$ then
\begin{align*}
&\frac{x(x^p+1)(x^q+1)}{(x+1)^2(x^p-1)(x^q-1)}=\frac{1}{pq(x-1)}+\frac{1}{pq(x-1)^2} \\
&\qquad \qquad  +\frac{2}{p}\sum_{j=1}^{p-1}\frac{\omega_j^2(\omega_j^q+1)}{(\omega_j+1)^2(\omega_j^q-1)(x-\o_j)}
+\frac{2}{q}\sum_{j=1}^{q-1}\frac{\xi_j^2(\xi_j^p+1)}{(\xi_j+1)^2(\xi_j^p-1)(x-\xi_j)}.
\end{align*}
\end{lemma}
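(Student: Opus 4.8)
The plan is to follow the proof of Lemma~\ref{R} almost verbatim, replacing the prefactor $x/(x-1)^2$ by $x/(x+1)^2$ throughout and invoking Lemma~\ref{zz+1} in place of Lemma~\ref{zz1} to evaluate the resulting sums of roots of unity. First I would write
$$\frac{x(x^p+1)(x^q+1)}{(x+1)^2(x^p-1)(x^q-1)}=\frac{x}{(x+1)^2}\Big(1+\frac{2}{x^p-1}\Big)\Big(1+\frac{2}{x^q-1}\Big),$$
apply the logarithmic-derivative identities \eqref{xp1}, and peel off the poles at $x=1$ coming from $\o_p=\x_q=1$. Expanding the product then produces nine pieces, exactly analogous to $A_1,\dots,A_6$ in \eqref{A}: the pure term $x/(x+1)^2$; the three diagonal terms $\tfrac{2}{p}\cdot\tfrac{x}{(x+1)^2(x-1)}$, $\tfrac{2}{q}\cdot\tfrac{x}{(x+1)^2(x-1)}$, $\tfrac{4}{pq}\cdot\tfrac{x}{(x+1)^2(x-1)^2}$; the two single sums $\tfrac{2}{p}\sum_j\o_j x/((x+1)^2(x-\o_j))$ and its $\x$-analogue; the two mixed sums carrying an extra $(x-1)$ factor; and the double sum over $\o_i,\x_j$.

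Next I would record the partial-fraction decompositions needed, namely those of $x/(x+1)^2$, of $x/((x+1)^2(x-a))$, of $x/((x+1)^2(x-1)^2)$, of $x/((x+1)^2(x-1)(x-a))$, and of $x/((x+1)^2(x-a)(x-b))$; these are the $x+1$ counterparts of \eqref{p1}--\eqref{p4} and follow from the same residue computations. After substitution, the decomposition of the left side is supported on $1/(x+1)$, $1/(x+1)^2$, $1/(x-1)$, $1/(x-1)^2$, and the simple poles $1/(x-\o_j)$, $1/(x-\x_j)$. The coefficients of the $x\pm1$ poles become finite linear combinations of $\sum_j\o_j/(\o_j+1)^m$ and $\sum_j\o_j^2/(\o_j+1)^m$ (and the $\x$-analogues), which I would evaluate by Lemma~\ref{zz+1}, while the double sum factors as a product of two such single sums.

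The real content lies in showing that the coefficients of $1/(x+1)$ and $1/(x+1)^2$ both vanish and that those of $1/(x-1)$ and $1/(x-1)^2$ collapse to $1/(pq)$. For $1/(x+1)^2$ the contributions are $-1$, $\tfrac1p+\tfrac1q$ (diagonal linear terms), $-\tfrac1{pq}$ (diagonal quadratic term), $2-\tfrac1p-\tfrac1q$ (single sums, via $\sum_j\o_j/(\o_j+1)=\tfrac{p-1}{2}$), and $-\tfrac1{pq}\big[(p-1)+(q-1)+(p-1)(q-1)\big]$ (mixed and double sums); these total $1-\tfrac1{pq}\cdot pq=0$, since $1+(p-1)+(q-1)+(p-1)(q-1)=pq$. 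The coefficient of $1/(x+1)$ vanishes by an analogous but longer identity, for which I would also need $\sum_j\o_j^2/(\o_j+1)^2=-\tfrac{(p-1)^2}{4}$ and $\sum_j\o_j/(\o_j+1)^2=\tfrac{p^2-1}{4}$, together with the factorization of $\sum_{i,j}\o_i\x_j(\o_i\x_j-1)/((\o_i+1)^2(\x_j+1)^2)$ into a product of single sums; the $1/(x-1)$ and $1/(x-1)^2$ coefficients reduce to $1/(pq)$ using $\sum_j\o_j/(\o_j-1)=\tfrac{p-1}{2}$ from Lemma~\ref{zz1}. Finally, for $1/(x-\o_j)$ I would factor out $2\o_j^2/(p(\o_j+1)^2)$ and reduce the bracket
$$1+\frac{2}{q(\o_j-1)}+\frac{2}{q}\sum_{i=1}^{q-1}\frac{\x_i}{\o_j-\x_i}$$
to $(\o_j^q+1)/(\o_j^q-1)$ by \eqref{1xz}; this is literally the bracket appearing in the derivation of \eqref{B5}, so it delivers the stated coefficient $2\o_j^2(\o_j^q+1)/(p(\o_j+1)^2(\o_j^q-1))$, and symmetrically for $1/(x-\x_j)$. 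The only genuine obstacle is the sheer volume of bookkeeping in the cancellations at $x=-1$; these require the full strength of Lemma~\ref{zz+1} (hence the oddness that makes $\o_j,\x_j\neq-1$), and it is exactly there that the clean disappearance of the $x+1$ poles must be verified.
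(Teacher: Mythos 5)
Your proposal is correct and follows essentially the same route as the paper: the same factorization via \eqref{xp1}, the same six-part expansion with the $x+1$ analogues \eqref{p5}--\eqref{p8} of the earlier partial fractions, the same use of Lemmas \ref{zz1} and \ref{zz+1} to verify that the $1/(x\pm1)$ coefficients collapse to $1/(pq)$ and $0$ respectively (your arithmetic for the $1/(x+1)^2$ coefficient checks out, since $1+(p-1)+(q-1)+(p-1)(q-1)=pq$), and the same bracket reduction via \eqref{1xz} for the residues at $\o_j$ and $\xi_j$. Your remark that oddness of $p$ and $q$ is implicitly needed for Lemma \ref{zz+1} (and to keep $\o_j,\xi_j\neq-1$) is a fair observation, as the paper's hypothesis omits it even though the lemma is only applied with $p,q$ odd.
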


\begin{proof} We first use \eqref{xp1} to obtain
\begin{align}\label{B-}
&\frac{x(x^p+1)(x^q+1)}{(x+1)^2(x^p-1)(x^q-1)}=\frac{x}{(x+1)^2}\Big(1+\frac{2}{x^p-1}\Big)\Big(1+\frac{2}{x^q-1}\Big)\notag\\
=&\frac{x}{(x+1)^2}\Big(1+\frac{2}{p}\sum_{j=1}^p\frac{\o_j}{x-\omega_j}\Big)
\Big(1+\frac{2}{q}\sum_{j=1}^q\frac{\xi_j}{x-\xi_j}\Big)\notag\\
=&\frac{x}{(x+1)^2}\Big(1+\frac{2}{p(x-1)}+\frac{2}{p}\sum_{j=1}^{p-1}\frac{\o_j}{x-\omega_j}\Big)
\Big(1+\frac{2}{q(x-1)}+\frac{2}{q}\sum_{j=1}^{q-1}\frac{\xi_j}{x-\xi_j}\Big)\notag\\
=&\frac{x}{(x+1)^2}+\frac{2x}{p(x+1)^2(x-1)}+\frac{2x}{q(x+1)^2(x-1)}+\frac{4x}{pq(x+1)^2(x-1)^2}\notag\\
&+\frac{2}{p}\sum_{j=1}^{p-1}\frac{\o_j x}{(x+1)^2(x-\omega_j)}
+\frac{2}{q}\sum_{j=1}^{q-1}\frac{\xi_j x}{(x+1)^2(x-\xi_j)}
+\frac{4}{pq}\sum_{j=1}^{p-1}\frac{\o_j x}{(x+1)^2(x-1)(x-\omega_j)}\notag\\
&+\frac{4}{pq}\sum_{j=1}^{p-1}\frac{\xi_j x}{(x+1)^2(x-1)(x-\xi_j)}
+\frac{4}{pq}\sum_{i=1}^{p-1}\sum_{j=1}^{q-1}\frac{\o_i\xi_j x}{(x+1)^2(x-\o_i)(x-\xi_j)}.
\end{align}
Now, we consider the following partial fraction decompositions:
\begin{align}
&\frac{x}{(x+1)^2}=\frac{1}{x+1}-\frac{1}{(x+1)^2}, \label{p5}\\
&\frac{x}{(x+1)^2(x-a)}=-\frac{a}{(a+1)^2(x+1)}+\frac{1}{(a+1)(x+1)^2}+\frac{a}{(a+1)^2(x-a)}, \label{p6}\\
&\frac{x}{(x+1)^2(x-1)^2}=\frac{1}{4}\left(\frac{1}{(x-1)^2}-\frac{1}{(x+1)^2}\right), \label{p7}\\
&\frac{x}{(x+1)^2(x-a)(x-b)}=\frac{ab-1}{(a+1)^2(b+1)^2(x+1)}-\frac{1}{(a+1)(b+1)(x+1)^2} \notag \\
&\qquad \qquad \qquad \qquad \qquad\,\,\, +\frac{a}{(a+1)^2(a-b)(x-a)}+\frac{b}{(b+1)^2(b-a)(x-b)}. \label{p8}
\end{align}
We divide the right-hand side of \eqref{B-} into six parts, $B_1, \ldots, B_6$, and find the partial fraction decomposition of each.
By \eqref{p5}, \eqref{p6}, and \eqref{p7},
\begin{align}\label{B-1}
&B_1:=\frac{x}{(x+1)^2}+\frac{2x}{p(x+1)^2(x-1)}+\frac{2x}{q(x+1)^2(x-1)}+\frac{4x}{pq(x+1)^2(x-1)^2} \notag\\
&=\frac{1}{x+1}-\frac{1}{(x+1)^2}+\left(\frac{2}{p}+\frac{2}{q}\right)\left\{-\frac{1}{4(x+1)}+\frac{1}{2(x+1)^2}+\frac{1}{4(x-1)}\right\}
\notag\\
&\qquad \qquad +\frac{1}{pq}\left(\frac{1}{(x-1)^2}-\frac{1}{(x+1)^2}\right) \notag\\
&=\frac{(p+q)}{2pq}\frac{1}{(x-1)}+\frac{1}{pq(x-1)^2}+\frac{(2pq-p-q)}{2pq}\frac{1}{(x+1)}-\frac{(p-1)(q-1)}{pq}\frac{1}{(x+1)^2}.
\end{align}
From \eqref{p6}, it follows that
\begin{align} \label{B-2}
B_2&:=\frac{2}{p}\sum_{j=1}^{p-1}\frac{\o_j x}{(x+1)^2(x-\omega_j)} \notag\\
&=\frac{2}{p}\sum_{j=1}^{p-1}\left\{\frac{-\o_j^2}{(\o_j+1)^2(x+1)}+\frac{\o_j}{(\o_j+1)(x+1)^2}+\frac{\o_j^2}{(\o_j+1)^2(x-\o_j)}\right\},
\end{align}
and
\begin{align} \label{B-3}
B_3&:=\frac{2}{q}\sum_{j=1}^{q-1}\frac{\xi_j x}{(x+1)^2(x-\xi_j)} \notag\\
&=\frac{2}{q}\sum_{j=1}^{q-1}\left\{\frac{-\xi_j^2}{(\xi_j+1)^2(x+1)}+\frac{\xi_j}{(\xi_j+1)(x+1)^2}+\frac{\xi_j^2}{(\xi_j+1)^2(x-\xi_j)}\right\}.
\end{align}
Similarly, using \eqref{p8}, we obtain
\begin{align} \label{D4}
B_4&:=\frac{4}{pq}\sum_{j=1}^{p-1}\frac{\o_j x}{(x+1)^2(x-1)(x-\o_j)}\notag\\
&=\frac{4}{pq}\sum_{j=1}^{p-1}\left\{\frac{\o_j(\o_j-1)}{4(\o_j+1)^2(x+1)}-\frac{\o_j}{2(\o_j+1)(x+1)^2}\right. \notag \\
&\qquad \qquad  \left.+\frac{\o_j}{4(1-\o_j)(x-1)}+\frac{\o_j^2}{(\o_j+1)^2(\o_j-1)(x-\o_j)}\right\},
\end{align}
\begin{align} \label{B-5}
B_5&:=\frac{4}{pq}\sum_{j=1}^{p-1}\frac{\xi_j x}{(x+1)^2(x-1)(x-\xi_j)}\notag\\
&=\frac{4}{pq}\sum_{j=1}^{p-1}\left\{\frac{\xi_j(\xi_j-1)}{4(\xi_j+1)^2(x+1)}-\frac{\xi_j}{2(\xi_j+1)(x+1)^2}\right. \notag \\
&\qquad \qquad  \left.+\frac{\xi_j}{4(1-\xi_j)(x-1)}+\frac{\xi_j^2}{(\xi_j+1)^2(\xi_j-1)(x-\xi_j)}\right\},
\end{align}\and
\begin{align} \label{B-6}
B_6&:=\frac{4}{pq}\sum_{i=1}^{p-1}\sum_{j=1}^{q-1}\frac{\o_i\xi_j x}{(x+1)^2(x-\o_i)(x-\xi_j)}\notag\\
&=\frac{4}{pq}\sum_{i=1}^{p-1}\left\{\sum_{j=1}^{q-1}\frac{\o_i\xi_j(\o_i\xi_j-1)}{(\o_i+1)^2(\xi_j+1)^2(x+1)}
-\frac{\o_i\xi_j}{(\o_i+1)(\xi_j+1)(x+1)^2} \right.\notag \\
&\qquad \qquad \left. +\frac{\o_i^2\xi_j}{(\o_i+1)^2(\o_i-\xi_j)(x-\o_i)}+\frac{\o_i\xi_j^2}{(\xi_j+1)^2(\xi_j-\o_i)(x-\xi_j)}\right\}.
\end{align}
Next, we substitute \eqref{B-1}--\eqref{B-6} into \eqref{B-} and calculate the coefficient of each term.
First, by Lemma \ref{zz1}, we see that the coefficient of $\dfrac{1}{x-1}$ is
\begin{align*}
\frac{(p+q)}{2pq}+\frac{1}{pq}\sum_{j=1}^{p-1}\frac{\o_j}{(1-\o_j)}+\frac{1}{pq}\sum_{j=1}^{q-1}\frac{\xi_j}{(1-\xi_j)}=\frac{1}{pq},
\end{align*}
and the coefficient of $\dfrac{1}{(x-1)^2}$ is $\dfrac{1}{pq}.$
Also, the coefficient of $\dfrac{1}{x+1}$ is equal to
\begin{align*}
&\frac{(2pq-p-q)}{2pq}-\frac{2}{p}\sum_{j=1}^{p-1}\frac{\o_j^2}{(\o_j+1)^2}
-\frac{2}{q}\sum_{j=1}^{q-1}\frac{\xi_j^2}{(\xi_j+1)^2} \notag\\
&+\frac{1}{pq}\sum_{j=1}^{p-1}\frac{\o_j(\o_j-1)}{(\o_j+1)^2}
+\frac{1}{pq}\sum_{j=1}^{p-1}\frac{\xi_j(\xi_j-1)}{(\xi_j+1)^2}+
\frac{4}{pq}\sum_{i=1}^{p-1}\sum_{j=1}^{q-1}\frac{\o_i\xi_j(\o_i\xi_j-1)}{(\o_i+1)^2(\xi_j+1)^2} \notag\\
=&\frac{(2pq-p-q)}{2pq}+\frac{(p-1)^2}{2p}+\frac{(q-1)^2}{2q}-\frac{1}{4pq}\Big((p-1)^2+(p^2-1)\Big)\notag\\
&-\frac{1}{4pq}\Big((q-1)^2+(q^2-1)\Big)+\frac{1}{4pq}\Big((p-1)^2(q-1)^2-(p^2-1)(q^2-1)\Big)=0,
\end{align*}
where we used Lemma \ref{zz+1}. Analogously, the coefficient of $\dfrac{1}{(x+1)^2}$ is
\begin{align*}
&-\frac{(p-1)(q-1)}{pq}+\frac{2}{p}\sum_{j=1}^{p-1}\frac{\o_j}{(\o_j+1)}+\frac{2}{q}\sum_{j=1}^{q-1}\frac{\xi_j}{(\xi_j+1)}
-\frac{2}{pq}\sum_{j=1}^{p-1}\frac{\o_j}{(\o_j+1)}\notag\\
&-\frac{2}{pq}\sum_{j=1}^{q-1}\frac{\xi_j}{(\xi_j+1)}
-\frac{4}{pq}\sum_{i=1}^{p-1}\frac{\o_i}{(\o_i+1)}\sum_{j=1}^{q-1}\frac{\xi_j}{(\xi_j+1)}=0.
\end{align*}
Lastly, using the same argument in \eqref{B5}, we find that the coefficient of $\dfrac{1}{x-\o_j},$ $1\leq j \leq p-1,$ is
\begin{align*}
&\frac{2\o_j^2}{p(\o_j+1)^2}\left\{1+\frac{2}{q}\frac{1}{\o_j-1}
+\frac{2}{q}\sum_{i=1}^{q-1}\frac{\xi_i}{(\o_j-\xi_i)}\right\}=\frac{2\o_j^2(\o_j^q+1)}{p(\o_j+1)^2(\o_j^q-1)},
\end{align*}
and the coefficient of $\dfrac{1}{x-\xi_j},$ $1\leq j \leq p-1,$ is
\begin{align*}
\frac{2\xi_j^2(\xi_j^p+1)}{q(\xi_j+1)^2(\xi_j^p-1)}.
\end{align*}
Putting all these together in \eqref{B-}, we finish our proof.
\end{proof}

\begin{lemma}\label{lemR21}
Let $p$ and $q$ be relatively prime positive integers such that  $q \geq 3$ and $q$ is odd.
If $\xi_j=e^{2\pi ij/q}$, then
\begin{align*}
\sum_{j=1}^{q-1}\frac{\xi_j}{(\xi_j+1)^2(\xi_j^p-1)}=-\frac{q^2-1}{8}.
\end{align*}
\end{lemma}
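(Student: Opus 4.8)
The plan is to mirror the proof of Lemma~\ref{lemr1}, replacing cosecants by secants and $(\xi_j-1)^2$ by $(\xi_j+1)^2$ throughout. The starting point is the vanishing, by symmetry, of the finite sum
\begin{align*}
\sum_{n=1}^{q-1}\cot\Big(\frac{\pi np}{q}\Big)\sec^2\Big(\frac{\pi n}{q}\Big)=0.
\end{align*}
I would establish this by pairing the index $n$ with $q-n$: since $\cot$ has period $\pi$, one has $\cot(\pi(q-n)p/q)=-\cot(\pi np/q)$, while $\sec^2(\pi(q-n)/q)=\sec^2(\pi n/q)$, so the summand is odd under $n\mapsto q-n$. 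Because $q$ is odd there is no fixed index (one never has $2n=q$), and, crucially, every term is finite: $(p,q)=1$ guarantees $q\nmid np$, so the cotangent is defined, and $q$ odd guarantees $\cos(\pi n/q)\neq0$ for $1\le n\le q-1$, so the secant is defined. This is precisely where the hypothesis that $q$ is odd is used.

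Next I would convert this trigonometric identity into a statement about roots of unity. Using $\cos^2(\pi n/q)=(\xi_n+1)^2/(4\xi_n)$ from \eqref{sq}, hence $\sec^2(\pi n/q)=4\xi_n/(\xi_n+1)^2$, together with the cotangent identity $\cot(\pi np/q)=i(\xi_n^p+1)/(\xi_n^p-1)$ as in \eqref{s2cc}, the vanishing sum becomes
\begin{align*}
0=4i\sum_{n=1}^{q-1}\frac{\xi_n^p+1}{\xi_n^p-1}\cdot\frac{\xi_n}{(\xi_n+1)^2}.
\end{align*}
Writing $\dfrac{\xi_n^p+1}{\xi_n^p-1}=1+\dfrac{2}{\xi_n^p-1}$ splits this into the target sum plus $\sum_{n=1}^{q-1}\xi_n/(\xi_n+1)^2$, which yields
\begin{align*}
\sum_{n=1}^{q-1}\frac{\xi_n}{(\xi_n+1)^2(\xi_n^p-1)}=-\frac12\sum_{n=1}^{q-1}\frac{\xi_n}{(\xi_n+1)^2}.
\end{align*}
Finally, Lemma~\ref{zz+1} evaluates the right-hand sum as $(q^2-1)/4$, and the factor $-\tfrac12$ produces the claimed value $-(q^2-1)/8$.

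There is essentially no computational obstacle here; the entire content lies in the symmetry argument of the first step. The point that requires care is to justify the vanishing of the secant sum rigorously---that $n\mapsto q-n$ is a fixed-point-free involution of $\{1,\dots,q-1\}$ pairing terms of opposite sign---and to verify that the oddness of $q$ keeps every term finite, so that no spurious poles invalidate the pairing. Once this is in place, the remaining reduction via Lemma~\ref{zz+1} is immediate.
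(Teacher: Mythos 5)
Your proposal is correct and follows essentially the same route as the paper: the authors likewise start from the symmetry identity $\sum_{n=1}^{q-1}\cot(\pi np/q)\sec^2(\pi n/q)=0$, rewrite the summand via $\cot(\pi np/q)=i(\xi_n^p+1)/(\xi_n^p-1)$ and $\sec^2(\pi n/q)=4\xi_n/(\xi_n+1)^2$, split off $1+2/(\xi_n^p-1)$, and finish with Lemma \ref{zz+1}. Your added justification of the fixed-point-free involution $n\mapsto q-n$ and of the role of the oddness of $q$ is a welcome elaboration of the paper's terse ``by symmetry.''
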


\begin{proof}
We observe that  by symmetry
\begin{align*}
\sum_{n=1}^{q-1}\cot\Big(\frac{\pi np}{q}\Big)\sec^2\Big(\frac{\pi n}{q}\Big)=0.
\end{align*}
Also,
\begin{align*}
\sum_{n=1}^{q-1}\cot\Big(\frac{\pi np}{q}\Big)\sec^2\Big(\frac{\pi n}{q}\Big)
&=4i\sum_{n=1}^{q-1}\frac{\xi_n^p+1}{\xi_n^p-1}\frac{\xi_n}{(\xi_n+1)^2} \notag\\
&=4i\sum_{n=1}^{q-1}\left(1+\frac{2}{\xi_n^p-1}\right)\frac{\xi_n}{(\xi_n+1)^2}.
\end{align*}
Hence, by Lemma \ref{zz+1}, we have
\begin{align*}
\sum_{n=1}^{q-1}\frac{\xi_n}{(\xi_n+1)^2(\xi_n^p-1)}=-\frac12\sum_{n=1}^{q-1}\frac{\xi_n}{(\xi_n+1)^2}
=-\frac{q^2-1}{8}.
\end{align*}
\end{proof}

\begin{lemma}\label{lemR22}
Let $p$ and $q$ be relatively prime positive integers  such that $p, q \geq 2.$ If
$\omega_j=e^{2\pi ij/p}$ and $\xi_j=e^{2\pi ij/q},$ then
\begin{align*}
q\sum_{j=1}^{p-1}\frac{1}{(\o_j-1)(\o_j^q-1)}+p\sum_{j=1}^{q-1}\frac{1}{(\xi_j-1)(\xi_j^p-1)}
=-\frac{1}{12}(p^2+q^2-9pq+3p+3q+1).
\end{align*}
\end{lemma}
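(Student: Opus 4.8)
The plan is to produce the first sum, $q\sum_{j=1}^{p-1}1/\big((\omega_j-1)(\omega_j^q-1)\big)$, by partial-fraction-decomposing a single rational function and then evaluating it at the $p$-th roots of unity $\omega_j$; the symmetry of the problem will force the ``leftover'' term to be exactly the negative of $p$ times the second sum, so that the two sums of the lemma collapse onto one side. First I would start from $\frac{q}{(x-1)(x^q-1)}$ and insert \eqref{xp1} in the form $\frac{q}{x^q-1}=\frac{1}{x-1}+\sum_{i=1}^{q-1}\frac{\xi_i}{x-\xi_i}$. Together with the elementary splitting $\frac{1}{(x-1)(x-\xi_i)}=\frac{1}{\xi_i-1}\big(\frac{1}{x-\xi_i}-\frac{1}{x-1}\big)$ and the evaluation $\sum_{i=1}^{q-1}\frac{\xi_i}{\xi_i-1}=\frac{q-1}{2}$ from Lemma \ref{zz1}, this yields the clean decomposition
\begin{align*}
\frac{q}{(x-1)(x^q-1)}=\frac{1}{(x-1)^2}-\frac{q-1}{2}\,\frac{1}{x-1}+\sum_{i=1}^{q-1}\frac{\xi_i}{(\xi_i-1)(x-\xi_i)},
\end{align*}
which has only simple and double poles that are straightforward to evaluate at $x=\omega_j$.

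Next I would set $x=\omega_j$ and sum over $1\le j\le p-1$. The poles at $x=1$ contribute $\sum_j 1/(\omega_j-1)^2=-(p-1)(p-5)/12$ and $\sum_j 1/(\omega_j-1)=-(p-1)/2$ via Lemma \ref{1z-1}. The genuine work is the double sum $\sum_{i=1}^{q-1}\frac{\xi_i}{\xi_i-1}\sum_{j=1}^{p-1}\frac{1}{\omega_j-\xi_i}$: interchanging the order and applying \eqref{1xz} with $x=\xi_i$, $z_j=\omega_j$, and $k=p$ gives $\sum_{j=1}^{p-1}\frac{1}{\omega_j-\xi_i}=\frac{1}{\xi_i-1}-\frac{p\xi_i^{p-1}}{\xi_i^p-1}$, where $\xi_i^p\neq 1$ because $(p,q)=1$ forces $q\nmid pi$ for $1\le i\le q-1$.

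The main, and pleasantly self-cancelling, obstacle is the factor $\xi_i^{p}$ appearing in the numerator after this substitution. Writing $\xi_i^{p}=(\xi_i^{p}-1)+1$ splits the term $-p\sum_i\frac{\xi_i^{p}}{(\xi_i-1)(\xi_i^p-1)}$ into $\frac{p(q-1)}{2}$ (using $\sum_i 1/(\xi_i-1)=-(q-1)/2$ from Lemma \ref{1z-1}) together with precisely $-p\sum_{i=1}^{q-1}\frac{1}{(\xi_i-1)(\xi_i^p-1)}$. Transposing this last term to the left reconstitutes the full symmetric left-hand side of the lemma. After also inserting $\sum_{i=1}^{q-1}\frac{\xi_i}{(\xi_i-1)^2}=-(q^2-1)/12$ from Lemma \ref{zz1}, I would be left with
\begin{align*}
-\frac{(p-1)(p-5)}{12}+\frac{(p-1)(q-1)}{4}+\frac{p(q-1)}{2}-\frac{q^2-1}{12},
\end{align*}
and collecting this over the common denominator $12$ produces $-\frac{1}{12}(p^2+q^2-9pq+3p+3q+1)$, completing the proof. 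The only care needed throughout is bookkeeping the signs and verifying that no term involving $\omega_j^q-1$ or $\xi_j^p-1$ survives except the intended second sum of the lemma.
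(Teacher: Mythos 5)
Your proposal is correct and follows essentially the same route as the paper's proof: both expand $q/(x^q-1)$ via \eqref{xp1}, apply the same partial-fraction splitting, evaluate at the $\omega_j$ using \eqref{1xz} together with Lemmas \ref{1z-1} and \ref{zz1}, and recover the second sum from the $\xi_i^p=(\xi_i^p-1)+1$ split. The only difference is cosmetic — you perform the partial-fraction decomposition of the rational function before substituting $x=\omega_j$, whereas the paper substitutes first — and your final arithmetic checks out.
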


\begin{proof}
From \eqref{xp1}, \eqref{1xz} and Lemmas \ref{1z-1} and \ref{zz1}, we can derive
\begin{align*}
&q\sum_{j=1}^{p-1}\frac{1}{(\o_j-1)(\o_j^q-1)}=\sum_{j=1}^{p-1}\left\{\frac{1}{(\o_j-1)^2}
+\sum_{i=1}^{q-1}\frac{\xi_i}{(\o_j-1)(\o_j-\xi_i)} \right\} \notag\\
&=-\frac{(p-1)(p-5)}{12}+\sum_{i=1}^{q-1}\frac{\xi_i}{(1-\xi_i)}\sum_{j=1}^{p-1}\left\{\frac{1}{\o_j-1}-\frac{1}{\o_j-\xi_i}\right\} \notag\\
&=-\frac{(p-1)(p-5)}{12}+\frac{(q-1)}{2}\frac{(p-1)}{2}-
\sum_{i=1}^{q-1}\frac{\xi_i}{(1-\xi_i)}\left\{\frac{1}{\xi_i-1}-\frac{p}{\xi_i}-\frac{p}{\xi_i(\xi_i^p-1)} \right\}\notag\\
&=-\frac{(p-1)(p-5)}{12}+\frac{(q-1)}{2}\frac{(p-1)}{2}-\frac{(q-1)(q+1)}{12}+\frac{p(q-1)}{2}-p\sum_{j=1}^{q-1}\frac{1}{(\xi_j-1)(\xi_j^p-1)} \notag\\
&=-\frac{1}{12}(p^2+q^2-9pq+3p+3q+1)-p\sum_{j=1}^{q-1}\frac{1}{(\xi_j-1)(\xi_j^p-1)}.
\end{align*}
This completes our proof.
\end{proof}

\begin{lemma}\label{lemr21+}
Let $p$ and $q$ be relatively prime positive integers  such that $p, q \geq 2.$ If
$\omega_n=e^{2\pi in/p}$ and $\xi_n=e^{2\pi in/q},$ then
\begin{align*}
q\sum_{n=1}^{p-1}\frac{1}{(\o_n+1)(\o_n^q-1)}+p\sum_{n=1}^{q-1}\frac{1}{(\xi_n+1)(\xi_n^p-1)}
=-\frac{1}{4}(3pq-p-q-1).
\end{align*}
\end{lemma}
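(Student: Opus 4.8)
The plan is to imitate the proof of Lemma~\ref{lemR22}, replacing the factors $\omega_j-1,\ \xi_j-1$ by $\omega_j+1,\ \xi_j+1$ and arranging the computation so that a single ``cross term'' reassembles the symmetric left-hand side. First I would apply the logarithmic-derivative identity \eqref{xp1} in the form $\dfrac{q}{x^q-1}=\sum_{i=1}^{q}\dfrac{\xi_i}{x-\xi_i}$, evaluate it at $x=\omega_n$, and split off the index $i=q$ (where $\xi_q=1$), obtaining
\begin{equation*}
\frac{q}{(\omega_n+1)(\omega_n^q-1)}=\frac{1}{\omega_n^2-1}+\sum_{i=1}^{q-1}\frac{\xi_i}{(\omega_n+1)(\omega_n-\xi_i)},
\end{equation*}
since $(\omega_n+1)(\omega_n-1)=\omega_n^2-1$. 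Summing over $1\le n\le p-1$ then expresses $q\sum_n\frac{1}{(\omega_n+1)(\omega_n^q-1)}$ as a single ``diagonal'' sum $\sum_n\frac{1}{\omega_n^2-1}$ plus a double sum over $n$ and $i$. (Here I use that the moduli are odd, which is exactly the hypothesis under which the stated sums are well defined, since $\omega_n+1$ and $\xi_n+1$ must not vanish.)

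For the diagonal sum I would write $\frac{1}{\omega_n^2-1}=\tfrac12\bigl(\frac{1}{\omega_n-1}-\frac{1}{\omega_n+1}\bigr)$ and invoke Lemmas~\ref{1z-1} and \ref{1z1} to get $\sum_{n=1}^{p-1}\frac{1}{\omega_n^2-1}=-\tfrac{p-1}{2}$. For the double sum I would interchange the order of summation, use the partial fraction $\frac{1}{(x+1)(x-\xi_i)}=\frac{1}{1+\xi_i}\bigl(\frac{1}{x-\xi_i}-\frac{1}{x+1}\bigr)$, and evaluate the inner sum $\sum_{n=1}^{p-1}\frac{1}{\omega_n-\xi_i}$ by means of \eqref{1xz} with $x=\xi_i$, $k=p$, $z_j=\omega_j$. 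The crucial algebraic rewriting is then $\frac{p\xi_i^{p-1}}{\xi_i^p-1}=\frac{p}{\xi_i}+\frac{p}{\xi_i(\xi_i^p-1)}$ (using $\xi_i^{p-1}=\xi_i^p/\xi_i$ and $\xi_i^p=(\xi_i^p-1)+1$), which is precisely what makes the companion term $-p\sum_{i=1}^{q-1}\frac{1}{(\xi_i+1)(\xi_i^p-1)}$ appear.

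The remaining work is to collect the elementary pieces, for which I expect to need $\sum_{i=1}^{q-1}\frac{1}{\xi_i+1}=\sum_{i=1}^{q-1}\frac{\xi_i}{\xi_i+1}=\tfrac{q-1}{2}$ (Lemmas~\ref{1z1} and \ref{zz+1}), together with the pleasant cancellation $\sum_{i=1}^{q-1}\frac{\xi_i}{\xi_i^2-1}=\tfrac12\bigl(\sum\frac{1}{\xi_i-1}+\sum\frac{1}{\xi_i+1}\bigr)=0$, which annihilates the contribution of the $\frac{1}{\xi_i-1}$ term. Assembling everything yields
\begin{equation*}
q\sum_{n=1}^{p-1}\frac{1}{(\omega_n+1)(\omega_n^q-1)}=-\frac{p-1}{2}-\frac{p(q-1)}{2}-\frac{(p-1)(q-1)}{4}-p\sum_{i=1}^{q-1}\frac{1}{(\xi_i+1)(\xi_i^p-1)},
\end{equation*}
and transposing the last term gives the symmetric left-hand side of the lemma equal to $-\tfrac{p-1}{2}-\tfrac{p(q-1)}{2}-\tfrac{(p-1)(q-1)}{4}$, which expands to $-\tfrac14(3pq-p-q-1)$.

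The main obstacle is not any single hard step but the careful bookkeeping: each auxiliary sum must be matched to the correct evaluation (in particular using the odd-modulus Lemmas~\ref{1z1} and \ref{zz+1} in place of their $z_n-1$ analogues), and one must track signs through the partial-fraction decomposition and the application of \eqref{1xz}. The conceptual point worth emphasizing is that the reciprocity shape of the identity is forced by the lone rewriting $\frac{p\xi_i^{p-1}}{\xi_i^p-1}=\frac{p}{\xi_i}+\frac{p}{\xi_i(\xi_i^p-1)}$, which manufactures the companion sum over the $\xi_i$ and thereby the symmetric left-hand side.
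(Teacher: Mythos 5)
Your proposal is correct and follows essentially the same route as the paper's proof: expand $q/(\omega_n^q-1)$ via the logarithmic-derivative identity \eqref{xp1}, split off the root $\xi_q=1$, decompose $1/((\omega_n+1)(\omega_n-\xi_i))$ by partial fractions, and use \eqref{1xz} together with the rewriting $\tfrac{p\xi^{p-1}}{\xi^p-1}=\tfrac{p}{\xi}+\tfrac{p}{\xi(\xi^p-1)}$ to manufacture the companion sum; all the intermediate evaluations and the final arithmetic check out. Your remark that the moduli must be odd for the sums (and for Lemmas \ref{1z1} and \ref{zz+1}) to apply is a fair point that the paper's statement leaves implicit.
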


\begin{proof}
Using \eqref{xp1} and Lemmas \ref{1z1} and \ref{1z-1}, we have
\begin{align}\label{r21+e1}
&q\sum_{n=1}^{p-1}\frac{1}{(\o_n+1)(\o_n^q-1)}
=\sum_{n=1}^{p-1}\frac{1}{(\o_n+1)}\left\{\frac{1}{\o_n-1} +\sum_{j=1}^{q-1}\frac{\xi_j}{\o_n-\xi_j}\right\} \notag\\
&=\sum_{n=1}^{p-1}\frac{1}{(\o_n-1)(\o_n+1)}+\sum_{j=1}^{q-1}\xi_j\sum_{n=1}^{p-1}\frac{1}{(\o_n+1)(\o_n-\xi_j)}\notag\\
&=\frac{1}{2}\sum_{n=1}^{p-1}\left(\frac{1}{\o_n-1}-\frac{1}{\o_n+1}\right)
-\sum_{j=1}^{q-1}\frac{\xi_j}{\xi_j+1}\sum_{n=1}^{p-1}\left(\frac{1}{\o_n+1}-\frac{1}{\o_n-\xi_j}\right)\notag\\
&=-\frac{(p-1)(q+1)}{4}+\sum_{j=1}^{q-1}\frac{\xi_j}{\xi_j+1}\sum_{n=1}^{p-1}\frac{1}{\o_n-\xi_j}.
\end{align}
Now, by \eqref{1xz},
\begin{align}\label{r21+e2}
&\sum_{j=1}^{q-1}\frac{\xi_j}{\xi_j+1}\sum_{n=1}^{p-1}\frac{1}{\o_n-\xi_j}=
\sum_{j=1}^{q-1}\frac{\xi_j}{\xi_j+1}\left\{\frac{1}{\xi_j-1}-\frac{p}{\xi_j}-\frac{p}{\xi_j(\xi_j^p-1)}\right\}\notag\\
&=\frac{1}{2}\sum_{j=1}^{q-1}\left(\frac{1}{\xi_j+1}+\frac{1}{\xi_j-1}\right)
-p\sum_{j=1}^{q-1}\frac{1}{\xi_j+1}-p\sum_{n=1}^{q-1}\frac{1}{(\xi_n+1)(\xi_n^p-1)} \notag\\
&=-\frac{p(q-1)}{2}-p\sum_{n=1}^{q-1}\frac{1}{(\xi_n+1)(\xi_n^p-1)},
\end{align}
where we applied Lemmas \ref{1z1} and \ref{1z-1}.
Putting \eqref{r21+e2} into \eqref{r21+e1}, we finish the proof of the lemma.
\end{proof}

\begin{lemma}\label{lemR23}
Let $p$ and $q$ be relatively prime odd positive integers  such that $p, q \geq 3.$ If
$\omega_n=e^{2\pi in/p}$ and $\xi_n=e^{2\pi in/q},$ then
\begin{align*}
q\sum_{n=1}^{p-1}\frac{1}{(\o_n+1)^2(\o_n^q-1)}+p\sum_{n=1}^{q-1}\frac{1}{(\xi_n+1)^2(\xi_n^p-1)}
=\frac{1}{8}(p^2q+pq^2+p+q-6pq+2).
\end{align*}
\end{lemma}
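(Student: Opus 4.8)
The plan is to mirror the proofs of Lemmas~\ref{lemR22} and~\ref{lemr21+}: I would evaluate the single sum $q\sum_{n=1}^{p-1}\frac{1}{(\o_n+1)^2(\o_n^q-1)}$ directly and show that it equals the desired closed form \emph{minus} the companion sum $p\sum_{n=1}^{q-1}\frac{1}{(\xi_n+1)^2(\xi_n^p-1)}$, so that the claimed reciprocity drops out upon transposing that term. The engine is the logarithmic-derivative identity \eqref{xp1}, which at $x=\o_n$ (legitimate since $(p,q)=1$ forces $\o_n^q\neq1$ for $1\le n\le p-1$) gives $\frac{q}{\o_n^q-1}=\frac{1}{\o_n-1}+\sum_{j=1}^{q-1}\frac{\xi_j}{\o_n-\xi_j}$. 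Inserting this into $q\sum_{n}\frac{1}{(\o_n+1)^2(\o_n^q-1)}$ and interchanging the order of summation splits the left side into a diagonal piece $\sum_{n=1}^{p-1}\frac{1}{(\o_n+1)^2(\o_n-1)}$ and a cross term $\sum_{j=1}^{q-1}\xi_j\sum_{n=1}^{p-1}\frac{1}{(\o_n+1)^2(\o_n-\xi_j)}$.

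For the diagonal piece I would use the partial fraction $\frac{1}{(w+1)^2(w-1)}=\frac{1}{4(w-1)}-\frac{1}{4(w+1)}-\frac{1}{2(w+1)^2}$ and evaluate the three resulting sums by Lemmas~\ref{1z-1} and~\ref{1z1}, where the oddness of $p$ is what licenses the two $+1$ sums. This yields $\sum_{n=1}^{p-1}\frac{1}{(\o_n+1)^2(\o_n-1)}=\frac{(p-1)(p-3)}{8}$.

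For the cross term the key decomposition is $\frac{1}{(w+1)^2(w-\xi_j)}=\frac{1}{(\xi_j+1)^2(w-\xi_j)}-\frac{1}{(\xi_j+1)^2(w+1)}-\frac{1}{(\xi_j+1)(w+1)^2}$. Summing over $n$, the first term is handled by \eqref{1xz}, which gives $\sum_{n=1}^{p-1}\frac{1}{\o_n-\xi_j}=\frac{1}{\xi_j-1}-\frac{p}{\xi_j}-\frac{p}{\xi_j(\xi_j^p-1)}$, while the remaining two contribute $\sum_{n}\frac{1}{\o_n+1}=\frac{p-1}{2}$ and $\sum_{n}\frac{1}{(\o_n+1)^2}=-\frac{(p-1)^2}{4}$ by Lemma~\ref{1z1}. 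Multiplying by $\xi_j$ and summing over $j$, the cross term becomes a linear combination of the four closed-form sums $\sum_{j}\frac{\xi_j}{(\xi_j+1)^2(\xi_j-1)}$, $\sum_{j}\frac{1}{(\xi_j+1)^2}$, $\sum_{j}\frac{\xi_j}{(\xi_j+1)^2}$, and $\sum_{j}\frac{\xi_j}{\xi_j+1}$, all supplied by Lemmas~\ref{1z1}, \ref{1z-1}, and~\ref{zz+1} (here the oddness of $q$ enters), together with \emph{exactly} the term $-p\sum_{j=1}^{q-1}\frac{1}{(\xi_j+1)^2(\xi_j^p-1)}$.

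The one conceptual point, as opposed to a computational obstacle, is to recognize that this last $\xi$-sum is precisely the second summand on the left-hand side of the Lemma: transposing it across the equality reconstructs the symmetric left side, while the four explicit evaluations collect to $\frac{1}{8}(p^2q+pq^2-6pq+p+q+2)$. The main labor is therefore bookkeeping of signs across the several partial fractions and the final polynomial consolidation, which I expect to present no genuine difficulty beyond keeping the auxiliary root-of-unity evaluations straight. I note that Lemma~\ref{lemR21} is not needed for this argument, since the companion sum carries numerator $1$ rather than $\xi_j$.
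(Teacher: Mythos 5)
Your proposal is correct --- I verified the partial fraction decompositions, the use of \eqref{xp1} and \eqref{1xz}, the value $\tfrac{(p-1)(p-3)}{8}$ for the diagonal piece, and the final consolidation, which does collapse to $\tfrac18(p^2q+pq^2+p+q-6pq+2)$ --- but it is a genuinely different route from the paper's. The paper proves this lemma in three lines by writing $\frac{1}{(w+1)^2}=\frac{1}{w+1}-\frac{w}{(w+1)^2}$ inside each summand: the first pieces assemble into the symmetric combination $q\sum_n\frac{1}{(\o_n+1)(\o_n^q-1)}+p\sum_n\frac{1}{(\xi_n+1)(\xi_n^p-1)}$, which is exactly Lemma \ref{lemr21+}, while the second pieces are each evaluated \emph{individually} by Lemma \ref{lemR21} (and its $p\leftrightarrow q$ counterpart), contributing $\tfrac{q(p^2-1)}{8}+\tfrac{p(q^2-1)}{8}$. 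You instead rebuild the identity from scratch out of the logarithmic-derivative expansion and \eqref{1xz}, in the same style as the paper's proofs of Lemmas \ref{lemR22} and \ref{lemr21+}. What each approach buys: the paper's is far shorter given the auxiliary lemmas already established, and it exploits the pleasant fact that Lemma \ref{lemR21} evaluates its sum in closed form without needing reciprocity; yours is self-contained modulo the basic root-of-unity evaluations, bypasses Lemma \ref{lemR21} entirely (as you correctly note, since your companion sum carries numerator $1$ rather than $\xi_j$), and makes no use of Lemma \ref{lemr21+} either --- at the cost of internally reproducing essentially the same bookkeeping that went into proving that lemma.
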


\begin{proof}
Applying Lemmas \ref{lemR21} and \ref{lemr21+}, we obtain
\begin{align*}
&q\sum_{n=1}^{p-1}\frac{1}{(\o_n+1)^2(\o_n^q-1)}+p\sum_{n=1}^{q-1}\frac{1}{(\xi_n+1)^2(\xi_n^p-1)} \notag\\
&=q\sum_{n=1}^{p-1}\left\{\frac{1}{(\o_n+1)(\o_n^q-1)}-\frac{\o_j}{(\o_n+1)^2(\o_n^q-1)}\right\}\notag\\
&\quad+p\sum_{n=1}^{q-1}\left\{\frac{1}{(\xi_n+1)(\xi_n^p-1)}-\frac{\xi_j}{(\xi_n+1)^2(\xi_n^p-1)}\right\} \notag\\
&=-\frac{1}{4}(3pq-p-q-1)+\frac{q}{8}(p^2-1)+\frac{p}{8}(q^2-1) \notag\\
&=\frac{1}{8}(p^2q+pq^2+p+q-6pq+2),
\end{align*}
which completes the proof.
\end{proof}

\begin{lemma}\label{+xk2}
Let $k>1$ be an integer  and $z_n=e^{2\pi in/k}.$ Then,
\begin{align*}
&\frac{x}{(x+1)^2(x^k-1)^2}=-\frac{k-1}{4k^2(x-1)}+\frac{1}{4k^2(x-1)^2}
-\frac{(k-1)}{4(x+1)}-\frac{1}{4(x+1)^2}\\
&\qquad -\frac{1}{k^2}\sum_{n=1}^{k-1}\left\{\frac{kz_n^2}{(z_n+1)^2}
-\frac{2z_n^2}{(z_n+1)^3}\right\}\frac{1}{(x-z_n)}
+\frac{1}{k^2}\sum_{n=1}^{k-1}\frac{z_n^3}{(z_n+1)^2(x-z_n)^2}.
\end{align*}
\end{lemma}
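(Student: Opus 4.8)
The plan is to mirror the proof of Lemma~\ref{xk2} line for line, replacing the prefactor $\frac{x}{(x-1)^2}$ by $\frac{x}{(x+1)^2}$ throughout and using the $(x+1)$-denominator decompositions \eqref{p5}--\eqref{p8} in place of \eqref{p1}--\eqref{p4}. First I would apply \eqref{xp1} in the form $\frac{1}{x^k-1}=\frac{1}{k}\bigl(\frac{1}{x-1}+\sum_{n=1}^{k-1}\frac{z_n}{x-z_n}\bigr)$, square it, and multiply by $\frac{x}{(x+1)^2}$ to obtain
\begin{align*}
\frac{k^2x}{(x+1)^2(x^k-1)^2}
&=\frac{x}{(x+1)^2(x-1)^2}+\frac{2x}{(x+1)^2(x-1)}\sum_{n=1}^{k-1}\frac{z_n}{x-z_n}\\
&\quad+\frac{x}{(x+1)^2}\left\{\sum_{n=1}^{k-1}\frac{z_n^2}{(x-z_n)^2}
+\sum_{\substack{i,j=1\\i\neq j}}^{k-1}\frac{z_iz_j}{(x-z_i)(x-z_j)}\right\}.
\end{align*}
Here I only need powers $(x-1)^{-1}$ and $(x-1)^{-2}$, since the sole factor $x-1$ enters through the $z=1$ term of \eqref{xp1}; this is why the right-hand side of the lemma has no higher powers of $(x-1)^{-1}$.

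Next I would decompose each piece. The term $\frac{x}{(x+1)^2(x-1)^2}$ is handled by \eqref{p7}; each summand $\frac{2z_nx}{(x+1)^2(x-1)(x-z_n)}$ by \eqref{p8} with $a=1$, $b=z_n$; the off-diagonal double sum by \eqref{p8} with $a=z_i$, $b=z_j$; and the diagonal double-pole sum by the new decomposition
\begin{align*}
\frac{x}{(x+1)^2(x-a)^2}&=\frac{a-1}{(a+1)^3(x+1)}-\frac{a-1}{(a+1)^3(x-a)}
-\frac{1}{(a+1)^2(x+1)^2}+\frac{a}{(a+1)^2(x-a)^2},
\end{align*}
which I would verify by the standard residue evaluation at $x=-1$ and $x=a$. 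I then collect the coefficients of $\frac{1}{x-1}$, $\frac{1}{(x-1)^2}$, $\frac{1}{x+1}$, $\frac{1}{(x+1)^2}$, $\frac{1}{x-z_j}$, and $\frac{1}{(x-z_j)^2}$.

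The routine coefficients come out quickly: $\frac{1}{(x-1)^2}$ and $\frac{1}{(x-z_j)^2}$ each receive a single contribution (giving $\tfrac14$ and $\frac{z_j^3}{(z_j+1)^2}$); the $\frac{1}{x-1}$ coefficient reduces through $\sum_{n=1}^{k-1}\frac{z_n}{z_n-1}=\frac{k-1}{2}$ of Lemma~\ref{zz1}; and the $\frac{1}{x-z_j}$ coefficient is treated exactly as the coefficient $E_5$ in the proof of Lemma~\ref{xk2}, using \eqref{1xz} and Lemma~\ref{1z-1} to evaluate $\sum_{i\neq j}\frac{1}{z_j-z_i}=\frac{k-1}{2z_j}-\frac{1}{z_j-1}$, after which the $(z_j-1)$-denominators cancel and leave $-\frac{kz_j^2}{(z_j+1)^2}+\frac{2z_j^2}{(z_j+1)^3}$.

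The hard part will be the coefficients of $\frac{1}{x+1}$ and $\frac{1}{(x+1)^2}$, since each contains the off-diagonal double sum. For the $\frac{1}{x+1}$ coefficient I would rewrite
\begin{align*}
\sum_{\substack{i,j=1\\i\neq j}}^{k-1}\frac{z_iz_j(z_iz_j-1)}{(z_i+1)^2(z_j+1)^2}
&=\left(\sum_{n=1}^{k-1}\frac{z_n^2}{(z_n+1)^2}\right)^2-\sum_{n=1}^{k-1}\frac{z_n^4}{(z_n+1)^4}\\
&\quad-\left(\sum_{n=1}^{k-1}\frac{z_n}{(z_n+1)^2}\right)^2+\sum_{n=1}^{k-1}\frac{z_n^2}{(z_n+1)^4}
\end{align*}
and evaluate each of the four sums via Lemma~\ref{zz+1}; the degree-four contributions cancel, and combining with the two diagonal single sums collapses the whole expression to $-\frac{k^2(k-1)}{4}$. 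An analogous but shorter reduction of the $\frac{1}{(x+1)^2}$ coefficient, again via Lemma~\ref{zz+1}, produces a perfect square $-\frac{1+2(k-1)+(k-1)^2}{4}=-\frac{k^2}{4}$. Dividing the resulting identity by $k^2$ gives the stated form of the lemma.
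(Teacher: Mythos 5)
Your proposal is correct and follows essentially the same route as the paper's own proof: the same squared expansion of \eqref{xp1} multiplied by $x/(x+1)^2$, the same partial fraction decompositions \eqref{p7}, \eqref{p8} together with the identical new decomposition of $x/\bigl((x+1)^2(x-a)^2\bigr)$, and the same coefficient computations via Lemmas \ref{zz1}, \ref{zz+1}, \ref{1z-1} and \eqref{1xz}, with the values $-\tfrac{k-1}{4}$, $\tfrac14$, $-\tfrac{k^2(k-1)}{4}$, $-\tfrac{k^2}{4}$, $-\tfrac{kz_n^2}{(z_n+1)^2}+\tfrac{2z_n^2}{(z_n+1)^3}$, and $\tfrac{z_n^3}{(z_n+1)^2}$ all matching the paper's $H_1,\dots,H_{6,n}$.
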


\begin{proof}
From \eqref{xk2-1}, we can deduce that
\begin{align*}
\frac{k^2x}{(x+1)^2(x^k-1)^2}&=\frac{x}{(x+1)^2(x-1)^2}+\frac{2x}{(x+1)^2(x-1)}\sum_{j=1}^{k-1}\frac{z_j}{x-z_j}\notag\\
&\quad +\frac{x}{(x+1)^2}
\left\{\sum_{j=1}^{k-1}\frac{z^2_j}{(x-z_j)^2}+\sum_{\substack{i,j=1 \\ i\neq j}}^{k-1}\frac{z_iz_j}{(x-z_i)(x-z_j)}\right\}.
\end{align*}
Employing \eqref{p7}, \eqref{p8} and the following partial fraction decomposition
\begin{align*}
\frac{x}{(x+1)^2(x-a)^2}=&\frac{a-1}{(a+1)^3(x+1)}-\frac{1}{(a+1)^2(x+1)^2}\\
&-\frac{a-1}{(a+1)^3(x-a)}+\frac{a}{(a+1)^2(x-a)^2},
\end{align*}
we derive
\begin{align}\label{+xk2e1}
&\frac{k^2x}{(x+1)^2(x^k-1)^2}=\frac{1}{4(x-1)^2}-\frac{1}{4(x+1)^2}\notag\\
&+\sum_{j=1}^{k-1}\left\{\frac{z_j(z_j-1)}{2(z_j+1)^2(x+1)}-\frac{z_j}{(z_j+1)(x+1)^2}
+\frac{z_j}{2(1-z_j)(x-1)}+\frac{2z_j^2}{(z_j+1)^2(z_j-1)(x-z_j)}\right\}\notag\\
&+\sum_{j=1}^{k-1}\left\{\frac{z_j^2(z_j-1)}{(z_j+1)^3(x+1)}-\frac{z_j^2}{(z_j+1)^2(x+1)^2}
-\frac{z_j^2(z_j-1)}{(z_j+1)^3(x-z_j)}+\frac{z_j^3}{(z_j+1)^2(x-z_j)^2}\right\}\notag\\
&+\sum_{\substack{i,j=1 \\ i\neq j}}^{k-1}\left\{\frac{z_iz_j(z_iz_j-1)}{(z_i+1)^2(z_j+1)^2(x+1)}-\frac{z_iz_j}{(z_i+1)(z_j+1)(x+1)^2}
+\frac{2z_i^2z_j}{(z_i+1)^2(z_i-z_j)(x-z_i)}\right\}.
\end{align}
Now, we write the right-hand side of \eqref{+xk2e1} as
\begin{align}\label{H}
\frac{H_1}{x-1}+\frac{H_2}{(x-1)^2}+\frac{H_3}{x+1}+\frac{H_4}{(x+1)^2}
+\sum_{n=1}^{k-1}\frac{H_{5,n}}{x-z_n}+\sum_{n=1}^{k-1}\frac{H_{6,n}}{(x-z_n)^2}
\end{align}
and calculate each $H_i$ with $1\leq i\leq 4$ and $H_{j, n}$ with  $j=5, 6$ and $1\leq n \leq k-1.$

We can easily see that by Lemma \ref{zz1},
\begin{align*}
H_1=\sum_{j=1}^{k-1}\frac{z_j}{2(1-z_j)}=-\frac{k-1}{4}, \quad \text{and} \quad H_2=\dfrac{1}{4}.
\end{align*}
Next, using Lemma \ref{zz+1}, we have
\begin{align*}
H_3&=\sum_{j=1}^{k-1}\left\{\frac{z_j(z_j-1)}{2(z_j+1)^2}+\frac{z_j^2(z_j-1)}{(z_j+1)^3}\right\}
+\sum_{\substack{i,j=1 \\ i\neq j}}^{k-1}\frac{z_iz_j(z_iz_j-1)}{(z_i+1)^2(z_j+1)^2} \notag\\
&=\sum_{j=1}^{k-1}\left\{\frac{z_j^2-z_j}{2(z_j+1)^2}+\frac{z_j^3-z_j^2}{(z_j+1)^3}\right\}
+\left(\sum_{j=1}^{k-1}\frac{z_j^2}{(z_i+1)^2}\right)^2-\sum_{j=1}^{k-1}\frac{z_j^4}{(z_i+1)^4} \notag\\
&\qquad \qquad-\left(\sum_{j=1}^{k-1}\frac{z_j}{(z_i+1)^2}\right)^2+\sum_{j=1}^{k-1}\frac{z_j^2}{(z_i+1)^4}  \notag\\
&=-\frac{k^2(k-1)}{4},
\end{align*}
and
\begin{align*}
H_4&=-\frac14-\sum_{j=1}^{k-1}\frac{z_j}{z_j+1}-\sum_{j=1}^{k-1}\frac{z_j^2}{(z_j+1)^2}
-\sum_{\substack{i,j=1 \\ i\neq j}}^{k-1}\frac{z_iz_j}{(z_i+1)(z_j+1)} \notag\\
&=-\frac14-\sum_{j=1}^{k-1}\frac{z_j}{z_j+1}-\left(\sum_{j=1}^{k-1}\frac{z_j}{z_j+1}\right)^2=-\frac{k^2}{4}.
\end{align*}
Now, for each $1\leq n\leq k-1,$
\begin{align}\label{H5}
H_{5, n}&=\frac{2z_n^2}{(z_n+1)^2(z_n-1)}-\frac{z_n^2(z_n-1)}{(z_n+1)^3}
+\sum_{\substack{j=1 \\ j\neq n}}^{k-1}\frac{2z_n^2z_j}{(z_n+1)^2(z_n-z_j)} \notag\\
&=\frac{2z_n^2}{(z_n+1)^2(z_n-1)}-\frac{z_n^3-z_n^2}{(z_n+1)^3}
+\frac{2z_n^2}{(z_n+1)^2}\sum_{\substack{j=1 \\ j\neq n}}^{k-1}\frac{z_j}{z_n-z_j}.
\end{align}
Observe that
\begin{align}\label{H5-1}
\sum_{\substack{j=1 \\ j\neq n}}^{k-1}\frac{z_j}{z_n-z_j}&=\sum_{\substack{j=1 \\ j\neq n}}^{k-1}\left(\frac{z_n}{z_n-z_j}-1\right)
=\sum_{\substack{j=1 \\ j\neq n}}^{k}\frac{z_n}{z_n-z_j}-\frac{z_n}{z_n-1}-(k-2)\notag\\
&=\sum_{s=1}^{k-1}\frac{1}{1-z_s}-\frac{z_n}{z_n-1}-(k-2)
=-\frac{(k-3)}{2}-\frac{z_n}{z_n-1},
\end{align}
where we used Lemma \ref{1z-1}. Putting \eqref{H5-1} into \eqref{H5} yields
\begin{align*}
H_{5, n}&=\frac{2z_n^2}{(z_n+1)^2(z_n-1)}-\frac{z_n^3-z_n^2}{(z_n+1)^3}
-\frac{(k-3)z_n^2}{(z_n+1)^2}-\frac{2z_n^3}{(z_n+1)^2(z_n-1)} \notag\\
&=-\frac{(k-1)z_n^2}{(z_n+1)^2}-\frac{z_n^3-z_n^2}{(z_n+1)^3}
=-\frac{kz_n^2}{(z_n+1)^2}+\frac{2z_n^2}{(z_n+1)^3}.
\end{align*}
Clearly, we see that for each $1\leq n\leq k-1,$
\begin{align*}
H_{6, n}&=\frac{z_n^3}{(z_n+1)^2}.
\end{align*}
Putting all these together into \eqref{H}, we complete our proof.
\end{proof}

\begin{lemma}\label{lemR24}
Let $p$ and $q$ be relatively prime odd positive integers  such that $p, q \geq 3,$ and let
$\omega_n=e^{2\pi in/p}$ and $\xi_n=e^{2\pi in/q}.$ Then,
\begin{align*}
&q^2\sum_{j=1}^{p-1}\frac{\o_j}{(\o_j+1)^2(\o_j^q-1)^2}
-p^2\sum_{j=1}^{q-1}\frac{\xi_j}{(\xi_j+1)^2(\xi_j^p-1)^2} \notag\\
&=\frac{1}{48}(3p^2q^2+6p^2q+5p^2-5q^2-6q-3)+2q\sum_{n=1}^{p-1}\frac{\o_n}{(\o_n+1)^3(\o_n^q-1)}.
\end{align*}
\end{lemma}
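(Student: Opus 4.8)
The plan is to treat this as the $(x+1)$-analogue of Lemma~\ref{lemr4}, with Lemma~\ref{+xk2} playing the role that Lemma~\ref{xk2} played there. First I would apply the partial fraction identity of Lemma~\ref{+xk2} with the substitutions $x=\xi_j$, $k=p$, and $z_n=\o_n$, then sum over $1\le j\le q-1$ and multiply through by $p^2$. This expresses
\begin{equation*}
p^2\sum_{j=1}^{q-1}\frac{\xi_j}{(\xi_j+1)^2(\xi_j^p-1)^2}
\end{equation*}
as a linear combination of the four pole-at-$\pm1$ sums $\sum_j(\xi_j\pm1)^{-m}$ together with the two cross sums $\sum_{n=1}^{p-1}\left(\tfrac{p\o_n^2}{(\o_n+1)^2}-\tfrac{2\o_n^2}{(\o_n+1)^3}\right)\sum_{j=1}^{q-1}\frac{1}{\xi_j-\o_n}$ and $\sum_{n=1}^{p-1}\frac{\o_n^3}{(\o_n+1)^2}\sum_{j=1}^{q-1}\frac{1}{(\xi_j-\o_n)^2}$.

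Next I would evaluate each ingredient. The sums $\sum_j(\xi_j-1)^{-m}$ come from Lemma~\ref{1z-1} and the sums $\sum_j(\xi_j+1)^{-m}$ from Lemma~\ref{1z1} (here the hypothesis that $q$ is odd is used). For the two cross sums I would invoke the basic identities \eqref{1xz} and \eqref{1xz2} with $x=\o_n$, $z_j=\xi_j$, and $k=q$, which convert $\sum_j(\xi_j-\o_n)^{-1}$ and $\sum_j(\xi_j-\o_n)^{-2}$ into explicit rational functions of $\o_n$ carrying the powers $\o_n^{q-1}$, $\o_n^{q-2}$, and $\o_n^{2q-2}$. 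The key algebraic move is then the reduction $\o_n^q=(\o_n^q-1)+1$, which rewrites $\o_n^{q+1}/(\o_n^q-1)$ as $\o_n\left(1+(\o_n^q-1)^{-1}\right)$ and $\o_n^{2q+1}/(\o_n^q-1)^2$ as $\o_n\left(1+2(\o_n^q-1)^{-1}+(\o_n^q-1)^{-2}\right)$; this mirrors exactly the step used in \eqref{r4e4}. After this reduction the only two sums that cannot be collapsed to a polynomial are $q^2\sum_n\frac{\o_n}{(\o_n+1)^2(\o_n^q-1)^2}$, arising from the squared cross sum, and $2q\sum_n\frac{\o_n}{(\o_n+1)^3(\o_n^q-1)}$, arising from the first cross sum.

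Every remaining sum over $\o_n$ is evaluable in closed form: the purely $+1$ sums $\sum_n\frac{\o_n}{(\o_n+1)^2}$, $\sum_n\frac{\o_n^2}{(\o_n+1)^2}$, $\sum_n\frac{\o_n^2}{(\o_n+1)^3}$, and the like come from Lemma~\ref{zz+1}; the mixed sum $\sum_n\frac{\o_n}{(\o_n+1)^2(\o_n^q-1)}$ comes from Lemma~\ref{lemR21} with $p$ and $q$ interchanged; and the sum $\sum_n\frac{\o_n^3}{(\o_n+1)^2(\o_n-1)^2}$ produced by the $-(\o_n-1)^{-2}$ piece of \eqref{1xz2} is handled by an elementary partial fraction split back into $(\o_n\pm1)^{-m}$ followed by Lemmas~\ref{1z1} and~\ref{1z-1}. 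Collecting the polynomial contributions, transferring the surviving term $q^2\sum_n\frac{\o_n}{(\o_n+1)^2(\o_n^q-1)^2}$ to the left-hand side (which produces the combination in the statement), and checking that the $2q$-sum emerges with the stated sign, yields the claimed identity. I expect the main obstacle to be neither conceptual nor structural, but rather the sheer polynomial bookkeeping: one must keep track of roughly a dozen rational sums in $\o_n$, apply the correct closed form to each, and verify that the accumulated degree-four polynomial in $p$ and $q$ collapses to $\tfrac{1}{48}(3p^2q^2+6p^2q+5p^2-5q^2-6q-3)$. A symbolic check of that final simplification would be prudent.
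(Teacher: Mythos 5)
Your proposal follows the paper's proof essentially verbatim: the paper likewise applies Lemma \ref{+xk2} with $x=\xi_j$, $k=p$, sums over $j$, evaluates the $(\xi_j\pm1)^{-m}$ sums via Lemmas \ref{1z1} and \ref{1z-1}, converts the two cross sums with \eqref{1xz} and \eqref{1xz2}, performs the same $\o_n^q=(\o_n^q-1)+1$ reduction as in \eqref{r4e4}, and is left with exactly the two uncollapsible sums you identify. The approach and all the key ingredients (including the partial-fraction split of $\o_n^3/((\o_n+1)^2(\o_n-1)^2)$ and the appeal to Lemmas \ref{zz+1} and \ref{lemR21}) match; only the final polynomial bookkeeping remains, as you note.
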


\begin{proof}
We first apply Lemma \ref{+xk2} to obtain
\begin{align}\label{R24e1}
&p^2\sum_{j=1}^{q-1}\frac{\xi_j}{(\xi_j+1)^2(\xi_j^p-1)^2}
=-\frac{p-1}{4}\sum_{j=1}^{q-1}\frac{1}{(\xi_j-1)}+\frac{1}{4}\sum_{j=1}^{q-1}\frac{1}{(\xi_j-1)^2}
-\frac{p^2(p-1)}{4}\sum_{j=1}^{q-1}\frac{1}{(\xi_j+1)} \notag\\
&\qquad \qquad \qquad\qquad-\frac{p^2}{4}\sum_{j=1}^{q-1}\frac{1}{(\xi_j+1)^2}
-\sum_{n=1}^{p-1}\left\{\frac{p\o_n^2}{(\o_n+1)^2}
-\frac{2\o_n^2}{(\o_n+1)^3}\right\}\sum_{j=1}^{q-1}\frac{1}{(\xi_j-\o_n)}\notag\\
&\qquad\qquad\qquad\qquad+\sum_{n=1}^{p-1}\frac{\o_n^3}{(\o_n+1)^2}\sum_{j=1}^{q-1}\frac{1}{(\xi_j-\o_n)^2}.
\end{align}
Using Lemmas \ref{1z1} and \ref{1z-1}, we can calculate the sum of the first four summations of \eqref{R24e1} as follows:
\begin{align}\label{R24e2}
&\frac{(p-1)(q-1)}{8}-\frac{(q-1)(q-5)}{48}
-\frac{p^2(p-1)(q-1)}{8}+\frac{p^2(q-1)^2}{16} \notag\\
&\quad =-\frac{1}{48}(q-1)(6p^3-3p^2q-3p^2-6p+q+1).
\end{align}

Next, by \eqref{1xz} and Lemmas \ref{1z1}, \ref{1z-1}, \ref{zz+1} and \ref{lemR21},
\begin{align}\label{R24e3}
&\sum_{n=1}^{p-1}\left\{\frac{p\o_n^2}{(\o_n+1)^2}
-\frac{2\o_n^2}{(\o_n+1)^3}\right\}\sum_{j=1}^{q-1}\frac{1}{(\xi_j-\o_i)} \notag\\
=&\sum_{n=1}^{p-1}\left\{\frac{p\o_n^2}{(\o_n+1)^2}
-\frac{2\o_n^2}{(\o_n+1)^3}\right\}
\left\{\frac{1}{\o_n-1}-\frac{q}{\o_n}-\frac{q}{\o_n(\o_n^q-1)}\right\} \notag\\
=&\sum_{n=1}^{p-1}\left\{\frac{p\o_n^2}{(\o_n+1)^2(\o_n-1)}-\frac{2\o_n^2}{(\o_n+1)^3(\o_n-1)}
-\frac{pq\o_n}{(\o_n+1)^2}+\frac{2q\o_n}{(\o_n+1)^3}\right.\notag\\
&\left.\qquad\qquad  -\frac{pq\o_n}{(\o_n+1)^2(\o_n^q-1)}+\frac{2q\o_n}{(\o_n+1)^3(\o_n^q-1)}\right\} \notag\\
=&\frac{p-1}{4}\sum_{n=1}^{p-1}\frac{1}{(\o_n-1)}+\frac{3p+1}{4}\sum_{n=1}^{p-1}\frac{1}{(\o_n+1)}
-\frac{p+3}{2}\sum_{n=1}^{p-1}\frac{1}{(\o_n+1)^2}\notag\\
&-pq\sum_{n=1}^{p-1}\frac{\o_n}{(\o_n+1)^2}+\sum_{n=1}^{p-1}\frac{1}{(\o_n+1)^3}+2q\sum_{n=1}^{p-1}\frac{\o_n}{(\o_n+1)^3}
\notag\\
&-pq\sum_{n=1}^{p-1}\frac{\o_n}{(\o_n+1)^2(\o_n^q-1)} +2q\sum_{n=1}^{p-1}\frac{\o_n}{(\o_n+1)^3(\o_n^q-1)} \notag\\
=&-\frac{(p-1)^2}{8}+\frac{(3p+1)(p-1)}{8}+\frac{(p+3)(p-1)^2}{8}-\frac{pq(p^2-1)}{4}-\frac{(p-1)(3p-1)}{8}\notag\\
&\quad +\frac{q(p^2-1)}{4} +\frac{pq(p^2-1)}{8}+2q\sum_{n=1}^{p-1}\frac{\o_n}{(\o_n+1)^3(\o_n^q-1)}\notag\\
=&\frac{1}{8}(p-1)(-p^2q+p^2+pq+p+2q)+2q\sum_{n=1}^{p-1}\frac{\o_n}{(\o_n+1)^3(\o_n^q-1)}.
\end{align}
Similarly, we use \eqref{1xz2} to obtain
\begin{align}\label{R24e4}
&\sum_{n=1}^{p-1}\frac{\o_n^3}{(\o_n+1)^2}\sum_{j=1}^{q-1}\frac{1}{(\xi_j-\o_n)^2}\notag\\
&=-\sum_{n=1}^{p-1}\frac{\o_n^3}{(\o_n+1)^2(\o_n-1)^2}
-q(q-1)\sum_{n=1}^{p-1}\frac{\o_n^{q+1}}{(\o_n+1)^2(\o_n^q-1)}
+q^2\sum_{n=1}^{p-1}\frac{\o_n^{2q+1}}{(\o_n+1)^2(\o_n^q-1)^2} \notag\\
&=-\sum_{n=1}^{p-1}\left\{\frac{1}{2(\o_n+1)}-\frac{1}{4(\o_n+1)^2}+\frac{1}{2(\o_n-1)}+\frac{1}{4(\o_n-1)^2}\right\} \notag\\
&\qquad -q(q-1)\sum_{n=1}^{p-1}\left\{\frac{\o_n}{(\o_n+1)^2}+\frac{\o_n}{(\o_n+1)^2(\o_n^q-1)}\right\} \notag\\
&\qquad +q^2\sum_{n=1}^{p-1}\left\{\frac{\o_n}{(\o_n+1)^2}+\frac{2\o_n}{(\o_n+1)^2(\o_n^q-1)}
+\frac{\o_n}{(\o_n+1)^2(\o_n^q-1)^2}\right\} \notag\\
&=-\frac{1}{24}(p^2-1)(3q^2-3q+1)+q^2\sum_{n=1}^{p-1}\frac{\o_n}{(\o_n+1)^2(\o_n^q-1)^2}.
\end{align}
where we applied Lemmas \ref{1z1}, \ref{1z-1}, \ref{zz+1} and \ref{lemR21}.

Putting \eqref{R24e2}--\eqref{R24e4} into \eqref{R24e1}, we arrive at
\begin{align*}
&p^2\sum_{j=1}^{q-1}\frac{\xi_j}{(\xi_j+1)^2(\xi_j^p-1)^2}=-\frac{1}{48}(q-1)(6p^3-3p^2q-3p^2-6p+q+1) \notag\\
&\qquad -\frac{1}{8}(p-1)(-p^2q+p^2+pq+p+2q)-2q\sum_{n=1}^{p-1}\frac{\o_n}{(\o_n+1)^3(\o_n^q-1)} \notag\\
&\qquad -\frac{1}{24}(p^2-1)(3q^2-3q+1)+q^2\sum_{n=1}^{p-1}\frac{\o_n}{(\o_n+1)^2(\o_n^q-1)^2} \notag\\
&=-\frac{1}{48}(3p^2q^2+6p^2q+5p^2-5q^2-6q-3)-2q\sum_{n=1}^{p-1}\frac{\o_n}{(\o_n+1)^3(\o_n^q-1)}  \notag\\
&\qquad +q^2\sum_{n=1}^{p-1}\frac{\o_n}{(\o_n+1)^2(\o_n^q-1)^2}.
\end{align*}
This completes the proof of the lemma.
\end{proof}

\section{A Second Trigonometric Sum with Two Independent Periods}

\begin{theorem}\label{cccpq}
Let $p$ and $q$ be odd positive integers  with $p, q \geq 3$ and $(p, q)=1,$ and let $\o_j=e^{2\pi ij/p}.$ Then,
\begin{align*}
&\sum_{\substack{n=1\\ p \nmid n, \, q \nmid n}}^{pq-1}
\dfrac{\cot\Big(\dfrac{\pi n}{p}\Big)\cot\Big(\dfrac{\pi n}{q}\Big)}{\cos^2\Big(\dfrac{\pi n}{pq}\Big)}
=\frac{2}{3p}(p^2-1)(5q+3)\\
&\qquad\qquad +\frac{32}{p}\sum_{j=1}^{p-1}\frac{\o_j}{(\o_j+1)^3(\o_j^q-1)}
-\frac{32q}{p}\sum_{j=1}^{p-1}\frac{\o_j}{(\o_j+1)^2(\o_j^q-1)^2}.
\end{align*}
\end{theorem}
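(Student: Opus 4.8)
The plan is to follow the template of Theorem~\ref{ccspq}, with $\cos^2$ in place of $\sin^2$ and the factor $(x+1)^2$ in place of $(x-1)^2$. Keeping $z_n=e^{2\pi in/(pq)}$ and $f(x)=\df{(x-1)(x^{pq}-1)}{(x^p-1)(x^q-1)}=\prod_{p\nmid n,\,q\nmid n}(x-z_n)$ as before, the point of departure is $\cos^2(\pi n/(pq))=\df{(z_n+1)^2}{4z_n}$, which together with the cotangent formulas in \eqref{s2cc} gives
\begin{equation*}
\df{\cot(\pi n/p)\cot(\pi n/q)}{\cos^2(\pi n/(pq))}=R(z_n),\qquad
R(x):=-\df{4x(x^p+1)(x^q+1)}{(x+1)^2(x^p-1)(x^q-1)}.
\end{equation*}
The overall minus sign, absent in Theorem~\ref{ccspq}, appears because $\cot\cdot\cot$ contributes $i^2=-1$ while $\cos^2$, unlike $\sin^2$, supplies no compensating sign. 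Hence $R(x)=-4$ times the rational function decomposed in Lemma~\ref{R2}. Summing $R(z_n)$ over the admissible $n$ therefore reduces the theorem to evaluating $\sum_n(z_n-1)^{-m}$ for $m=1,2$ — note that, because the pole of Lemma~\ref{R2} at $x=1$ is only of order two (unlike the order-four pole in Lemma~\ref{R}), only the first two lines of \eqref{pqz1} are needed here — together with $\sum_n(z_n-\o_j)^{-1}$ and $\sum_n(z_n-\x_j)^{-1}$, which are furnished by \eqref{pqo} and \eqref{pqxi}.

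The core is the $\o_j$-block, handled exactly as in \eqref{pqe2}. Writing $\df{\o_j^q+1}{\o_j^q-1}=1+\df{2}{\o_j^q-1}$ and inserting $\sum_n(z_n-\o_j)^{-1}$ from \eqref{pqo}, the block expands into: pure sums $\sum_j\df{\o_j^a}{(\o_j+1)^b}$, evaluated by Lemma~\ref{zz+1}; the sum $\sum_j\df{\o_j}{(\o_j+1)^2(\o_j^q-1)}=-\tfrac{p^2-1}{8}$, which is the $p\leftrightarrow q$ form of Lemma~\ref{lemR21}; the residual $A_{\o}:=\sum_j\df{\o_j}{(\o_j+1)^2(\o_j^q-1)^2}$, arising from the cross term $\df{4q\o_j^{q+1}}{p(\o_j+1)^2(\o_j^q-1)^2}$ after $\o_j^{q+1}=\o_j(\o_j^q-1)+\o_j$; and three reciprocity sums $\sum_j\df{1}{(\o_j\pm1)(\o_j^q-1)}$ and $\sum_j\df{1}{(\o_j+1)^2(\o_j^q-1)}$, produced when the $-1/(\o_j-1)$ piece of \eqref{pqo} meets $(\o_j+1)^{-2}$. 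A structural point worth flagging is that the product $(\o_j+1)^2(\o_j-1)$ never collapses to a cube, so \emph{no} sum $B_{\o}:=\sum_j\df{\o_j}{(\o_j+1)^3(\o_j^q-1)}$ is generated directly; this is why no ``$(x+1)^3$'' analogue of Lemma~\ref{lemr2} is required.

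The $\x_j$-block is the same computation with $p,q$ and $\o,\x$ interchanged, using \eqref{pqxi}. On adding the two blocks, the three pairs of reciprocity sums occur in precisely the ratios $q:p$ needed to apply Lemmas~\ref{lemR22}, \ref{lemr21+}, and \ref{lemR23}, each collapsing to an explicit polynomial; and the $\x$-residual $\sum_j\df{\x_j}{(\x_j+1)^2(\x_j^p-1)^2}$, entering with coefficient $4p/q$, is converted by Lemma~\ref{lemR24} into $\df{4q}{p}A_{\o}-\df{8}{p}B_{\o}$ plus a polynomial. It is this single step that introduces $B_{\o}$ into the final formula. Tallying the residuals, $A_{\o}$ accumulates coefficient $\df{4q}{p}+\df{4q}{p}=\df{8q}{p}$ and $B_{\o}$ accumulates $-\df{8}{p}$ before the overall factor; multiplying by $-4$ yields the stated coefficients $-\df{32q}{p}$ and $+\df{32}{p}$.

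I expect the main obstacle to be the final bookkeeping: one must confirm that the many polynomial fragments contributed by the $(z_n-1)$-part, by Lemma~\ref{zz+1}, and by the four reciprocity Lemmas~\ref{lemR22}, \ref{lemr21+}, \ref{lemR23}, \ref{lemR24} amalgamate \emph{exactly} into $\df{2}{3p}(p^2-1)(5q+3)$, and that no stray root-of-unity sum survives beyond $A_{\o}$ and $B_{\o}$. The individual steps are mechanical, but the volume of terms makes a verification with a computer algebra system advisable.
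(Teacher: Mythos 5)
Your proposal is correct and follows essentially the same route as the paper's proof: the same reduction to $-4\sum R(z_n)$ with $R$ as in Lemma \ref{R2}, the same use of the first two identities of \eqref{pqz1}, of \eqref{pqo}--\eqref{pqxi} with Lemmas \ref{zz+1} and \ref{lemR21} for the two blocks, and of Lemmas \ref{lemR22}, \ref{lemR23}, \ref{lemR24} at the combination step. Your structural observations — the sign from $i^2$ versus $\cos^2$, the fact that the $(\o_j+1)^3$ sum enters only through Lemma \ref{lemR24} applied to the $\xi$-residual, and the resulting coefficients $\tfrac{8q}{p}$ and $-\tfrac{8}{p}$ before the factor $-4$ — all agree with the paper's computation.
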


\begin{proof}
We set
\begin{align*}
z_n=e^{2\pi i n/(pq)}, \quad f(x)=\dfrac{(x-1)(x^{pq}-1)}{(x^p-1)(x^q-1)}
=\prod_{\substack{n=1\\ p \nmid n, \, q \nmid n}}^{pq-1}(x-z_n),
\end{align*}
and
\begin{align*}
R(x)=\dfrac{x(x^p+1)(x^q+1)}{(x+1)^2(x^p-1)(x^q-1)}.
\end{align*}
Then, by the identities in \eqref{s2cc} we see that
\begin{align}\label{R2z}
\sum_{\substack{n=1\\ p \nmid n, \, q \nmid n}}^{pq-1}
\dfrac{\cot\Big(\dfrac{\pi n}{p}\Big)\cot\Big(\dfrac{\pi n}{q}\Big)}{\cos^2\Big(\dfrac{\pi n}{pq}\Big)}
=-4\sum_{\substack{n=1\\ p \nmid n, \, q \nmid n}}^{pq-1}R(z_n).
\end{align}
Now, by Lemma \ref{R2},
\begin{align}\label{R2z1}
\sum_{\substack{n=1\\ p \nmid n, \, q \nmid n}}^{pq-1}R(z_n)
&=\frac{1}{pq}\sum_{\substack{n=1\\ p \nmid n, \, q \nmid n}}^{pq-1}\left\{\frac{1}{(z_n-1)}+\frac{1}{(z_n-1)^2} \right\}
+\frac{2}{p}\sum_{\substack{n=1\\ p \nmid n, \, q \nmid n}}^{pq-1}\sum_{j=1}^{p-1}\frac{\omega_j^2(\omega_j^q+1)}{(\omega_j+1)^2(\omega_j^q-1)(z_n-\o_j)} \notag\\
&\qquad+\frac{2}{q}\sum_{\substack{n=1\\ p \nmid n, \, q \nmid n}}^{pq-1}\sum_{j=1}^{q-1}\frac{\xi_j^2(\xi_j^p+1)}{(\xi_j+1)^2(\xi_j^p-1)(z_n-\xi_j)},
\end{align}
where $\xi_j=e^{2\pi ij/q}.$
Applying the first two identities in \eqref{pqz1} yields
\begin{align}\label{R2e1}
\frac{1}{pq}\sum_{\substack{n=1\\ p \nmid n, \, q \nmid n}}^{pq-1}\left\{\frac{1}{(z_n-1)}+\frac{1}{(z_n-1)^2} \right\}=
-\frac{(p^2-1)(q^2-1)}{12pq}.
\end{align}
Also, by \eqref{pqo} we obtain
\begin{align}\label{R2e2}
&\frac{2}{p}\sum_{\substack{n=1\\ p \nmid n, \, q \nmid n}}^{pq-1}\sum_{j=1}^{p-1}\frac{\omega_j^2(\omega_j^q+1)}{(\omega_j+1)^2(\omega_j^q-1)(z_n-\o_j)} \notag\\
&=\frac{2}{p}\sum_{j=1}^{p-1}\left\{\frac{\o_j^2}{(\o_j+1)^2}+\frac{2\omega_j^2}{(\o_j+1)^2(\omega^q_j-1)}\right\}
\left\{-\frac{p(q-1)}{2\o_j}-\frac{1}{\o_j-1}+\frac{q\o_j^{q-1}}{\o_j^q-1}\right\}\notag\\
&=-(q-1)\sum_{j=1}^{p-1}\frac{\o_j}{(\o_j+1)^2}-\frac{2}{p}\sum_{j=1}^{p-1}\frac{\o_j^2}{(\o_j+1)^2(\o_j-1)}
+\frac{2q}{p}\sum_{j=1}^{p-1}\frac{\o_j^{q+1}}{(\o_j+1)^2(\o_j^q-1)}\notag\\
&\quad -2(q-1)\sum_{j=1}^{p-1}\frac{\o_j}{(\o_j+1)^2(\o_j^q-1)}
-\frac{4}{p}\sum_{j=1}^{p-1}\frac{\o_j^2}{(\o_j+1)^2(\o_j-1)(\o_j^q-1)}\notag\\
&\quad  +\frac{4q}{p}\sum_{j=1}^{p-1}\frac{\o_j^{q+1}}{(\o_j+1)^2(\o_j^q-1)^2} \notag\\
&=-(q-1)\sum_{j=1}^{p-1}\frac{\o_j}{(\o_j+1)^2}
-\frac{2}{p}\sum_{j=1}^{p-1}\left\{\frac{3\o_j+1}{4(\o_j+1)^2}+\frac{1}{4(\o_j-1)}\right\} \notag\\
&\quad +\frac{2q}{p}\sum_{j=1}^{p-1}\left\{\frac{\o_j}{(\o_j+1)^2}+\frac{\o_j}{(\o_j+1)^2(\o_j^q-1)}\right\}
-2(q-1)\sum_{j=1}^{p-1}\frac{\o_j}{(\o_j+1)^2(\o_j^q-1)}\notag\\
&\quad -\frac{4}{p}\sum_{j=1}^{p-1}\left\{\frac{3\o_j}{4(\o_j+1)^2(\o_j^q-1)}+\frac{1}{4(\o_j+1)^2(\o_j^q-1)}
+\frac{1}{4(\o_j-1)(\o_j^q-1)}\right\}\notag\\
&\quad +\frac{4q}{p}\sum_{j=1}^{p-1}\left\{\frac{\o_j}{(\o_j+1)^2(\o_j^q-1)}+\frac{\o_j}{(\o_j+1)^2(\o_j^q-1)^2}\right\}\notag\\
&=-\frac{1}{2p}(2pq-2p-4q+3)\sum_{j=1}^{p-1}\frac{\o_j}{(\o_j+1)^2}-\frac{1}{2p}\sum_{j=1}^{p-1}\frac{1}{(\o_j+1)^2}
-\frac{1}{2p}\sum_{j=1}^{p-1}\frac{1}{(\o_j-1)} \notag\\
&\quad-\frac{1}{p}(2pq-2p-6q+3)\sum_{j=1}^{p-1}\frac{\o_j}{(\o_j+1)^2(\o_j^q-1)}
-\frac{1}{p}\sum_{j=1}^{p-1}\frac{1}{(\o_j+1)^2(\o_j^q-1)} \notag\\
&\quad-\frac{1}{p}\sum_{j=1}^{p-1}\frac{1}{(\o_j-1)(\o_j^q-1)}+\frac{4q}{p}\sum_{j=1}^{p-1}\frac{\o_j}{(\o_j+1)^2(\o_j^q-1)^2}\notag\\
&=-\frac{1}{8p}(2pq-2p-4q+3)(p^2-1)+\frac{1}{8p}(p-1)^2+\frac{1}{4p}(p-1) \notag\\
&\quad +\frac{1}{8p}(2pq-2p-6q+3)(p^2-1)-\frac{1}{p}\sum_{j=1}^{p-1}\frac{1}{(\o_j+1)^2(\o_j^q-1)} \notag\\
&\quad-\frac{1}{p}\sum_{j=1}^{p-1}\frac{1}{(\o_j-1)(\o_j^q-1)}+\frac{4q}{p}\sum_{j=1}^{p-1}\frac{\o_j}{(\o_j+1)^2(\o_j^q-1)^2}\notag\\
&=-\frac{1}{8p}(p^2-1)(2q-1)-\frac{1}{p}\sum_{j=1}^{p-1}\frac{1}{(\o_j+1)^2(\o_j^q-1)}
-\frac{1}{p}\sum_{j=1}^{p-1}\frac{1}{(\o_j-1)(\o_j^q-1)} \notag\\
&\quad+\frac{4q}{p}\sum_{j=1}^{p-1}\frac{\o_j}{(\o_j+1)^2(\o_j^q-1)^2},
\end{align}
where we employed Lemmas \ref{1z1} and \ref{1z-1}, \ref{zz+1} and \ref{lemR21} for the penultimate equality.

Analogously, we can deduce that
\begin{align}\label{R2e3}
&\frac{2}{q}\sum_{\substack{n=1\\ p \nmid n, \, q \nmid n}}^{pq-1}\sum_{j=1}^{q-1}\frac{\xi_j^2(\xi_j^p+1)}{(\xi_j+1)^2(\xi_j^p-1)(z_n-\xi_j)}=-\frac{1}{8q}(q^2-1)(2p-1)\notag \\
&\quad -\frac{1}{q}\sum_{j=1}^{q-1}\frac{1}{(\xi_j+1)^2(\xi_j^p-1)}-\frac{1}{q}
\sum_{j=1}^{q-1}\frac{1}{(\xi_j-1)(\xi_j^p-1)}+\frac{4p}{q}\sum_{j=1}^{q-1}\frac{\xi_j}{(\xi_j+1)^2(\xi_j^p-1)^2}.
\end{align}
From \eqref{R2z1}--\eqref{R2e3}, with the aid of Lemmas \ref{lemR22}, \ref{lemR23}, and \ref{lemR24}, it follows that
\begin{align*}
&\sum_{\substack{n=1\\ p \nmid n, \, q \nmid n}}^{pq-1}R(z_n)=
\frac{1}{24pq}(8p^2+8q^2-14p^2q^2+3p^2q+3pq^2-3p-3q-2) \notag \\
&\quad -\frac{1}{pq}\left\{q\sum_{j=1}^{p-1}\frac{1}{(\o_j-1)(\o_j^q-1)}
+p\sum_{j=1}^{q-1}\frac{1}{(\xi_j-1)(\xi_j^p-1)}\right\}\notag\\
&\quad-\frac{1}{pq}\left\{q\sum_{j=1}^{p-1}\frac{1}{(\o_j+1)^2(\o_j^q-1)}
+p\sum_{j=1}^{q-1}\frac{1}{(\xi_j+1)^2(\xi_j^p-1)}\right\}
\notag\\
&\quad+\frac{4}{pq}\left\{q^2\sum_{j=1}^{p-1}\frac{\o_j}{(\o_j+1)^2(\o_j^q-1)^2}
+p^2\sum_{j=1}^{q-1}\frac{\xi_j}{(\xi_j+1)^2(\xi_j^p-1)^2}\right\} \notag\\
&=\frac{1}{24pq}(8p^2+8q^2-14p^2q^2+3p^2q+3pq^2-3p-3q-2) \notag \\
&\quad+\frac{1}{12pq}(p^2+q^2-9pq+3p+3q+1)-\frac{1}{8pq}(p^2q+pq^2+p+q-6pq+2) \notag\\
&\quad -\frac{1}{12pq}(3p^2q^2+6p^2q+5p^2-5q^2-6q-3) \notag\\
&\quad +\frac{8q}{p}\sum_{j=1}^{p-1}\frac{\o_j}{(\o_j+1)^2(\o_j^q-1)^2}
-\frac{8}{p}\sum_{j=1}^{p-1}\frac{\o_j}{(\o_j+1)^3(\o_j^q-1)} \notag\\
&=-\frac{1}{6p}(p^2-1)(5q+3)+\frac{8q}{p}\sum_{j=1}^{p-1}\frac{\o_j}{(\o_j+1)^2(\o_j^q-1)^2}
-\frac{8}{p}\sum_{j=1}^{p-1}\frac{\o_j}{(\o_j+1)^3(\o_j^q-1)}.
\end{align*}
By \eqref{R2z}, we complete our proof.
\end{proof}

\begin{corollary}
Let $p$ and $q$ be relatively prime odd positive integers such that $p, q \geq 3$ and $q\equiv \pm1 \pmod{p}.$
Then,
\begin{align*}
&\sum_{\substack{n=1\\ p \nmid n, \, q \nmid n}}^{pq-1}
\dfrac{\cot\Big(\dfrac{\pi n}{p}\Big)\cot\Big(\dfrac{\pi n}{q}\Big)}{\cos^2\Big(\dfrac{\pi n}{pq}\Big)}
=\begin{cases}
\dfrac{2}{p}(p^2-1)(q-1), & \text{if} \quad  q\equiv 1 \pmod{p}, \\[0.2in]
\dfrac{2}{p}(p^2-1)(q+1), & \text{if} \quad  q\equiv -1 \pmod{p}.
\end{cases}
\end{align*}
\end{corollary}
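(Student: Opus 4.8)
The plan is to specialize Theorem \ref{cccpq} to the two congruence classes $q\equiv\pm1\pmod p$, in which case the two residual root-of-unity sums collapse to sums that Lemmas \ref{1z1} and \ref{1z-1} already evaluate in closed form. The key observation is that $\o_j^q=e^{2\pi ijq/p}$ depends only on $q\bmod p$: if $q\equiv1\pmod p$ then $\o_j^q=\o_j$, so $\o_j^q-1=\o_j-1$, while if $q\equiv-1\pmod p$ then $\o_j^q=\o_j^{-1}$, so $\o_j^q-1=-(\o_j-1)/\o_j$. In either case the two sums in Theorem \ref{cccpq} become ordinary sums of rational functions of the $p$th roots of unity with denominators built only from $\o_j\pm1$.

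First I would treat $q\equiv1\pmod p$. Here the two sums become $S_1:=\sum_{j=1}^{p-1}\o_j/[(\o_j+1)^3(\o_j-1)]$ and $S_2:=\sum_{j=1}^{p-1}\o_j/[(\o_j+1)^2(\o_j-1)^2]$, whose denominators mix the factors $\o_j+1$ and $\o_j-1$. I would split each summand by partial fractions in the variable $\o_j$ into pure powers of $(\o_j-1)^{-1}$ and $(\o_j+1)^{-1}$; for instance $x/[(x+1)^3(x-1)]=\tf18(x-1)^{-1}-\tf18(x+1)^{-1}-\tf14(x+1)^{-2}+\tf12(x+1)^{-3}$, and similarly $x/[(x+1)^2(x-1)^2]=\tf14(x-1)^{-2}-\tf14(x+1)^{-2}$. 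Summing each piece over $1\le j\le p-1$ with Lemma \ref{1z1} (for the $(\o_j+1)$-powers, valid since $p$ is odd) and Lemma \ref{1z-1} (for the $(\o_j-1)$-powers) yields $S_1=-(p^2-1)/8$ and $S_2=(p^2-1)/24$. Substituting these into Theorem \ref{cccpq} and extracting the common factor $(p^2-1)/p$ reduces the bracketed expression $\tf23(5q+3)-4-\tf43q$ to $2(q-1)$, giving $\tf2p(p^2-1)(q-1)$.

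The case $q\equiv-1\pmod p$ proceeds identically after using $\o_j^q-1=-(\o_j-1)/\o_j$, which converts the two sums into $-\sum_{j=1}^{p-1}\o_j^2/[(\o_j+1)^3(\o_j-1)]$ and $\sum_{j=1}^{p-1}\o_j^3/[(\o_j+1)^2(\o_j-1)^2]$. The extra powers of $\o_j$ in the numerators alter only the partial-fraction coefficients, not the method: I would again decompose each rational function into pure $(\o_j\pm1)^{-m}$ terms, evaluate them termwise via Lemmas \ref{1z1} and \ref{1z-1}, substitute into Theorem \ref{cccpq}, and simplify to $\tf2p(p^2-1)(q+1)$.

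No genuine obstacle is expected: the whole argument is a specialization of Theorem \ref{cccpq} together with two bounded partial-fraction computations. The only points demanding care are the correct reduction of $\o_j^q$ under the congruence (especially the spurious powers of $\o_j$ that appear in the $q\equiv-1$ case, and the sign flip coming from $-(\o_j-1)/\o_j$) and the bookkeeping of signs when assembling the several contributions from Lemmas \ref{1z1} and \ref{1z-1}. A sign slip there is the most likely source of error, so I would verify the resulting polynomial identities on a small instance such as $p=3,\ q=7$ (for $q\equiv1$) and $p=3,\ q=5$ (for $q\equiv-1$) as a numerical check.
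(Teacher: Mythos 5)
Your proposal is correct and follows essentially the same route as the paper: specialize Theorem \ref{cccpq}, reduce $\o_j^q$ to $\o_j^{\pm1}$ (with the extra powers of $\o_j$ and sign flip in the $q\equiv-1$ case handled exactly as the paper does), decompose by partial fractions into pure powers of $(\o_j\pm1)^{-1}$, and evaluate with Lemmas \ref{1z1} and \ref{1z-1}. Your intermediate values $S_1=-(p^2-1)/8$ and $S_2=(p^2-1)/24$ and the final simplification agree with the paper's computation.
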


\begin{proof}
Suppose that $q\equiv 1 \pmod{p}.$ Then, Theorem \ref{cccpq} implies that
\begin{align*}
&\sum_{\substack{n=1\\ p \nmid n, \, q \nmid n}}^{pq-1}
\dfrac{\cot\Big(\dfrac{\pi n}{p}\Big)\cot\Big(\dfrac{\pi n}{q}\Big)}{\cos^2\Big(\dfrac{\pi n}{pq}\Big)}
=\frac{2}{3p}(p^2-1)(5q+3)\\
&\qquad\qquad +\frac{32}{p}\sum_{j=1}^{p-1}\frac{\o_j}{(\o_j+1)^3(\o_j-1)}
-\frac{32q}{p}\sum_{j=1}^{p-1}\frac{\o_j}{(\o_j+1)^2(\o_j-1)^2} \notag\\
&=\frac{2}{3p}(p^2-1)(5q+3)+\frac{4}{p}\sum_{j=1}^{p-1}\left\{-\frac{1}{\o_j+1} -\frac{2}{(\o_j+1)^2}
+\frac{4}{(\o_j+1)^3}+\frac{1}{\o_j-1}\right\} \notag\\
&\qquad \qquad -\frac{8q}{p}\sum_{j=1}^{p-1}\left\{\frac{1}{(\o_j-1)^2}-\frac{1}{(\o_j+1)^2}\right\} \notag\\
&=\frac{2}{3p}(p^2-1)(5q+3)-\frac{4(p^2-1)}{p}-\frac{4q(p^2-1)}{3p}
=\frac{2}{p}(p^2-1)(q-1),
\end{align*}
where we applied Lemmas \ref{1z1} and \ref{1z-1}.

Next, if $q\equiv -1 \pmod{p},$ with the use of  Lemmas \ref{1z1} and \ref{1z-1},  we derive
\begin{align*}
&\sum_{\substack{n=1\\ p \nmid n, \, q \nmid n}}^{pq-1}
\dfrac{\cot\Big(\dfrac{\pi n}{p}\Big)\cot\Big(\dfrac{\pi n}{q}\Big)}{\cos^2\Big(\dfrac{\pi n}{pq}\Big)}
=\frac{2}{3p}(p^2-1)(5q+3)\\
&\qquad\qquad -\frac{32}{p}\sum_{j=1}^{p-1}\frac{\o_j^2}{(\o_j+1)^3(\o_j-1)}
-\frac{32q}{p}\sum_{j=1}^{p-1}\frac{\o_j^3}{(\o_j+1)^2(\o_j-1)^2} \notag\\
&=\frac{2}{3p}(p^2-1)(5q+3)+\frac{4}{p}\sum_{j=1}^{p-1}\left\{\frac{1}{\o_j+1} -\frac{6}{(\o_j+1)^2}
+\frac{4}{(\o_j+1)^3}-\frac{1}{\o_j-1}\right\} \notag\\
&\qquad \qquad-\frac{8q}{p}\sum_{j=1}^{p-1}\left\{\frac{2}{\o_j+1} -\frac{1}{(\o_j+1)^2}
+\frac{2}{\o_j-1}+\frac{1}{(\o_j-1)^2}\right\} \notag\\
&=\frac{2}{3p}(p^2-1)(5q+3)-\frac{4}{3p}q(p^2-1)=\frac{2}{p}(p^2-1)(q+1).
\end{align*}
Therefore, we complete the proof.
\end{proof}


\section{Reciprocity Theorems}

\begin{theorem}\label{cpc}
Let $p$ and $q$ be relatively prime positive integers  with $p, q \geq 2,$ and let $\xi_j=e^{2\pi ij/q}.$ Then,
\begin{align*}
\sum_{n=1}^{q-1}\cot\Big(\frac{\pi np}{q}\Big)\cot\Big(\frac{\pi n}{q}\Big)=q-1-4\sum_{n=1}^{q-1}\frac{1}{(\xi_n-1)(\xi_n^p-1)}
\end{align*}
\end{theorem}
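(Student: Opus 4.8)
The plan is to convert both cotangents into rational functions of the $q$th roots of unity $\xi_n=e^{2\pi in/q}$ and then collapse everything onto sums already handled in Lemma \ref{1z-1}. First I would record the elementary identity $\cot(\pi x)=i(e^{2\pi ix}+1)/(e^{2\pi ix}-1)$, which gives
\begin{align*}
\cot\Big(\frac{\pi n}{q}\Big)=i\frac{\xi_n+1}{\xi_n-1}, \qquad \cot\Big(\frac{\pi np}{q}\Big)=i\frac{\xi_n^p+1}{\xi_n^p-1}.
\end{align*}
Multiplying these and using $i^2=-1$ turns the summand into a purely rational expression in $\xi_n$, namely
\begin{align*}
\cot\Big(\frac{\pi np}{q}\Big)\cot\Big(\frac{\pi n}{q}\Big)=-\frac{(\xi_n^p+1)(\xi_n+1)}{(\xi_n^p-1)(\xi_n-1)}.
\end{align*}

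Next I would split off the constant part of each factor via $\dfrac{\xi^a+1}{\xi^a-1}=1+\dfrac{2}{\xi^a-1}$, so that the product expands as
\begin{align*}
-\Big(1+\frac{2}{\xi_n-1}\Big)\Big(1+\frac{2}{\xi_n^p-1}\Big)
=-1-\frac{2}{\xi_n-1}-\frac{2}{\xi_n^p-1}-\frac{4}{(\xi_n-1)(\xi_n^p-1)}.
\end{align*}
Summing over $1\le n\le q-1$ then reduces the whole problem to evaluating the two single sums $\sum_{n=1}^{q-1}1/(\xi_n-1)$ and $\sum_{n=1}^{q-1}1/(\xi_n^p-1)$, together with the last sum, which is exactly the quantity appearing on the right-hand side of the claimed identity.

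The first single sum is $-\tfrac{q-1}{2}$ directly by Lemma \ref{1z-1}. The only point requiring a moment of care is the second sum. Here I would invoke the hypothesis $(p,q)=1$: the map $n\mapsto np\pmod q$ is a bijection of $\{1,\dots,q-1\}$ onto itself, so $\{\xi_n^p:1\le n\le q-1\}=\{\xi_m:1\le m\le q-1\}$ as a set, whence $\sum_{n=1}^{q-1}1/(\xi_n^p-1)=\sum_{m=1}^{q-1}1/(\xi_m-1)=-\tfrac{q-1}{2}$ as well. This permutation step, which is where coprimality enters, is the one genuinely non-mechanical ingredient; everything else is bookkeeping. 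Assembling the pieces, the two middle sums each contribute $+(q-1)$, and combined with the $-(q-1)$ from the constant term they leave a net $+(q-1)$, yielding
\begin{align*}
\sum_{n=1}^{q-1}\cot\Big(\frac{\pi np}{q}\Big)\cot\Big(\frac{\pi n}{q}\Big)=q-1-4\sum_{n=1}^{q-1}\frac{1}{(\xi_n-1)(\xi_n^p-1)},
\end{align*}
which is the asserted reciprocity-type identity.
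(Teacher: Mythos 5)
Your proposal is correct and follows essentially the same route as the paper: converting each cotangent to $i(\xi^a+1)/(\xi^a-1)$, expanding the product via $1+2/(\xi^a-1)$, and evaluating the two single sums as $-(q-1)/2$ using Lemma \ref{1z-1} together with the coprimality permutation $n\mapsto np\pmod q$. The only difference is that you spell out the permutation step explicitly, whereas the paper merely remarks that $(p,q)=1$ is used at that point.
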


\begin{proof} Employing the identities
\begin{align}\label{cot}
\cot\Big(\dfrac{\pi n}{q}\Big)=i\dfrac{\xi_n+1}{\xi_n-1},
\quad \cot\Big(\dfrac{\pi np}{q}\Big)=i\dfrac{\xi_n^p+1}{\xi_n^p-1}
\end{align}
and Lemma  \ref{1z-1}, we deduce that
\begin{align*}
&\sum_{n=1}^{q-1}\cot\Big(\frac{\pi np}{q}\Big)\cot\Big(\frac{\pi n}{q}\Big)
=-\sum_{n=1}^{q-1}\frac{\xi_n^p+1}{\xi_n^p-1}\frac{\xi_n+1}{\xi_n-1}
=-\sum_{n=1}^{q-1}\left\{1+\frac{2}{\xi_n^p-1}\right\}\left\{1+\frac{2}{\xi_n-1}\right\} \notag\\
&=-\sum_{n=1}^{q-1}\left\{1+\frac{2}{\xi_n-1}+\frac{2}{\xi_n^p-1}+\frac{4}{(\xi_n-1)(\xi_n^p-1)}\right\} \notag\\
&=-(q-1)+2\frac{(q-1)}{2}+2\frac{(q-1)}{2}-4\sum_{n=1}^{q-1}\frac{1}{(\xi_n-1)(\xi_n^p-1)}\notag\\
&=q-1-4\sum_{n=1}^{q-1}\frac{1}{(\xi_n-1)(\xi_n^p-1)},
\end{align*}
where we used  the assumption $(p, q)=1$ for the penultimate equality.
\end{proof}

\begin{corollary}
Let $p$ and $q$ be relatively prime positive integers  with $p, q \geq 2.$ Then,
\begin{align}\label{dedekind}
p\sum_{n=1}^{q-1}\cot\Big(\frac{\pi np}{q}\Big)\cot\Big(\frac{\pi n}{q}\Big)
+q\sum_{n=1}^{p-1}\cot\Big(\frac{\pi nq}{p}\Big)\cot\Big(\frac{\pi n}{p}\Big)=\frac{1}{3}(p^2+q^2-3pq+1).
\end{align}
\end{corollary}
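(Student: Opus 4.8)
The plan is to apply Theorem~\ref{cpc} twice---once as stated and once with the roles of $p$ and $q$ interchanged---and then to collapse the resulting rational sums using the reciprocity relation furnished by Lemma~\ref{lemR22}. The entire argument is a matter of combining two prior results, so I expect no genuine obstacle; the analytic content has already been absorbed into the proof of Lemma~\ref{lemR22}.

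First I would invoke Theorem~\ref{cpc} directly, writing $\xi_n=e^{2\pi in/q}$, to obtain
\begin{align*}
\sum_{n=1}^{q-1}\cot\Big(\frac{\pi np}{q}\Big)\cot\Big(\frac{\pi n}{q}\Big)=q-1-4\sum_{n=1}^{q-1}\frac{1}{(\xi_n-1)(\xi_n^p-1)}.
\end{align*}
Since the hypotheses $(p,q)=1$ and $p,q\geq 2$ are symmetric in $p$ and $q$, the same theorem with $p$ and $q$ interchanged yields, with $\o_n=e^{2\pi in/p}$,
\begin{align*}
\sum_{n=1}^{p-1}\cot\Big(\frac{\pi nq}{p}\Big)\cot\Big(\frac{\pi n}{p}\Big)=p-1-4\sum_{n=1}^{p-1}\frac{1}{(\o_n-1)(\o_n^q-1)}.
\end{align*}

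Next I would multiply the first identity by $p$ and the second by $q$ and add. The cotangent sides produce exactly the left-hand side of \eqref{dedekind}, the elementary terms contribute $p(q-1)+q(p-1)=2pq-p-q$, and the two rational sums assemble into
\begin{align*}
-4\left\{p\sum_{n=1}^{q-1}\frac{1}{(\xi_n-1)(\xi_n^p-1)}+q\sum_{n=1}^{p-1}\frac{1}{(\o_n-1)(\o_n^q-1)}\right\}.
\end{align*}
This bracket is precisely the left-hand side of Lemma~\ref{lemR22}, whose value is $-\tfrac1{12}(p^2+q^2-9pq+3p+3q+1)$; hence the rational contribution equals $\tfrac13(p^2+q^2-9pq+3p+3q+1)$.

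Finally I would collect terms. The left-hand side of \eqref{dedekind} equals $2pq-p-q+\tfrac13(p^2+q^2-9pq+3p+3q+1)$, and the only remaining work is a short simplification: the linear terms $-p-q$ cancel against $\tfrac13(3p+3q)$, while $2pq-\tfrac13(9pq)=2pq-3pq=-pq$, leaving $\tfrac13(p^2+q^2-3pq+1)$, which is the claimed right-hand side. As anticipated, this step is purely algebraic and presents no difficulty.
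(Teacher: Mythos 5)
Your proposal is correct and matches the paper's own proof exactly: the paper likewise applies Theorem \ref{cpc} to each sum, forms the combination $p(q-1)+q(p-1)-4\bigl\{p\sum_{n}\tfrac{1}{(\xi_n-1)(\xi_n^p-1)}+q\sum_{n}\tfrac{1}{(\o_n-1)(\o_n^q-1)}\bigr\}$, and evaluates the bracket via Lemma \ref{lemR22}. The algebraic simplification at the end is also identical.
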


\begin{proof}
By Theorem \ref{cpc} and Lemma \ref{lemR22}, we have
\begin{align*}
&p\sum_{n=1}^{q-1}\cot\Big(\frac{\pi np}{q}\Big)\cot\Big(\frac{\pi n}{q}\Big)
+q\sum_{n=1}^{p-1}\cot\Big(\frac{\pi nq}{p}\Big)\cot\Big(\frac{\pi n}{p}\Big) \notag\\
&=p(q-1)-4p\sum_{n=1}^{q-1}\frac{1}{(\xi_n-1)(\xi_n^p-1)}
+q(p-1)-4q\sum_{j=1}^{p-1}\frac{1}{(\o_j-1)(\o_j^q-1)}\notag\\
&=2pq-p-q+\frac{1}{3}(p^2+q^2-9pq+3p+3q+1)\notag\\
&=\frac{1}{3}(p^2+q^2-3pq+1).
\end{align*}
\end{proof}

The reciprocity relation \eqref{dedekind} is equivalent to the reciprocity theorem for Dedekind sums. Let
\begin{equation*}
((x))=\begin{cases} x-[x]-\frac12,\quad &\text{if $x$ is not an integer}, \\
0, &\text{if $x$ is  an integer}.
\end{cases}
\end{equation*}
Then, for integers $p,q$ with $(p,q)=1$,  the Dedekind sum $s(p,q)$ is defined by
\begin{equation*}
s(p,q):=\sum_{n=1}^q\left(\left(\df{pn}{q}\right)\right)\left(\left(\df{n}{q}\right)\right).
\end{equation*}
The Dedekind sum $s(p,q)$ satisfies a famous reciprocity theorem \cite[p.~4]{rademachergrosswald}
\begin{equation}\label{dedekind2}
s(p,q)+s(q,p)=-\df14+\df{1}{12}\left(\df{p}{q}+\df{1}{pq}+\df{q}{p}\right).
\end{equation}
H.~Rademacher \cite{rademacher}, \cite[p.~18]{rademachergrosswald} proved that
\begin{equation}\label{dedekind3}
s(p,q)=\df{1}{4q}\sum_{n=1}^{q-1}\cot\left(\df{\pi pn}{q}\right)\cot\left(\df{\pi n}{q}\right),
\end{equation}
which he used to give a proof of \eqref{dedekind2}.  Thus, from \eqref{dedekind3}, we see that the reciprocity theorem \eqref{dedekind} is equivalent to the reciprocity theorem for Dedekind sums \eqref{dedekind2}.  The beautiful book \cite{rademachergrosswald} by Rademacher and Grosswald gives perhaps a more accessible account of \eqref{dedekind2}, \eqref{dedekind3}, and, more generally, of Dedekind sums. Other proofs of \eqref{dedekind} have been devised by Berndt and Yeap \cite[pp.~370, 371]{yeap} and Fukuhara \cite{fukuhara}.  Further references can be found in both aforementioned papers.

\begin{theorem}\label{cpc3}
Let $p$ and $q$ be relatively prime positive integers  with $p, q \geq 2,$ and let $\xi_j=e^{2\pi ij/q}.$ Then,
\begin{align*}
\sum_{n=1}^{q-1}\cot\Big(\frac{\pi np}{q}\Big)\cot^3\Big(\frac{\pi n}{q}\Big)
=\frac{(q-1)^2}{2}+\sum_{n=1}^{q-1}\frac{12}{(\xi_n-1)^2(\xi_n^p-1)}
+\sum_{n=1}^{q-1}\frac{16}{(\xi_n-1)^3(\xi_n^p-1)}.
\end{align*}

\begin{proof}
Applying \eqref{cot} and Lemmas \ref{1z-1} and \ref{lemr1}, we derive
\begin{align*}
&\sum_{n=1}^{q-1}\cot\Big(\frac{\pi np}{q}\Big)\cot^3\Big(\frac{\pi n}{q}\Big)
=\sum_{n=1}^{q-1}\frac{\xi_n^p+1}{\xi_n^p-1}\left(\frac{\xi_n+1}{\xi_n-1}\right)^3\\
&=\sum_{n=1}^{q-1}\left(1+\frac{2}{\xi_n^p-1}\right)\left(1+\frac{2}{\xi_n-1}\right)^3\\
&=\sum_{n=1}^{q-1}\left\{1+\frac{6}{\xi_n-1}+\frac{12}{(\xi_n-1)^2} +\frac{8}{(\xi_n-1)^3}+\frac{2}{\xi_n^p-1} \right. \\
&\qquad \qquad \left. +\frac{12}{(\xi_n-1)(\xi_n^p-1)}+\frac{24}{(\xi_n-1)^2(\xi_n^p-1)}+\frac{16}{(\xi_n-1)^3(\xi_n^p-1)}  \right\}\\
&=-(q-1)+\sum_{n=1}^{q-1}\left\{\frac{12\xi_n}{(\xi_n-1)^2(\xi_n^p-1)}
+\frac{12}{(\xi_n-1)^2(\xi_n^p-1)}+\frac{16}{(\xi_n-1)^3(\xi_n^p-1)}\right\}\\
&=\frac{(q-1)^2}{2}+\sum_{n=1}^{q-1}\frac{12}{(\xi_n-1)^2(\xi_n^p-1)}
+\sum_{n=1}^{q-1}\frac{16}{(\xi_n-1)^3(\xi_n^p-1)}.
\end{align*}
This completes our proof.
\end{proof}
\end{theorem}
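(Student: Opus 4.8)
The plan is to mirror the proof of Theorem~\ref{cpc}: rewrite the cotangents as rational functions of the $\xi_n$, expand, and reduce everything to the root-of-unity sums already evaluated. First I would invoke \eqref{cot}; since $i\cdot i^{3}=1$, the summand collapses to
$$\cot\Big(\frac{\pi np}{q}\Big)\cot^{3}\Big(\frac{\pi n}{q}\Big)=\frac{\xi_n^{p}+1}{\xi_n^{p}-1}\Big(\frac{\xi_n+1}{\xi_n-1}\Big)^{3}=\Big(1+\frac{2}{\xi_n^{p}-1}\Big)\Big(1+\frac{2}{\xi_n-1}\Big)^{3}.$$

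Then I would expand $\big(1+\tfrac{2}{\xi_n-1}\big)^{3}=1+\tfrac{6}{\xi_n-1}+\tfrac{12}{(\xi_n-1)^{2}}+\tfrac{8}{(\xi_n-1)^{3}}$ and multiply by $1+\tfrac{2}{\xi_n^{p}-1}$, producing eight terms. Summing over $1\le n\le q-1$, the four terms free of $\xi_n^{p}-1$ evaluate immediately through Lemma~\ref{1z-1}, and $\sum_{n}2/(\xi_n^{p}-1)=-(q-1)$ because $(p,q)=1$ makes $n\mapsto np\pmod q$ permute $\{1,\dots,q-1\}$, so $\xi_n^{p}$ runs over all nontrivial $q$th roots of unity and the sum again reduces to $-(q-1)/2$ via Lemma~\ref{1z-1}. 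The two top-order mixed terms $\sum 12/[(\xi_n-1)^{2}(\xi_n^{p}-1)]$ and $\sum 16/[(\xi_n-1)^{3}(\xi_n^{p}-1)]$ I would leave untouched, as they already appear on the right-hand side.

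The one step that is not pure bookkeeping is disposing of the remaining mixed terms $\tfrac{12}{(\xi_n-1)(\xi_n^{p}-1)}$ and $\tfrac{24}{(\xi_n-1)^{2}(\xi_n^{p}-1)}$, which I would recombine so that Lemma~\ref{lemr1} becomes applicable. Using $\xi_n=(\xi_n-1)+1$, i.e.\ $\tfrac{1}{\xi_n-1}=\tfrac{\xi_n}{(\xi_n-1)^{2}}-\tfrac{1}{(\xi_n-1)^{2}}$, gives
$$\frac{12}{(\xi_n-1)(\xi_n^{p}-1)}+\frac{24}{(\xi_n-1)^{2}(\xi_n^{p}-1)}=\frac{12\,\xi_n}{(\xi_n-1)^{2}(\xi_n^{p}-1)}+\frac{12}{(\xi_n-1)^{2}(\xi_n^{p}-1)}.$$
Lemma~\ref{lemr1} then yields $\sum_{n}12\,\xi_n/[(\xi_n-1)^{2}(\xi_n^{p}-1)]=12\cdot\tfrac{q^{2}-1}{24}=\tfrac{q^{2}-1}{2}$, while the leftover $12/[(\xi_n-1)^{2}(\xi_n^{p}-1)]$ merges into the right-hand-side term of the same shape.

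Finally I would collect the polynomial pieces: the four direct sums together with $\sum 2/(\xi_n^{p}-1)$ contribute $(q-1)-3(q-1)-(q-1)(q-5)+(q-1)(q-3)-(q-1)=-(q-1)$, and $-(q-1)+\tfrac{q^{2}-1}{2}=\tfrac{(q-1)^{2}}{2}$ produces the stated constant. The main obstacle is simply tracking the eight expanded terms accurately and spotting the $\xi_n=(\xi_n-1)+1$ rewriting that channels the sum into Lemma~\ref{lemr1}; beyond that the argument is routine.
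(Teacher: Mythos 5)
Your proposal is correct and follows essentially the same route as the paper's own proof: the same expansion of $\bigl(1+\tfrac{2}{\xi_n^p-1}\bigr)\bigl(1+\tfrac{2}{\xi_n-1}\bigr)^3$ into eight terms, the same recombination of the two lower-order mixed terms into $\tfrac{12\xi_n+12}{(\xi_n-1)^2(\xi_n^p-1)}$ so that Lemma~\ref{lemr1} applies, and the same bookkeeping via Lemma~\ref{1z-1} yielding $-(q-1)+\tfrac{q^2-1}{2}=\tfrac{(q-1)^2}{2}$. All the arithmetic checks out.
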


\begin{lemma}\label{lemr1-1}
Let $p$ and $q$ be relatively prime positive integers  with $p, q \geq 2.$ Then,
\begin{align*}
&p\sum_{n=1}^{q-1}\frac{1}{(\xi_n-1)^2(\xi_n^p-1)}+q\sum_{n=1}^{p-1}\frac{1}{(\o_n-1)^2(\o_n^q-1)} \notag\\
&\qquad =\frac{1}{24}(p^2q+pq^2+2p^2+2q^2-18pq+5p+5q+2).
\end{align*}
\end{lemma}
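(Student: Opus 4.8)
The plan is to follow the template of the proof of Lemma~\ref{lemR22}, expanding the factor $1/(\o_j^q-1)$ via the logarithmic-derivative identity \eqref{xp1} so that the sum over the $p$-th roots $\o_j$ collapses under the elementary evaluations in Lemmas~\ref{1z-1} and \ref{zz1}, together with the partial-fraction identity \eqref{1xz}. Isolating the term $\xi_q=1$ in \eqref{xp1} gives $\dfrac{q}{\o_j^q-1}=\dfrac{1}{\o_j-1}+\sum_{i=1}^{q-1}\dfrac{\xi_i}{\o_j-\xi_i}$, so that
\begin{align*}
q\sum_{j=1}^{p-1}\frac{1}{(\o_j-1)^2(\o_j^q-1)}
=\sum_{j=1}^{p-1}\frac{1}{(\o_j-1)^3}
+\sum_{i=1}^{q-1}\xi_i\sum_{j=1}^{p-1}\frac{1}{(\o_j-1)^2(\o_j-\xi_i)}.
\end{align*}
The first sum on the right is $\frac{(p-1)(p-3)}{8}$ by Lemma~\ref{1z-1}.

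Next I would treat the inner sum over $j$ by the partial fraction decomposition
\begin{align*}
\frac{1}{(x-1)^2(x-a)}=-\frac{1}{(a-1)^2(x-1)}+\frac{1}{(1-a)(x-1)^2}+\frac{1}{(a-1)^2(x-a)},
\end{align*}
taken with $a=\xi_i$. Summing over $j$, the first two pieces are given directly by Lemma~\ref{1z-1}, while the last requires $\sum_{j=1}^{p-1}\frac{1}{\o_j-\xi_i}$, which by \eqref{1xz} (applied with $x=\xi_i$ and $k=p$) equals $\frac{1}{\xi_i-1}-\frac{p}{\xi_i}-\frac{p}{\xi_i(\xi_i^p-1)}$, exactly as used in \eqref{xi2}.

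Multiplying through by $\xi_i$ and summing over $i$, each resulting sum of the shape $\sum_i\frac{\xi_i}{(\xi_i-1)^m}$ is evaluated by Lemma~\ref{zz1}, and $\sum_i\frac{1}{(\xi_i-1)^2}$ by Lemma~\ref{1z-1}. The only term that does not close up is $-p\sum_{i=1}^{q-1}\frac{1}{(\xi_i-1)^2(\xi_i^p-1)}$, which is precisely the companion sum on the left-hand side of the lemma. Transposing it yields the claimed symmetric identity, and collecting the remaining contributions over the common denominator $24$ produces $p^2q+pq^2+2p^2+2q^2-18pq+5p+5q+2$.

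The only genuine obstacle is the bookkeeping: I must carefully track which pieces are ``diagonal'' and recognize that, after the $\xi_i$-cancellation, the term $\frac{p}{\xi_i(\xi_i^p-1)}$ regenerates the partner sum with the correct sign and with the weight $p$ matching the weight $q$ on the other side of the reciprocity. Everything else reduces to mechanical substitution of the four evaluations in Lemmas~\ref{1z-1} and \ref{zz1}, followed by routine polynomial algebra.
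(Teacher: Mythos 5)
Your proof is correct, and I verified that your expansion does close up to the stated polynomial, but it takes a genuinely different route from the paper. You rerun the full machinery of the proof of Lemma~\ref{lemR22} one degree higher: expand $q/(\o_j^q-1)$ via \eqref{xp1}, apply the partial-fraction decomposition of $1/\bigl((x-1)^2(x-a)\bigr)$, evaluate $\sum_{j}1/(\o_j-\xi_i)$ by \eqref{1xz}, and recover the companion sum with weight $p$ from the $p/\bigl(\xi_i(\xi_i^p-1)\bigr)$ term. The paper instead gives a two-line reduction: writing $\dfrac{1}{(\xi_n-1)^2}=\dfrac{\xi_n}{(\xi_n-1)^2}-\dfrac{1}{\xi_n-1}$, the left-hand side splits into $p\sum_n \xi_n/\bigl((\xi_n-1)^2(\xi_n^p-1)\bigr)+q\sum_n \o_n/\bigl((\o_n-1)^2(\o_n^q-1)\bigr)$, each of which is evaluated \emph{individually} by the symmetry argument of Lemma~\ref{lemr1} as $(q^2-1)/24$ and $(p^2-1)/24$, minus the symmetric combination already computed in Lemma~\ref{lemR22}. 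The paper's argument buys brevity and exposes the structural fact that the numerator-$\xi_n$ version of this sum is not merely reciprocal but individually computable; your argument buys self-containedness (it needs only the elementary root-of-unity evaluations, not Lemmas~\ref{lemr1} and~\ref{lemR22}) at the cost of substantially more bookkeeping. Both are valid.
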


\begin{proof}
From Lemmas \ref{lemr1} and \ref{lemR22}, we can easily obtain
\begin{align*}
&p\sum_{n=1}^{q-1}\frac{1}{(\xi_n-1)^2(\xi_n^p-1)}+q\sum_{n=1}^{p-1}\frac{1}{(\o_n-1)^2(\o_n^q-1)} \notag\\
&=p\sum_{n=1}^{q-1}\left\{\frac{\xi_n}{(\xi_n-1)^2(\xi_n^p-1)}-\frac{1}{(\xi_n-1)(\xi_n^p-1)}\right\} \notag\\
&\qquad +q\sum_{n=1}^{p-1}\left\{\frac{\o_n}{(\o_n-1)^2(\o_n^q-1)}-\frac{1}{(\o_n-1)(\o_n^q-1)} \right\} \notag\\
&=\frac{1}{24}(p^2q+pq^2-p-q)+\frac{1}{12}(p^2+q^2-9pq+3p+3q+1)\notag\\
&=\frac{1}{24}(p^2q+pq^2+2p^2+2q^2-18pq+5p+5q+2).
\end{align*}
\end{proof}

\begin{lemma}\label{lemr2-1}
If $p$ and $q$ are relatively prime positive integers  with $p, q \geq 2,$ then
\begin{align*}
&p\sum_{n=1}^{q-1}\frac{1}{(\xi_n-1)^3(\xi_n^p-1)}+q\sum_{n=1}^{p-1}\frac{1}{(\o_n-1)^3(\o_n^q-1)}\notag\\
&\quad =\frac{1}{720}(p^4+q^4-5p^2q^2-45p^2q-45pq^2-60p^2-60q^2+540pq-135p-135q-57).
\end{align*}
\end{lemma}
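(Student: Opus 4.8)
The plan is to reduce the statement to the two reciprocity identities already proved, namely Lemma \ref{lemr2} and Lemma \ref{lemr1-1}, by peeling one factor of $(\xi_n-1)$ off the cubic denominator. The only tool needed is the elementary identity $\xi_n=(\xi_n-1)+1$, which gives
\begin{align*}
\frac{1}{(\xi_n-1)^3(\xi_n^p-1)}=\frac{\xi_n}{(\xi_n-1)^3(\xi_n^p-1)}-\frac{1}{(\xi_n-1)^2(\xi_n^p-1)},
\end{align*}
together with the companion identity in which $\xi_n$, $p$, $q$ are replaced by $\o_n$, $q$, $p$.

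First I would sum the displayed identity over $1\le n\le q-1$ and multiply through by $p$, and do the same for the $\o$-version summed over $1\le n\le p-1$ and multiplied by $q$. Adding the two resulting equations, the terms carrying the numerators $\xi_n$ and $\o_n$ assemble into exactly the left-hand side of Lemma \ref{lemr2}, while the two remaining terms assemble into the negative of the left-hand side of Lemma \ref{lemr1-1}. Substituting the closed forms supplied by those two lemmas, I obtain
\begin{align*}
&p\sum_{n=1}^{q-1}\frac{1}{(\xi_n-1)^3(\xi_n^p-1)}+q\sum_{n=1}^{p-1}\frac{1}{(\o_n-1)^3(\o_n^q-1)}\\
&\quad=\frac{1}{720}(p^4+q^4-5p^2q^2-15p^2q-15pq^2+15p+15q+3)\\
&\qquad-\frac{1}{24}(p^2q+pq^2+2p^2+2q^2-18pq+5p+5q+2).
\end{align*}

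The final step is purely arithmetic: I would bring both fractions over the common denominator $720$ (using $720/24=30$) and collect like terms in $p$ and $q$. This produces
\begin{align*}
\frac{1}{720}(p^4+q^4-5p^2q^2-45p^2q-45pq^2-60p^2-60q^2+540pq-135p-135q-57),
\end{align*}
which is the claimed value. There is no real obstacle in this argument; all the analytic work has been absorbed into Lemmas \ref{lemr2} and \ref{lemr1-1}, and the only care required is in verifying the final polynomial subtraction and in confirming that the hypotheses $(p,q)=1$ and $p,q\ge 2$ match those of the two lemmas invoked.
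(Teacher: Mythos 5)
Your proposal is correct and coincides with the paper's own proof: both peel off one factor via $1=\xi_n-(\xi_n-1)$, reduce the sum to the combination given by Lemmas \ref{lemr2} and \ref{lemr1-1}, and finish with the polynomial subtraction over the common denominator $720$. The arithmetic in your final step checks out.
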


\begin{proof}
Similarly, from Lemmas \ref{lemr2} and \ref{lemr1-1}, it follows that
\begin{align*}
&p\sum_{n=1}^{q-1}\frac{1}{(\xi_n-1)^3(\xi_n^p-1)}+q\sum_{n=1}^{p-1}\frac{1}{(\o_n-1)^3(\o_n^q-1)}\notag\\
&=p\sum_{n=1}^{q-1}\left\{\frac{\xi_n}{(\xi_n-1)^3(\xi_n^p-1)}-\frac{1}{(\xi_n-1)^2(\xi_n^p-1)}\right\}\notag\\
&\qquad +q\sum_{n=1}^{p-1}\left\{\frac{\o_n}{(\o_n-1)^3(\o_n^q-1)}-\frac{1}{(\o_n-1)^2(\o_n^q-1)} \right\}\notag\\
&=\frac{1}{720}(p^4+q^4-5p^2q^2-15p^2q-15pq^2+15p+15q+3)\notag\\
&\qquad -\frac{1}{24}(p^2q+pq^2+2p^2+2q^2-18pq+5p+5q+2) \notag\\
&=\frac{1}{720}(p^4+q^4-5p^2q^2-45p^2q-45pq^2-60p^2-60q^2 \notag\\
&\qquad \qquad \qquad +540pq-135p-135q-57).
\end{align*}
\end{proof}

\begin{corollary}
If $p$ and $q$ are relatively prime positive integers  with $p, q \geq 2,$ then
\begin{align*}
&p\sum_{n=1}^{q-1}\cot\Big(\frac{\pi np}{q}\Big)\cot^3\Big(\frac{\pi n}{q}\Big)
+q\sum_{n=1}^{p-1}\cot\Big(\frac{\pi nq}{p}\Big)\cot^3\Big(\frac{\pi n}{p}\Big) \notag\\
&\qquad =\frac{1}{45}(p^4+q^4-5p^2q^2-15p^2-15q^2+45pq-12).
\end{align*}
\end{corollary}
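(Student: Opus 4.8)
The plan is to read this corollary as the reciprocity companion of Theorem~\ref{cpc3}, exactly as the reciprocity law \eqref{dedekind} was deduced from Theorem~\ref{cpc}. Theorem~\ref{cpc3} expresses the single sum $\sum_{n=1}^{q-1}\cot(\pi np/q)\cot^3(\pi n/q)$ as the elementary term $(q-1)^2/2$ plus the two root-of-unity sums $12\sum_{n=1}^{q-1}(\xi_n-1)^{-2}(\xi_n^p-1)^{-1}$ and $16\sum_{n=1}^{q-1}(\xi_n-1)^{-3}(\xi_n^p-1)^{-1}$. So first I would apply Theorem~\ref{cpc3} once as stated and once with the roles of $p$ and $q$ interchanged (so that $\xi_n$ is replaced by $\o_n=e^{2\pi in/p}$), multiply the first identity by $p$ and the second by $q$, and add. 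On the left this produces precisely the symmetric combination appearing in the corollary, and on the right it produces three groups of terms.

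Next I would evaluate each group. The elementary contribution is $\tfrac12\bigl(p(q-1)^2+q(p-1)^2\bigr)=\tfrac12(p^2q+pq^2-4pq+p+q)$, a routine expansion. The two weighted sums of reciprocals combine into exactly the reciprocal pairs that Lemmas~\ref{lemr1-1} and \ref{lemr2-1} were built to handle: the $(\,\cdot\,)^{-2}$ terms give
\begin{align*}
12\left\{p\sum_{n=1}^{q-1}\frac{1}{(\xi_n-1)^2(\xi_n^p-1)}+q\sum_{n=1}^{p-1}\frac{1}{(\o_n-1)^2(\o_n^q-1)}\right\}
=\tfrac12(p^2q+pq^2+2p^2+2q^2-18pq+5p+5q+2)
\end{align*}
by Lemma~\ref{lemr1-1}, and the $(\,\cdot\,)^{-3}$ terms give $\tfrac{16}{720}$ times the polynomial of Lemma~\ref{lemr2-1}, namely $\tfrac{1}{45}(p^4+q^4-5p^2q^2-45p^2q-45pq^2-60p^2-60q^2+540pq-135p-135q-57)$.

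Finally I would collect these three polynomials over the common denominator $45$. Adding the elementary part to the Lemma~\ref{lemr1-1} contribution gives the clean intermediate form $p^2q+pq^2+p^2+q^2-11pq+3p+3q+1$; adding in the Lemma~\ref{lemr2-1} contribution then forces the cancellation of every cubic-in-$p,q$ monomial (the $\pm45p^2q$ and $\pm45pq^2$ terms annihilate, the $p^2,q^2,p,q$ coefficients collapse to $-15p^2,-15q^2,0,0$, the $pq$ coefficient becomes $45$, and the constant becomes $-12$), leaving exactly $\tfrac{1}{45}(p^4+q^4-5p^2q^2-15p^2-15q^2+45pq-12)$. The only real work is this last bookkeeping, and the main thing to watch is precisely that the unwanted degree-three symmetric terms cancel between the two lemmas; since all the analytic content already lives in Theorem~\ref{cpc3} and Lemmas~\ref{lemr1-1}--\ref{lemr2-1}, no further estimates or new identities are needed.
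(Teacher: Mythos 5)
Your proposal is correct and follows essentially the same route as the paper: apply Theorem~\ref{cpc3} with the roles of $p$ and $q$ in both orders, weight by $p$ and $q$, add, and then invoke Lemmas~\ref{lemr1-1} and~\ref{lemr2-1} for the two reciprocal pairs. The intermediate polynomial bookkeeping you describe matches the paper's computation exactly.
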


\begin{proof} By Theorem \ref{cpc3}, we have
\begin{align}\label{cpc3e}
&p\sum_{n=1}^{q-1}\cot\Big(\frac{\pi np}{q}\Big)\cot^3\Big(\frac{\pi n}{q}\Big)
+q\sum_{n=1}^{p-1}\cot\Big(\frac{\pi nq}{p}\Big)\cot^3\Big(\frac{\pi n}{p}\Big) \notag\\
&=\frac{p(q-1)^2}{2}+12p\sum_{n=1}^{q-1}\frac{1}{(\xi_n-1)^2(\xi_n^p-1)}
+16p\sum_{n=1}^{q-1}\frac{1}{(\xi_n-1)^3(\xi_n^p-1)}\notag\\
&\qquad +\frac{q(p-1)^2}{2}+12q\sum_{n=1}^{p-1}\frac{1}{(\o_n-1)^2(\o_n^q-1)}
+16q\sum_{n=1}^{p-1}\frac{1}{(\o_n-1)^3(\o_n^q-1)}.
\end{align}
Employing Lemmas \ref{lemr1-1} and \ref{lemr2-1} in \eqref{cpc3e} yields
\begin{align*}
&p\sum_{n=1}^{q-1}\cot\Big(\frac{\pi np}{q}\Big)\cot^3\Big(\frac{\pi n}{q}\Big)
+q\sum_{n=1}^{p-1}\cot\Big(\frac{\pi nq}{p}\Big)\cot^3\Big(\frac{\pi n}{p}\Big) \notag\\
=&\frac{p(q-1)^2}{2}+\frac{q(p-1)^2}{2}+\frac{1}{2}(p^2q+pq^2+2p^2+2q^2-18pq+5p+5q+2) \notag\\
&+\frac{1}{45}(p^4+q^4-5p^2q^2-45p^2q-45pq^2-60p^2-60q^2+540pq-135p-135q-57) \notag\\
=&\frac{1}{45}(p^4+q^4-5p^2q^2-15p^2-15q^2+45pq-12).
\end{align*}
\end{proof}

\begin{theorem}\label{S3}
Let $p$ and $q$ be positive integers  with $q \geq 2$ and $(p, q)=1,$ and let $\xi_n=e^{2\pi in/q}.$ Then,
\begin{align*}
T(p, q):=\sum_{n=1}^{q-1}
\frac{\cot\Big(\dfrac{\pi np}{q}\Big)\cot\Big(\dfrac{\pi n}{q}\Big)}{\sin^2\Big(\dfrac{\pi n}{q}\Big)}
=\frac{(q^2-1)}{3}+16\sum_{n=1}^{q-1}\frac{\xi_n}{(\xi_n-1)^3(\xi_n^p-1)}.
\end{align*}
\end{theorem}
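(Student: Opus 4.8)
The plan is to mimic the root-of-unity conversions used in the proofs of Theorems \ref{cpc} and \ref{cpc3}. First I would rewrite the entire summand in terms of $\xi_n$, combining the cotangent identities in \eqref{cot} with the sine-square identity $\sin^2(\pi n/q)=-(\xi_n-1)^2/(4\xi_n)$ from \eqref{sq}. Since the two factors of $i$ in \eqref{cot} give $i^2=-1$, and the reciprocal of $\sin^2$ carries a compensating minus sign, the product simplifies with a net positive sign to
$$\frac{\cot(\pi np/q)\cot(\pi n/q)}{\sin^2(\pi n/q)}=\frac{4\xi_n(\xi_n^p+1)(\xi_n+1)}{(\xi_n-1)^3(\xi_n^p-1)}.$$

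Next I would break this rational expression into the four simpler pieces whose sums are already known. Writing $(\xi_n^p+1)/(\xi_n^p-1)=1+2/(\xi_n^p-1)$ and $\xi_n+1=(\xi_n-1)+2$ and expanding gives
$$\frac{4\xi_n}{(\xi_n-1)^2}+\frac{8\xi_n}{(\xi_n-1)^3}+\frac{8\xi_n}{(\xi_n-1)^2(\xi_n^p-1)}+\frac{16\xi_n}{(\xi_n-1)^3(\xi_n^p-1)}.$$
The purpose of this manipulation is that the first two sums are evaluated in Lemma \ref{zz1} as $-(q^2-1)/12$ and $(q^2-1)/24$, while the third is $(q^2-1)/24$ by Lemma \ref{lemr1}; only the fourth sum resists closed-form evaluation and is exactly the term appearing on the right of the theorem.

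Finally I would collect the constants: $4\cdot(-\tfrac1{12})+8\cdot\tfrac1{24}+8\cdot\tfrac1{24}=-\tfrac13+\tfrac13+\tfrac13=\tfrac13$, so the three evaluable sums collapse to $(q^2-1)/3$, which together with the residual fourth sum yields the claimed identity. There is no genuine obstacle in this argument; the only points requiring care are the sign bookkeeping in the first step and the fact that Lemma \ref{lemr1} needs $q\ge2$ and $(p,q)=1$, precisely the hypotheses assumed here. Because the sum $\sum_{n=1}^{q-1}\xi_n/((\xi_n-1)^3(\xi_n^p-1))$ is left intact rather than evaluated, the result is a reciprocity-type identity rather than a fully closed form, and it is just the companion quantity that the later reciprocity arguments in this section require.
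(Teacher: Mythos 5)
Your proposal is correct and follows essentially the same route as the paper: the same conversion to roots of unity giving $4\xi_n(\xi_n+1)(\xi_n^p+1)/\bigl((\xi_n-1)^3(\xi_n^p-1)\bigr)$, the same four-term expansion, and the same appeal to Lemmas \ref{zz1} and \ref{lemr1}, with the fourth sum left unevaluated. The sign bookkeeping and the constant $-\tfrac13+\tfrac13+\tfrac13=\tfrac13$ both check out.
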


\begin{proof}
We apply the identities \eqref{cot} and
\begin{align*}
\sin^2\Big(\dfrac{\pi n}{q}\Big)=-\dfrac{(\xi_n-1)^2}{4\xi_n}
\end{align*}
to obtain
\begin{align*}
T(p, q)&=4\sum_{n=1}^{q-1}\frac{\xi_n(\xi_n+1)(\xi_n^p+1)}{(\xi_n-1)^3(\xi_n^p-1)}
=4\sum_{n=1}^{q-1}\left\{\frac{\xi_n(\xi_n+1)}{(\xi_n-1)^3}+\frac{2\xi_n(\xi_n+1)}{(\xi_n-1)^3(\xi_n^p-1)}\right\}\\
&=4\sum_{n=1}^{q-1}\left\{\frac{\xi_n}{(\xi_n-1)^2}+\frac{2\xi_n}{(\xi_n-1)^3}
+\frac{2\xi_n}{(\xi_n-1)^2(\xi_n^p-1)}+\frac{4\xi_n}{(\xi_n-1)^3(\xi_n^p-1)}\right\}.
\end{align*}
Invoking Lemmas \ref{zz1} and \ref{lemr1}, we have
\begin{align*}
T(p, q)&=4
\left\{\frac{(q^2-1)}{12}-\frac{2(q^2-1)}{24}
+\frac{(q^2-1)}{12}\right\}+16\sum_{n=1}^{q-1}\frac{\xi_n}{(\xi_n-1)^3(\xi_n^p-1)} \\
&=\df{(q^2-1)}{3}+16\sum_{n=1}^{q-1}\frac{\xi_n}{(\xi_n-1)^3(\xi_n^p-1)}.
\end{align*}
Hence, we complete the proof.
\end{proof}

Observe from Corollary \ref{mc1} and \eqref{mc2} that
\begin{equation*}
T(p,q)=-\df{8k^3}{3}S_3(1,k).
\end{equation*}
A proof of Corollary \ref{mc3} below was given by McIntosh \cite[Theorem 3, p.~199]{mcintosh}.

\begin{corollary}\label{mc3}
Let $p$ and $q$ be positive integers  with $p, q \geq 2$ and $(p, q)=1.$ Then,
\begin{align*}
45pT(p, q)+45qT(q,p)=p^4+q^4-5p^2q^2+3.
\end{align*}
\end{corollary}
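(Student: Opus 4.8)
The plan is to reduce the corollary to a direct combination of Theorem~\ref{S3} and Lemma~\ref{lemr2}, since together these already contain all the substantive content. First I would record $T(p,q)$ in the closed form furnished by Theorem~\ref{S3},
\begin{align*}
T(p,q)=\frac{q^2-1}{3}+16\sum_{n=1}^{q-1}\frac{\xi_n}{(\xi_n-1)^3(\xi_n^p-1)},
\end{align*}
and, by the symmetry $p\leftrightarrow q$ in that theorem, the companion expression
\begin{align*}
T(q,p)=\frac{p^2-1}{3}+16\sum_{n=1}^{p-1}\frac{\o_n}{(\o_n-1)^3(\o_n^q-1)},
\end{align*}
where $\o_n=e^{2\pi in/p}$.

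Next I would assemble the weighted combination $45pT(p,q)+45qT(q,p)$ and separate it into an elementary polynomial part and a part built from the cubic-pole root-of-unity sums. The elementary parts contribute $15p(q^2-1)+15q(p^2-1)=15p^2q+15pq^2-15p-15q$, while the sums contribute
\begin{align*}
720\left\{p\sum_{n=1}^{q-1}\frac{\xi_n}{(\xi_n-1)^3(\xi_n^p-1)}+q\sum_{n=1}^{p-1}\frac{\o_n}{(\o_n-1)^3(\o_n^q-1)}\right\},
\end{align*}
where I have used that $45p\cdot 16=720p$ and $45q\cdot 16=720q$. The crucial observation is that the bracketed expression is \emph{exactly} the left-hand side of Lemma~\ref{lemr2}, so it equals $\tfrac{1}{720}(p^4+q^4-5p^2q^2-15p^2q-15pq^2+15p+15q+3)$, and the factor $720$ cancels the denominator cleanly.

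Finally I would add the two contributions and collect terms, at which point the $\pm 15p^2q$, $\pm 15pq^2$, $\mp 15p$, and $\mp 15q$ pairs cancel, leaving precisely $p^4+q^4-5p^2q^2+3$. I do not anticipate any genuine obstacle here: the only real difficulty, namely evaluating the reciprocal combination of the two cubic-pole sums in closed form, has already been handled in Lemma~\ref{lemr2}, so the corollary is simply its trigonometric repackaging through Theorem~\ref{S3}. The one point worth verifying carefully is that the normalization constant $16$ appearing in $T$ combines with the weights $45p$ and $45q$ to yield exactly the coefficients $720p$ and $720q$ demanded by Lemma~\ref{lemr2}; once this matches, the identity follows by routine cancellation.
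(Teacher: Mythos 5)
Your proposal is correct and follows essentially the same route as the paper: apply Theorem~\ref{S3} to both $T(p,q)$ and $T(q,p)$, observe that $45p\cdot 16=720p$ and $45q\cdot 16=720q$ turn the two cubic-pole sums into exactly the left-hand side of Lemma~\ref{lemr2}, and cancel the remaining polynomial terms. The arithmetic checks out, so nothing further is needed.
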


\begin{proof} Let  $\o_n=e^{2\pi in/p}$ and $\xi_n
=e^{2\pi in/q}.$
By Theorem \ref{S3}, we have
\begin{align*}
&45pT(p, q)+45qT(q,p)=p^4+q^4-5p^2q^2+3\\
&=15p(q^2-1)+720p\sum_{n=1}^{q-1}\frac{\xi_n}{(\xi_n-1)^3(\xi_n^p-1)}+15q(p^2-1)\\
&\qquad  +720q\sum_{n=1}^{p-1}\frac{\o_n}{(\o_n-1)^3(\o_n
^q-1)}\\
&=15(p^2q+pq^2-p-q)+(p^4+q^4-5p^2q^2-15p^2q-15pq^2+15p+15q+3)\\
&=p^4+q^4-5p^2q^2+3,
\end{align*}
where we employed Lemma  \ref{lemr2}.
\end{proof}

\begin{theorem}\label{cpcc2}
Let $p$ and $q$ be relatively prime positive integers  such that $q \geq 3$ is odd, and let $\xi_j=e^{2\pi ij/q}.$ Then,
\begin{align*}
\sum_{n=1}^{q-1}
\frac{\cot\Big(\dfrac{\pi np}{q}\Big)\cot\Big(\dfrac{\pi n}{q}\Big)}{\cos^2\Big(\dfrac{\pi n}{q}\Big)}
=-8\sum_{n=1}^{q-1}\frac{\xi_n}{(\xi_n^2-1)(\xi_n^p-1)}.
\end{align*}
\end{theorem}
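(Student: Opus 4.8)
The plan is to follow the template used in the proof of Theorem~\ref{S3}, but with $\cos^2$ in place of $\sin^2$, converting the trigonometric summand into a rational function of $\xi_n$ and then isolating the target sum. First I would invoke the cotangent identities \eqref{cot} together with the identity $\cos^2(\pi n/q)=(\xi_n+1)^2/(4\xi_n)$ recorded in \eqref{sq} (taken with $k=q$ and $z_n=\xi_n$). Multiplying the two cotangents contributes a factor $i^2=-1$, and after dividing by $\cos^2$ one obtains
\begin{align*}
\frac{\cot(\pi np/q)\cot(\pi n/q)}{\cos^2(\pi n/q)}
=-\frac{(\xi_n^p+1)(\xi_n+1)}{(\xi_n^p-1)(\xi_n-1)}\cdot\frac{4\xi_n}{(\xi_n+1)^2}
=-\frac{4\xi_n(\xi_n^p+1)}{(\xi_n^p-1)(\xi_n^2-1)},
\end{align*}
where I have cancelled one factor of $(\xi_n+1)$ and combined $(\xi_n-1)(\xi_n+1)=\xi_n^2-1$.

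Next I would peel off the geometric part by writing $\dfrac{\xi_n^p+1}{\xi_n^p-1}=1+\dfrac{2}{\xi_n^p-1}$, which splits the summand into
\begin{align*}
-\frac{4\xi_n}{\xi_n^2-1}-\frac{8\xi_n}{(\xi_n^2-1)(\xi_n^p-1)}.
\end{align*}
Summed over $1\le n\le q-1$, the second term is exactly $-8$ times the right-hand side of the theorem, so the proof reduces to showing that the auxiliary sum $\sum_{n=1}^{q-1}\dfrac{\xi_n}{\xi_n^2-1}$ vanishes. Notice that this is where the $\cos^2$ case genuinely differs from Theorem~\ref{S3}: there the analogous leading term produced the polynomial $(q^2-1)/3$, whereas here it will collapse to $0$.

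I would establish the vanishing by the symmetry $n\mapsto q-n$. Since $\xi_{q-n}=\xi_n^{-1}$, a direct computation gives $\dfrac{\xi_{q-n}}{\xi_{q-n}^2-1}=\dfrac{\xi_n^{-1}}{\xi_n^{-2}-1}=\dfrac{\xi_n}{1-\xi_n^2}=-\dfrac{\xi_n}{\xi_n^2-1}$, so the contributions indexed by $n$ and $q-n$ are negatives of each other and cancel in pairs. Because $q$ is odd, no index is self-paired (there is no $n$ with $2n=q$), and $\xi_n\ne\pm1$ throughout $1\le n\le q-1$, so every term is well defined and the pairing is complete, whence the sum is $0$. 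The hypothesis that $q$ is odd is precisely what makes this cancellation exhaustive, and verifying that no unpaired or singular term survives is the only point requiring care; combining this vanishing with the displayed splitting then yields the claimed identity.
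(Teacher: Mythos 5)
Your proposal is correct and follows essentially the same route as the paper: the same reduction via \eqref{cot} and $\cos^2(\pi n/q)=(\xi_n+1)^2/(4\xi_n)$, and the same splitting of $(\xi_n^p+1)/(\xi_n^p-1)$ to isolate the target sum. The only (minor) difference is that you dispose of the auxiliary sum $\sum_{n=1}^{q-1}\xi_n/(\xi_n^2-1)$ by the pairing $n\mapsto q-n$, whereas the paper writes $2\xi_n/(\xi_n^2-1)=1/(\xi_n-1)+1/(\xi_n+1)$ and invokes Lemmas \ref{1z1} and \ref{1z-1}; both verifications are valid, and your symmetry argument correctly uses the oddness of $q$ to ensure the cancellation is exhaustive and every term is defined.
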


\begin{proof}
From \eqref{cot} and the identity
\begin{align}\label{c2}
\cos^2\Big(\dfrac{\pi n}{q}\Big)=\dfrac{(\xi_n+1)^2}{4\xi_n},
\end{align}
it follows that
\begin{align*}
\sum_{n=1}^{q-1}
&\frac{\cot\Big(\dfrac{\pi np}{q}\Big)\cot\Big(\dfrac{\pi n}{q}\Big)}{\cos^2\Big(\dfrac{\pi n}{q}\Big)}
=-4\sum_{n=1}^{q-1}\frac{\xi_n(\xi_n^p+1)}{(\xi_n-1)(\xi_n+1)(\xi_n^p-1)}\notag\\
&=-4\sum_{n=1}^{q-1}\frac{\xi_n}{(\xi_n^2-1)}\left(1+\frac{2}{\xi_n^p-1}\right) \notag\\
&=-2\sum_{n=1}^{q-1}\left(\frac{1}{\xi_n-1}+\frac{1}{\xi_n+1}\right)
-8\sum_{n=1}^{q-1}\frac{\xi_n}{(\xi_n^2-1)(\xi_n^p-1)}.
\end{align*}
By Lemmas \ref{1z1} and \ref{1z-1}, we see that
\begin{align*}
\sum_{n=1}^{q-1}\left(\frac{1}{\xi_n-1}+\frac{1}{\xi_n+1}\right)=-\frac{q-1}{2}+\frac{q-1}{2}=0,
\end{align*}
which completes the proof.
\end{proof}

\begin{corollary}
Let $p$ and $q$ be odd positive integers  with $p, q \geq 3$ and $(p, q)=1.$ Then,
\begin{align*}
p\sum_{n=1}^{q-1}\frac{\cot\Big(\dfrac{\pi np}{q}\Big)\cot\Big(\dfrac{\pi n}{q}\Big)}{\cos^2\Big(\dfrac{\pi n}{q}\Big)}
+q\sum_{n=1}^{p-1}\frac{\cot\Big(\dfrac{\pi nq}{p}\Big)\cot\Big(\dfrac{\pi n}{p}\Big)}{\cos^2\Big(\dfrac{\pi n}{p}\Big)}
=\frac{1}{3}(p^2+q^2-2).
\end{align*}
\end{corollary}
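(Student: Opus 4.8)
The plan is to reduce the left side to sums of rational functions of roots of unity using the already-proved Theorem~\ref{cpcc2}, and then to recognize the resulting symmetric combinations as precisely those evaluated in Lemmas~\ref{lemR22} and \ref{lemr21+}. First I would set $\omega_n=e^{2\pi in/p}$ and $\xi_n=e^{2\pi in/q}$, and apply Theorem~\ref{cpcc2} to each of the two trigonometric sums (with the roles of $p$ and $q$ interchanged for the second one), obtaining
\begin{align*}
&p\sum_{n=1}^{q-1}\frac{\cot(\pi np/q)\cot(\pi n/q)}{\cos^2(\pi n/q)}
+q\sum_{n=1}^{p-1}\frac{\cot(\pi nq/p)\cot(\pi n/p)}{\cos^2(\pi n/p)}\\
&\qquad=-8p\sum_{n=1}^{q-1}\frac{\xi_n}{(\xi_n^2-1)(\xi_n^p-1)}
-8q\sum_{n=1}^{p-1}\frac{\omega_n}{(\omega_n^2-1)(\omega_n^q-1)}.
\end{align*}

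Next I would use the elementary partial fraction identity $\dfrac{z}{z^2-1}=\dfrac12\Big(\dfrac{1}{z-1}+\dfrac{1}{z+1}\Big)$ to split each of the two sums on the right into a piece whose denominator carries a factor $(z-1)$ and a piece carrying a factor $(z+1)$. Collecting the $(z-1)$ pieces produces the symmetric combination $p\sum_{n=1}^{q-1}\frac{1}{(\xi_n-1)(\xi_n^p-1)}+q\sum_{n=1}^{p-1}\frac{1}{(\omega_n-1)(\omega_n^q-1)}$, which is exactly what Lemma~\ref{lemR22} evaluates, while the $(z+1)$ pieces produce $p\sum_{n=1}^{q-1}\frac{1}{(\xi_n+1)(\xi_n^p-1)}+q\sum_{n=1}^{p-1}\frac{1}{(\omega_n+1)(\omega_n^q-1)}$, handled by Lemma~\ref{lemr21+}. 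Hence the bracketed quantity equals
$$\tfrac12\left[-\tfrac{1}{12}(p^2+q^2-9pq+3p+3q+1)-\tfrac14(3pq-p-q-1)\right]=-\tfrac{1}{24}(p^2+q^2-2),$$
where the $9pq$, $3p$, and $3q$ terms cancel after writing both polynomials over the common denominator $12$.

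Finally, multiplying through by the overall prefactor $-8$ yields $\tfrac13(p^2+q^2-2)$, which is the asserted right side. I do not anticipate any genuine difficulty: the whole argument is a one-step application of Theorem~\ref{cpcc2} followed by the two reciprocity lemmas, with the only delicate point being the bookkeeping in combining $-(p^2+q^2-9pq+3p+3q+1)$ with $-3(3pq-p-q-1)$, so that everything except $p^2+q^2-2$ cancels. One should also note that the standing hypotheses ($p,q\geq 3$ odd and coprime) are compatible with the applicability of Theorem~\ref{cpcc2} and of both lemmas.
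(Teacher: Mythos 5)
Your proposal is correct and follows exactly the paper's own argument: apply Theorem~\ref{cpcc2} to each sum, split $\xi_n/(\xi_n^2-1)$ via $\tfrac12\bigl(\tfrac{1}{\xi_n-1}+\tfrac{1}{\xi_n+1}\bigr)$, and invoke Lemmas~\ref{lemR22} and \ref{lemr21+} for the two symmetric combinations. The arithmetic ($-8\cdot\bigl(-\tfrac{1}{24}\bigr)(p^2+q^2-2)=\tfrac13(p^2+q^2-2)$) checks out, and the hypotheses $p,q\geq 3$ odd and coprime indeed cover all the lemmas used.
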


\begin{proof}
Applying Theorem \ref{cpcc2}, and Lemmas \ref{lemR22} and \ref{lemr21+}, we obtain
\begin{align*}
&p\sum_{n=1}^{q-1}\frac{\cot\Big(\dfrac{\pi np}{q}\Big)\cot\Big(\dfrac{\pi n}{q}\Big)}{\cos^2\Big(\dfrac{\pi n}{q}\Big)}
+q\sum_{n=1}^{p-1}\frac{\cot\Big(\dfrac{\pi nq}{p}\Big)\cot\Big(\dfrac{\pi n}{p}\Big)}{\cos^2\Big(\dfrac{\pi n}{p}\Big)}\notag\\
&=-8p\sum_{n=1}^{q-1}\frac{\xi_n}{(\xi_n^2-1)(\xi_n^p-1)}-8q\sum_{n=1}^{p-1}\frac{\o_n}{(\o_n^2-1)(\o_n^q-1)} \notag\\
&=-4p\sum_{n=1}^{q-1}\left(\frac{1}{\xi_n-1}+\frac{1}{\xi_n+1}\right)\frac{1}{(\xi_n^p-1)}
-4q\sum_{n=1}^{p-1}\left(\frac{1}{\o_n-1}+\frac{1}{\o_n+1}\right)\frac{1}{(\o_n^q-1)}  \notag\\
&=\frac{1}{3}(p^2+q^2-9pq+3p+3q+1)+3pq-p-q-1\\
&=\frac{1}{3}(p^2+q^2-2).
\end{align*}
\end{proof}

\begin{theorem}\label{cpc3c2}
Let $p$ and $q$ be  positive integers  with $p, q \geq 2$ and $(p, q)=1.$ Then,
\begin{align*}
\sum_{n=1}^{q-1}\frac{\cot\Big(\dfrac{\pi np}{q}\Big)\cot^3\Big(\dfrac{\pi n}{q}\Big)}{\cos^2\Big(\dfrac{\pi n}{q}\Big)}
=\frac{q^2-1}{3}+16\sum_{n=1}^{q-1}\frac{\xi_n}{(\xi_n-1)^3(\xi_n^p-1)}.
\end{align*}
\end{theorem}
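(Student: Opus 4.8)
The plan is to reduce the summand, via the root-of-unity substitutions, to exactly the same rational function of $\xi_n$ that was encountered at the start of the proof of Theorem \ref{S3}. Since the right-hand side here is \emph{identical} to that of Theorem \ref{S3}, this will identify the present sum with $T(p,q)$ and finish the proof at once.

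First I would substitute the identities \eqref{cot} and \eqref{c2}, namely $\cot(\pi n/q)=i(\xi_n+1)/(\xi_n-1)$, $\cot(\pi np/q)=i(\xi_n^p+1)/(\xi_n^p-1)$, and $\cos^2(\pi n/q)=(\xi_n+1)^2/(4\xi_n)$, into the summand. Collecting the factors of $i$ gives $i\cdot i^3=1$ from the product $\cot(\pi np/q)\cot^3(\pi n/q)$, so that
\begin{align*}
\frac{\cot(\pi np/q)\cot^3(\pi n/q)}{\cos^2(\pi n/q)}
=\frac{4\xi_n(\xi_n^p+1)(\xi_n+1)^3}{(\xi_n^p-1)(\xi_n-1)^3(\xi_n+1)^2}
=\frac{4\xi_n(\xi_n+1)(\xi_n^p+1)}{(\xi_n-1)^3(\xi_n^p-1)},
\end{align*}
where one factor $(\xi_n+1)^2$ cancels between numerator and denominator.

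The key observation is then that this is precisely the rational function appearing at the very beginning of the proof of Theorem \ref{S3}, where it was shown that
\begin{align*}
4\sum_{n=1}^{q-1}\frac{\xi_n(\xi_n+1)(\xi_n^p+1)}{(\xi_n-1)^3(\xi_n^p-1)}
=T(p,q)=\frac{q^2-1}{3}+16\sum_{n=1}^{q-1}\frac{\xi_n}{(\xi_n-1)^3(\xi_n^p-1)}.
\end{align*}
Consequently the left-hand side of the present theorem equals $T(p,q)$, and invoking Theorem \ref{S3} completes the proof. There is essentially no obstacle here beyond careful bookkeeping of the powers of $i$; the entire content lies in noticing that the $\cot^3/\cos^2$ combination collapses to the same core expression as the $\cot/\sin^2$ combination treated in Theorem \ref{S3}.
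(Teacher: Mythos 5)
Your proof is correct and follows essentially the same route as the paper: both reduce the summand via \eqref{cot} and \eqref{c2} to $4\xi_n(\xi_n+1)(\xi_n^p+1)/\bigl((\xi_n-1)^3(\xi_n^p-1)\bigr)$, which is exactly the expression occurring in the proof of Theorem \ref{S3} (indeed $\cot^3\theta/\cos^2\theta=\cot\theta/\sin^2\theta$, so the two sums are termwise identical). The only difference is that you invoke Theorem \ref{S3} at that point, whereas the paper repeats the expansion and re-applies Lemmas \ref{zz1} and \ref{lemr1}; since the hypotheses of Theorem \ref{S3} are satisfied here, your shortcut is valid.
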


\begin{proof}
Similarly, by \eqref{cot} and \eqref{c2}, we see that
\begin{align*}
&\sum_{n=1}^{q-1}\frac{\cot\Big(\dfrac{\pi np}{q}\Big)\cot^3\Big(\dfrac{\pi n}{q}\Big)}{\cos^2\Big(\dfrac{\pi n}{q}\Big)}
=4\sum_{n=1}^{q-1}\frac{\xi_n(\xi_n+1)(\xi_n^p+1)}{(\xi_n-1)^3(\xi_n^p-1)}\notag\\
&=4\sum_{n=1}^{q-1}\left(1+\frac{2}{\xi_n^p-1}\right)\left(1+\frac{2}{\xi_n-1}\right)\frac{\xi_n}{(\xi_n-1)^2} \notag\\
&=4\sum_{n=1}^{q-1}\left\{\frac{\xi_n}{(\xi_n-1)^2}+\frac{2\xi_n}{(\xi_n-1)^3}+\frac{2\xi_n}{(\xi_n-1)^2(\xi_n^p-1)}
+\frac{4\xi_n}{(\xi_n-1)^3(\xi_n^p-1)}\right\} \notag\\
&=\frac{q^2-1}{3}+16\sum_{n=1}^{q-1}\frac{\xi_n}{(\xi_n-1)^3(\xi_n^p-1)},
\end{align*}
where we applied Lemmas \ref{zz1} and \ref{lemr1}.
\end{proof}

\begin{corollary}
Let $p$ and $q$ be positive integers  with $p, q \geq 2$ and $(p, q)=1.$ Then,
\begin{align*}
p\sum_{n=1}^{q-1}\frac{\cot\Big(\dfrac{\pi np}{q}\Big)\cot^3\Big(\dfrac{\pi n}{q}\Big)}{\cos^2\Big(\dfrac{\pi n}{q}\Big)}
+q\sum_{n=1}^{p-1}\frac{\cot\Big(\dfrac{\pi nq}{p}\Big)\cot^3\Big(\dfrac{\pi n}{p}\Big)}{\cos^2\Big(\dfrac{\pi n}{p}\Big)}
=\frac{1}{45}(p^4+q^4-5p^2q^2+3).
\end{align*}
\end{corollary}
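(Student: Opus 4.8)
The plan is to reduce both trigonometric sums on the left-hand side to sums of roots of unity via Theorem~\ref{cpc3c2}, and then to collapse the resulting pair of root-of-unity sums using the reciprocity relation already established in Lemma~\ref{lemr2}. Set $\o_n=e^{2\pi in/p}$ and $\xi_n=e^{2\pi in/q}$. Since $(p,q)=1$, Theorem~\ref{cpc3c2} applies to the sum over $n=1,\dots,q-1$ with modulus $q$, and, upon interchanging the roles of $p$ and $q$, also to the sum over $n=1,\dots,p-1$ with modulus $p$.

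First I would apply Theorem~\ref{cpc3c2} to each of the two sums separately, obtaining
\begin{align*}
\sum_{n=1}^{q-1}\frac{\cot\bigl(\tfrac{\pi np}{q}\bigr)\cot^3\bigl(\tfrac{\pi n}{q}\bigr)}{\cos^2\bigl(\tfrac{\pi n}{q}\bigr)}
&=\frac{q^2-1}{3}+16\sum_{n=1}^{q-1}\frac{\xi_n}{(\xi_n-1)^3(\xi_n^p-1)},\\
\sum_{n=1}^{p-1}\frac{\cot\bigl(\tfrac{\pi nq}{p}\bigr)\cot^3\bigl(\tfrac{\pi n}{p}\bigr)}{\cos^2\bigl(\tfrac{\pi n}{p}\bigr)}
&=\frac{p^2-1}{3}+16\sum_{n=1}^{p-1}\frac{\o_n}{(\o_n-1)^3(\o_n^q-1)}.
\end{align*}
Multiplying the first identity by $p$, the second by $q$, and adding, the left side becomes the desired expression, while the right side is
\[
\frac{p(q^2-1)+q(p^2-1)}{3}
+16\Bigl(p\sum_{n=1}^{q-1}\frac{\xi_n}{(\xi_n-1)^3(\xi_n^p-1)}+q\sum_{n=1}^{p-1}\frac{\o_n}{(\o_n-1)^3(\o_n^q-1)}\Bigr).
\]
At this stage the bracketed quantity is exactly the left-hand side of Lemma~\ref{lemr2}, so I would substitute its closed form, noting the convenient normalization $\tfrac{16}{720}=\tfrac{1}{45}$.

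Finally I would verify the resulting polynomial identity. Writing the first term over $45$ as $\tfrac{15}{45}(p^2q+pq^2-p-q)$ and using Lemma~\ref{lemr2}, the whole right side equals
\[
\frac{1}{45}\bigl(15p^2q+15pq^2-15p-15q+p^4+q^4-5p^2q^2-15p^2q-15pq^2+15p+15q+3\bigr),
\]
and the linear-in-$p,q$ and the degree-three cross terms cancel in pairs, leaving $\tfrac{1}{45}(p^4+q^4-5p^2q^2+3)$, which is the claimed value. I expect no genuine obstacle here: the only nontrivial input is the reciprocity for $\sum \xi_n/((\xi_n-1)^3(\xi_n^p-1))$, which is supplied by Lemma~\ref{lemr2}, so the entire argument amounts to an application of Theorem~\ref{cpc3c2} followed by the cancellation bookkeeping just indicated. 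The mildest point of care is simply confirming that the sign and coefficient conventions in Theorem~\ref{cpc3c2} and Lemma~\ref{lemr2} line up so that the $\tfrac{1}{3}$-term and the $-15p^2q-15pq^2+15p+15q$ contributions from Lemma~\ref{lemr2} annihilate precisely.
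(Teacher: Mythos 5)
Your proposal is correct and follows exactly the same route as the paper: apply Theorem \ref{cpc3c2} to each sum, weight by $p$ and $q$, and then invoke Lemma \ref{lemr2} with the normalization $\tfrac{16}{720}=\tfrac{1}{45}$ to obtain the stated polynomial. The cancellation bookkeeping you describe matches the paper's computation precisely.
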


\begin{proof}
From Theorem \ref{cpc3c2} and Lemma \ref{lemr2}, we can readily derive
\begin{align*}
&p\sum_{n=1}^{q-1}\frac{\cot\Big(\dfrac{\pi np}{q}\Big)\cot^3\Big(\dfrac{\pi n}{q}\Big)}{\cos^2\Big(\dfrac{\pi n}{q}\Big)}
+q\sum_{n=1}^{p-1}\frac{\cot\Big(\dfrac{\pi nq}{p}\Big)\cot^3\Big(\dfrac{\pi n}{p}\Big)}{\cos^2\Big(\dfrac{\pi n}{p}\Big)} \\&=\frac{p(q^2-1)}{3}+\frac{q(p^2-1)}{3}+\frac{1}{45}(p^4+q^4-5p^2q^2-15p^2q-15pq^2+15p+15q+3)\\
&=\frac{1}{45}(p^4+q^4-5p^2q^2+3).
\end{align*}
\end{proof}

There exists further reciprocity theorems for cotangent sums, some of which are related to reciprocity theorems for analogues or generalizations of Dedekind sums.  For example, see papers of U.~Dieter \cite{dieter} and S.~Fukuhara \cite{fukuhara}.  These papers also contain several references to further work.

\section{Evaluations of Two Trigonometric Sums}

\begin{theorem}\label{theorem1a}
Let $k$ denote an odd positive integer, and let $a$ denote a positive integer.  Then
\begin{equation}\label{thm1a}
\sum_{0<n<k/2}\df{\cos^{2a}(\pi n/k)}{\cos^{2a+2}(2\pi n/k)}=-\df12-\pi i\left(R_{k/4}+R_{3k/4}\right),
\end{equation}
where $ R_{k/4}$ and $R_{3k/4}$, respectively, denote the residues of the function
\begin{equation*}
F(z):=\df{\cos^{2a}(\pi z/k)}{\cos^{2a+2}(2\pi z/k)(e^{2\pi i z}-1)}
\end{equation*}
at the poles $k/4$ and $3k/4$.
\end{theorem}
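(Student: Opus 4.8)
The plan is to establish the identity by contour integration, exactly the second of the two methods announced in the introduction. The first step is to record the singularities of $F$. Since $e^{2\pi i z}-1$ vanishes to first order at every integer, and since $\cos(2\pi n/k)\neq 0$ for all integers $n$ when $k$ is odd, $F$ has a simple pole at each integer $n$, and a short computation using $e^{2\pi i z}-1\sim 2\pi i(z-n)$ gives
\begin{equation*}
\Res_{z=n}F=\frac{1}{2\pi i}\,\frac{\cos^{2a}(\pi n/k)}{\cos^{2a+2}(2\pi n/k)}.
\end{equation*}
The factor $\cos^{2a+2}(2\pi z/k)$ contributes poles of order $2a+2$ at the points $z=k/4+mk/2$, none of which is an integer; within any strip of width $k$ exactly two of them occur, and $R_{k/4}$, $R_{3k/4}$ are by definition the residues there.

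Next I would observe that $F$ has period $k$: one has $\cos^{2a}(\pi(z+k)/k)=\cos^{2a}(\pi z/k)$, $\cos^{2a+2}(2\pi(z+k)/k)=\cos^{2a+2}(2\pi z/k)$, and $e^{2\pi i(z+k)}-1=e^{2\pi i z}-1$. I would then integrate $F$ counterclockwise around the rectangle with vertices $-\delta\pm iT$ and $k-\delta\pm iT$, choosing $\delta\in(0,1)$ small enough that the vertical sides miss every pole, so that the enclosed integers are precisely $0,1,\dots,k-1$ and the enclosed cosine-poles are precisely $k/4$ and $3k/4$. By periodicity the contributions of the two vertical sides cancel. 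On the horizontal sides, using $\cos(\pi z/k)\sim\tfrac12 e^{\mp i\pi z/k}$ and $\cos(2\pi z/k)\sim\tfrac12 e^{\mp 2i\pi z/k}$ as $\Im z\to\pm\infty$, one finds $F(z)\to0$ exponentially (with net rate $e^{-(2a+4)\pi|\Im z|/k}$ on the top edge, and an even faster rate on the bottom edge after accounting for the blow-up of $(e^{2\pi i z}-1)^{-1}$). Hence the contour integral tends to $0$ as $T\to\infty$, and the residue theorem forces the sum of the enclosed residues to vanish:
\begin{equation*}
\frac{1}{2\pi i}\sum_{n=0}^{k-1}\frac{\cos^{2a}(\pi n/k)}{\cos^{2a+2}(2\pi n/k)}+R_{k/4}+R_{3k/4}=0.
\end{equation*}

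Finally I would exploit the symmetry $n\mapsto k-n$. For $k$ odd, $\cos(\pi(k-n)/k)=-\cos(\pi n/k)$ and $\cos(2\pi(k-n)/k)=\cos(2\pi n/k)$, so the summand is invariant, and since $k$ is odd the indices $1,\dots,k-1$ pair off with no fixed point; thus $\sum_{n=1}^{k-1}=2\sum_{0<n<k/2}$. Separating the $n=0$ term, whose value is $\cos^{2a}(0)/\cos^{2a+2}(0)=1$, the displayed residue relation becomes $1+2\sum_{0<n<k/2}=-2\pi i(R_{k/4}+R_{3k/4})$, and dividing by $2$ yields precisely $-\tfrac12-\pi i(R_{k/4}+R_{3k/4})$, which is \eqref{thm1a}.

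The main obstacle I anticipate is the uniform exponential-decay estimate on the horizontal segments: one must simultaneously control the growth of $\cos^{2a}(\pi z/k)$, the decay of $\cos^{-(2a+2)}(2\pi z/k)$, and, on the lower edge, the growth of $(e^{2\pi i z}-1)^{-1}$, and verify that the combined exponent is strictly negative so that both integrals genuinely vanish in the limit. Once this bound and the choice of $\delta$ avoiding all poles are secured, the periodic cancellation of the vertical sides and the bookkeeping of residues are routine.
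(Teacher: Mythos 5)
Your argument is correct and is essentially the paper's own proof: you integrate the same function $F$ over a rectangle of width $k$ enclosing the poles $0,1,\dots,k-1$ and $k/4,3k/4$, cancel the vertical sides by periodicity, kill the horizontal sides by the exponential decay estimate, and finish with the symmetry $R_n=R_{k-n}$; shifting the contour by $\delta$ instead of indenting it with semicircles is an immaterial variation. (One cosmetic slip: on the bottom edge it is $e^{2\pi i z}$ that blows up, so $(e^{2\pi i z}-1)^{-1}$ \emph{decays}, which is why the rate there is faster — your conclusion is right even though the phrasing inverts it.)
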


\begin{proof}
Let $C_N$ denote an indented, positively oriented rectangle with horizonal sides passing through $\pm iN$, $N>0$, and vertical sides passing through $0$ and $k$. The left vertical side is indented by a semi-circle $C_0$ of radius $\epsilon$, $0<\epsilon<1$, in the left half-plane.  The right vertical line is indented at $z=k$ by the semi-circle $C_0+k$.  Let $I(C_N)$ denote the interior of $C_N$. Let $R_{\alpha}$ denote the residue of a pole of $F(z)$ at $z=\alpha$ on $I(C_N)$.  We apply the residue theorem to $F(z)$, integrated over $C_N$.  On $I(C_N)$, there are simple poles at $z=0,1,2,\dots,k-1$ and poles of order $2a+2$ at $z=k/4,3k/4$.
We easily see that
\begin{equation}\label{3a}
R_n=\df{1}{2\pi i}\df{\cos^{2a}(\pi n/k)}{\cos^{2a+2}(2\pi n/k)}, \qquad n=0,1,2, \dots, k-1.
\end{equation}
Also observe that
\begin{equation}\label{4a}
R_{n}=R_{k-n}, \qquad 0<n<k/2.
\end{equation}
Hence, by the residue theorem,  \eqref{3a}, and \eqref{4a},
\begin{equation}\label{5a}
\int_{C_N}F(z)dz=1+2\sum_{0<n<k/2}\df{\cos^{2a}(\pi n/k)}{\cos^{2a+2}(2\pi n/k)}+2\pi i\left(R_{k/4}+R_{3k/4}\right).
\end{equation}

We now directly calculate $\int_{C_N}F(z)dz$.  Observe that
$F(z)=F(z+k)$.  Hence the integrals over the vertical sides of $C_N$ cancel.
 Let $C_{NT}$ and $C_{NB}$, respectively, denote those portions of $C_N$ over the top and bottom sides. It is easy to see that the integrals over $C_{NT}$ and $C_{NB}$ tend to 0 as $N\to\infty$.  Hence,
 \begin{equation*}
 \lim_{N\to\infty}\int_{C_N}F(z)dz=0.
 \end{equation*}
Hence, by \eqref{5a},
\begin{equation}\label{5.5a}
1+2\sum_{0<n<k/2}\df{\cos^{2a}(\pi n/k)}{\cos^{2a+2}(2\pi n/k)}+2\pi i\left(R_{k/4}+R_{3k/4}\right)=0,
\end{equation}
which is easily seen to be equivalent to \eqref{thm1a}.
\end{proof}

The following corollary is identical to \eqref{cc}.

\begin{corollary}\label{cor1} For any positive odd integer $k$,
\begin{equation}
\sum_{0<n<k/2}\df{\cos^{2}(\pi n/k)}{\cos^{4}(2\pi n/k)}=\begin{cases}
-\df12+\df{k}{24}(3+4k+3k^2+2k^3), \quad &\text{if }\,k\equiv 1\pmod4,\\[0.1in]
-\df12+\df{k}{24}(-3+4k-3k^2+2k^3),  &\text{if }\,k\equiv 3\pmod4.
\end{cases}
\end{equation}
\end{corollary}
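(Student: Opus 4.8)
The plan is to deduce the corollary from Theorem~\ref{cc} by an evenness symmetry, which is the quickest route; I will then indicate how the same identity follows directly from the contour-integral Theorem~\ref{theorem1a} with $a=1$, and explain why the residue step is the real obstacle along that second route.

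First I would record the symmetry of the summand. Since $k$ is odd, for every $n$ with $0<n<k$ one has $\cos^2(\pi(k-n)/k)=\cos^2(\pi n/k)$ and $\cos^4(2\pi(k-n)/k)=\cos^4(2\pi n/k)$, so the summand appearing in Theorem~\ref{cc} is invariant under $n\mapsto k-n$. As $k$ is odd, $k/2\notin\Z$, so $\{1,\dots,k-1\}$ is partitioned into the pairs $\{n,k-n\}$ with $0<n<k/2$, each pair contributing two equal terms, while the index $n=k$ contributes $\cos^2(\pi)/\cos^4(2\pi)=1$. This yields
$$\sum_{n=1}^{k}\frac{\cos^2(\pi n/k)}{\cos^4(2\pi n/k)}=1+2\sum_{0<n<k/2}\frac{\cos^2(\pi n/k)}{\cos^4(2\pi n/k)}.$$

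Next I would solve for the restricted sum, obtaining
$$\sum_{0<n<k/2}\frac{\cos^2(\pi n/k)}{\cos^4(2\pi n/k)}=-\frac12+\frac12\sum_{n=1}^{k}\frac{\cos^2(\pi n/k)}{\cos^4(2\pi n/k)},$$
and substitute the two cases of Theorem~\ref{cc}. Since $\tfrac12\cdot\tfrac{k}{12}=\tfrac{k}{24}$, inserting $\tfrac{k}{12}(3+4k+3k^2+2k^3)$ when $k\equiv1\pmod4$ and $\tfrac{k}{12}(-3+4k-3k^2+2k^3)$ when $k\equiv3\pmod4$ reproduces exactly the two stated formulas. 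This proves the corollary and makes transparent the assertion that it is identical to \eqref{cc}.

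Finally, for the contour route one sets $a=1$ in Theorem~\ref{theorem1a} and must evaluate $R_{k/4}+R_{3k/4}$, where $F(z)$ has poles of order $4$ at $z=k/4,3k/4$ because $\cos(2\pi z/k)$ vanishes simply there. Writing $w=z-k/4$ (and similarly at $3k/4$), one has $\cos(2\pi z/k)=\pm\sin(2\pi w/k)$, so, the sign being immaterial after raising to the fourth power, $1/\cos^4(2\pi z/k)=(k/(2\pi w))^4\bigl(1+\tfrac23(2\pi w/k)^2+\cdots\bigr)$, and the residue equals $(k/(2\pi))^4$ times the coefficient of $w^3$ in the product of $\cos^2(\pi z/k)$, this even correction factor, and $(e^{2\pi iz}-1)^{-1}$. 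The main obstacle is precisely this extraction: it requires third-order Taylor expansions of $\cos^2(\pi z/k)$ and of $(e^{2\pi iz}-1)^{-1}$ about each pole, together with the parity-dependent values $e^{\pi ik/2}=\pm i$ at $z=k/4$ and $e^{3\pi ik/2}=\mp i$ at $z=3k/4$, which separate the residues in the cases $k\equiv1$ and $k\equiv3\pmod4$. That computation is routine but lengthy, which is why the symmetry reduction above is the preferred argument.
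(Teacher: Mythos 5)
Your proof is correct, but it takes a genuinely different route from the paper's. The paper proves Corollary \ref{cor1} by setting $a=1$ in Theorem \ref{theorem1a} and then explicitly computing the residues $R_{k/4}$ and $R_{3k/4}$ (via \emph{Mathematica}), obtaining \eqref{7a} and \eqref{8a}, combining them in \eqref{9a}, and substituting into \eqref{thm1a}. You instead exploit the invariance of the summand under $n\mapsto k-n$ to write the full sum over $1\le n\le k$ as $1$ plus twice the half-range sum, and then quote Theorem \ref{cc}; your symmetry bookkeeping (the pair decomposition of $\{1,\dots,k-1\}$ for odd $k$, the value $1$ at $n=k$, and the factor $\tfrac12\cdot\tfrac{k}{12}=\tfrac{k}{24}$) is all accurate, so the two displayed cases come out exactly right. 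What each approach buys: yours is shorter and entirely elementary, requiring no residue calculations at the order-$4$ poles; the paper's is self-contained within the contour-integral framework of Theorem \ref{theorem1a} and is placed where it is precisely to corroborate that second method against the roots-of-unity evaluation of Theorem \ref{cc} (the paper itself remarks that the corollary is ``identical'' to that theorem). So your argument, while valid as a proof of the statement, deliberately bypasses the illustrative purpose the corollary serves in the paper; your closing sketch of the residue computation correctly identifies the order of the poles, the needed Taylor data, and the source of the parity split via $e^{\pi ik/2}=\pm i$, which is exactly the work the paper delegates to \emph{Mathematica}.
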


\begin{proof}
Let $a=1$ in Theorem \ref{theorem1a}.
To calculate $R_{k/4}, R_{3k/4}$ we use \emph{Mathematica}.  First,
\begin{equation}\label{7a}
R_{k/4}= \begin{cases}
\df{ki}{48\pi}\left((3-3i)+4k+3k^2+2k^3\right), \quad &\emph{if }\,k\equiv1\pmod4,\\[0.1in]
\df{ki}{48\pi}\left((-3-3i)+4k-3k^2+2k^3\right),  &\emph{if }\,k\equiv3\pmod4.
\end{cases}
\end{equation}
Secondly,
\begin{equation}\label{8a}
R_{3k/4}= \begin{cases}
\df{ki}{48\pi}\left((3+3i)+4k+3k^2+2k^3\right), \quad &\emph{if }\,k\equiv1\pmod4,\\[0.1in]
\df{ki}{48\pi}\left((-3+3i)+4k-3k^2+2k^3\right),  &\emph{if }\,k\equiv3\pmod4.
\end{cases}
\end{equation}
It follows from \eqref{7a} and \eqref{8a} that
\begin{equation}\label{9a}
2\pi i\left(R_{k/4}+R_{3k/4}\right)=
\begin{cases}
-\df{k}{12}(3+4k+3k^2+2k^3),\quad &\emph{if }\,k\equiv 1\pmod4,\\[0.1in]
-\df{k}{12}(-3+4k-3k^2+2k^3), &\emph{if }\,k\equiv 3\pmod4.
\end{cases}
\end{equation}
Using \eqref{thm1a} and \eqref{9a}, we find that
\begin{equation*}
\sum_{0<n<k/2}\df{\cos^2(\pi n/k)}{\cos^4(2\pi n/k)}=
\begin{cases}
-\df12+\df{k}{24}(3+4k+3k^2+2k^3), \quad &\emph{if }\,k\equiv 1\pmod4,\\[0.1in]
-\df12+\df{k}{24}(-3+4k-3k^2+2k^3),  &\emph{if }\,k\equiv 3\pmod4,
\end{cases}
\end{equation*}
which completes the proof of Corollary \ref{cor1}.
\end{proof}

\begin{theorem}\label{theorem2a}
Let $k$ denote an odd positive integer, and let $a$ denote a positive integer.  Then
\begin{equation}\label{thm2a}
\sum_{0<n<k/2}\df{\sin^{2a}(\pi n/k)}{\sin^{2a+2}(2\pi n/k)}=-\pi i\left(R_{0}+R_{k/2}\right),
\end{equation}
where $ R_{0}$ and $R_{k/2}$, respectively, denote the residues of the function
\begin{equation}\label{1aa}
G(z):=\df{\sin^{2a}(\pi z/k)}{\sin^{2a+2}(2\pi z/k)(e^{2\pi i z}-1)}
\end{equation}
at the poles $0$ and $k/2$.
\end{theorem}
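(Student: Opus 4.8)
The plan is to mirror exactly the contour-integration argument used for Theorem~\ref{theorem1a}, now applied to the function $G(z)$ defined in \eqref{1aa} and integrated over the same indented rectangle $C_N$ (horizontal sides through $\pm iN$, vertical sides through $0$ and $k$, with the semicircular indentations $C_0$ at $z=0$ and $C_0+k$ at $z=k$, so that $z=0$ is enclosed while $z=k$ is not). Applying the residue theorem to $G$ over $C_N$ and then letting $N\to\infty$ will produce a relation between the target sum and the residues $R_0$ and $R_{k/2}$; the task reduces to locating and classifying the poles of $G$ inside $I(C_N)$ and to showing that $\int_{C_N}G(z)\,dz\to 0$.

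First I would determine the poles of $G$ in $I(C_N)$. The factor $e^{2\pi i z}-1$ vanishes simply at each integer, $\sin(\pi z/k)$ vanishes to first order at integer multiples of $k$, and $\sin(2\pi z/k)$ vanishes to first order at integer multiples of $k/2$. The key structural difference from Theorem~\ref{theorem1a} is at $z=0$: here \emph{both} sine factors vanish, so counting orders gives a pole of order $(2a+2)+1-2a=3$, not a simple pole; I would call its residue $R_0$. Since $k$ is odd, $z=k/2$ is not an integer and $\sin(\pi z/k)$ does not vanish there, so $z=k/2$ is a pole of order $2a+2$ with residue $R_{k/2}$. At the remaining integers $n=1,\dots,k-1$ one has $\sin(2\pi n/k)\ne0$ (as $k\nmid n$ and $k$ is odd), giving simple poles with
\begin{equation*}
R_n=\frac{1}{2\pi i}\,\frac{\sin^{2a}(\pi n/k)}{\sin^{2a+2}(2\pi n/k)}.
\end{equation*}
Using $\sin(\pi-x)=\sin x$ and the evenness of the exponents, one checks $R_n=R_{k-n}$, so that $\sum_{n=1}^{k-1}R_n=2\sum_{0<n<k/2}R_n$; the absence of a fixed point ($k/2\notin\mathbb{Z}$) is exactly where the oddness of $k$ enters.

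Finally I would show $\int_{C_N}G(z)\,dz\to 0$. Periodicity $G(z+k)=G(z)$ (the even powers of $\sin(\pi z/k)$ absorb the sign change, while $\sin(2\pi z/k)$ and $e^{2\pi i z}-1$ are genuinely $k$-periodic) forces the two vertical sides, including their matching indentations, to cancel for every $N$. On the horizontal sides $|\sin(\pi z/k)|$ and $|\sin(2\pi z/k)|$ grow like $e^{\pi N/k}$ and $e^{2\pi N/k}$, so the numerator contributes $e^{2a\pi N/k}$ against a denominator of size at least $e^{(2a+2)2\pi N/k}$, a net exponent $-(2a+4)\pi N/k<0$; hence the contributions of $C_{NT},C_{NB}$ vanish as $N\to\infty$. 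The residue theorem then gives
\begin{equation*}
\int_{C_N}G(z)\,dz=2\pi i\Big(R_0+R_{k/2}+\sum_{n=1}^{k-1}R_n\Big)=2\pi i\,(R_0+R_{k/2})+2\sum_{0<n<k/2}\frac{\sin^{2a}(\pi n/k)}{\sin^{2a+2}(2\pi n/k)},
\end{equation*}
and letting $N\to\infty$ yields \eqref{thm2a}. The main obstacle is the bookkeeping at $z=0$: one must recognize that it is a triple pole rather than the simple pole appearing in Theorem~\ref{theorem1a}, so that no isolated constant term splits off and $R_0$ is grouped with $R_{k/2}$. The explicit evaluation of the higher-order residues $R_0,R_{k/2}$ is deferred (as in Corollary~\ref{cor1}) and is not needed for the statement itself.
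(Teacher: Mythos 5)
Your proposal is correct and follows essentially the same route as the paper: the same indented rectangle $C_N$, the identification of simple poles at $n=1,\dots,k-1$ with $R_n=R_{k-n}$, a pole of order $3$ at $z=0$ and of order $2a+2$ at $z=k/2$, cancellation of the vertical sides by $k$-periodicity, and the decay estimate on the horizontal sides. The residue bookkeeping at $z=0$ (order $(2a+2)+1-2a=3$) and the exponent count $-(2a+4)\pi N/k$ both check out.
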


\begin{proof}
As in the previous proof, let $C_N$ denote the  indented, positively oriented rectangle with horizonal sides passing through $\pm iN$, $N>0$, and vertical sides passing through $0$ and $k$. Let $I(C_N)$ denote the interior of $C_N$. Let $R_{\alpha}$ denote the residue of a pole of $G(z)$ at $z=\alpha$ on $I(C_N)$.  We apply the residue theorem to $G(z)$, integrated over $C_N$.  On $I(C_N)$, there are simple poles at $z=1,2,\dots,k-1$, a pole of order $3$ at $z=0$, and a pole of order $2a+2$ at $z=k/2$.
We easily see that
\begin{equation}\label{3aa}
R_n=\df{1}{2\pi i}\df{\sin^{2a}(\pi n/k)}{\sin^{2a+2}(2\pi n/k)}, \qquad n= 1,2, \dots, k-1.
\end{equation}
Also observe that
\begin{equation}\label{4aa}
R_{n}=R_{k-n}, \qquad 0<n<k/2.
\end{equation}
Hence, by the residue theorem,  \eqref{3aa}, and \eqref{4aa},
\begin{equation}\label{5aa}
\int_{C_N}G(z)dz=2\sum_{0<n<k/2}\df{\sin^{2a}(\pi n/k)}{\sin^{2a+2}(2\pi n/k)}+2\pi i\left(R_{0}+R_{k/2}\right).
\end{equation}

We now directly calculate $\int_{C_N}F(z)dz$.  The details are analogous to those in the proof of Theorem \ref{theorem1a}. Thus,
\begin{equation}\label{5aaa}
\lim_{N\to\infty}\int_{C_N}G(z)dz=0.
\end{equation}
Hence, by \eqref{5aa} and \eqref{5aaa},
\begin{equation*}
2\sum_{0<n<k/2}\df{\sin^{2a}(\pi n/k)}{\sin^{2a+2}(2\pi n/k)}+2\pi i\left(R_{0}+R_{k/2}\right)=0,
\end{equation*}
which is easily seen to be equivalent to \eqref{thm2a}.
\end{proof}

We provide a second proof of \eqref{triangular}.

\begin{corollary}\label{cor2a}
If $k$ is an odd positive integer, then
\begin{equation}\label{16.5a}
\sum_{1\leq n<k/2}\df{\sin^2(\pi n/k)}{\sin^4(2\pi n/k)}=\df{k^4+6k^2-7}{96}.
\end{equation}
\end{corollary}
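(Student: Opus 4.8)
The plan is to specialize Theorem~\ref{theorem2a} to the case $a=1$, which reduces \eqref{16.5a} entirely to the evaluation of the two residues appearing in \eqref{thm2a}. With $a=1$, Theorem~\ref{theorem2a} asserts that the left side of \eqref{16.5a} equals $-\pi i\left(R_0+R_{k/2}\right)$, where $R_0$ and $R_{k/2}$ are the residues at $z=0$ and $z=k/2$ of
$$G(z)=\frac{\sin^{2}(\pi z/k)}{\sin^{4}(2\pi z/k)\,(e^{2\pi i z}-1)}.$$
Thus the whole problem becomes a pair of local Laurent expansions, exactly in the spirit of Corollary~\ref{cor1}.

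First I would compute $R_0$. Near $z=0$ we have $\sin^{2}(\pi z/k)\sim(\pi z/k)^2$, $\sin^{4}(2\pi z/k)\sim(2\pi z/k)^4$, and $e^{2\pi i z}-1\sim 2\pi i z$, so $G$ has a pole of order $3$ and $z^{3}G(z)$ is analytic at the origin. Expanding each of the three factors as a power series and reading off the coefficient of $z^{2}$ in $z^{3}G(z)$ (equivalently, the coefficient of $z^{-1}$ in $G$) is a finite computation; carrying the expansions far enough that the $z^{2}$ term of $z^{3}G(z)$ is determined, I expect to obtain $R_0=\dfrac{7-k^{2}}{96\pi i}$, and hence $-\pi i\,R_0=\dfrac{k^{2}-7}{96}$.

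Second, for $R_{k/2}$ I would substitute $z=k/2+t$. Since $k$ is odd, the shift identities $\sin(2\pi z/k)=-\sin(2\pi t/k)$, $\sin(\pi z/k)=\cos(\pi t/k)$, and $e^{2\pi i z}-1=-e^{2\pi i t}-1$ hold; the last factor equals $-2$ at $t=0$ and the numerator equals $1$ there, so the pole of order $4$ comes solely from $\sin^{4}(2\pi t/k)$, and $t^{4}G(k/2+t)$ is analytic at $t=0$. Expanding $\cos^{2}(\pi t/k)$, $\sin^{4}(2\pi t/k)$, and $-e^{2\pi i t}-1$ through order $t^{3}$ and extracting the coefficient of $t^{3}$ in $t^{4}G$ (the coefficient of $t^{-1}$ in $G$) should give $R_{k/2}=\dfrac{i\,k^{2}(k^{2}+5)}{96\pi}$, whence $-\pi i\,R_{k/2}=\dfrac{k^{2}(k^{2}+5)}{96}$.

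Adding the two contributions then yields
$$-\pi i\left(R_0+R_{k/2}\right)=\frac{k^{2}-7}{96}+\frac{k^{2}(k^{2}+5)}{96}=\frac{k^{4}+6k^{2}-7}{96},$$
which is \eqref{16.5a} and recovers \eqref{triangular} via Theorem~\ref{ss}. The only genuine obstacle is the bookkeeping in the two Laurent expansions: each residue requires the series of three separate factors carried to precisely the order that fixes the constant term of $z^{3}G$ (respectively $t^{4}G$), and a single truncation error in $\sin^{4}$ or in the exponential factor will spoil the final polynomial in $k$. As in the proof of Corollary~\ref{cor1}, I would mechanize these expansions in \emph{Mathematica} to confirm the hand computation of $R_0$ and $R_{k/2}$.
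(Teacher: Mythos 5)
Your proposal is correct and follows essentially the same route as the paper: set $a=1$ in Theorem \ref{theorem2a}, compute the residues $R_0$ and $R_{k/2}$ by direct Laurent expansion (or \emph{Mathematica}), and your values $R_0=i\,\dfrac{k^2-7}{96\pi}$ and $R_{k/2}=\dfrac{ik^2(k^2+5)}{96\pi}$ agree with the paper's \eqref{14a} and \eqref{15a}. The final combination $-\pi i(R_0+R_{k/2})=\dfrac{k^4+6k^2-7}{96}$ is exactly the paper's conclusion.
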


\begin{proof}
Let $a=1$ in Theorem \ref{theorem2a}.
It is not difficult to use the definition \eqref{1aa} to calculate $R_0$ and $R_{k/2}$ directly. Or, one can resort to \emph{Mathematica}.  In either case,
\begin{equation}\label{14a}
R_0=i\df{-7+k^2}{96\pi}
\end{equation}
and
\begin{equation}\label{15a}
R_{k/2}=\df{ik^2}{96\pi}(5+k^2).
\end{equation}
If we use \eqref{14a} and \eqref{15a} in Theorem \ref{theorem2a} when $a=1$ and simplify, we easily obtain \eqref{16.5a}.
\end{proof}

\section{Further Reciprocity Theorems}

The following theorem is identical to Corollary \ref{mc3}.  We provide another proof, because it leads to a generalization mentioned after the proof.

\begin{theorem}\label{aaa1} Let $p$ and $q$ be distinct, positive, odd integers, each $\geq3$.  Then
\begin{equation*}
 p\sum_{n=1}^{q-1}\df{\cot(\pi n/q)\cot(\pi np/q)}{\sin^2(\pi n/q)}
+q\sum_{n=1}^{p-1}\df{\cot(\pi n/p)\cot(\pi nq/p)}{\sin^2(\pi n/p)}=\df{p^4+q^4-5p^2q^2+3}{45}.
\end{equation*}
\end{theorem}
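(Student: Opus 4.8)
The plan is to reprove the identity by a single contour integration, in the spirit of Theorem \ref{theorem2a}, applied to the manifestly $p\leftrightarrow q$ symmetric function
\begin{equation*}
F(z):=\cot(\pi pz)\,\cot(\pi qz)\,\cot(\pi z)\,\csc^2(\pi z).
\end{equation*}
Writing $T(p,q)$ for the sum of Theorem \ref{S3}, the left-hand side of the assertion is exactly $pT(p,q)+qT(q,p)$. Since each factor $\cot(\pi pz)$, $\cot(\pi qz)$, $\cot(\pi z)$, $\csc^2(\pi z)$ has period $1$, so does $F$, and I would integrate $F$ over the indented, positively oriented rectangle $C_N$ with horizontal sides through $\pm iN$ and vertical sides through $0$ and $1$, indented at the boundary poles $z=0,1$ precisely as in the proof of Theorem \ref{theorem2a}. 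On the two horizontal sides $\csc^2(\pi z)\to 0$ exponentially as $N\to\infty$ while the three cotangents remain bounded (each tends to $\mp i$), so those integrals vanish; the two vertical sides cancel by periodicity. Hence $\lim_{N\to\infty}\int_{C_N}F(z)\,dz=0$, and the residue theorem forces the sum of the residues of $F$ in one period strip to vanish.

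Next I would identify the interior residues. For $1\le n\le q-1$ the point $z=n/q$ is a simple pole, arising solely from $\cot(\pi qz)$, since $(p,q)=1$ keeps the other three factors finite there; thus
\begin{equation*}
\Res_{z=n/q}F=\frac{1}{\pi q}\cot\Big(\frac{\pi np}{q}\Big)\cot\Big(\frac{\pi n}{q}\Big)\csc^2\Big(\frac{\pi n}{q}\Big),
\end{equation*}
and these residues sum to $\tfrac{1}{\pi q}T(p,q)$. By the symmetry of $F$, the points $z=m/p$, $1\le m\le p-1$, are simple poles coming from $\cot(\pi pz)$ and contribute $\tfrac{1}{\pi p}T(q,p)$ in total. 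The only remaining pole in the strip is the boundary pole at the origin, of order $5$; counting it once (as the analogous boundary pole is treated in Theorem \ref{theorem2a}) yields
\begin{equation*}
\frac{1}{\pi q}T(p,q)+\frac{1}{\pi p}T(q,p)+\Res_{z=0}F=0.
\end{equation*}

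It then remains to compute $\Res_{z=0}F$. Writing $\cot(\pi az)=\tfrac{1}{\pi az}\,c(\pi az)$ and $\csc^2(\pi z)=\tfrac{1}{\pi^2z^2}\,s(\pi z)$ with the even series $c(t)=t\cot t=1-\tfrac{t^2}{3}-\tfrac{t^4}{45}-\cdots$ and $s(t)=t^2\csc^2 t=1+\tfrac{t^2}{3}+\tfrac{t^4}{15}+\cdots$, one gets
\begin{equation*}
F(z)=\frac{1}{\pi^5pq\,z^5}\,c(\pi pz)\,c(\pi qz)\,c(\pi z)\,s(\pi z),
\end{equation*}
so that $\Res_{z=0}F$ equals $\tfrac{1}{\pi^5pq}$ times the coefficient of $z^4$ in the product of the four series. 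Extracting that coefficient (the only genuine calculation in the argument) gives $\tfrac{\pi^4}{45}(5p^2q^2-p^4-q^4-3)$, whence
\begin{equation*}
\Res_{z=0}F=-\frac{p^4+q^4-5p^2q^2+3}{45\,\pi pq}.
\end{equation*}
Substituting this into the residue identity and multiplying by $\pi pq$ yields $pT(p,q)+qT(q,p)=\tfrac{1}{45}(p^4+q^4-5p^2q^2+3)$, as required.

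The main obstacles are bookkeeping rather than conceptual: first, justifying that the order-$5$ boundary pole at the origin is counted exactly once, which I would secure most cleanly by translating $C_N$ horizontally by a small $\delta>0$ so that $z=0$ is replaced by its periodic image $z=1$ lying strictly inside the contour; and second, performing the $z^4$-coefficient extraction correctly (the sum $\sum_i\beta_i+\sum_{i<j}\alpha_i\alpha_j$ of the four series, which collapses via $\sum_{i<j}a_ia_j=\tfrac12[(\sum a_i)^2-\sum a_i^2]$ to the stated polynomial). Finally, replacing $\csc^2(\pi z)$ by $\csc^{2a}(\pi z)$ produces a pole of order $2a+2$ at the origin and, by the identical argument, a reciprocity theorem for $\sum_n\cot(\pi np/q)\cot(\pi n/q)\csc^{2a}(\pi n/q)$, which is the generalization to be recorded after the proof.
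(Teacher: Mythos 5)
Your proof is correct, and while it is still a contour-integration argument, it is structured differently from the paper's. The paper integrates the asymmetric kernel $\cot(\pi z/q)\cot(\pi pz/q)/\bigl((e^{2\pi iz}-1)\sin^2(\pi z/q)\bigr)$ over a strip of width $q$: the poles at the integers produce $T(p,q)$ directly, but the poles at $z=nq/p$ produce residues containing $1/(e^{2\pi inq/p}-1)$, which must then be split (equation \eqref{aaa12}) into a cotangent part giving $\tfrac{q}{p}T(q,p)$ plus an odd part that cancels in $n\leftrightarrow p-n$ pairs; the order-$5$ residue at the origin is delegated to \emph{Mathematica}. Your symmetric kernel $\cot(\pi pz)\cot(\pi qz)\cot(\pi z)\csc^2(\pi z)$ over a unit-period strip builds that symmetrization into the integrand from the outset (note $\cot(\pi qz)=i+2i/(e^{2\pi iqz}-1)$, so your choice is precisely the symmetrized version of theirs), so both families of poles are treated identically and no cancellation lemma is needed; moreover you compute $\Res_{z=0}$ by hand from the Laurent series of $t\cot t$ and $t^2\csc^2t$ — I checked the coefficient extraction $\sum_i\beta_i+\sum_{i<j}\alpha_i\alpha_j=\tfrac{\pi^4}{45}(5p^2q^2-p^4-q^4-3)$ and it is right — which is more self-contained than the paper's computer-assisted step and generalizes transparently to $\csc^{2a}$. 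Two small remarks: your handling of the boundary pole (counting $z=0$ once, or shifting the strip by $\delta$) matches the paper's indentation convention and is fine; and both your argument and the paper's implicitly require $(p,q)=1$, a hypothesis the theorem statement omits but which is needed even for the sums to be defined.
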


\begin{proof} Let
\begin{equation*}
f(z):=\df{\cot(\pi z/q)\cot(\pi pz/q)}{(e^{2\pi iz}-1)\sin^2(\pi z/q)}.
\end{equation*}
Integrate $f(z)$ over a rectangle $C_N$, with horizontal sides $z=x\pm iN, 0\leq x\leq q$, and with vertical sides $z=0+iy, q+iy, -N\leq y\leq N$. The contour contains semi-circular indentations of radius $\epsilon, 0<\epsilon<1$, at $0$ and $q$, with the former lying in the left-half plane, and the latter lying in the half-plane $x\leq q$. On the interior of $C_N$,  $f(z)$ has simple poles at $z=n, 1\leq n\leq q-1$; simple poles at $z=nq/p, 1\leq n\leq p-1$; and a pole of order $5$ at the origin. Let $R_{\alpha}$ denote the residue of a pole $\alpha$ of $f(z)$. By the residue theorem,
\begin{align}\label{aaa11}
\int_{C_N}f(z)dz=&2\pi i\sum_{n=1}^{q-1}R_n+2\pi i\sum_{n=1}^{p-1}R_{nq/p}+2\pi i R_0\notag\\
=&\sum_{n=1}^{q-1}\df{\cot(\pi n/q)\cot(\pi np/q)}{\sin^2(\pi n/q)}
+\df{2iq}{p}\sum_{n=1}^{p-1}\df{\cot(\pi n/p)}{(e^{2\pi inq/p}-1)\sin^2(\pi n/p)}+2\pi iR_0.
\end{align}
Now,
\begin{align}\label{aaa12}
\df{\cot(\pi n/p)}{(e^{2\pi inq/p}-1)\sin^2(\pi n/p)}=&\df{\cot(\pi n/p)}{e^{\pi inq/p}(2i)\sin(\pi nq/p)\sin^2(\pi n/p)}\notag\\
=&-\df{i}{2}\,\df{\cos(\pi nq/p)-i\sin(\pi nq/p)}{\sin(\pi nq/p)\sin^2(\pi n/p)}\cot(\pi n/p)\notag\\
=&-\df{i}{2}\,\df{\cos(\pi nq/p)\cot(\pi n/p)}{\sin(\pi nq/p)\sin^2(\pi n/p)}-\df12\,\df{\sin(\pi nq/p)\cot(\pi n/p)}{\sin(\pi nq/p)\sin^2(\pi n/p)}.
\end{align}
When the far right side of \eqref{aaa12} is substituted into \eqref{aaa11}, the contribution of the latter quotient in \eqref{aaa12} will be equal to 0, since the terms with index $n$ and $p-n$, $0<n<p/2$, have opposite signs and so cancel.  Hence, we find that \eqref{aaa11} can be written in the form
\begin{equation}\label{aaa13}
\int_{C_N}f(z)dz
=\sum_{n=1}^{q-1}\df{\cot(\pi n/q)\cot(\pi np/q)}{\sin^2(\pi n/q)}
+\df{q}{p}\sum_{n=1}^{p-1}\df{\cot(\pi n/p)\cot(\pi nq/p)}{\sin^2(\pi n/p)}+2\pi iR_0.
\end{equation}

By the use of \emph{Mathematica},
\begin{equation}\label{aaa14}
2\pi i R_0=-\df{p^4+q^4-5p^2q^2+3}{45p}.
\end{equation}
Putting \eqref{aaa14} in \eqref{aaa13}, we deduce that
\begin{equation}\label{aaa15}
\int_{C_N}f(z)dz
=\sum_{n=1}^{q-1}\df{\cot(\pi n/q)\cot(\pi np/q)}{\sin^2(\pi n/q)}
+\df{q}{p}\sum_{n=1}^{p-1}\df{\cot(\pi n/p)\cot(\pi nq/p)}{\sin^2(\pi n/p)}-\df{p^4+q^4-5p^2q^2+3}{45p}.
\end{equation}
By a now familiar argument, a direct calculation gives
 \begin{equation}\label{aaa16}
 \lim_{N\to\infty}\int_{C_N}f(z)dz=0.
 \end{equation}
 Combining \eqref{aaa15} and \eqref{aaa16} and then multiplying both sides by $p$, we conclude that
 \begin{equation*}
 p\sum_{n=1}^{q-1}\df{\cot(\pi n/q)\cot(\pi np/q)}{\sin^2(\pi n/q)}
+q\sum_{n=1}^{p-1}\df{\cot(\pi n/p)\cot(\pi nq/p)}{\sin^2(\pi n/p)}=\df{p^4+q^4-5p^2q^2+3}{45},
\end{equation*}
which completes the proof of Theorem \ref{aaa1}.
\end{proof}

Generalizing our argument, we can derive a reciprocity theorem for
$$\sum_{n=1}^{p-1}\df{\cot(\pi n/p)\cot(\pi nq/p)}{\sin^{2m}(\pi n/p)}$$
for any positive integer $m$.
The next two results give these reciprocity theorems for $m=2,3$, respectively.

 \begin{theorem} Let $p$ and $q$ be distinct, positive, odd integers, each $\geq3$.  Then
\begin{gather*}
 p\sum_{n=1}^{q-1}\df{\cot(\pi n/q)\cot(\pi np/q)}{\sin^4(\pi n/q)}
+q\sum_{n=1}^{p-1}\df{\cot(\pi n/p)\cot(\pi nq/p)}{\sin^4(\pi n/p)}\\=
\df{2p^6+2q^6-7p^4q^2-7p^2q^4+7p^4+7q^4-35p^2q^2+31}{945}.
\end{gather*}
\end{theorem}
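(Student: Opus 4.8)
The plan is to mimic the contour-integration proof of Theorem \ref{aaa1}, replacing $\sin^2$ by $\sin^4$ in the auxiliary kernel. First I would set
\begin{equation*}
f(z):=\frac{\cot(\pi z/q)\cot(\pi pz/q)}{(e^{2\pi iz}-1)\sin^4(\pi z/q)}
\end{equation*}
and integrate $f$ over the same indented rectangle $C_N$ used in Theorem \ref{aaa1}, with horizontal sides $z=x\pm iN$, $0\le x\le q$, and vertical sides $z=iy$, $z=q+iy$, $-N\le y\le N$, with semicircular indentations of radius $\epsilon$ at $z=0$ and $z=q$. Since $q$ is an integer, $f$ has period $q$, so the contributions of the two vertical sides cancel; and since $\sin^4(\pi z/q)$ grows like $e^{4\pi N/q}$ on the horizontal sides while the cotangents stay bounded, a now-familiar estimate gives $\lim_{N\to\infty}\int_{C_N}f(z)\,dz=0$.

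Next I would locate the poles inside $C_N$. As in Theorem \ref{aaa1}, using $(p,q)=1$, there are simple poles at $z=n$, $1\le n\le q-1$, coming from $e^{2\pi iz}-1$, and simple poles at $z=nq/p$, $1\le n\le p-1$, coming from $\cot(\pi pz/q)$; the only new feature is that the pole at the origin now has order $7$ rather than $5$, since $f(z)\sim q^6/(2\pi^7 i\,p\,z^7)$ as $z\to0$. At $z=n$ the residue of $1/(e^{2\pi iz}-1)$ equals $1/(2\pi i)$, so $2\pi i\sum_{n=1}^{q-1}R_n=\sum_{n=1}^{q-1}\cot(\pi n/q)\cot(\pi np/q)/\sin^4(\pi n/q)$. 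At $z=nq/p$ the residue of $\cot(\pi pz/q)$ equals $q/(\pi p)$, and the manipulation of \eqref{aaa12}--\eqref{aaa13} applies verbatim: writing $e^{2\pi inq/p}-1=e^{\pi inq/p}(2i)\sin(\pi nq/p)$ and splitting into real and imaginary parts, the $\sin(\pi nq/p)\cot(\pi n/p)/\sin^4(\pi n/p)$ piece is odd under $n\mapsto p-n$ and cancels, leaving exactly
\begin{equation*}
2\pi i\sum_{n=1}^{p-1}R_{nq/p}=\frac{q}{p}\sum_{n=1}^{p-1}\frac{\cot(\pi n/p)\cot(\pi nq/p)}{\sin^4(\pi n/p)}.
\end{equation*}

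The hard part will be the residue $R_0$ at the origin. Because $f$ now has a pole of order $7$, one must expand the Laurent series of $\cot(\pi z/q)\cot(\pi pz/q)$ and of $1/\big((e^{2\pi iz}-1)\sin^4(\pi z/q)\big)$ through the $z^6$ term and extract the coefficient of $1/z$; this is the step where I would invoke \emph{Mathematica}, exactly as in the proof of Theorem \ref{aaa1} and of Corollaries \ref{cor1} and \ref{cor2a}. The outcome should be
\begin{equation*}
2\pi iR_0=-\frac{2p^6+2q^6-7p^4q^2-7p^2q^4+7p^4+7q^4-35p^2q^2+31}{945\,p}.
\end{equation*}
Assembling the residue theorem with the vanishing of $\int_{C_N}f$ yields
\begin{equation*}
\sum_{n=1}^{q-1}\frac{\cot(\pi n/q)\cot(\pi np/q)}{\sin^4(\pi n/q)}
+\frac{q}{p}\sum_{n=1}^{p-1}\frac{\cot(\pi n/p)\cot(\pi nq/p)}{\sin^4(\pi n/p)}+2\pi iR_0=0,
\end{equation*}
and multiplying by $p$ gives the claimed reciprocity relation. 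The only genuine obstacle is thus the bookkeeping in the order-$7$ residue; every other step is a transcription of the $\sin^2$ argument.
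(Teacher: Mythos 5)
Your proposal is correct and follows exactly the route the paper intends: the theorem is stated as the $m=2$ case of the generalization of Theorem \ref{aaa1}, obtained by replacing $\sin^2$ with $\sin^4$ in the kernel, so that the origin becomes a pole of order $7$ whose residue (computed symbolically) supplies the polynomial on the right-hand side, while the treatment of the simple poles at $z=n$ and $z=nq/p$ and the vanishing of $\int_{C_N}f$ carry over verbatim. Your claimed value of $2\pi iR_0$ checks out numerically (e.g.\ for $p=3$, $q=5$ both sides equal $-23744/945$), so the argument is complete.
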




 \begin{theorem} Let $p$ and $q$ be distinct, positive, odd integers, each $\geq3$.  Then
\begin{gather*}
 p\sum_{n=1}^{q-1}\df{\cot(\pi n/q)\cot(\pi np/q)}{\sin^6(\pi n/q)}
+q\sum_{n=1}^{p-1}\df{\cot(\pi n/p)\cot(\pi nq/p)}{\sin^6(\pi n/p)}\\=
\df{3 p^8+3 q^8+20p^6+20q^6 +56p^4+56q^4-280p^2q^2-10p^6q^2-10p^2q^6-7p^4q^4+289}{14175}.
\end{gather*}
\end{theorem}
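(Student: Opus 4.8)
The plan is to mirror, almost verbatim, the contour-integration argument in the proof of Theorem~\ref{aaa1}, now carrying the sixth power $\sin^6$ in the denominator. First I would set
\begin{equation*}
f(z):=\df{\cot(\pi z/q)\cot(\pi pz/q)}{(e^{2\pi iz}-1)\sin^6(\pi z/q)}
\end{equation*}
and integrate $f$ over the same indented rectangle $C_N$ with horizontal sides $z=x\pm iN$, $0\le x\le q$, and vertical sides $z=iy$, $z=q+iy$, $-N\le y\le N$, with semicircular indentations of radius $\epsilon$ at $0$ and $q$. Since $\cot(\pi(z+q)/q)=\cot(\pi z/q)$, $\cot(\pi p(z+q)/q)=\cot(\pi pz/q)$ (as $p$ is an integer), $e^{2\pi i(z+q)}-1=e^{2\pi iz}-1$, and $\sin^6(\pi(z+q)/q)=\sin^6(\pi z/q)$, we have $f(z+q)=f(z)$, so the integrals along the two vertical sides cancel. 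On the horizontal sides the two cotangents and $1/(e^{2\pi iz}-1)$ stay bounded while $\lvert\sin^6(\pi z/q)\rvert$ grows like $e^{6\pi N/q}$, so the horizontal contributions tend to $0$ and $\lim_{N\to\infty}\int_{C_N}f(z)\,dz=0$, exactly as in the earlier proof.

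Next I would classify the poles inside $C_N$ and compute the residues at the simple ones. The factor $e^{2\pi iz}-1$ contributes simple poles at $z=n$, $1\le n\le q-1$, with
\begin{equation*}
R_n=\df{1}{2\pi i}\df{\cot(\pi n/q)\cot(\pi np/q)}{\sin^6(\pi n/q)},
\end{equation*}
while $\cot(\pi pz/q)$ contributes simple poles at $z=nq/p$, $1\le n\le p-1$ (distinct from the integers since $(p,q)=1$). The residue computation there is identical in form to \eqref{aaa12}, giving
\begin{equation*}
\df{2iq}{p}\sum_{n=1}^{p-1}\df{\cot(\pi n/p)}{(e^{2\pi inq/p}-1)\sin^6(\pi n/p)}
=\df{q}{p}\sum_{n=1}^{p-1}\df{\cot(\pi n/p)\cot(\pi nq/p)}{\sin^6(\pi n/p)},
\end{equation*}
where the purely real ``diagonal'' part $-\tfrac12\cot(\pi n/p)/\sin^6(\pi n/p)$ again cancels under $n\mapsto p-n$, because $\cot$ is odd and $\sin^6$ is even about $n=p/2$. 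At the origin $f$ now has a pole whose order rises from $5$ in the $\sin^2$ case to $9$, since the numerator contributes $z^{-2}$ while the denominator contributes $z^{1+6}=z^{7}$.

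The main obstacle is the evaluation of the residue $R_0$ at this order-$9$ pole. As in the $\sin^2$ and $\sin^4$ cases, I would extract the coefficient of $z^{-1}$ in the Laurent expansion of $f$ about $0$, using the standard series for $\cot$, for $1/\sin^6$, and for $1/(e^{2\pi iz}-1)$ (the last generating the Bernoulli numbers). Because this requires keeping terms through eighth order in two separate cotangent expansions, it is best carried out in \emph{Mathematica}, which should yield
\begin{equation*}
2\pi i R_0=-\df{3p^8+3q^8+20p^6+20q^6+56p^4+56q^4-280p^2q^2-10p^6q^2-10p^2q^6-7p^4q^4+289}{14175\,p}.
\end{equation*}
Assembling the residue theorem, letting $N\to\infty$, and then multiplying through by $p$ gives the asserted reciprocity formula. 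The only genuinely new content relative to Theorem~\ref{aaa1} is this single residue; everything else is a direct adaptation, and the symmetry of the stated answer in $p$ and $q$ furnishes a convenient consistency check on the output.
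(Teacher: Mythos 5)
Your proposal is correct and follows exactly the route the paper intends: the paper states this theorem as the $m=3$ instance obtained by ``generalizing our argument'' from Theorem \ref{aaa1}, and your adaptation (same indented rectangle, same cancellation of the vertical sides and of the real part of the residues at $z=nq/p$, with the only new work being the order-$9$ residue at the origin computed symbolically) is precisely that generalization. The value you assert for $2\pi i R_0$ is forced by the vanishing of the contour integral together with the stated identity, and the symmetry check you mention is a reasonable safeguard on the \emph{Mathematica} output.
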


\begin{theorem}\label{r0} (Four-sum reciprocity relation) Let $p,q,r,$ and $s$ be distinct, odd positive integers, relatively prime in pairs.  Then
\begin{align}
&\df{1}{q}\sum_{n=1}^{q-1}\df{\cot(\pi n/q)\cot(\pi pn/q)\cot(\pi rn/q)\cot(\pi sn/q)}{\sin^2(\pi n/q)}\notag\\
&+\df{1}{p}\sum_{n=1}^{p-1}\df{\cot(\pi n/p)\cot(\pi qn/p)\cot(\pi rn/p)\cot(\pi sn/p)}{\sin^2(\pi n/p)}\notag\\
&+\df{1}{r}\sum_{n=1}^{r-1}\df{\cot(\pi n/r)\cot(\pi pn/r)\cot(\pi qn/r)\cot(\pi sn/r)}{\sin^2(\pi n/r)}\notag\\
&+\df{1}{s}\sum_{n=1}^{s-1}\df{\cot(\pi n/s)\cot(\pi pn/s)\cot(\pi rn/s)\cot(\pi qn/s)}{\sin^2(\pi n/s)}\notag\\
=&\df{1}{1890pqrs}\left\{35(p^2q^2r^2+p^2q^2s^2+q^2r^2s^2+p^2r^2s^2)\right.\notag\\
&-7\left(p^4q^2 + p^4r^2 + q^4p^2 + r^4p^2 + s^4p^2 + q^4s^2 + q^4r^2 + r^4q^2 + s^4q^2 + s^4r^2 +r^4s^2 + p^4s^2\right)\notag\\
&\left.+2(p^6+q^6+r^6+s^6)-21(p^2+q^2+r^2+s^2)+20\right\}.\label{r1}
\end{align}
\end{theorem}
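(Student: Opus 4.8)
The plan is to adapt the contour-integration method of Theorem~\ref{aaa1} to a single function carrying all four moduli. Singling out $q$ as the ``base'' modulus, I would set
\begin{equation*}
f(z):=\df{\cot(\pi z/q)\cot(\pi pz/q)\cot(\pi rz/q)\cot(\pi sz/q)}{(e^{2\pi iz}-1)\sin^2(\pi z/q)}
\end{equation*}
and integrate $f$ over the same indented, positively oriented rectangle $C_N$ used in Theorem~\ref{aaa1} (vertical sides at $0$ and $q$, horizontal sides through $\pm iN$, indentations at $0$ and $q$). Since $p,q,r,s$ are integers, each factor of $f$ is invariant under $z\mapsto z+q$, so $f(z+q)=f(z)$ and the two vertical contributions cancel; the periodicity together with the exponential decay of $1/(e^{2\pi iz}-1)$ forces the horizontal integrals to $0$, giving $\lim_{N\to\infty}\int_{C_N}f(z)\,dz=0$ by the now-familiar estimate.

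Next I would classify the poles inside $C_N$. The factor $1/(e^{2\pi iz}-1)$ produces simple poles at $z=n$, $1\le n\le q-1$, where $\sin^2(\pi n/q)\neq0$ and, because the moduli are pairwise coprime, all four cotangents are finite; here $2\pi i R_n$ is exactly the summand of the $q$-sum. The factors $\cot(\pi pz/q)$, $\cot(\pi rz/q)$, $\cot(\pi sz/q)$ produce three disjoint families of simple poles at $z=jq/p$, $z=jq/r$, $z=jq/s$ (with $1\le j$ below the respective modulus); pairwise coprimality guarantees that these are distinct from one another and from every integer. Finally there is a single pole of order $7$ at the origin, coming from the four simple cotangent poles, the double pole of $1/\sin^2(\pi z/q)$, and the simple pole of $1/(e^{2\pi iz}-1)$. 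As in Theorem~\ref{aaa1}, the chosen indentation convention ensures only the origin among the boundary poles is counted.

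The heart of the argument is the residue at $z=jq/p$ (the $jq/r$ and $jq/s$ families being identical by symmetry). There $\cot(\pi pz/q)$ supplies the pole, with residue $q/(\pi p)$, so $2\pi i R_{jq/p}=\frac{2iq}{p}$ times the remaining factors evaluated at $z=jq/p$. Exactly as in \eqref{aaa12}, I would write $1/(e^{2\pi ijq/p}-1)=-\tf i2\cot(\pi jq/q\cdot q/p)-\tf12=-\tf i2\cot(\pi jq/p)-\tf12$. The $-\tf12$ piece multiplies only the three cotangents $\cot(\pi j/p)\cot(\pi rj/p)\cot(\pi sj/p)$, which is \emph{odd} under $j\mapsto p-j$ and hence cancels in pairs; the $-\tf i2\cot(\pi jq/p)$ piece completes the product to four cotangents, which is \emph{even} under $j\mapsto p-j$ and survives. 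Collecting the constant $\frac{2iq}{p}\cdot\bigl(-\tf i2\bigr)=\frac{q}{p}$, these poles contribute precisely $\frac{q}{p}$ times the $p$-sum, and likewise the $jq/r$ and $jq/s$ poles contribute $\frac{q}{r}$ and $\frac{q}{s}$ times the $r$- and $s$-sums. Setting the total residue sum to $0$ and dividing the whole identity by $q$ then yields the left-hand side of \eqref{r1} equal to $-2\pi i R_0/q$.

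It remains to evaluate $R_0$, which I expect to be the main obstacle: one must extract the coefficient of $z^{-1}$ in an order-$7$ Laurent expansion, i.e.\ expand the product of four cotangents against $1/\sin^2(\pi z/q)$ and $1/(e^{2\pi iz}-1)$ through order $z^{6}$. I would perform this with \emph{Mathematica}, obtaining $-2\pi i R_0/q$ as the symmetric polynomial on the right of \eqref{r1}. The crucial point is that, although $q$ was singled out in the construction, the resulting value is fully symmetric in $p,q,r,s$; this symmetry is simultaneously the decisive consistency check on the residue computation and the structural reason the four sums assemble into the stated reciprocity law.
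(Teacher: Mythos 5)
Your proposal is correct and follows essentially the same route as the paper's proof: the identical function $f(z)$, the same indented rectangle with vertical sides at $0$ and $q$, the same classification of the order-$7$ pole at the origin and the three families of simple poles at $jq/p$, $jq/r$, $jq/s$, the same parity cancellation of the three-cotangent piece coming from expanding $1/(e^{2\pi i jq/p}-1)$, and the same reliance on \emph{Mathematica} for $R_0$. Nothing further is needed.
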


\begin{proof} Let
\begin{equation*}
f(z):=\df{\cot(\pi z/q)\cot(\pi pz/q)\cot(\pi rz/q)\cot(\pi sz/q)}{\sin^2(\pi z/q)(e^{2\pi i z}-1)},\quad z=x+iy.
\end{equation*}
Let $C_N$ denote a positively oriented rectangle with vertical sides passing through $x=0$ and $x=q$, horizontal sides $0\leq x \leq q, y=N$, and $0\leq x \leq q, y=-N$.  Furthermore, the left vertical line is indented by a semicircle of radius $\epsilon <1$ around $z=0$ and lying in the left half-plane, and lastly the right vertical line has an indention of radius $\epsilon<1$ around $z=q$ and lying in the half-plane $x\leq q$.
On the interior of $C_N$, $f(z)$ has a pole of order 7 at the origin, and simple poles at $z=n, 1\leq n \leq q-1; z= qn/p, 1\leq n\leq p-1; z=qn/r, 1\leq n\leq r-1$; and $z=qn/s, 1\leq n\leq s-1$.

We apply the residue theorem to $f(z)$ on $C_N$.  Let $R_a$ denote the residue of $f(z)$ at a pole $a$. First,
\begin{equation*}
R_n=\df{\cot(\pi n/q)\cot(\pi pn/q)\cot(\pi rn/q)\cot(\pi sn/q)}{2\pi i\sin^2(\pi n/q)},\quad 1\leq n\leq q-1.
\end{equation*}
Thus, from the residue theorem, we obtain the contribution
\begin{equation}\label{r2}
\sum_{n=1}^{q-1}\df{\cot(\pi n/q)\cot(\pi pn/q)\cot(\pi rn/q)\cot(\pi sn/q)}{\sin^2(\pi n/q)}.
\end{equation}

Secondly, for $1\leq n \leq p-1$,
\begin{align}\label{Ab}
R_{qn/p}=&\df{\cot(\pi n/p)\cot(\pi rn/p)\cot(\pi sn/p)}{(\pi p/q)(e^{2\pi iqn/p}-1)\sin^2(\pi n/p)}\notag\\
=&\df{q}{2\pi ip}\df{\cot(\pi n/p)\cot(\pi rn/p)\cot(\pi sn/p)\left(\cos(\pi qn/p)-i\sin(\pi qn/p)\right)}
{\sin(\pi qn/p)\sin^2(\pi n/p)}\notag\\
=&\df{q}{2\pi ip}\df{\cot(\pi n/p)\cot(\pi rn/p)\cot(\pi sn/p)\cot(\pi qn/p)}{\sin^2(\pi n/p)}\notag\\
&-\df{q}{2\pi p}\df{\cot(\pi n/p)\cot(\pi rn/p)\cot(\pi sn/p)}{\sin^2(\pi n/p)}.
\end{align}
Hence, from the residue theorem, the contributions from \eqref{Ab} total
\begin{align}
&\df{q}{p}\sum_{n=1}^{p-1}\left\{\df{\cot(\pi n/p)\cot(\pi rn/p)\cot(\pi sn/p)\cot(\pi qn/p)}{\sin^2(\pi n/p)}\right.\notag\\
&-i\left.\df{\cot(\pi n/p)\cot(\pi rn/p)\cot(\pi sn/p)}{\sin^2(\pi n/p)}\right\}\notag\\
=& \df{q}{p}\sum_{n=1}^{p-1}\df{\cot(\pi n/p)\cot(\pi rn/p)\cot(\pi sn/p)\cot(\pi qn/p)}{\sin^2(\pi n/p)},\label{r3}
\end{align}
because in the latter sum on the left-hand side of \eqref{r3}, the terms with index $n$ and $p-n$, $1\leq n\leq (p-1)/2$, cancel.

By analogous calculations, the contributions of the residue theorem from the poles $z=qn/r$, $1\leq n\leq r-1$, and $z=qn/s$, $1\leq n \leq s-1$, are, respectively,
\begin{equation}
\df{q}{r}\sum_{n=1}^{r-1}\df{\cot(\pi n/r)\cot(\pi pn/r)\cot(\pi qn/r)\cot(\pi sn/r)}{\sin^2(\pi n/r)}\label{r4}
\end{equation}
and
\begin{equation}
\df{q}{s}\sum_{n=1}^{s-1}\df{\cot(\pi n/s)\cot(\pi pn/s)\cot(\pi rn/s)\cot(\pi qn/s)}{\sin^2(\pi n/s)}.\label{r5}
\end{equation}

Lastly, using \emph{Mathematica}, we find that
\begin{align}\label{r6}
2\pi i R_0 =&-\df{1}{1890prs}\left\{35(p^2q^2r^2+p^2q^2s^2+q^2r^2s^2+p^2r^2s^2)\right.\notag\\
&-7\left(p^4q^2 + p^4r^2 + q^4p^2 + r^4p^2 + s^4p^2 + q^4s^2 + q^4r^2 + r^4q^2 + s^4q^2 + s^4r^2 +r^4s^2 + p^4s^2\right)\notag\\
&\left.+2(p^6+q^6+r^6+s^6)-21(p^2+q^2+r^2+s^2)+20\right\}.
\end{align}

In summary, bringing together \eqref{r2}, \eqref{r3}, \eqref{r4}, and\eqref{r5}, and dividing both sides by $q$, we deduce that
\begin{align}
&\df{1}{q} \int_{C_N}f(z)dz\notag \\
=&\df{1}{q}\sum_{n=1}^{q-1}\df{\cot(\pi n/q)\cot(\pi pn/q)\cot(\pi rn/q)\cot(\pi sn/q)}{\sin^2(\pi n/q)}\notag\\
&+\df{1}{p}\sum_{n=1}^{p-1}\df{\cot(\pi n/p)\cot(\pi qn/p)\cot(\pi rn/p)\cot(\pi sn/p)}{\sin^2(\pi n/p)}\notag\\
&+\df{1}{r}\sum_{n=1}^{r-1}\df{\cot(\pi n/r)\cot(\pi pn/r)\cot(\pi qn/r)\cot(\pi sn/r)}{\sin^2(\pi n/r)}\notag\\
&+\df{1}{s}\sum_{n=1}^{s-1}\df{\cot(\pi n/s)\cot(\pi pn/s)\cot(\pi rn/s)\cot(\pi qn/s)}{\sin^2(\pi n/s)}\notag\\
=&-\df{1}{1890pqrs}\left\{35(p^2q^2r^2+p^2q^2s^2+q^2r^2s^2+p^2r^2s^2)\right.\notag\\
&-7\left(p^4q^2 + p^4r^2 + q^4p^2 + r^4p^2 + s^4p^2 + q^4s^2 + q^4r^2 + r^4q^2 + s^4q^2 + s^4r^2 +r^4s^2 + p^4s^2\right)\notag\\
&\left.+2(p^6+s^6+r^6+q^6)-21(p^2+r^2+s^2+q^2)+20\right\}.\label{r7}
\end{align}

By a familiar argument, we can easily show that
\begin{equation}\label{r8}
\lim_{N\to\infty}\int_{C_N}f(z)dz =0.
\end{equation}

If we now combine \eqref{r7} and \eqref{r8}, we readily deduce \eqref{r1} to complete the proof of Theorem \ref{r0}.
\end{proof}

   Let $p_1, p_2, \dots , p_{2m}$ denote $2m$ positive odd integers, relatively prime in pairs.  By an extension of the argument that we employed to prove Theorem \ref{r0}, we can prove a $2m$-term reciprocity theorem for the $2m$ sums
   \begin{equation*}
   \df{1}{p_r}\sum_{n=1}^{p_r-1}\df{\cot(\pi n/p_r)\cot(\pi np_1/p_r)\cdots\cot(\pi np_{r-1}/p_r)\cot(\pi np_{r+1}/p_r)\cdots
   \cot(\pi n p_{2m}/p_r)}{\sin^2(\pi n/p_r)},\label{r9}
   \end{equation*}
   where $1\leq r\leq 2m$.

   It is clear that we can also obtain a $2m$-term reciprocity theorem if we replace $\sin^2(\pi n/p_r)$ by  $\sin^{2N}(\pi n/p_r),$ $1\leq r \leq 2m$, for any positive integer $N$.

\section{Two Theorems on Analogues of Gauss Sums}

Let $\chi$ denote a character modulo $k$.  Define the Gauss sum,
\begin{equation}  \label{gausssumcomplex}
G(z,\chi):=\sum_{n=1}^{k}\chi(n)e^{2\pi i nz/k}, \qquad z\in \mathbb{C}.
\end{equation}
Trivially, from \eqref{gausssumcomplex},
\begin{equation}\label{gausssumadd}
G(z,\chi)=\begin{cases} {\displaystyle\sum_{n=1}^k\chi(n)\cos(2\pi  nz/k),} \quad &\emph{if }\, \chi \,\,\emph{is even},\\
i{\displaystyle\sum_{n=1}^k\chi(n)\sin(2\pi  nz/k),} \quad &\emph{if }\, \chi \,\, \emph{is odd}.
\end{cases}
\end{equation}
For each integer $n$ and $\chi$ non-principal, we have the factorization theorem \cite[p.~9]{bew}
\begin{equation}\label{factorization}
G(n,\chi)=\chi(n)G(1,\chi)=:\chi(n)G(\chi).
\end{equation}
Recall also that if $\chi$ is a non-principal character modulo $k$ \cite[p.~10]{bew},
\begin{equation}\label{gausssum}
G(\chi)=\begin{cases}\sqrt{k}, \quad &\text{if $\chi$ is even},\\
i\sqrt{k}, &\text{if $\chi$ is odd}.
\end{cases}
\end{equation}

Set
\begin{equation}\label{M}
M(k,\chi):=\sum_{j=1}^kj\chi(j).
\end{equation}
We denote the class number of the imaginary quadratic field $Q(\sqrt{-k})$ by $h(-k)$. For $k\geq7$ and odd, the classical formula for the class number is given by \cite[p.~299]{ayoub}
\begin{equation}\label{classnumber}
h(-k)=-\df{1}{k}M(k,\chi).
\end{equation}


In this section, very roughly, we prove two general theorems in which for odd $\chi$, $\sin$ is replaced by a quotient of $\sin$'s, and which for even $\chi$, $\cos$ is replaced by a quotient of $\cos$'s in \eqref{gausssumadd}.

\begin{theorem}\label{theorem1} Let $\chi$ denote an even, non-principal character modulo  $p$. Let $a>1$ denote an odd positive integer.  Assume that $a-3<2p$. Then,
\begin{equation}\label{evaluation}
S(a,p,\chi):=\sum_{0<n<p/2}\chi(n)\df{\sin(a\pi n/p)}{\sin(\pi n/p)}=C_0p,
\end{equation}
where $C_0$ is the constant defined by
\begin{equation}\label{constants}
C_0:=\df{1}{G(\chi)}\sum_{\substack{m\geq0,n\geq1, r\geq0\\2m+2n+2ar=a-1}}\sum_{n=1}^{p-1}\df{\chi(n)}{\mu^{2n}}\,\mu^{a-1}\sum_{r=0}^1 \b_r \mu^{-2ar}
\sum_{m=0}^{\infty}\mu^{-2m},\quad |\mu|<1,
\end{equation}
where $\b_0=1$ and $\b_1=-1$.
\end{theorem}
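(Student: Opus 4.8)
The plan is to reduce $S(a,p,\chi)$ to a linear combination of Gauss sums $G(\ell,\chi)$ and then collapse each of these with the factorization theorem \eqref{factorization}. First I would exploit the two symmetries built into the sum: since $a$ is odd, $\sin(a\pi(p-n)/p)=\sin(a\pi n/p)$ and $\sin(\pi(p-n)/p)=\sin(\pi n/p)$, so the summand is invariant under $n\mapsto p-n$; together with the evenness $\chi(p-n)=\chi(n)$ this interchanges the ranges $0<n<p/2$ and $p/2<n<p$, giving (for $p$ odd, so that there is no fixed point $n=p/2$) the identity $2S(a,p,\chi)=\sum_{n=1}^{p-1}\chi(n)\sin(a\pi n/p)/\sin(\pi n/p)$.

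Next I would expand the quotient. Writing $\mu=e^{i\pi n/p}$, one has $\sin(a\pi n/p)/\sin(\pi n/p)=(\mu^a-\mu^{-a})/(\mu-\mu^{-1})=\mu^{a-1}(1-\mu^{-2a})\sum_{m\ge0}\mu^{-2m}$, which is precisely the geometric-series packaging $\mu^{a-1}\sum_{r=0}^1\beta_r\mu^{-2ar}\sum_{m\ge0}\mu^{-2m}$ appearing in \eqref{constants} (with $\beta_0=1$, $\beta_1=-1$); equivalently it is the finite Dirichlet kernel $\sum_{\ell=-(a-1)/2}^{(a-1)/2}e^{2\pi i\ell n/p}$. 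Substituting this, interchanging the order of summation, and recognizing that for each integer $\ell$ one has $\sum_{n=1}^{p-1}\chi(n)e^{2\pi i\ell n/p}=G(\ell,\chi)=\chi(\ell)G(\chi)$ by \eqref{factorization}, I obtain $2S(a,p,\chi)=G(\chi)\sum_{\ell=-(a-1)/2}^{(a-1)/2}\chi(\ell)$. The evenness of $\chi$ and $\chi(0)=0$ then fold this to $S(a,p,\chi)=G(\chi)\sum_{\ell=1}^{(a-1)/2}\chi(\ell)$. Finally, since $\chi$ is even and non-principal, $G(\chi)=\sqrt p$ is real by \eqref{gausssum}, so $G(\chi)=p/G(\chi)$, and the evaluation \eqref{evaluation} becomes $S(a,p,\chi)=\frac{p}{G(\chi)}\sum_{\ell=1}^{(a-1)/2}\chi(\ell)=C_0p$, with $C_0=\frac{1}{G(\chi)}\sum_{\ell=1}^{(a-1)/2}\chi(\ell)$.

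The main obstacle — and the only place the hypothesis $a-3<2p$ enters — is the bookkeeping that matches this clean answer with the constant $C_0$ as written in \eqref{constants}. The constraint $2m+2\ell+2ar=a-1$ with $r\in\{0,1\}$, $m\ge0$, $\ell\ge1$ forces $r=0$ (the case $r=1$ demands $2m+2\ell=-a-1<0$) and pairs each $\ell\in\{1,\dots,(a-1)/2\}$ with the unique $m=(a-1)/2-\ell\ge0$ and sign $\beta_0=1$, so the constrained sum is exactly $\sum_{\ell=1}^{(a-1)/2}\chi(\ell)$. The hypothesis $a-3<2p$ is equivalent to $(a-1)/2<p+1$, which guarantees that the exponents $\ell$ produced by the expansion lie within a single period and hence are pairwise incongruent modulo $p$ away from the multiples of $p$ (where $\chi$ vanishes); this is what lets the coefficient extraction pick up each character value $\chi(\ell)$ exactly once, with no period-wraparound merging distinct terms. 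Verifying this incongruence, and confirming that the residue/coefficient computation in \eqref{constants} agrees term-by-term with the Gauss-sum factorization, is the step that requires care; the remaining manipulations are the routine evaluation carried out above.
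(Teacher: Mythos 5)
Your argument is correct, but it follows a genuinely different route from the paper's. The paper proves this theorem by contour integration: it integrates $f(z)=\frac{G(z,\chi)}{G(\chi)}\cdot\frac{\sin(a\pi z/p)}{(e^{2\pi iz}-1)\sin(\pi z/p)}$ around a large indented rectangle, recovers $2S(a,p,\chi)$ from the residues at $z=1,\dots,p-1$, and then evaluates the integral directly by expanding the integrand on the horizontal sides in powers of $e^{\pi N/p}$; only the constant term survives as $N\to\infty$, and that constant term is $-2pb_0=2pC_0$. The hypothesis $a-3<2p$ enters there because $1/(e^{2\pi iz}-1)$ is replaced by the leading term of its expansion. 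You instead stay entirely in the finite world: the symmetry $n\mapsto p-n$, the Dirichlet-kernel identity $\sin(a\theta)/\sin\theta=\sum_{|\ell|\le(a-1)/2}e^{2i\ell\theta}$, an interchange of summation, and the factorization \eqref{factorization} give $S(a,p,\chi)=G(\chi)\sum_{\ell=1}^{(a-1)/2}\chi(\ell)$ with no contour and no limiting process, after which \eqref{gausssum} (i.e.\ $G(\chi)^2=p$) converts this to $C_0p$. Your route is more elementary and yields the cleaner closed form $S(a,p,\chi)=\frac{p}{G(\chi)}\sum_{\ell=1}^{(a-1)/2}\chi(\ell)$ unconditionally; it also makes transparent that the hypothesis $a-3<2p$ is needed only so that the truncated expression \eqref{constants} reproduces this value (your case analysis forcing $r=0$ and $m=(a-1)/2-n$ is exactly right, and the boundary case $(a-1)/2=p$ is harmless since $\chi(p)=0$). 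What the paper's heavier machinery buys is uniformity: the same contour template is reused essentially verbatim for Theorems \ref{theorem11} and \ref{theorem2} and for the two-period character sum later in the paper, where a closed-form kernel expansion like yours would be far less tractable. One small caveat: your last step needs $G(\chi)=\sqrt{p}$, which, like \eqref{factorization} itself, strictly speaking requires $\chi$ primitive; the paper's proof produces the factor $1/G(\chi)$ directly from the integrand and so avoids this extra identity, although it works under the same convention throughout.
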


\begin{proof} Let
\begin{equation}\label{proof1}
f(z):=\df{G(z,\chi)}{G(\chi)}\df{\sin(a \pi z/p)}{(e^{2\pi iz}-1)\sin(\pi z/p)}.
\end{equation}
Integrate over the positively oriented indented rectangle $C_N$ with vertical sides passing through the origin and $p$, and horizontal sides $z=x\pm i N$, $0\leq x \leq p$. The indented rectangle $C_N$ contains a semi-circular indentation of radius $\epsilon$, $0<\epsilon<1$, about $0$ in the left half-plane.  The rectangle also possesses an analogous indentation about $p$, lying in the half-plane to the left of the line $z=p+iy$, $-N\leq y\leq N$.  Since $G(0,\chi)=0$, the function $f$ has a removable singularity at the origin.  Thus, $f(z)$ is analytic on $C_N$ and its interior except for simple poles at $z=1,2,\dots,p-1$.  Their residues $R_n$ are easily seen to be
\begin{equation*}
R_n =\df{G(n,\chi)}{G(\chi)}\df{\sin(a \pi n/p)}{2\pi i\,\sin(\pi n/p)}, \quad 1\leq n\leq p-1.
\end{equation*}
Therefore, by the residue theorem and \eqref{factorization},
\begin{equation}\label{proof3}
\int_{C_N}f(z)dz=\sum_{n=1}^{p-1}\chi(n)\df{\sin(a\pi n/p)}{\sin(\pi n/p)}.
\end{equation}
Since $\chi$ is even and $a$ is odd,
\begin{equation*}
\chi(p-n)\df{\sin(a\pi(p-n)/p)}{\sin(\pi(p-n)/p)}=\chi(n)\df{\sin(a\pi n/p)}{\sin(\pi n/p)},\qquad 0<n<p/2.
\end{equation*}
Hence, we can rewrite \eqref{proof3} in the form
\begin{equation}\label{proof4}
\int_{C_N}f(z)dz=2\sum_{0<n<p/2}\chi(n)\df{\sin(a\pi n/p)}{\sin(\pi n/p)}=2S(a,p,\chi),
\end{equation}
by \eqref{evaluation}.

We next directly integrate $\int_{C_N}f(z)dz$. Using the definition \eqref{proof1}, we easily see that $f(z)=f(z+p)$, and so the integrals over the vertical sides of $C_N$ cancel.  Let $C_{NT} $ and $C_{NB}$ denote the upper and lower sides of $C_N$, respectively. For $z=x+iN, $ set
\begin{equation*}
\mu:=e^{-\pi ix/p}e^{\pi N/p}.
\end{equation*}
We examine $f(z)$ on each portion of $C_{NT}$.  First,
\begin{equation}\label{proof6}
\df{1}{e^{2\pi i (x+iN)}-1}=\df{1}{\mu^{-2p}-1}=-1+O(\mu^{-2p}), \qquad N\to\infty.
\end{equation}
Second, we readily see that
\begin{equation}\label{proof7}
G(x+iN,\chi)=\sum_{n=1}^{p-1}\df{\chi(n)}{\mu^{2n}}.
\end{equation}
Third, an elementary calculation gives
\begin{equation}\label{proof8}
\sin(\pi a(x+iN)/p)=-\df{1}{2i}\mu^a(1-\mu^{-2a}).
\end{equation}
Fourth,
\begin{equation}\label{proof9}
\df{1}{\sin(\pi(x+iN)/p)}=-\df{2i}{\mu(1-\mu^{-2})}=-\df{2i}{\mu}\sum_{m=0}^{\infty}\mu^{-2m}.
\end{equation}
Gathering \eqref{proof6}--\eqref{proof9} together, we find that
\begin{align}
f(x+iN)=&\df{1}{G(\chi)}\sum_{n=1}^{p-1}\df{\chi(n)}{\mu^{2n}}\,\mu^a(1-\mu^{-2a})\mu^{-1}
\sum_{m=0}^{\infty}\mu^{-2m}\left(-1+O(\mu^{-2p})\right)\notag\\
=&\df{1}{G(\chi)}\sum_{n=1}^{p-1}\df{\chi(n)}{\mu^{2n}}\,\mu^{a-1}\sum_{r=0}^1\b_r\mu^{-2ar}
\sum_{m=0}^{\infty}\mu^{-2m}\left(-1+O(\mu^{-2p})\right)\label{constant}\\
=& b_d e^{\pi Nd/p}+b_{d-1}e^{\pi N(d-1)/p}+\cdots+b_0 +b_{-1}e^{-N\pi/p}+\cdots,\label{proof10}
\end{align}
where
\begin{equation}\label{proof11}
 \b_0=1,\,\, \b_1=-1,
\end{equation}
and where $d=a-3$.

Now, let $z=x-iN$, $0\leq x \leq p$. Set
\begin{equation*}
\nu:=e^{\pi ix/p}e^{\pi N/p}.
\end{equation*}
We examine each of the terms comprising $f(x-iN)$.  First,
\begin{equation}\label{proof13}
\df{1}{e^{2\pi i (x-iN)}-1}=\df{1}{\nu^{2p}-1}=\nu^{-2p}+O(\mu^{-4p}), \qquad N\to\infty.
\end{equation}
Second, replacing $n$ by $p-n$ below, we find that
\begin{equation}\label{proof14}
G(x-iN,\chi)=\sum_{n=1}^{p-1}\chi(n)\nu^{2n}=\sum_{n=1}^{p-1}\chi(p-n)\nu^{2(p-n)}
=\nu^{2p}\sum_{n=1}^{p-1}\df{\chi(n)}{\nu^{2n}},
\end{equation}
since $\chi$ is even.
Third, by employing the same kind of calculations that produced \eqref{proof8} and \eqref{proof9}, we find that
\begin{equation}\label{proof15}
\sin(\pi a(x-iN)/p)=\df{\nu^a}{2i}(1-\nu^{-2a})
\end{equation}
and
\begin{equation}\label{proof16}
\df{1}{\sin(\pi(x-iN)/p)}=\df{2i}{\nu}\sum_{m=0}^{\infty}\nu^{-2m}.
\end{equation}
Bringing \eqref{proof13}--\eqref{proof16} together, we arrive at
\begin{align}
f(x-iN)=&\df{1}{G(\chi)}\nu^{2p}\sum_{n=1}^{p-1}\df{\chi(n)}{\nu^{2n}}\,\nu^a(1-\nu^{-2a})\nu^{-1}
\sum_{m=0}^{\infty}\nu^{-2m}\,\nu^{-2p}\left(1+O(\nu^{-2p})\right)\notag\\
=&\df{1}{G(\chi)}\sum_{n=1}^{p-1}\df{\chi(n)}{\nu^{2n}}\,\nu^{a-1}\sum_{r=0}^1\b_r\nu^{-2ar}
\sum_{m=0}^{\infty}\nu^{-2m}\left(1+O(\nu^{-2p})\right)\label{proofa7a}\\
=& c_d e^{\pi Nd/p}+c_{d-1}e^{\pi N(d-1)/p}+\cdots+c_0 +c_{-1}e^{-N\pi/p}+\cdots,\label{proof17}
\end{align}
where $\beta_r, r=0,1$, is defined in \eqref{proof11}, and, as before, $d=a-3$.

Recall that $C_{NT}$ and $C_{NB}$  denote those portions of $C_N$ in which $z=x+iN$ and $z=x-iN$, $0\leq x\leq N$, respectively. By combining like powers of $e^{\pi N/p}$, and beginning with $b_d+c_d$, we successively conclude that
$$\lim_{N\to\infty}\left(\int_{C_{NT}}b_jdx+\int_{C_{NB}}c_jdx\right)=0,\qquad d \geq j\geq 1.$$
Trivially, as $N\to\infty$,
\begin{equation*}
\lim_{N\to\infty}\left(\int_{C_{NT}}b_jdx+\int_{C_{NB}}c_jdx\right)=0,\qquad j\leq-1.
\end{equation*}
  Hence, in conclusion, there remains only one integral to investigate, namely,
\begin{align}\label{proof18}
\lim_{N\to\infty}\left(\int_{C_{NT}}b_0\, dz+\int_{C_{NB}}c_0\,dz\right)=&\int_p^0b_0\,dx+\int_0^p c_0\,dx
=-pb_0+pc_0=-2pb_0.
\end{align}
since, from \eqref{constant} and \eqref{proofa7a}, $b_0=-c_0$.

Finally, from \eqref{proof4} and \eqref{proof18},
\begin{equation}\label{proof19}
S(a,p, \chi)=-pb_0,
\end{equation}
where $b_0$ is the constant term in \eqref{constant}.  Note that the constant term arises only from terms when
\begin{equation}\label{proof20}
a-1-2m-2n-2ar=0.
\end{equation}

Examining \eqref{proof6}, \eqref{proof10}, \eqref{proof13}, and \eqref{proof17}, we see that we are ignoring secondary terms arising from the Laurent expansions of $1/(e^{2\pi iz}-1)$ on $C_{NT}$ and $C_{NB}$.  Thus, from \eqref{proof20}, we need to make the assumption that $a-3<2p$.
From \eqref{constant} and \eqref{constants}, we see that  $C_0=-b_0$, and so using \eqref{proof19},  we complete the proof of \eqref{evaluation}.
\end{proof}

As indicated above, the hypothesis $a-3<2p$ can be relaxed if further terms are employed from the Laurent series expansions  \eqref{proof6} and \eqref{proof13}.

In a similar manner, we can prove the following theorem.  Since the details of the proof are like those of Theorem \ref{theorem1}, we will forego many of them.

\begin{theorem}\label{theorem11} Let $\chi$ denote an even,  non-principal character modulo $p$, where $p$ is odd. Let $a>1$ denote an odd positive integer.  Assume that $a-3<2p$. Then
\begin{equation}\label{evaluation11}
S(a,p,\chi):=\sum_{0<n<p/2}\chi(n)\df{\cos(a\pi n/p)}{\cos(\pi n/p)}=C_0p,
\end{equation}
where $C_0$ is the constant defined by
\begin{equation}\label{constants11}
C_0:=\df{1}{G(\chi)}\sum_{\substack{m\geq0,n\geq1, r\geq0\\2m+2n+2ar=a-1}}\sum_{n=1}^{p-1}\df{\chi(n)}{\mu^{2n}}\,\mu^{a-1}\sum_{r=0}^1 \b_r \mu^{-2ar}
\sum_{m=0}^{\infty}(-1)^m\mu^{-2m}, \quad |\mu|<1,
\end{equation}
where $\b_0=\b_1=1$.
\end{theorem}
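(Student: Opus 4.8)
The plan is to mimic the contour-integration argument of Theorem \ref{theorem1} almost verbatim, changing only the trigonometric quotient and carefully tracking the resulting changes in the Laurent expansions. Set
\[
f(z):=\df{G(z,\chi)}{G(\chi)}\df{\cos(a\pi z/p)}{(e^{2\pi iz}-1)\cos(\pi z/p)},
\]
and integrate $f$ over the same indented rectangle $C_N$ used before, with vertical sides through $0$ and $p$ and horizontal sides through $\pm iN$. First I would verify that $f$ is analytic on $C_N$ and its interior apart from simple poles at $z=1,2,\dots,p-1$. Since $G(0,\chi)=0$, the origin is again a removable singularity. The one genuinely new point to check is $z=p/2$: here $\cos(\pi z/p)$ vanishes, but because $a$ is odd, $\cos(a\pi z/p)$ vanishes there as well, so the quotient $\cos(a\pi z/p)/\cos(\pi z/p)$ has only a removable singularity. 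This is exactly where the hypothesis that $p$ be odd is used: it guarantees $\cos(\pi n/p)\neq0$ for every integer $n$, so the cosine in the denominator introduces no pole at any integer point $z=1,\dots,p-1$, and the only interior zero of $\cos(\pi z/p)$, at $z=p/2$, is cancelled.

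Next I would compute the residues. Since $\Res_{z=n}1/(e^{2\pi iz}-1)=1/(2\pi i)$, the factorization \eqref{factorization} gives $R_n=\chi(n)\cos(a\pi n/p)/\big(2\pi i\cos(\pi n/p)\big)$ for $1\le n\le p-1$, whence the residue theorem yields $\int_{C_N}f(z)\,dz=\sum_{n=1}^{p-1}\chi(n)\cos(a\pi n/p)/\cos(\pi n/p)$. Because $\chi$ is even and $a$ is odd, $\cos(a\pi(p-n)/p)=-\cos(a\pi n/p)$ and $\cos(\pi(p-n)/p)=-\cos(\pi n/p)$, so the $n$ and $p-n$ terms coincide and the sum folds to $2S(a,p,\chi)$, exactly as in \eqref{proof4}.

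The direct evaluation of $\int_{C_N}f\,dz$ proceeds as before. Periodicity $f(z+p)=f(z)$ (now using $\cos(a\pi(z+p)/p)=-\cos(a\pi z/p)$ and $\cos(\pi(z+p)/p)=-\cos(\pi z/p)$, whose signs cancel in the quotient) forces the vertical sides to cancel. On the top side, writing $\mu=e^{-\pi ix/p}e^{\pi N/p}$, the expansions \eqref{proof6} and \eqref{proof7} are unchanged, while the cosine factors replace \eqref{proof8} and \eqref{proof9} by
\[
\cos(\pi a(x+iN)/p)=\tfrac12\mu^{a}(1+\mu^{-2a}),\qquad
\df{1}{\cos(\pi(x+iN)/p)}=\df{2}{\mu}\sum_{m=0}^{\infty}(-1)^m\mu^{-2m}.
\]
These two changes are precisely what produce the alternating factor $(-1)^m$ and the coefficients $\b_0=\b_1=1$ appearing in \eqref{constants11}, in place of the $\sum_{m=0}^{\infty}\mu^{-2m}$ and $\b_0=1,\b_1=-1$ of Theorem \ref{theorem1}; the extraneous factors $1/2$ and $2$ cancel between numerator and denominator, and—in contrast to the sine case—no factors of $i$ intervene. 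An identical computation on the bottom side with $\nu=e^{\pi ix/p}e^{\pi N/p}$ gives the analogue of \eqref{proofa7a}, and in particular $b_0=-c_0$.

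Finally I would extract the constant term. Combining like powers of $e^{\pi N/p}$, every term of positive degree integrates to $0$ in the limit (its top-side and bottom-side contributions cancel) and every term of negative degree integrates to $0$ trivially, leaving only $\lim_{N\to\infty}\big(\int_{C_{NT}}b_0\,dz+\int_{C_{NB}}c_0\,dz\big)=-2pb_0$ by \eqref{proof18}. The constant term arises exactly from the indices with $a-1-2m-2n-2ar=0$, i.e. $2m+2n+2ar=a-1$, which is the constraint in \eqref{constants11}, while the hypothesis $a-3<2p$ again ensures that no secondary contribution from the $O(\mu^{-2p})$ tail of $1/(e^{2\pi iz}-1)$ reaches degree $0$. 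Hence $2S(a,p,\chi)=-2pb_0$, and since $C_0=-b_0$ by \eqref{constants11}, we obtain $S(a,p,\chi)=C_0p$. The main point requiring care is the bookkeeping of the removable singularity at $z=p/2$ together with the sign and normalization changes in the two cosine expansions, since it is these—and only these—that distinguish the cosine identity from its sine counterpart.
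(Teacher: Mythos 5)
Your proposal is correct and follows essentially the same route as the paper's own proof: the same kernel $G(z,\chi)\cos(a\pi z/p)/\big(G(\chi)(e^{2\pi iz}-1)\cos(\pi z/p)\big)$, the same indented rectangle, the same folding via the evenness of $\chi$ and oddness of $a$, and the same two modified expansions producing the alternating factor $(-1)^m$ and $\b_0=\b_1=1$. Your explicit check of the removable singularity at $z=p/2$ and of why $p$ odd keeps $\cos(\pi n/p)\neq0$ is exactly the point the paper records more tersely, so nothing is missing.
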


\begin{proof} Let
\begin{equation*}
f(z):=\df{G(z,\chi)}{G(\chi)}\df{\cos(a \pi z/p)}{(e^{2\pi iz}-1)\cos(\pi z/p)}.
\end{equation*}
Integrate over the same positively oriented indented rectangle $C_N$ as in the proof of Theorem \ref{theorem11}.  We see that $f(z)$ is analytic on $C_N$ and its interior except for simple poles at $z=1,2,\dots,p-1$.  The residue $R_n$, $1\leq n\leq p-1$, is easily seen to be
\begin{equation*}
R_n =\df{G(n,\chi)}{G(\chi)}\df{\cos(a \pi n/p)}{2\pi i\,\cos(\pi n/p)}, \quad 1\leq n\leq p-1.
\end{equation*}
Note that because $a$ is odd, $f(z)$ does not have a pole at $z=\tfrac12 p$.
Therefore, by the residue theorem and \eqref{factorization},
\begin{equation}\label{proof311}
\int_{C_N}f(z)dz=\sum_{n=1}^{p-1}\chi(n)\df{\cos(a\pi n/p)}{\cos(\pi n/p)}.
\end{equation}
Since $\chi$ is even, $R_n=R_{p-n}, 0<n<p/2$.
Hence, we can rewrite \eqref{proof311} in the form
\begin{equation*}
\int_{C_N}f(z)dz=2\sum_{0<n<p/2}\chi(n)\df{\cos(a\pi n/p)}{\cos(\pi n/p)}=2S(a,p,\chi),
\end{equation*}
by \eqref{evaluation11}.

We next directly calculate $\int_{C_N}f(z)dz$.  By periodicity, the integrals of $f(z)$ over the vertical sides of $C_N$ cancel.  Let $C_{NT} $ and $C_{NB}$ denote the upper and lower sides of $C_N$, respectively. Set
\begin{equation*}
\mu:=e^{-\pi ix/p}e^{\pi N/p}.
\end{equation*}
We examine each portion of $f(z)$ on $C_{NT}$, where we put $z=x+iN$.  First,
\begin{equation}\label{proof611}
\df{1}{e^{2\pi i (x+iN)}-1}=\df{1}{\mu^{-2p}-1}=-1+O(\mu^{-2p}), \qquad N\to\infty.
\end{equation}
Second, we readily see that
\begin{equation}\label{proof711}
G(x+iN,\chi)=\sum_{n=1}^{p-1}\df{\chi(n)}{\mu^{2n}}.
\end{equation}
Third, an elementary calculation gives
\begin{equation}\label{proof811}
\cos(\pi a(x+iN)/p)=\df{1}{2}\mu^a(1+\mu^{-2a}).
\end{equation}
Fourth,
\begin{equation}\label{proof911}
\df{1}{\cos(\pi(x+iN)/p)}=\df{2}{\mu(1+\mu^{-2})}=\df{2}{\mu}\sum_{m=0}^{\infty}(-1)^m\mu^{-2m}.
\end{equation}
Using \eqref{proof611}--\eqref{proof911}, we find that
\begin{align}
f(x+iN)=&\df{1}{G(\chi)}\sum_{n=1}^{p-1}\df{\chi(n)}{\mu^{2n}}\,\mu^a(1+\mu^{-2a})\mu^{-1}
\sum_{m=0}^{\infty}(-1)^m\mu^{-2m}\left(-1+O(\mu^{-2p})\right)\notag\\
=&\df{1}{G(\chi)}\sum_{n=1}^{p-1}\df{\chi(n)}{\mu^{2n}}\,\mu^{a-1}\sum_{r=0}^1\b_r\mu^{-2ar}
\sum_{m=0}^{\infty}(-1)^m\mu^{-2m}\left(-1+O(\mu^{-2p})\right)\notag\\
=& b_d e^{\pi Nd/p}+b_{d-1}e^{\pi N(d-1)/p}+\cdots+b_0 +b_{-1}e^{-N\pi/p}+\cdots,\label{proof1011}
\end{align}
where
\begin{equation*}
 \b_0=\b_1=1
\end{equation*}
and where $d=a-3$.

Arguing as we did in the previous proof, we find that
\begin{align}
f(x-iN)
=&\df{1}{G(\chi)}\sum_{n=1}^{p-1}\df{\chi(n)}{\mu^{2n}}\,\mu^{a-1}\sum_{r=0}^1\b_r\mu^{-2ar}
\sum_{m=0}^{\infty}(-1)^m\mu^{-2m}\left(1+O(\mu^{-2p})\right)\notag\\
=& c_d e^{\pi Nd/p}+c_{d-1}e^{\pi N(d-1)/p}+\cdots+c_0 +c_{-1}e^{-N\pi/p}+\cdots. \label{proof1111}
\end{align}

With the use of \eqref{proof1011} and \eqref{proof1111}, the remainder of the proof follows along the same lines as the proof for Theorem \ref{theorem1}.

\end{proof}

In \cite[p.~354, Theorem 5.1]{bbyz}, the following analogue of Theorem \ref{theorem1} is proved.  Suppose that $a,b$, and $k$ are  positive integers with $b$ odd and greater than 1.  Then
\begin{gather*}
\sum_{0<n<k/2}\df{\sin^a(2\pi bn/k)}{\sin^a(2\pi n/k)}=-\df12b^a
+k\sum_{\substack{m,n,r\geq 0\\2bn+2m+rk=ab-a}}(-1)^n\binom{a}{n}\binom{a-1+m}{m},
\end{gather*}
where in the case $r=0$, the terms are to be multiplied by $\frac12$.

\bigskip

\begin{center}\textbf{Examples}\end{center}

\bigskip

\begin{example}
Let $a=3$.  There is one term satisfying the conditions $m\geq 0, n\geq 1, r\geq 0, 2=2m+2n+2r$, namely, when $(m,n,r)=(0,1,0)$.  Thus, using \eqref{constants} and \eqref{constants11}, respectively, and also \eqref{gausssum}, we conclude that
\begin{equation*}
\sum_{0<n<p/2}\chi(n)\df{\sin(3\pi n/p)}{\sin(\pi n/p)}=\sqrt{p}
\end{equation*}
\end{example}
\noindent and
\begin{equation*}
\sum_{0<n<p/2}\chi(n)\df{\cos(3\pi n/p)}{\cos(\pi n/p)}=\sqrt{p}.
\end{equation*}

\begin{example}
Let $a=5$.  There are three terms $(m,n,r)$ satisfying the conditions $4=2m+2n+2r$, namely, for $(m,n,r)=(0,2,0), (1,1,0), (0,1,1)$.  Thus,
\begin{align*}
\sum_{0<n<p/2}\chi(n)\df{\sin(5\pi n/p)}{\sin(\pi n/p)}=\left\{\chi(2)+1-1\right\}\sqrt{p}\\
=\begin{cases}\sqrt{p}, \quad &\textup{if } p\equiv 1,7, \pmod8, \\
-\sqrt{p}, \quad &\textup{if } p\equiv 3,5 \pmod8.\end{cases}
\end{align*}
and
\begin{align*}
\sum_{0<n<p/2}\chi(n)\df{\cos(5\pi n/p)}{\cos(\pi n/p)}=\left\{\chi(2)-1+1\right\}\sqrt{p}\\
=\begin{cases}\sqrt{p}, \quad &\textup{if } p\equiv 1,7 \pmod8,\\
-\sqrt{p}, \quad &\textup{if } p\equiv 3,5 \pmod8.\end{cases}
\end{align*}
\end{example}

\section{Sums with Multiple $\sin$'s}

\begin{theorem}\label{theorem2} Let $\chi$ denote an even, non-principal character modulo the prime $p$. Let $a_1, a_2, \dots , a_k$ denote $k$ odd positive integers, with each $>1$. Assume that $a_1+a_2+\cdots+a_k-k-2<2p$.  Then
\begin{equation}\label{evaluation2}
S(a_1,a_2,\dots, a_k,p,\chi):=\sum_{0<n<p/2}\chi(n)
\df{\sin(a_1\pi n/p)\sin(a_2\pi n/p)\cdots\sin(a_k\pi n/p)}{\sin^k(\pi n/p)}=C_0p,
\end{equation}
where $C_0$ is the constant defined by
\begin{align}\label{constants2}
C_0:=&\df{1}{G(\chi)}\sum_{\substack{m\geq0,n\geq1,0\leq r\leq a_1+a_2+\cdots+a_k\\2m+2n+2r=a_1+a_2+\cdots+a_k-k}}
\sum_{n=1}^{p-1}\df{\chi(n)}{\mu^{2n}}\notag\\
&\times\sum_{r=0}^{a_1+a_2+\cdots+a_k}\beta_r\mu^{-2r}\mu^{a_1+a_2+\cdots+a_k-k}
\sum_{m=0}^{\infty}\binom{k+m-1}{k-1}\mu^{-2m}, \quad |\mu|<1,
\end{align}
where
\begin{equation*}
\sum_{r=0}^{a_1+a_2+\cdots+a_k}\beta_r\mu^{-2r}:=\prod_{j=1}^k(1-\mu^{-2a_j}).
\end{equation*}
\end{theorem}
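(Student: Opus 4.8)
The plan is to carry over, essentially word for word, the contour-integration argument of Theorem \ref{theorem1}, now with a product of $k$ sines in the numerator and $\sin^{k}$ in the denominator. First I would introduce
\begin{equation*}
f(z):=\df{G(z,\chi)}{G(\chi)}\,\df{\sin(a_1\pi z/p)\sin(a_2\pi z/p)\cdots\sin(a_k\pi z/p)}{(e^{2\pi iz}-1)\sin^{k}(\pi z/p)}
\end{equation*}
and integrate it over the same positively oriented indented rectangle $C_N$ (vertical sides through $0$ and $p$, horizontal sides $z=x\pm iN$) used before. The first point to settle is the pole structure on the interior of $C_N$. The product $\sin(a_1\pi z/p)\cdots\sin(a_k\pi z/p)$ vanishes to order $k$ at $z=0$, exactly cancelling the order-$k$ pole of $1/\sin^{k}(\pi z/p)$, while $G(0,\chi)=\sum_n\chi(n)=0$ (indeed $G(\cdot,\chi)$ is even, so its zero at $0$ has order $\geq 2$) outweighs the simple zero of $e^{2\pi iz}-1$ in the denominator. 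Hence $f$ has a removable singularity at $z=0$, and, by periodicity, at $z=p$; since $\sin(\pi z/p)$ has no further zeros in $0\le\Re z\le p$ and $p$ is odd, the only poles of $f$ inside $C_N$ are the simple poles at $z=1,2,\dots,p-1$.

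Computing the residues at these simple poles and invoking the factorization \eqref{factorization}, the residue theorem yields
\begin{equation*}
\int_{C_N}f(z)\,dz=\sum_{n=1}^{p-1}\chi(n)\,\df{\sin(a_1\pi n/p)\cdots\sin(a_k\pi n/p)}{\sin^{k}(\pi n/p)}.
\end{equation*}
Because $\chi$ is even and each $a_j$ is odd, one checks that $\sin(a_j\pi(p-n)/p)=\sin(a_j\pi n/p)$, $\sin^{k}(\pi(p-n)/p)=\sin^{k}(\pi n/p)$, and $\chi(p-n)=\chi(n)$, so the summand is invariant under $n\mapsto p-n$. As $p$ is odd there is no middle term, and the right-hand side collapses to $2S(a_1,\dots,a_k,p,\chi)$.

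Next I would evaluate $\int_{C_N}f(z)\,dz$ directly. Since the factors $(-1)^{k}$ produced by the numerator and by $\sin^{k}(\pi z/p)$ cancel, $f(z+p)=f(z)$, and the two vertical sides cancel. On the top side I set $\mu=e^{-\pi ix/p}e^{\pi N/p}$ and expand each factor as in \eqref{proof6}--\eqref{proof9}; in particular
\begin{equation*}
\prod_{j=1}^{k}\sin(a_j\pi z/p)=\Big(\!-\tf1{2i}\Big)^{k}\mu^{a_1+\cdots+a_k}\prod_{j=1}^{k}(1-\mu^{-2a_j}),\qquad \df{1}{\sin^{k}(\pi z/p)}=(-2i)^{k}\mu^{-k}(1-\mu^{-2})^{-k}.
\end{equation*}
The genuinely new ingredient is the negative-binomial expansion $(1-\mu^{-2})^{-k}=\sum_{m\ge0}\binom{k+m-1}{k-1}\mu^{-2m}$, which supplies the binomial coefficient in \eqref{constants2}, while $\prod_{j}(1-\mu^{-2a_j})=\sum_{r}\beta_r\mu^{-2r}$ supplies the $\beta_r$. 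After the constants $\big(\!-\tf1{2i}\big)^{k}(-2i)^{k}=1$ cancel and $G(x+iN,\chi)=\sum_{n=1}^{p-1}\chi(n)\mu^{-2n}$ is inserted, $f(x+iN)$ becomes a series in $\mu$; the analogous computation on the bottom side, with $\nu=e^{\pi ix/p}e^{\pi N/p}$, differs only in that the geometric factor reads $1+O(\nu^{-2p})$ in place of $-1+O(\mu^{-2p})$, so that the constant terms satisfy $c_0=-b_0$.

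The main obstacle, exactly as in Theorem \ref{theorem1}, is to show that the horizontal integrals retain only a single constant term. Here two observations do the work and deserve careful statement. First, since $a_1+\cdots+a_k\equiv k\pmod2$, the quantity $a_1+\cdots+a_k-k$ is even, so every power $\mu^{s}$ occurring has $s$ even; consequently $\int_0^{p}e^{-\pi isx/p}\,dx=0$ for every $s\ne0$, and only the constant term survives the integration over each horizontal side. Second, the hypothesis $a_1+\cdots+a_k-k-2<2p$ guarantees that the secondary terms of $-1/(1-\mu^{-2p})=-1-\mu^{-2p}-\cdots$ cannot contribute to the exponent $0$, so they do not pollute the constant term. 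Collecting the surviving term gives $\int_{C_N}f(z)\,dz\to-2pb_0$ as $N\to\infty$, where $b_0$ is precisely the constrained sum over $2m+2n+2r=a_1+\cdots+a_k-k$ appearing in \eqref{constants2}, so that $C_0=-b_0$. Combining this with $\int_{C_N}f=2S$ yields $S(a_1,\dots,a_k,p,\chi)=-pb_0=C_0\,p$, which is \eqref{evaluation2}. The only remaining labor is the routine combinatorial bookkeeping of the triple product of series subject to the constraint, which presents no real difficulty once the constraint has been isolated.
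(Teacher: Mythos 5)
Your proposal is correct and follows essentially the same route as the paper: the same contour integral of $G(z,\chi)\prod_j\sin(a_j\pi z/p)/\bigl(G(\chi)(e^{2\pi iz}-1)\sin^k(\pi z/p)\bigr)$ over the indented rectangle, the same residue computation and folding by $n\mapsto p-n$, and the same expansions in $\mu$ and $\nu$ with the negative-binomial series supplying $\binom{k+m-1}{k-1}$ and $\prod_j(1-\mu^{-2a_j})$ supplying the $\beta_r$. You in fact supply slightly more detail than the paper on the removability of the singularity at $0$ and on why only the constant term survives the horizontal integrations (your parenthetical that $G(\cdot,\chi)$ is ``even'' is not quite right --- the relevant fact is $G(0,\chi)=0$, which already suffices since the $k$ sine zeros exactly cancel the $\sin^k$ pole).
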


\begin{proof}  We provide a sketch of the proof, since the details are similar to those in the proof of Theorem \ref{theorem1} in which $k=1$.  Let
\begin{equation*}
f(z):=\df{G(z,\chi)}{G(\chi)}\df{\sin(a_1 \pi z/p)\sin(a_2 \pi z/p)\cdots\sin(a_k\pi z/p)}
{(e^{2\pi iz}-1)\sin^k(\pi z/p)}.
\end{equation*}
Since $f(z)$ is analytic at the origin, its only poles on the interior of $C_N$ are at $z=1,2,\dots, p-1$.  The residue of $f$ at $z=n, 1\leq n\leq p-1,$ is given by
\begin{equation*}
R_n =\df{G(n,\chi)}{G(\chi)}\df{\sin(a_1 \pi n/p)\sin(a_2\pi n/p)\cdots\sin(a_k\pi n/p)}{2\pi i\,\sin^k(\pi n/p)}.
\end{equation*}
Thus, by the residue theorem and \eqref{factorization},
\begin{equation}\label{proof23}
\int_{C_N}f(z)dz=\sum_{n=1}^{p-1}\chi(n)\df{\sin(a\pi n/p)\sin(a_2\pi n/p)\cdots\sin(a_k\pi n/p)}{\sin^k(\pi n/p)}.
\end{equation}
Since $\chi$ is even and $a_j$ is odd, $1\leq j\leq k$,
\begin{gather*}
\chi(p-n)\df{\sin(a_1\pi(p-n)/p)\sin(a_2\pi(p- n)/p)\cdots\sin(a_k\pi (p-n)/p)}{\sin(\pi(p-n)/p)}\\=\chi(n)\df{\sin(a_1\pi n/p)\sin(a_2\pi n/p)\cdots\sin(a_k\pi n/p)}{\sin^k(\pi n/p)},\qquad 0<n<p/2.
\end{gather*}
Hence, we can rewrite \eqref{proof23} in the form
\begin{align}\label{proof24}
\int_{C_N}f(z)dz=&2\sum_{0<n<p/2}\chi(n)
\df{\sin(a_1\pi n/p)\sin(a_2\pi n/p)\cdots\sin(a_k\pi n/p)}{\sin^k(\pi n/p)}\notag\\
=&2S(a_1,a_2,\dots,a_k,p,\chi),
\end{align}
by \eqref{evaluation2}.

We now integrate $f(z)$ over $C_N$. First, consider $C_{NT}$.
The procedure is the same as that for Theorem \ref{theorem1}. Let
\begin{equation*}
\mu:=e^{-\pi ix/p}e^{\pi N/p}.
\end{equation*}
We examine each portion of $f(z)$ on $C_{NT}$, where we put $z=x+iN$.  First,
\begin{equation}\label{proof611a}
\df{1}{e^{2\pi i (x+iN)}-1}=\df{1}{\mu^{-2p}-1}=-1+O(\mu^{-2p}), \qquad N\to\infty.
\end{equation}
Also,
\begin{equation}\label{proof711a}
G(x+iN,\chi)=\sum_{n=1}^{p-1}\df{\chi(n)}{\mu^{2n}}.
\end{equation}
As in \eqref{proof8}, we have
\begin{equation}\label{proof25}
\sin(\pi a_j(x+iN)/p)=-\df{1}{2i}\mu^{a_j}(1-\mu^{-2a_j}), \quad 1\leq j \leq k.
\end{equation}
From \eqref{proof9},
\begin{equation}\label{proof26}
\df{1}{\sin(\pi(x+iN)/p)}=-\df{2i}{\mu(1-\mu^{-2})}.
\end{equation}
Lastly, by the binomial theorem,
\begin{equation}\label{binomial}
(1-\mu^{-2})^{-k} =\sum_{m=0}^{\infty}\binom{k+m-1}{k-1}\mu^{-2m}.
\end{equation}
Employing \eqref{proof611a}--\eqref{binomial}, we conclude that
\begin{equation}
f(x+iN)=\df{1}{G(\chi)}\sum_{n=1}^{p-1}\df{\chi(n)}{\mu^{2n}}\,\prod_{j=1}^k\mu^{a_j}(1-\mu^{-2a_j})
\mu^{-k}\sum_{m=0}^{\infty}\binom{k+m-1}{k-1}\mu^{-2m}
\left(-1+O(\mu^{-2p}\right)).\label{proof27}
\end{equation}

By the same procedure that we used in obtaining \eqref{proofa7a}, we can derive a formula for $f(x-iN)$ analogous to that in \eqref{proof27}.
Proceeding as before,
 we obtain the evaluation
\begin{equation}\label{CO}
\lim_{N\to\infty}\int_{C_N}f(z)dz=2pC_0,
\end{equation}
where the value of $C_0$ is given \eqref{constants2}.
Combining \eqref{proof24} with \eqref{CO},
we conclude our proof of Theorem \ref{theorem2}.
\end{proof}

 At the conclusion of the proof of Theorem \ref{theorem1}, we remarked that the hypothesis $a-3<2p$ could be relaxed.  Note  that in the proof of Theorem \ref{theorem2} to ensure that the error term does not affect the constant term, we need to make the assumption that $a_1+a_2+\cdots+a_k-k-2<2p$.    Thus, by refining the error term in \eqref{proof27}, the condition
$a_1+a_2+\cdots+a_k-k-2<2p$ can be relaxed.

Theorem \ref{theorem2} is a generalization of \cite[p.~346, Cor.~4.5]{bbyz}.  Namely, if $k$ is an odd positive integer and $\chi$ is an even character modulo $k$, then
\begin{gather*}
\sum_{0<n<k/2}\chi(n)\df{\sin(3\pi n/k)\sin(5\pi n/k)\sin(7\pi n/k)}{\sin^3(\pi n/k)}\\
=\sqrt{k}\sum_{m=0}^6\chi(m)\left\{\binom{8-m}{2}-\binom{5-m}{2}-\binom{3-m}{2}   \right\}.
\end{gather*}

\section{A Sum with Characters and Trigonometric Functions with Two Distinct Periods}

\begin{theorem}
Let $p$ and $q$ be relatively prime, positive integers.  Let $\chi_1$ and $\chi_2$ denote non-principal characters modulo $p$ and $q$, respectively.  Suppose that $a_1$ and $a_2$ are odd positive integers, and that $b_1$ and $b_2$ denote even positive integers.  Assume that
\begin{equation}\label{66a}
q(a_1b_1-a_1-3)+p(a_2b_2-a_2-3)<2pq.
\end{equation}
 Define
\begin{equation*}
S(a_1,a_2,b_1,b_2,p,q,\chi_1,\chi_2):= \sum_{\substack{0<n\leq pq-1\\n\neq kp,kq}}\chi_1(n)\chi_2(n)
\df{\sin^{a_1}(b_1\pi n/p)}{\sin^{a_1+1}(\pi n/p)}\df{\sin^{a_2}(b_2\pi n/q)}{\sin^{a_2+1}(\pi n/q)}.
\end{equation*}
Then,
\begin{align}
S(a_1,a_2,b_1,b_2,p,q,\chi_1,\chi_2)=&pq(-C_0+C_0^{\prime})
+4 \df{M(p,\chi_1)}{G(\chi_1)}\df{M(q,\chi_2)}{G(\chi_2)}b_1^{a_1}b_2^{a_2}\notag\\
&-2i\df{b_1^{a_1}M(p,\chi_1)}{G(\chi_1)}\chi_2(p)\sum_{k=1}^{q-1}\chi_2(k)\df{\sin^{a_2}(b_2\pi kp/q)}{\sin^{a_2+1}(\pi kp/q)}\notag\\
&-2i\df{b_2^{a_2}M(q,\chi_2)}{G(\chi_2)}\chi_1(q)\sum_{k=1}^{p-1}\chi_1(k)\df{\sin^{a_1}(b_1\pi kq/p)}{\sin^{a_1+1}(\pi kq/p)},\label{chieftheorem}
\end{align}
where $M(k,\chi)$ is defined in \eqref{M}, and where $C_0$ and $C_0^{\prime}$ are defined by \eqref{64} and \eqref{65}, respectively.
\end{theorem}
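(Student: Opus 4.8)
The plan is to prove \eqref{chieftheorem} by the contour-integration method of Theorems \ref{theorem1} and \ref{theorem2}, now carried out over a period rectangle of width $pq$. I would integrate
$$f(z):=\dfrac{G(z,\chi_1)}{G(\chi_1)}\,\dfrac{G(z,\chi_2)}{G(\chi_2)}\,\dfrac{\sin^{a_1}(b_1\pi z/p)}{\sin^{a_1+1}(\pi z/p)}\,\dfrac{\sin^{a_2}(b_2\pi z/q)}{\sin^{a_2+1}(\pi z/q)}\,\dfrac{1}{e^{2\pi iz}-1}$$
over the indented, positively oriented rectangle $C_N$ with vertical sides through $0$ and $pq$ and horizontal sides through $\pm iN$, indented by semicircles of radius $\epsilon$ at $0$ (into the left half-plane, so that $z=0$ is interior) and at $pq$ (so that $z=pq$ is exterior). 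Because $a_1,a_2$ are odd and $b_1,b_2$ are even, each $\sin$-quotient has period $p$, respectively $q$, and each Gauss sum has the same period, so $f(z+pq)=f(z)$ and the integrals over the two vertical sides cancel. First I would classify the interior poles of $f$: simple poles at the integers $n$ with $1\le n\le pq-1$, $p\nmid n$ and $q\nmid n$; simple poles at $z=kp$ for $1\le k\le q-1$ and at $z=kq$ for $1\le k\le p-1$; and a simple pole at $z=0$.

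At an ordinary pole $z=n$ the residue is $\tfrac{1}{2\pi i}\tfrac{G(n,\chi_1)}{G(\chi_1)}\tfrac{G(n,\chi_2)}{G(\chi_2)}(\cdots)=\tfrac{1}{2\pi i}\chi_1(n)\chi_2(n)(\cdots)$ by the factorization theorem \eqref{factorization}, so these poles reproduce $S$. The mixed poles are the new ingredient. Writing $z=kp+w$, the factor $G(z,\chi_1)$ vanishes simply with $G'(kp,\chi_1)=\tfrac{2\pi i}{p}M(p,\chi_1)$ by \eqref{M}; the quotient $\sin^{a_1}(b_1\pi z/p)/\sin^{a_1+1}(\pi z/p)$ has a simple pole with leading behaviour $b_1^{a_1}\,p/(\pi w)$ (the even $b_1$ and odd $a_1$ making all sign factors trivial); $e^{2\pi iz}-1$ contributes one more simple zero; and the regular $q$-block equals $\chi_2(kp)=\chi_2(k)\chi_2(p)$ times the $q$-quotient, again by \eqref{factorization}. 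Combining these local data gives
$$2\pi i\sum_{k=1}^{q-1}\Res_{z=kp}f=2i\,\dfrac{b_1^{a_1}M(p,\chi_1)}{G(\chi_1)}\,\chi_2(p)\sum_{k=1}^{q-1}\chi_2(k)\dfrac{\sin^{a_2}(b_2\pi kp/q)}{\sin^{a_2+1}(\pi kp/q)},$$
and symmetrically at the poles $z=kq$. At $z=0$ both Gauss sums vanish simply, so the $p$-block and $q$-block of $f$ tend to the finite limits $2i\,M(p,\chi_1)b_1^{a_1}/G(\chi_1)$ and $2i\,M(q,\chi_2)b_2^{a_2}/G(\chi_2)$, while $1/(e^{2\pi iz}-1)$ supplies the simple pole, yielding $2\pi iR_0=-4\tfrac{M(p,\chi_1)}{G(\chi_1)}\tfrac{M(q,\chi_2)}{G(\chi_2)}b_1^{a_1}b_2^{a_2}$. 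Summing all residues expresses $\int_{C_N}f\,dz$ as $S$ plus these three $M$-type contributions.

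It remains to evaluate $\int_{C_N}f\,dz$ directly. By periodicity the vertical sides cancel, so only the horizontal sides contribute; on the top ($z=x+iN$) and bottom ($z=x-iN$) I would set $\mu=e^{-\pi ix/p}e^{\pi N/p}$ together with its $q$-analogue (and the conjugate variables on the bottom), expand $G(z,\chi_j)$, the two $\sin$-quotients, and $1/(e^{2\pi iz}-1)=-1+O(e^{-2\pi N})$ into powers of these variables, and isolate the term independent of $N$. These constant terms are $C_0$ and $C_0'$, and $\lim_{N\to\infty}\int_{C_N}f\,dz=pq(-C_0+C_0')$, the width $pq$ of the rectangle replacing the width $p$ of the single-period case. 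Equating this with the residue evaluation and solving for $S$ gives \eqref{chieftheorem}. The main obstacle is precisely this double expansion: with two independent geometric series (one in each period) to be multiplied, one must both read off the correct constant term and verify that the error $O(e^{-2\pi N})$ from $1/(e^{2\pi iz}-1)$ never descends to it. The largest growth on the horizontal sides is $e^{\pi N[(a_1b_1-a_1-3)q+(a_2b_2-a_2-3)p]/(pq)}$, and comparing its exponent with the $e^{-2\pi N}$ of the error term shows that the constant term is unaffected exactly when \eqref{66a} holds. The mixed-pole analysis is conceptually the feature absent from Theorems \ref{theorem1}--\ref{theorem2}, but the two-fold asymptotic bookkeeping is what demands the most care.
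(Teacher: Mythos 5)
Your proposal is correct and follows essentially the same route as the paper: the same function $F(z)$ integrated over the same indented period rectangle of width $pq$, the same classification of the pole at $0$ and the mixed poles at $kp$, $kq$ via the vanishing of $G(z,\chi_1)$, $G(z,\chi_2)$, the same residue evaluations producing the $M$-type terms, and the same horizontal-side expansions whose constant terms give $-pqC_0+pqC_0'$, with hypothesis \eqref{66a} arising exactly as you describe from comparing the top degree $q(a_1b_1-a_1-3)+p(a_2b_2-a_2-3)$ against the $O(e^{-2\pi N})$ error of $1/(e^{2\pi iz}-1)$. The only cosmetic difference is that the paper uses a single variable $\mu=e^{-\pi ix/(pq)}e^{\pi N/(pq)}$ rather than separate $p$- and $q$-scaled variables.
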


If $\chi_1$ and $\chi_2$ are odd, then the two sums on the right-hand side of \eqref{chieftheorem} have been evaluated by two of the present authors in \cite{mathann}.  It was assumed in \cite{mathann} that $\chi_1$ and $\chi_2$ are odd, so that the sums could be evaluated in terms of class numbers \eqref{classnumber}.  However, the evaluations can be expressed in terms of the sums \eqref{M}, and so the restriction that the characters be odd is unnecessary.

\begin{proof}
Define
\begin{equation}\label{F}
F(z):=\df{G(z,\chi_1)}{G(\chi_1)}\df{G(z,\chi_2)}{G(\chi_2)}
\df{\sin^{a_1}(b_1\pi z/p)}{\sin^{a_1+1}(\pi z/p)}\df{\sin^{a_2}(b_2\pi z/q)}{\sin^{a_2+1}(\pi z/q)}
\df{1}{e^{2\pi i z}-1}.
\end{equation}
Let $C_N, N>0$, denote the indented rectangle with horizontal sides passing through $\pm iN$ and vertical sides passing through $0$ and $pq$, with semicircular indentions around $z=0$ of radius $\epsilon<1$ in the left half-plane and around $z=pq$ of radius $\epsilon<1$ lying immediately to the left of the line $x=pq$. On the interior of $C_N$, $F(z)$ has a simple pole at $z=0$.  At $n=1,2, \dots , pq-1$, $F(z)$ also has poles. If $n\neq kp, kq$, for any integer $k$, then the poles are simple, since $(p,q)=1$.  Suppose that $n=kp, 1\leq k\leq q-1$. Then $G(n,\chi_1)=0$.  Hence, $kp$ is a simple pole. Similarly, if $n=kq, 1\leq k\leq p-1$, then $kq$ is also a simple pole.

Let $R_{\alpha}$ denote the residue of $F(z)$ at a pole $z=\alpha$.  First,
\begin{align}\label{0}
R_0=& \lim_{z\to 0}\df{G(z,\chi_1)}{G(\chi_1)(e^{2\pi iz}-1)}\df{G(z,\chi_2)}{G(\chi_2)\sin(\pi z/q)}
\df{z}{\sin(\pi z/p)}
 \df{\sin^{a_1}(b_1\pi z/p)}{\sin^{a_1}(\pi z/p)}\df{\sin^{a_2}(b_2\pi z/q)}{\sin^{a_2}(\pi z/q)}\notag\\
=&\df{\sum_{j=1}^{p-1}\chi_1(j)(2\pi ij/p)}{2\pi iG(\chi_1)}
\df{\sum_{j=1}^{q-1}\chi_2(j)(2\pi ij/q)}{(\pi/q)G(\chi_2)}\df{1}{(\pi/p)}
 \df{(b_1\pi/p)^{a_1}}{(\pi/p)^{a_1}} \df{(b_2\pi/q)^{a_2}}{(\pi/q)^{a_2}}\notag\\
 =&-\df{4}{2\pi i} \df{M(p,\chi_1)}{G(\chi_1)}\df{M(q,\chi_2)}{G(\chi_2)}b_1^{a_1}b_2^{a_2},
\end{align}
where we used \eqref{M}.

Second, if $n\neq kp,kq$, $0<n\leq pq-1$, then by \eqref{factorization},
\begin{equation}\label{n}
R_n=\df{\chi_1(n)\chi_2(n)}{2\pi i}
\df{\sin^{a_1}(b_1\pi n/p)}{\sin^{a_1+1}(\pi n/p)}\df{\sin^{a_2}(b_2\pi n/q)}{\sin^{a_2+1}(\pi n/q)}.
\end{equation}

Third, let $n=kp$, $1\leq k\leq q-1$. Then
\begin{align}\label{00}
R_{kp}=& \lim_{z\to kp}\df{G(z,\chi_1)}{G(\chi_1)\sin(\pi z/p)}\df{G(z,\chi_2)}{G(\chi_2)}\df{z-kp}{e^{2\pi iz}-1}
\df{\sin^{a_1}(b_1\pi z/p)}{\sin^{a_1}(\pi z/p)}\df{\sin^{a_2}(b_2\pi z/q)}{\sin^{a_2+1}(\pi z/q)}\notag\\
\end{align}
Now,
\begin{align}\label{1}
\lim_{z\to kp}\df{G(z,\chi_1)}{G(\chi_1)\sin(\pi z/p)}=&\lim_{z\to kp}
\df{\sum_{j=1}^{p-1}\chi_1(j)(2\pi ij/p)e^{2\pi ijz/p}}{G(\chi_1)(\pi/p)\cos(\pi z/p)},\notag\\
=&2i(-1)^k\df{M(p,\chi_1)}{G(\chi_1)},
\end{align}
by \eqref{M}.  Next, by \eqref{factorization},
\begin{equation}\label{2}
\lim_{z\to kp}\df{G(z,\chi_2)}{G(\chi_2)}\df{\sin^{a_2}(b_2\pi z/q)}{\sin^{a_2+1}(\pi z/q)}=
\chi_2(p)\chi_2(k)\df{\sin^{a_2}(b_2\pi kp/q)}{\sin^{a_2+1}(\pi kp/q)}.
\end{equation}
Next,
\begin{equation}\label{3}
\lim_{z\to kp}\df{\sin^{a_1}(b_1\pi z/p)}{\sin^{a_1}(\pi z/p)}=(-1)^kb_1^{a_1},
\end{equation}
since $b_1$ is even.  Lastly,
\begin{equation}\label{4}
\lim_{z\to kp}\df{z-kp}{e^{2\pi iz}-1}=\df{1}{2\pi i}.
\end{equation}
Gathering \eqref{1}--\eqref{4} and putting them in \eqref{00}, we conclude that
\begin{equation}\label{5}
R_{kp}=\df{b_1^{a_1}M(p,\chi_1)}{\pi \,G(\chi_1)}\chi_2(p)\chi_2(k)\df{\sin^{a_2}(b_2\pi kp/q)}{\sin^{a_2+1}(\pi kp/q)}, \qquad 1\leq k \leq q-1.
\end{equation}
By analogy,
\begin{equation}\label{6}
R_{kq}=\df{b_2^{a_2}M(q,\chi_2)}{\pi \,G(\chi_2)}\chi_1(q)\chi_1(k)\df{\sin^{a_1}(b_1\pi kq/p)}{\sin^{a_1+1}(\pi kq/p)}, \qquad 1\leq k \leq p-1.
\end{equation}

Finally, using \eqref{0}, \eqref{n}, \eqref{5}, and \eqref{6}, and the residue theorem, we conclude that
\begin{align}
\int_{C_N}F(z)dz=&-4 \df{M(p,\chi_1)}{G(\chi_1)}\df{M(q,\chi_2)}{G(\chi_2)}b_1^{a_1}b_2^{a_2}+\sum_{\substack{0<n<pq-1\\n\neq kp,kq}}
\chi_1(n)\chi_2(n)
\df{\sin^{a_1}(b_1\pi n/p)}{\sin^{a_1+1}(\pi n/p)}\df{\sin^{a_2}(b_2\pi n/q)}{\sin^{a_2+1}(\pi n/q)}\notag\\
&+2i\df{b_1^{a_1}M(p,\chi_1)}{G(\chi_1)}\chi_2(p)\sum_{k=1}^{q-1}\chi_2(k)\df{\sin^{a_2}(b_2\pi kp/q)}{\sin^{a_2+1}(\pi kp/q)}\notag\\
&+2i\df{b_2^{a_2}M(q,\chi_2)}{G(\chi_1)}\chi_1(q)\sum_{k=1}^{p-1}\chi_1(k)\df{\sin^{a_1}(b_1\pi kq/p)}{\sin^{a_1+1}(\pi kq/p)}\notag\\
=:&I(a_1,a_2,b_1,b_2,p,q,\chi_1,\chi_2).\label{residuetheorem}
\end{align}

We now calculate the integral on the left-hand side of \eqref{residuetheorem} directly. First observe from the definition \eqref{F} that because $a_1$ and $a_2$ are odd and $b_1$ and $b_2$ are even, the integrals over the two vertical sides of $C_N$ cancel.

 We  examine the integral over the top side, $C_{NT}$, of $C_N$.  Let $z=x+iN$, $0\leq x\leq pq$,  and set
\begin{equation*}
\mu=e^{-\pi i x/(pq)}e^{\pi N/(pq)} .
\end{equation*}
 We express the functions comprising $F(z)$ in terms of $\mu$.
  First,
  \begin{equation}\label{15aa}
  G(z,\chi_1)=\sum_{n=0}^{p-1}\chi_1(n)\mu^{-2q n}\quad\text{and}\quad G(z,\chi_2)=\sum_{n=0}^{q-1}\chi_2(n)\mu^{-2p n}.
\end{equation}
Second, since $a_1$ is odd,
\begin{align}
\sin^{a_1}(b_1\pi z/p)&=\left(\df{\mu^{-b_1q}-\mu^{b_1q}}{2i}\right)^{a_1}
=-\df{1}{(2i)^{a_1}}\mu^{a_1b_1q}\sum_{m=0}^{a_1}(-1)^m\binom{a_1}{m}\mu^{-2b_1qm}.\label{15}
\end{align}
Analogously,
\begin{align}
\sin^{a_2}(b_2\pi z/q)&=\left(\df{\mu^{-b_2p}-\mu^{b_2p}}{2i}\right)^{a_2}
 \notag\\
&=-\df{1}{(2i)^{a_2}}\mu^{a_2b_2p}\sum_{m=0}^{a_2}(-1)^m\binom{a_2}{m}\mu^{-2b_2pm}.\label{15aaa}
\end{align}
Third, since $a_1$ is odd,
\begin{align}
\sin^{-a_1-1}(\pi z/p)=&\left(\df{\mu^{-q}-\mu^q}{2i}\right)^{-a_1-1}\notag\\
=&(2i)^{a_1+1}\mu^{-(a_1+1)q}(1-\mu^{-2q})^{-a_1-1}\notag\\
=&(2i)^{a_1+1}\mu^{-(a_1+1)q}\sum_{r=0}^{\infty}(-1)^r\binom{-(a_1+1)}{r}\mu^{-2qr}\notag\\
=&(2i)^{a_1+1}\mu^{-(a_1+1)q}\sum_{r=0}^{\infty}\binom{(a_1+1)-1+ r}{r}\mu^{-2qr},\label{16}
\end{align}
and, analogously,
\begin{align}
\sin^{-a_2-1}(\pi z/q)=&\left(\df{\mu^{-p}-\mu^p}{2i}\right)^{-a_2-1}\notag\\
=&(2i)^{a_2+1}\mu^{-(a_2+1)p}\sum_{r=0}^{\infty}\binom{(a_2+1)-1+ r}{r}\mu^{-2pr}.\label{16aa}
\end{align}
Fourth,
\begin{align}\label{18}
\df{1}{e^{2\pi iz}-1} = -1+O(\mu^{-2pq}),
\end{align}
as $\mu\to\infty$.
 Using \eqref{15aa}--\eqref{18}, we deduce that
\begin{align}\label{19}
F(z)=&F(x+iN)=\left\{\df{2i}{G(\chi_1)}\mu^{q(a_1b_1-a_1-1)}
\sum_{n_1=1}^{p-1}\df{\chi_1(n_1)}{\mu^{2qn_1}}\right.
\sum_{m_1=0}^{a_1}(-1)^{m_1}\binom{a_1}{m_1}\mu^{-2b_1qm_1}\notag\\
&\times\left.\sum_{r_1=0}^{\infty}\binom{a_1+r_1}{r_1}\mu^{-2qr_1}\right\}\notag\\
&\times\left\{\df{2i}{G(\chi_2)}\mu^{p(a_2b_2-a_2-1)}\sum_{n_2=1}^{q-1}\df{\chi_2(n_2)}{\mu^{2pn_2}}
\sum_{m_2=0}^{a_2}(-1)^{m_2}\binom{a_2}{m_2}\mu^{-2pb_2m_2}\right.\notag\\
&\times\left.\sum_{r_2=0}^{\infty}\binom{a_2+r_2}{r_2}\mu^{-2pr_2}\right\}(-1+O(\mu^{-2pq})),
\end{align}
as $\mu$ tends to infinity.
For brevity, we write
\begin{equation*}
F(z)=F(x+iN)=A_D(x)e^{D\pi N/(pq)}+A_{D-1}(x)e^{(D-1)\pi N/(pq)}+\cdots+A_0(x)+O(e^{-\pi N/(pq)}),
\end{equation*}
where $A_D(x),A_{D-1}(x),\dots, A_1(x), A_0(x)$ are functions of $x$, and
\begin{equation}\label{59}
D=q(a_1b_1-a_1-1)-2p+p(a_2b_2-a_2-1)-2q.
\end{equation}
Thus,
\begin{equation}\label{60}
\int_{C_{NT}}F(z)dz =B_De^{D\pi N/(pq)}+B_{D-1}e^{(D-1)\pi N/(pq)}+\cdots+B_0+O(e^{-\pi N/(pq)}),
\end{equation}
where $B_D, B_{D-1}, \dots, B_1, B_0$ are constants.

The calculation of the integral over $C_{NB}$, the lower edge of $C_N$, follows along the same lines as above.  Set $z=x-iN$ and define
\begin{equation*}
\mu=e^{\pi i x/(pq)}e^{\pi N/(pq)} .
\end{equation*}
Note that \eqref{18} needs to be replaced by
\begin{equation*}
\df{1}{e^{2\pi iz}-1}=\mu^{-2pq}+O(\mu^{-4pq}), \qquad |\mu|\to\infty.
\end{equation*}
Then,
\begin{align}\label{19a}
F(z)=&F(x-iN)=\left\{\df{2i}{G(\chi_1)}\mu^{q(a_1b_1-a_1-1)}
\sum_{n_1=1}^{p-1}\df{\chi_1(n_1)}{\mu^{-2qn_1}}\right.
\sum_{m_1=0}^{a_1}(-1)^{m_1}\binom{a_1}{m_1}\mu^{-2b_1qm_1}\notag\\
&\times\left.\sum_{r_1=0}^{\infty}\binom{a_1+r_1}{r_1}\mu^{-2qr_1}\right\}\notag\\
&\times\left\{\df{2i}{G(\chi_2)}\mu^{p(a_2b_2-a_2-1)}\sum_{n_2=1}^{q-1}\df{\chi_2(n_2)}{\mu^{-2pn_2}}
\sum_{m_2=0}^{a_2}(-1)^{m_2}\binom{a_2}{m_2}\mu^{-2pb_2m_2}\right.\notag\\
&\times\left.\sum_{r_2=0}^{\infty}\binom{a_2+r_2}{r_2}\mu^{-2pr_2}\right\}(\mu^{-2pq}+O(\mu^{-4pq})),
\end{align}
as $\mu$ or $N$ tends to infinity.
Again, for brevity, write
\begin{equation*}
F(z)=F(x-iN)=A_H^{\prime}(x)e^{H\pi N/(pq)}+A_{H-1}^{\prime}(x)e^{(H-1)\pi N/(pq)}+\cdots+A_0^{\prime}(x)+O(e^{-\pi N/(pq)}),
\end{equation*}
where $A_H^{\prime}(x), A_{H-1}^{\prime}(x),\dots, A_0^{\prime}(x)$ are functions of $x$, and
\begin{align}\label{59a}
H=&q(a_1b_1-a_1-1)+2q(p-1)+p(a_2b_2-a_2-1)+2p(q-1)-2pq\notag\\
=&q(a_1b_1-a_1-1)-2p+p(a_2b_2-a_2-1)-2q+2pq.
\end{align}
From \eqref{59} and \eqref{59a}, we observe that $H=D+2pq$.
Thus,
\begin{equation}\label{60a}
\int_{C_{NB}}F(z)dz =B^{\prime}_He^{H\pi N/(pq)}+B_{H-1}^{\prime}e^{(H-1)\pi N/(pq)}+\cdots+B_0^{\prime}
+O(e^{-\pi N/(pq)}),
\end{equation}
where $B_H^{\prime}, B_{H-1}^{\prime},\dots, B_0^{\prime}$ are constants.
Define $B_H=B_{H-1}=\cdots= B_{D+1}=0$.

In summary, by \eqref{60} and \eqref{60a}, we have shown that
\begin{align}\label{61}
\int_{C_N}F(z)dz=&(B_H+B_H^{\prime})e^{H\pi N/(pq)}+(B_{H-1}+B_{H-1}^{\prime})e^{(H-1)\pi N/(pq)}\notag\\
&+\cdots+(B_0+B_0^{\prime})+O(e^{-\pi N/(pq)}),
\end{align}
as $N\to\infty$.
Combining \eqref{61} and \eqref{residuetheorem} and recalling the definition \eqref{residuetheorem},  we deduce that
\begin{gather}
I(a_1,a_2,b_1,b_2,p,q,\chi_1,\chi_2)\notag\\=(B_H+B_H^{\prime})e^{H\pi N/(pq)}
+(B_{H-1}+B_{H-1}^{\prime})e^{(H-1)\pi N/(pq)}
+\cdots+(B_0+B_0^{\prime})+O(e^{-\pi N/(pq)}).\label{62}
\end{gather}
Since the right-hand side of \eqref{62} is independent of $N$, in succession we conclude that
\begin{equation}\label{63}
B_H+B_H^{\prime}=B_{H-1}+B_{H-1}^{\prime}=\cdots=B_1+B_1^{\prime}=0.
\end{equation}
Hence, from \eqref{62} and \eqref{63},
\begin{equation*}
I(a_1,a_2,b_1,b_2,p,q,\chi_1,\chi_2)=B_0+B_0^{\prime}.
\end{equation*}

Returning to \eqref{19} and \eqref{60}, we observe that $B_0=-pqC_0$, where
\begin{align}\label{64}
C_0=&\df{4}{G(\chi_1)G(\chi_2)}\sum_{\substack{n_1,m_1,r_1,n_2,m_2,r_2\geq0\\
q(a_1b_1-a_1-1)-2qn_1-2qb_1m_1-2qr_1\\+p(a_2b_2-a_2-1)-2pn_2-2pb_2m_2-2pr_2=0}}\notag\\
&\left\{\mu^{q(a_1b_1-a_1-1)}
\sum_{n_1=1}^{p-1}\df{\chi_1(n_1)}{\mu^{2qn_1}}\right.
\sum_{m_1=0}^{a_1}(-1)^{m_1}\binom{a_1}{m_1}\mu^{-2b_1qm_1}\notag\\
&\times\left.\sum_{r_1=0}^{\infty}\binom{a_1+r_1}{r_1}\mu^{-2qr_1}\right\}\notag\\
&\times\left\{\mu^{p(a_2b_2-a_2-1)}\sum_{n_2=1}^{q-1}\df{\chi_2(n_2)}{\mu^{2pn_2}}
\sum_{m_2=0}^{a_2}(-1)^{m_2}\binom{a_2}{m_2}\mu^{-2pb_2m_2}\right.\notag\\
&\times\left.\sum_{r_2=0}^{\infty}\binom{a_2+r_2}{r_2}\mu^{-2pr_2}\right\}.
\end{align}
Returning to \eqref{19a} and \eqref{60a}, we furthermore observe that $B_0^{\prime}=pqC_0^{\prime}$, where
\begin{align}\label{65}
C_0^{\prime}=&-\df{4}{G(\chi_1)G(\chi_2)}\sum_{\substack{n_1,m_1,r_1,n_2,m_2,r_2\geq0\\
q(a_1b_1-a_1-1)+2qn_1-2 qb_1m_1-2qr_1\\+p(a_2b_2-a_2-1)+2pn_2-2pb_2m_2-2pr_2-2pq=0}}\notag\\
&\left\{\mu^{q(a_1b_1-a_1-1)}
\sum_{n_1=1}^{p-1}\df{\chi_1(n_1)}{\mu^{-2qn_1}}\right.
\sum_{m_1=0}^{a_1}(-1)^{m_1}\binom{a_1}{m_1}\mu^{-2b_1qm_1}\notag\\
&\times\left.\sum_{r_1=0}^{\infty}\binom{a_1+r_1}{r_1}\mu^{-2qr_1}\right\}\notag\\
&\times\left\{\mu^{p(a_2b_2-a_2-1)}\sum_{n_2=1}^{q-1}\df{\chi_2(n_2)}{\mu^{-2pn_2}}
\sum_{m_2=0}^{a_2}(-1)^{m_2}\binom{a_2}{m_2}\mu^{-2pb_2m_2}\right.\notag\\
&\times\left.\sum_{r_2=0}^{\infty}\binom{a_2+r_2}{r_2}\mu^{-2pr_2}\right\}\mu^{-2pq}.
\end{align}
In summary,
\begin{equation}\label{atlast}
I(a_1,a_2,b_1,b_2,p,q,\chi_1,\chi_2)=pq(-C_0+C_0^{\prime}),
\end{equation}
where $C_0$ and $C_0^{\prime}$ are given by \eqref{64} and \eqref{65}, respectively.

Lastly, in our calculations of both $C_0$ and $C_0^{\prime}$, we ignored the big-O terms arising from expanding
\begin{equation}\label{67}
\df{1}{e^{2\pi iz}-1}
\end{equation}
in powers of $\mu$.  In each case, to do this, we require that
\begin{equation}\label{66}
q(a_1b_1-a_1-3)+p(a_2b_2-a_2-3)<2pq.
\end{equation}

If we combine \eqref{atlast} with \eqref{residuetheorem}, we arrive at \eqref{chieftheorem}.

\end{proof}

    In our proof, we used only the first term when expanding \eqref{67} in a geometric series.  If we employed additional terms, this would enable us to weaken the hypothesis \eqref{66}, \eqref{66a} and prove a stronger theorem.

One could also prove a theorem involving a sum of products of $n$ sines.  The `evaluation' would be in terms of sums with the quotients of $1,2, \dots, n-1$ sines in the summands.  One of the technical difficulties in the proof would be the calculation of the residue at $z=0$.   Also, there would be poles at integers with different numbers of prime factors in them, corresponding to sums with different numbers of sine products in their summands.

\end{document}